\title{Rational homology 3-spheres and $\SL(2,\C)$ representations}
\author{Sudipta Ghosh}
\address{Department of Mathematics\\University of Notre Dame}
\email{sghosh7@nd.edu}
\author{Steven Sivek}
\address{Department of Mathematics\\Imperial College London}
\email{s.sivek@imperial.ac.uk}
\author{Raphael Zentner}
\address{Department of Mathematical Sciences\\Durham University}
\email{raphael.zentner@durham.ac.uk}
\newtheorem*{rep@theorem}{\rep@title}
\newcommand{\newreptheorem}[2]{%
\newenvironment{rep#1}[1]{%
 \def\rep@title{#2 \ref{##1}}%
 \begin{rep@theorem}}%
 {\end{rep@theorem}}}
\newtheorem {theorem}{Theorem}
\newtheorem {lemma}[theorem]{Lemma}
\newtheorem {proposition}[theorem]{Proposition}
\newtheorem {corollary}[theorem]{Corollary}
\newtheorem {conjecture}[theorem]{Conjecture}
\numberwithin{equation}{section}
\numberwithin{theorem}{section}
\theoremstyle{definition}
\newtheorem{remark}[theorem]{Remark}
\newtheorem*{remark*}{Remark}
\newcommand{\Z}{\mathbb{Z}}
\newcommand{\R}{\mathbb{R}}
\newcommand{\C}{\mathbb{C}}
\newcommand{\F}{\mathbb{F}}
\newcommand{\Q}{\mathbb{Q}}
\newcommand{\Hom}{\operatorname{Hom}}
\newcommand{\Img}{\operatorname{Im}}
\newcommand{\ind}{\operatorname{ind}}
\newcommand{\cM}{\mathcal{M}}
\newcommand{\cP}{\mathcal{P}}
\newcommand{\cR}{\mathcal{R}}
\newcommand{\cS}{\mathcal{S}}
\newcommand{\cX}{\mathcal{X}}
\newcommand{\RP}{\mathbb{RP}}
\DeclareMathOperator{\Span}{span}
\newcommand{\irr}{\mathrm{irr}}
\newcommand\SU{\mathrm{SU}}
\newcommand\SL{\mathrm{SL}}
\newcommand\PSL{\mathrm{PSL}}
\newcommand\SO{\mathrm{SO}}
\newcommand{\ad}{\operatorname{ad}}
\newcommand{\coker}{\operatorname{coker}}
\newcommand{\pt}{\mathrm{pt}}
\newcommand{\kbsm}{\cS}
\DeclareFontFamily{OMX}{MnSymbolE}{}
\DeclareSymbolFont{MnLargeSymbols}{OMX}{MnSymbolE}{m}{n}
\DeclareFontShape{OMX}{MnSymbolE}{m}{n}{
    <-6>  MnSymbolE5
   <6-7>  MnSymbolE6
   <7-8>  MnSymbolE7
   <8-9>  MnSymbolE8
   <9-10> MnSymbolE9
  <10-12> MnSymbolE10
  <12->   MnSymbolE12
}{}
\DeclareFontShape{OMX}{MnSymbolE}{b}{n}{
    <-6>  MnSymbolE-Bold5
   <6-7>  MnSymbolE-Bold6
   <7-8>  MnSymbolE-Bold7
   <8-9>  MnSymbolE-Bold8
   <9-10> MnSymbolE-Bold9
  <10-12> MnSymbolE-Bold10
  <12->   MnSymbolE-Bold12
}{}
\let\llangle\@undefined
\let\rrangle\@undefined
\DeclareMathDelimiter{\llangle}{\mathopen}%
                     {MnLargeSymbols}{'164}{MnLargeSymbols}{'164}
\DeclareMathDelimiter{\rrangle}{\mathclose}%
                     {MnLargeSymbols}{'171}{MnLargeSymbols}{'171}
\newcounter{desccount}
\newcommand{\descref}[1]{\hyperref[#1]{#1}}
\tikzset{every picture/.style=thick}
\tikzset{link/.style = { white, double = black, line width = 1.75pt, double distance = 1.25pt, looseness=1.75 }}
\tikzset{crossing/.style = {draw, circle, dotted, minimum size=0.5cm, inner sep=0, outer sep=0}}
\pgfplotsset{compat=1.12}
\begin{document}

\begin{abstract}
We use instanton gauge theory to prove that if $Y$ is a closed, orientable $3$-manifold such that $H_1(Y;\Z)$ is nontrivial and either $2$-torsion or $3$-torsion, and if $Y$ is neither $\#^r \RP^3$ for some $r\geq 1$ nor $\pm L(3,1)$, then there is an irreducible representation $\pi_1(Y) \to \SL(2,\C)$.  We apply this to show that the Kauffman bracket skein module of a non-prime 3-manifold has nontrivial torsion whenever two of the prime summands are different from $\RP^3$, answering a conjecture of Przytycki (Kirby problem 1.92(F)) unless every summand but one is $\RP^3$.  As part of the proof in the $2$-torsion case, we also show that if $M$ is a compact, orientable $3$-manifold with torus boundary whose rational longitude has order 2 in $H_1(M)$, then $M$ admits a degree-1 map onto the twisted $I$-bundle over the Klein bottle.
\end{abstract}

\maketitle

\section{Introduction}

Given a 3-manifold $Y$, the $\SL(2,\C)$ character variety of the fundamental group $\pi_1(Y)$ contains a lot of information about the geometry and topology of $Y$.  For example, one can use the characters of irreducible representations to understand something about the hyperbolic structure on $Y$, if it exists, or to find incompressible surfaces in $Y$.  Before doing so, however, it is natural to ask whether there are any irreducible representations in the first place.  The third author recently used instanton gauge theory to say that in many cases, the answer is yes.

\begin{theorem}[{\cite[Theorem~9.4]{zentner}}] \label{thm:zentner-main}
Let $Y$ be an integer homology 3-sphere.  If $Y$ is not homeomorphic to $S^3$, then there is an irreducible representation $\pi_1(Y) \to \SL(2,\C)$.
\end{theorem}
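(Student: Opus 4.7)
My plan is to decompose $Y$ via the Geometrization Theorem and dispose of the geometric cases by algebra, reserving the main gauge-theoretic effort for $Y$ with a non-trivial JSJ decomposition.

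\textbf{Geometric cases.} If $Y$ is Seifert-fibered, then being an integer homology sphere forces $Y = S^3$ or $Y = \Sigma(a_1,\dots,a_n)$ a Brieskorn sphere; in the latter case classical constructions (e.g.\ Fintushel--Stern) produce irreducible $\SU(2)$ representations, hence irreducible $\SL(2,\C)$ representations. If $Y$ is hyperbolic, the holonomy of its hyperbolic structure is a discrete faithful representation $\pi_1(Y) \to \PSL(2,\C)$; the obstruction to lifting it to $\SL(2,\C)$ lies in $H^2(Y; \Z/2) = 0$, so it lifts, and the lift is irreducible.

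\textbf{JSJ case: setup.} When $Y$ has a non-trivial JSJ decomposition, cut along a single JSJ torus $T$ to write $\pi_1(Y)$ as an amalgamated product or HNN extension of the piece groups over $\pi_1(T) = \Z^2$. Each piece $M_i$ carries an $\SL(2,\C)$ character variety $X(M_i)$ with a restriction map $r_i \colon X(M_i) \to X(T^2)$ to the pillowcase. Both Culler--Shalen theory (for a hyperbolic piece, producing the $A$-polynomial curve) and the explicit presentation of Seifert-fibered piece groups as central extensions of orbifold surface groups produce one-dimensional non-abelian components in $X(M_i)$ whose images $C_i \subset X(T^2)$ are non-constant curves. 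It then suffices to find a point of $C_1 \cap C_2$ whose preimage in at least one $X(M_i)$ is irreducible and whose image in $\pi_1(T)$ is non-central, since for a separating torus such a glued representation is automatically non-abelian and hence irreducible.

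\textbf{Main obstacle.} The crux is that the curves $C_1, C_2 \subset X(T^2)$ might a priori meet only at abelian characters, which would not give the desired representation. This is exactly where instanton gauge theory enters, and where the conclusion must be stated for $\SL(2,\C)$ rather than $\SU(2)$. Using holonomy perturbations supported in a neighborhood of $T$, one relates a signed count of perturbed flat $\SU(2)$ connections on $Y$ — a Floer-theoretic invariant whose non-vanishing can be forced from non-triviality of $Y$ — to signed intersections of the $\SU(2)$-real loci of $C_1$ and $C_2$ on the pillowcase. When these real intersections suffice one recovers an irreducible $\SU(2)$ representation; otherwise, most notably in the graph-manifold case where the $\SU(2)$-real loci of the $C_i$ can miss each other, one must exploit that $C_1, C_2$ are genuine complex curves in $(\C^\times)^2 / \iota$ whose intersection is forced by algebro-geometric degree considerations and can be located off the real locus.

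\textbf{Where I expect the real difficulty.} The hard part will be establishing that this complex intersection is never entirely abelian, uniformly across mixed JSJ decompositions and pure graph manifolds. Concretely, one needs an instanton-theoretic non-vanishing statement that persists off the $\SU(2)$-real locus of the pillowcase — precisely the feature that makes the result a theorem about $\SL(2,\C)$ rather than the still-open $\SU(2)$ analogue. In particular, identifying the correct perturbed Floer-type invariant attached to the splicing along $T$, and showing it forces a non-abelian character on one side of the splitting, is the main technical load I anticipate.
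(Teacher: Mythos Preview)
This theorem is quoted from \cite{zentner} rather than proved in the paper; the paper extracts and generalizes the underlying technique (Proposition~\ref{prop:curve-in-pillowcase}, Theorem~\ref{thm:splice-rep}) en route to its own results. So there is no in-paper proof to match against line by line, but your outline diverges from both Zentner's argument and the machinery this paper builds, and the divergence is a genuine gap.

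Your geometric cases are correct. In the toroidal case you misidentify both the mechanism and the reason the conclusion is $\SL(2,\C)$. You propose intersecting complex curves in the $\SL(2,\C)$ pillowcase via ``algebro-geometric degree considerations,'' supported by an instanton invariant ``that persists off the $\SU(2)$-real locus.'' No such off-real instanton input is used. The gauge theory stays entirely in $\SU(2)$: the nonvanishing $I^w_*(S^3_0(K)) \neq 0$ for nontrivial $K$ (Theorem~\ref{thm:km-zero-surgery}), fed through holonomy perturbations, forces the $\SU(2)$ pillowcase image of each knot exterior to contain a homologically essential closed curve (Proposition~\ref{prop:curve-in-pillowcase}). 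For a splice of two nontrivial knot complements in $S^3$, those two essential curves must meet at an irreducible point, yielding an irreducible $\SU(2)$ representation --- the real loci do not miss.

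The $\SL(2,\C)$ in the statement arises elsewhere. Writing a toroidal homology sphere as $Y = E_{K_1} \cup_T E_{K_2}$ with $K_i \subset Y_i$, the pinch maps of Proposition~\ref{prop:solid-torus-pinch} give degree-1 maps $Y \to Y_i$, so each $Y_i$ inherits $\SL(2,\C)$-reducibility from $Y$. When both $Y_i \cong S^3$ the splice argument finishes; otherwise one replaces $Y$ by some $Y_i \neq S^3$ and iterates. The $\SL(2,\C)$ hypothesis is precisely what rules out a \emph{hyperbolic} $Y_i$ from appearing in this chain, since hyperbolic manifolds have irreducible $\SL(2,\C)$ holonomy but are not known to admit irreducible $\SU(2)$ representations. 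That asymmetry --- not any failure of real pillowcase curves to intersect --- is why the theorem is stated for $\SL(2,\C)$.
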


The main results of this paper extend Theorem~\ref{thm:zentner-main} to all manifolds $Y$ for which $H_1(Y;\Z)$ is either $2$-torsion or $3$-torsion.

\begin{theorem} \label{thm:sl2-abelian-main}
Let $Y$ be a closed, orientable 3-manifold with $H_1(Y;\Z) \cong (\Z/2\Z)^r$ for some integer $r \geq 1$.  If $Y$ is not homeomorphic to $\#^r \RP^3$, then there is an irreducible representation $\pi_1(Y) \to \SL(2,\C)$.
\end{theorem}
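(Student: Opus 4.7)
The plan is to reduce Theorem~\ref{thm:sl2-abelian-main} to the case of prime $Y$, and then to attack the prime case by combining the degree-$1$ map theorem stated in the abstract with instanton gauge theory in the spirit of Theorem~\ref{thm:zentner-main}. For the reduction, write $Y = Y_1 \# \cdots \# Y_n$ as a connected sum of prime summands distinct from $S^3$. Since $H_1$ is additive under connected sums, each summand satisfies $H_1(Y_i) \cong (\Z/2\Z)^{s_i}$ with $\sum s_i = r$, and $\pi_1(Y)$ is the free product of the $\pi_1(Y_i)$. An irreducible $\SL(2,\C)$-representation of any single free factor, extended by the trivial representation on the others, remains irreducible, because the image of the distinguished factor already has no common fixed point on $\CP^1$. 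If some summand has $s_i = 0$ it is a nontrivial integer homology sphere and Theorem~\ref{thm:zentner-main} applies; otherwise every $s_i \geq 1$, and since $Y \neq \#^r \RP^3$ some summand $Y_i$ is prime with $s_i \geq 1$ and $Y_i \neq \RP^3$. So it suffices to prove the theorem assuming $Y$ itself is prime, $H_1(Y) \cong (\Z/2\Z)^s$ with $s \geq 1$, and $Y \neq \RP^3$.

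For such prime $Y$, I would produce a degree-$1$ map from $Y$ onto a closed $3$-manifold $Z$ whose fundamental group admits an irreducible $\SL(2,\C)$-representation; the induced $\pi_1$-surjection then pulls back such a representation to $\pi_1(Y)$. The essential topological input is the theorem from the abstract: any compact orientable $3$-manifold with torus boundary whose rational longitude has order $2$ in $H_1$ admits a degree-$1$ map onto the twisted $I$-bundle $N$ over the Klein bottle, and $\pi_1(N) = \langle a,b \mid b a b^{-1} = a^{-1}\rangle$ has manifest irreducible $\SL(2,\C)$-representations (for instance $a \mapsto \mathrm{diag}(\alpha,\alpha^{-1})$ with $\alpha^2 \neq 1$ and $b$ antidiagonal). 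The goal is then to exhibit a knot $K \subset Y$ whose exterior $M = Y \setminus \nu(K)$ has rational longitude of order $2$ --- an arrangement that should be achievable because $H_1(Y)$ is $2$-torsion of positive rank, providing classes of order $2$ realizable as the image of a suitable slope --- and to extend the resulting degree-$1$ map $M \to N$ across the Dehn filling recovering $Y$ from $M$ to a degree-$1$ map $Y \to Z$, where $Z$ is the corresponding filling of $N$. The target $Z$ should admit an irreducible representation either by direct inspection, since it is a Seifert fibered manifold of small complexity, or by applying Theorem~\ref{thm:zentner-main} after a further prime decomposition of $Z$.

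The main obstacle is the collection of exceptional prime $Y$ for which no suitable knot $K$ exists, or for which every $\SL(2,\C)$-representation of $\pi_1(Z)$ turns out to be reducible. Here I expect genuinely gauge-theoretic input is required: since $H^2(Y;\Z/2\Z) \cong (\Z/2\Z)^s$ is nontrivial, $Y$ carries a nontrivial $SO(3)$-bundle $P$, and if every representation $\pi_1(Y) \to \SL(2,\C)$ were reducible then the Chern--Simons functional on $P$-connections would have only reducible critical points. Adapting the Floer-theoretic argument behind Theorem~\ref{thm:zentner-main} to the $w_2$-twisted instanton Floer homology of $(Y,P)$ should then force enough constraints to conclude $Y \cong \RP^3$, contrary to hypothesis. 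The hardest and most novel step will be this twisted gauge-theoretic analysis, together with weaving it back into the degree-$1$ map argument to form a single proof covering every prime $Y$ with $H_1(Y) \cong (\Z/2\Z)^s$.
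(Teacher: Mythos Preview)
Your reduction to the prime case is fine, but the main argument has a genuine gap. The scheme ``drill out a knot $K$ so that the exterior $M$ has rational longitude of order $2$, pinch $M$ onto the twisted $I$-bundle $N$, then fill to get a degree-$1$ map $Y \to Z$'' founders because the target $Z$, being a Dehn filling of $N$, need not admit any irreducible $\SL(2,\C)$-representation. Indeed, the Dehn fillings of $N$ along slopes dual to its rational longitude are either $\RP^3\#\RP^3$ (whose fundamental group $(\Z/2)\ast(\Z/2)$ has only central image in $\SL(2,\C)$, since the only element of order $2$ is $-I$) or have cyclic fundamental group $\Z/4k\Z$; both are $\SL(2,\C)$-reducible. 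So there is no reason the pullback produces anything irreducible, and your proposed fallback via ``$w_2$-twisted instanton Floer homology forcing $Y\cong\RP^3$'' is not an argument: nontriviality of a twisted Floer group does not by itself rule out $\SL(2,\C)$-reducibility, and you have not indicated any mechanism connecting the two.

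The paper's approach is structurally quite different. After reducing to prime $Y$ and disposing of the atoroidal case (geometrization plus a classification of $\SU(2)$-abelian Seifert fibered spaces), one takes an incompressible torus $T$ and writes $Y = M_1 \cup_T M_2$. The Klein bottle pinch is applied not to a knot exterior drilled from $Y$, but to one side $M_1$ of this decomposition when its rational longitude is essential; the other side $M_2$ remains a nontrivial knot complement, and it is the instanton homology of a Dehn filling of $M_2$ (via a nonvanishing theorem and a pillowcase argument) that supplies the irreducible representation. When both rational longitudes are nullhomologous there are two further subcases depending on the gluing matrix, handled by separate pillowcase intersection arguments, and a theorem of Rong on sequences of degree-$1$ maps terminates an inductive descent. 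The Klein bottle bundle is thus a tool for transferring the problem across the torus, not a target whose own representation theory finishes the proof.
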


\begin{theorem} \label{thm:3-torsion-main}
Let $Y$ be a closed $3$-manifold such that $H_1(Y;\Z) \cong (\Z/3\Z)^r$ for some $r \geq 1$.  If $Y$ is not homeomorphic to $\pm L(3,1)$, then there is an irreducible representation $\pi_1(Y) \to \SL(2,\C)$.
\end{theorem}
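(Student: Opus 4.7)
The plan is to follow the high-level strategy of the proof of Theorem~\ref{thm:sl2-abelian-main}, proceeding by induction on the number of prime summands of $Y$. The base case, $Y$ prime, is handled by a geometric case analysis, with Theorem~\ref{thm:zentner-main} providing the essential instanton-theoretic input. The inductive step is a \emph{connect sum reduction}.

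For the reduction, suppose $Y = Y_1 \# Y_2$ is a non-trivial connect sum. Then $\pi_1(Y) = \pi_1(Y_1) \ast \pi_1(Y_2)$ and $H_1(Y;\Z) = H_1(Y_1;\Z) \oplus H_1(Y_2;\Z)$, so each $H_1(Y_i;\Z)$ is $3$-torsion. Any summand with $H_1(Y_i)=0$ is an integer homology sphere that is not $S^3$ (since $Y$ is not prime) and supplies an irreducible $\SL(2,\C)$ representation via Theorem~\ref{thm:zentner-main}, which extends to $Y$ through the free product. Otherwise, by the inductive hypothesis either some $Y_i$ is not $\pm L(3,1)$ and supplies an irreducible representation, or each $Y_i$ is $\pm L(3,1)$; in the latter case $\pi_1(Y)$ contains $\Z/3 \ast \Z/3$ as a free factor, and an irreducible $\SL(2,\C)$ representation can be built by hand by sending the two generators to non-commuting trace-$(-1)$ matrices.

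For the base case, let $Y$ be prime with $H_1(Y) \cong (\Z/3)^r$. Spherical $Y$ with this homology must be a lens space, forcing $Y = \pm L(3,1)$, which is excluded by hypothesis. If $Y$ is hyperbolic, its discrete faithful $\PSL(2,\C)$ holonomy lifts to $\SL(2,\C)$ because the obstruction lies in $H^2(Y;\Z/2)$, which vanishes here: $H_1(Y;\Z)$ being $3$-torsion forces $H_1(Y;\Z/2)=0$, and hence $H^2(Y;\Z/2)=0$ by Poincar\'e duality. If $Y$ is Seifert fibered with infinite $\pi_1$, one can construct irreducible representations directly from a Seifert presentation, apart from a short list of exceptional base orbifolds to be handled individually.

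The main difficulty, as in the $2$-torsion case, will be the situation in which $Y$ has a non-trivial JSJ decomposition. The plan here is to combine Theorem~\ref{thm:zentner-main}, applied to an auxiliary integer homology sphere obtained for instance by a suitable Dehn filling or branched cover of one JSJ piece, with a gluing argument that extends an irreducible $\SL(2,\C)$ representation across the JSJ tori by matching peripheral characters on the pillowcase. Ensuring that these peripheral compatibility conditions can always be arranged under the $3$-torsion hypothesis, and ruling out the residual degenerate JSJ configurations in which the extension could fail, is where I expect the bulk of the technical work to occur.
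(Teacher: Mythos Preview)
Your connect-sum reduction and your treatment of the atoroidal prime cases are essentially correct and match what the paper does (Lemma~\ref{lem:p-torsion-prime} and Lemma~\ref{lem:geometric}); the paper cites \cite{sz-menagerie} to dispose of the Seifert fibered cases and \cite{cs-splittings} for the hyperbolic lift, but your arguments are fine substitutes.

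The genuine gap is the toroidal case, where your plan is too vague and misses the key mechanism.  You propose to apply Theorem~\ref{thm:zentner-main} to an auxiliary homology sphere obtained by Dehn filling and then glue across JSJ tori.  But the paper does not proceed this way, and it is not clear your plan can be made to work: once you Dehn fill one side of an incompressible torus, the resulting representation need not extend back across the torus to the other side with matching peripheral data.  What the paper actually does is a two-stage reduction.  First (Theorem~\ref{thm:mod-p-to-p-torsion} and Proposition~\ref{prop:p-torsion-dominates-something}), it uses the structural analysis of Section~\ref{sec:knot-exteriors} together with degree-$1$ pinch maps and Rong's Theorem~\ref{thm:rong} to reduce from $(\Z/3)^r$ with $r\ge 2$ down to the cyclic case $H_1(Y)\cong\Z/3$.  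Second, for the cyclic toroidal case, a further degree-$1$/Rong reduction (Theorem~\ref{thm:p-torsion-strong}) leaves only one concrete gluing configuration to analyze, namely Theorem~\ref{thm:glue-slope-3}: two knot exteriors $E_{K_1},E_{K_2}$ in integer homology spheres with $3$-surgeries that are order-$3$ lens spaces, glued by $\mu_1\sim\mu_2^{-1}$, $\lambda_1\sim\mu_2^3\lambda_2$.

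The proof of Theorem~\ref{thm:glue-slope-3} is where the real pillowcase work happens, and it uses ingredients your plan does not anticipate.  The lens-space surgery hypothesis forces the pillowcase image $i^*(X(E_{K_j}))$ to avoid the lines $\{3\alpha+\beta\in\pi\Z\}$ except at isolated reducible points (Lemma~\ref{lem:p-avoid-lines}, using the Alexander polynomial via \cite{heusener-porti-suarez} and \cite{boyer-nicas}).  Instanton nonvanishing then produces essential ``$3$-avoiding'' curves $\gamma_j$ in each image.  The intersection argument is delicate: one must show $\gamma_1$ and $\sigma_3(\gamma_2)$ meet away from the bad point $(\tfrac{2\pi}{3},0)$, and the key trick is that if they meet there they do so \emph{transversely} (Lemma~\ref{lem:skew-transverse}), so since closed curves in $X(T^2)\cong S^2$ have algebraic intersection number zero, there must be a second intersection point elsewhere.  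None of this machinery---the degree-$1$/Rong descent, the lens-space-surgery constraint on the pillowcase, or the transversality-plus-intersection-number argument---appears in your outline, and without it there is no proof.
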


Although we will mostly describe applications of Theorem~\ref{thm:sl2-abelian-main}, it turns out that Theorem~\ref{thm:3-torsion-main} is slightly easier to prove.  In fact, the analogous result when $H_1(Y;\Z)$ is $p$-torsion for an odd prime $p$ follows from the case where $H_1(Y;\Z)$ is cyclic of order $p$; this is detailed in Theorems~\ref{thm:mod-p-to-p-torsion} and \ref{thm:p-torsion-strong}.  However, it is not always true that if $H_1(Y;\Z) \cong \Z/p\Z$, then either $Y$ is a lens space or there must be a representation $\pi_1(Y) \to \SL(2,\C)$ with non-abelian image; a construction due to Motegi \cite{motegi} (see Remark~\ref{rem:order-37}) provides counterexamples for many primes, starting with $p=37$.

In the following subsections we will provide some applications of Theorem~\ref{thm:sl2-abelian-main} to character varieties and skein modules of reducible 3-manifolds, and then we will give an outline of its proof.

\subsection{$\SL(2,\C)$ character varieties}

Given a 3-manifold $Y$, we can define its $\SL(2,\C)$ representation variety to be
\[ \cR(Y) = \Hom(\pi_1(Y), \SL(2,\C)). \]
We will say that $Y$ is \emph{$\SL(2,\C)$-reducible} if every $\rho \in \cR(Y)$ is reducible, or \emph{$\SL(2,\C)$-abelian} if every $\rho \in \cR(Y)$ has abelian image.  If $Y$ is $\SL(2,\C)$-abelian then it is $\SL(2,\C)$-reducible, though the converse need not be true.

The representation variety $\cR(Y)$ carries an action of $\SL(2,\C)$ by conjugation, and the $\SL(2,\C)$ character variety of $Y$ is the GIT quotient
\[ \cX(Y) = \cR(Y)\ /\!/\ \SL(2,\C). \] 
Culler and Shalen \cite{cs-splittings} showed that one can use ideal points of curves in $\cX(Y)$ to find incompressible surfaces in $Y$.  In the opposite direction, one can ask whether the existence of incompressible surfaces in $Y$ forces $\dim_\C \cX(Y)$ to be positive, and Motegi \cite{motegi} showed that this is not always true, but for essential spheres we have the following.

\begin{proposition} \label{prop:non-prime-curve}
Suppose that for $i=1,2$ there are representations
\[ \rho_i: \pi_1(Y_i) \to \SL(2,\C) \]
whose images are not central (i.e., not contained in $\{\pm1\}$).  Then $\dim_\C \cX(Y_1\#Y_2)$ is positive.
\end{proposition}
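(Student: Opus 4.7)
The plan is to exploit the fact that $\pi_1(Y_1 \# Y_2) \cong \pi_1(Y_1) * \pi_1(Y_2)$ by van Kampen's theorem, so that
$$\cR(Y_1 \# Y_2) \cong \cR(Y_1) \times \cR(Y_2)$$
via the universal property of free products. The pair $(\rho_1, \rho_2)$ then yields a representation $\rho := \rho_1 * \rho_2$, and I would consider the curve
$$\rho_A := \rho_1 * (A \rho_2 A^{-1}), \qquad A \in \SL(2,\C),$$
inside $\cR(Y_1 \# Y_2)$. Since $\SL(2,\C)$ is irreducible as a variety, it suffices to show that the induced map $A \mapsto \chi_{\rho_A}$ into $\cX(Y_1 \# Y_2)$ is non-constant, which immediately forces $\dim_\C \cX(Y_1 \# Y_2) \geq 1$.

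To certify non-constancy, I would use the non-centrality of each $\rho_i$ to pick $g_i \in \pi_1(Y_i)$ with $M_i := \rho_i(g_i) \neq \pm I$, and focus on the regular function
$$f(A) := \chi_{\rho_A}(g_1 g_2) = \mathrm{tr}(M_1 \cdot A M_2 A^{-1})$$
on $\SL(2,\C)$. Differentiating along the curve $A_0 \exp(tX)$ for $X \in \mathfrak{sl}(2,\C)$ and invoking cyclicity of trace gives
$$(df)_{A_0}(X) = \mathrm{tr}\bigl(X \cdot [M_2, A_0^{-1} M_1 A_0]\bigr).$$

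If $f$ were constant, this would vanish for all $X \in \mathfrak{sl}(2,\C)$ and all $A_0 \in \SL(2,\C)$; non-degeneracy of the trace form on $\mathfrak{sl}(2,\C)$ then forces $[M_2, A_0^{-1} M_1 A_0] = 0$ for every $A_0$, which is to say the entire conjugacy class of $M_1$ in $\SL(2,\C)$ sits inside the centralizer of $M_2$. But that conjugacy class has complex dimension $2$ since $M_1 \neq \pm I$, whereas the centralizer of $M_2 \neq \pm I$ has dimension $1$---a contradiction.

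The only real subtlety I anticipate is the bookkeeping step of separating \emph{characters} rather than merely representations; by writing the witness as the explicit class-function value $\chi_{\rho_A}(g_1 g_2)$, which descends directly to $\cX(Y_1 \# Y_2)$, one avoids having to analyse orbit closures in the GIT quotient. The argument is otherwise purely algebraic---no gauge theory is invoked once the free-product identification of $\cR$ is in hand.
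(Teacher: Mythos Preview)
Your proof is correct and follows exactly the same construction as the paper: the family $A \mapsto \rho_1 \ast (A\rho_2 A^{-1})$. The paper's proof simply asserts that this family has positive-dimensional image in $\cX(Y_1\#Y_2)$ without further justification, whereas you supply a complete verification via the trace function $\mathrm{tr}(M_1 \cdot A M_2 A^{-1})$ and the commutator/dimension argument; so your write-up is strictly more detailed than the paper's, but the underlying idea is identical.
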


\begin{proof}
We write $\pi_1(Y_1\#Y_2) \cong \pi_1(Y_1) \ast \pi_1(Y_2)$ and consider the map $\SL(2,\C) \to \cR(Y_1\#Y_2)$ given by $A \mapsto \rho_1 \ast (A\rho_2A^{-1})$.  This has positive-dimensional image, even in the quotient $\cX(Y_1\#Y_2)$.
\end{proof}

Combining this observation with Theorem~\ref{thm:sl2-abelian-main}, we readily deduce the following.

\begin{theorem} \label{thm:dim-x-positive}
If $Y_1$ and $Y_2$ are closed, oriented 3-manifolds, and neither $Y_1$ nor $Y_2$ is homeomorphic to $\#^r \RP^3$ for any $r \geq 0$, then $\dim_\C \cX(Y_1\#Y_2)$ is positive.
\end{theorem}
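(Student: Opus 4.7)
The plan is to reduce the theorem, via Proposition~\ref{prop:non-prime-curve}, to producing for each $Y_i$ a representation $\rho_i:\pi_1(Y_i)\to\SL(2,\C)$ whose image is not contained in $\{\pm I\}$. The hypothesis that neither summand is $\#^r\RP^3$ for any $r\ge 0$ (in particular, neither is $S^3 = \#^0\RP^3$) will enter only in one case of a dichotomy based on $H_1(Y_i;\Z)$.

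For each $i\in\{1,2\}$ I would split into two cases. First, suppose $H_1(Y_i;\Z)$ is \emph{not} annihilated by $2$, so that it contains some element $g$ of order $n\in\{3,4,5,\dots\}\cup\{\infty\}$. Since $\C^*$ is a divisible (hence injective) abelian group, there is a character $\chi:H_1(Y_i;\Z)\to\C^*$ with $\chi(g)$ of order $n$; in particular $\chi(g)\ne\pm 1$. Composing the Hurewicz map with $\chi$ and then with the diagonal embedding $z\mapsto \operatorname{diag}(z,z^{-1})$ of $\C^*$ into $\SL(2,\C)$ gives an abelian representation $\rho_i$ whose image is non-central. Second, suppose $H_1(Y_i;\Z)$ is $2$-torsion, so $H_1(Y_i;\Z)\cong(\Z/2\Z)^r$ for some $r\ge 0$. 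If $r=0$ then $Y_i$ is an integer homology sphere and the hypothesis forbids $Y_i\cong S^3$, so Theorem~\ref{thm:zentner-main} supplies an irreducible representation; if $r\ge 1$ then by hypothesis $Y_i\not\cong\#^r\RP^3$, and Theorem~\ref{thm:sl2-abelian-main} supplies an irreducible representation. In either subcase, irreducibility forces the image to be non-central.

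With a non-central $\rho_i$ in hand for each $i$, Proposition~\ref{prop:non-prime-curve} immediately gives $\dim_\C\cX(Y_1\#Y_2)>0$. The deduction is purely formal; all of the depth has been packaged into Theorems~\ref{thm:sl2-abelian-main} and \ref{thm:zentner-main}, which are used here as black boxes, so there is no real obstacle once they are granted.
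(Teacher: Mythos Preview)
Your proof is correct and follows essentially the same approach as the paper: split on whether $H_1(Y_i;\Z)$ is $2$-torsion, use an abelian character in the non-$2$-torsion case, and invoke the main theorems otherwise, then apply Proposition~\ref{prop:non-prime-curve}. You are in fact slightly more careful than the paper in separating the integer homology sphere subcase ($r=0$, handled by Theorem~\ref{thm:zentner-main}) from the $r\ge 1$ case (Theorem~\ref{thm:sl2-abelian-main}), whereas the paper glosses over this distinction.
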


\begin{proof}
If $Y_i \not\cong \#^r \RP^3$ for any $r$, then we can always find a representation $\rho_i: \pi_1(Y_i) \to \SL(2,\C)$ with non-central image.  Indeed, if $H_1(Y_i;\Z)$ is $2$-torsion then Theorem~\ref{thm:sl2-abelian-main} applies; if it is not $2$-torsion, then we can take $\rho_i$ to factor through $H_1(Y_i;\Z)$ and send a summand of the form $\Z$ or $\Z/n\Z$ ($n \geq 3$) to a non-central subgroup of $\SL(2,\C)$.  Now we apply Proposition~\ref{prop:non-prime-curve}.
\end{proof}

We remark that the condition $Y_i \not\cong \#^r \RP^3$ is necessary in Theorem~\ref{thm:dim-x-positive}, because we have
\[ \cX(Y\#\RP^3) \cong \cX(Y) \times \{\pm 1\} \]
and so taking connected sums with $\RP^3$ cannot change the dimension of $\cX(Y)$.

\subsection{Skein modules}

The Kauffman bracket skein module, defined by Przytycki \cite{przytycki-kbsm} and Turaev \cite{turaev-kbsm}, is a $\Z[A^{\pm1}]$-module $\kbsm(Y)$ associated to any oriented 3-manifold $Y$.  Relatively little is known about the structure of this invariant in general; it was only recently proved by Gunningham, Jordan, and Safronov \cite{gunningham-jordan-safronov} that if $Y$ is a closed, oriented 3-manifold then $\kbsm(Y)$ is finite-dimensional over $\Q(A)$.  Przytycki conjectured the following.

\begin{conjecture}[{\cite[Problem~1.92(F)]{kirby-list}}] \label{conj:kbsm-torsion}
If $Y \cong Y_1 \# Y_2$, where neither of the $Y_i$ is homeomorphic to $S^3$ with some number of disjoint balls removed, then $\kbsm(Y)$ has non-trivial torsion.
\end{conjecture}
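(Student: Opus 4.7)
My plan is to prove the partial version of the conjecture stated in the abstract, namely that $\kbsm(Y)$ has nontrivial torsion whenever the prime decomposition of $Y$ contains at least two summands not homeomorphic to $\RP^3$. The strategy combines three ingredients: the positive-dimensionality of $\cX(Y)$ from Theorem~\ref{thm:dim-x-positive}, the Bullock--Przytycki--Sikora identification of $\kbsm(Y)/(A+1)\kbsm(Y)$ with the coordinate ring of $\cX(Y)$ modulo nilpotents, and the Gunningham--Jordan--Safronov finiteness theorem.

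First I would regroup the prime decomposition as $Y = Y_1 \# Y_2$ so that each $Y_i$ contains at least one non-$\RP^3$ prime summand; in particular $Y_i \not\cong \#^r \RP^3$ for any $r \geq 0$. Theorem~\ref{thm:dim-x-positive} then yields $\dim_\C \cX(Y) \geq 1$, so the reduced coordinate ring $\C[\cX(Y)]_{\mathrm{red}}$ is infinite-dimensional over $\C$. Next, Bullock's theorem, together with the nilpotent-free refinement of Przytycki--Sikora, provides a surjective $\C$-algebra homomorphism
\[
\kbsm(Y)/(A+1)\kbsm(Y) \otimes_\Z \C \;\twoheadrightarrow\; \C[\cX(Y)]_{\mathrm{red}}
\]
sending a framed knot $K$ to the trace function $\chi \mapsto -\chi(K)$. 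Combined with the previous step, this forces $\kbsm(Y)/(A+1)\kbsm(Y) \otimes_\Z \Q$ to be infinite-dimensional over $\Q$.

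On the other hand, Gunningham--Jordan--Safronov ensures that $\kbsm(Y) \otimes_{\Z[A^{\pm 1}]} \Q(A)$ is finite-dimensional. The final step is the elementary observation that for any torsion-free $\Q[A^{\pm 1}]$-module $N$ with $\dim_{\Q(A)} N \otimes \Q(A) = r < \infty$, one has $\dim_\Q N/(A+1)N \leq r$. To see this, given lifts $e_1,\dots,e_k \in N$ of $\Q$-linearly independent residues, any relation $\sum p_i(A)\, e_i = 0$ in $N$ with $p_i \in \Q[A^{\pm 1}]$ may be divided through by the largest common $(A+1)$-power using torsion-freeness, and then specialized at $A = -1$ to produce a nontrivial $\Q$-linear relation among the residues, forcing $k \leq r$. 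Applying the contrapositive to $N = \kbsm(Y) \otimes_\Z \Q$ yields a nonzero element annihilated by a nonzero polynomial in $\Q[A^{\pm 1}]$, and clearing denominators lifts this to nontrivial $\Z[A^{\pm 1}]$-torsion in $\kbsm(Y)$.

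The main obstacle to proving the full conjecture is the case in which every prime summand of $Y$ except one is $\RP^3$. Since $\cX(Z \# \RP^3) \cong \cX(Z) \times \{\pm 1\}$, connected sum with $\RP^3$ does not alter $\dim_\C \cX$, so the character-variety approach cannot distinguish $Z$ from $Z \# \RP^3$ and yields no torsion in the Kauffman bracket skein module; resolving this exceptional case appears to require a genuinely different technique, either a refined invariant of the character scheme or a direct combinatorial detection of torsion from a free-product decomposition of $\pi_1(Y)$.
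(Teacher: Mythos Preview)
Your proposal correctly targets the partial result (Theorem~\ref{thm:skein-connected-sum}) rather than the full conjecture, and your argument is sound for closed $Y$. However, your route differs from the paper's. The paper does not pass through Gunningham--Jordan--Safronov finiteness at all: it simply invokes Przytycki's criterion \cite[Theorem~4.2(b)]{przytycki-fundamentals}, which says directly that $\kbsm(Y_1\#Y_2)$ has torsion whenever each $\pi_1(Y_i)$ admits an $\SL(2,\C)$ representation with non-central image. Theorem~\ref{thm:sl2-abelian-main} then supplies those representations whenever $Y_i \not\cong \#^r\RP^3$, exactly as in Theorem~\ref{thm:dim-x-positive}.

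Your approach instead combines Bullock's identification of the $A=-1$ specialization with the character variety, the GJS finiteness theorem, and a clean module-theoretic lemma to deduce torsion from the tension between infinite generation at $A=-1$ and finite rank over $\Q(A)$. This is a valid and conceptually appealing alternative, but it is heavier machinery: GJS is a deep result, whereas Przytycki's criterion is proved by an elementary sliding argument. Your route also requires $Y$ to be closed (since GJS is stated for closed manifolds), while Przytycki's criterion and hence the paper's Theorem~\ref{thm:skein-connected-sum} apply to manifolds with boundary as well. Both approaches leave the same residual case $Y \cong Z \# \RP^3$ unresolved, for the reason you identify.
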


By contrast, we know that $\kbsm(S^3) \cong \Z[A^{\pm1}]$ is freely generated by the empty link \cite[Theorem~12]{przytycki-kbsm}, while for lens spaces Hoste and Przytycki \cite{hoste-przytycki-lens} showed that $\kbsm(L(p,q))$ is a free module on $\lfloor p/2\rfloor+1$ generators.  (On the other hand, a \emph{non-separating} $S^2$ always leads to torsion in $\kbsm(Y)$, by a version of Dirac's belt trick \cite[\S4]{przytycki-fundamentals}.)  We remark that removing a ball from $Y$, or conversely filling in an $S^2$ component of $\partial Y$ with a ball, does not change $\kbsm(Y)$ up to isomorphism \cite[Proposition 4(d)]{przytycki-kbsm}.

We note the relevance of Theorem~\ref{thm:dim-x-positive} to Conjecture~\ref{conj:kbsm-torsion} via work of Bullock \cite[Corollary~1]{bullock}, who showed that if $\dim_\C \cX(Y) \geq 1$ then $\kbsm(Y)$ is infinitely generated.  Indeed, Przytycki \cite[Theorem~4.2(b)]{przytycki-fundamentals} proved that Conjecture~\ref{conj:kbsm-torsion} holds for a connected sum $Y=Y_1\#Y_2$ if for each $i$, there is a representation 
\[ \rho_i: \pi_1(Y_i) \to \SL(2,\C) \]
with non-central image.  (See also \cite[Theorem~4.4]{przytycki-algtop}.)  We have therefore shown the following, exactly as in Theorem~\ref{thm:dim-x-positive}.

\begin{theorem} \label{thm:skein-connected-sum}
Let $Y$ be an oriented $3$-manifold, and suppose that we can write
\[ Y \cong Y_1 \# Y_2 \]
where neither $Y_1$ nor $Y_2$ is homeomorphic to some $\#^r \RP^3$ ($r\geq 0$) minus a disjoint union of balls.  Then $\kbsm(Y)$ has non-trivial torsion.
\end{theorem}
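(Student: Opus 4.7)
The plan is to mimic the proof of Theorem~\ref{thm:dim-x-positive} and then substitute Przytycki's torsion criterion for the dimension statement. The entire theorem follows by (i) reducing to the closed case, (ii) producing a non-central $\SL(2,\C)$ representation for each summand, and (iii) quoting \cite[Theorem~4.2(b)]{przytycki-fundamentals}.

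First I would normalize the setup. By \cite[Proposition 4(d)]{przytycki-kbsm}, capping off any $S^2$ boundary components with $3$-balls leaves $\kbsm$ unchanged up to isomorphism. Applying this to $Y$, $Y_1$, and $Y_2$ simultaneously, I may replace $Y_i$ by its closure $\hat Y_i$ (a closed, oriented $3$-manifold) and replace $Y$ by $\hat Y_1 \# \hat Y_2$; the hypothesis becomes that neither $\hat Y_i$ is homeomorphic to $\#^r\RP^3$ for any $r\geq 0$ (the case $r=0$ being $S^3$).

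Next, for each $i\in\{1,2\}$ I would produce a representation $\rho_i\colon \pi_1(\hat Y_i)\to\SL(2,\C)$ whose image is not contained in $\{\pm 1\}$, by exactly the case split used in Theorem~\ref{thm:dim-x-positive}. If $H_1(\hat Y_i;\Z)$ is $2$-torsion, then Theorem~\ref{thm:sl2-abelian-main} applies (since $\hat Y_i\not\cong \#^r\RP^3$) and supplies an \emph{irreducible} representation, which is in particular non-central. Otherwise $H_1(\hat Y_i;\Z)$ contains a summand isomorphic to $\Z$ or to $\Z/n\Z$ with $n\geq 3$; composing the abelianization with a homomorphism that sends this summand into a maximal torus $\operatorname{diag}(\lambda,\lambda^{-1})\subset \SL(2,\C)$ via a non-central element yields the desired $\rho_i$.

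Finally I would invoke \cite[Theorem~4.2(b)]{przytycki-fundamentals}: since $\hat Y_1\# \hat Y_2$ is a connected sum whose two summands each admit an $\SL(2,\C)$ representation with non-central image, $\kbsm(\hat Y_1\# \hat Y_2)$ has non-trivial torsion. Combined with the isomorphism $\kbsm(Y)\cong \kbsm(\hat Y_1\# \hat Y_2)$ from the first step, this gives the theorem. The only substantive obstacle is the appeal to Theorem~\ref{thm:sl2-abelian-main} in the $2$-torsion case; everything else is bookkeeping plus Przytycki's theorem, and the pattern is identical to the deduction of Theorem~\ref{thm:dim-x-positive}.
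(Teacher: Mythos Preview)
Your proposal is correct and follows exactly the approach the paper indicates: produce non-central $\SL(2,\C)$ representations for each summand via the case split from Theorem~\ref{thm:dim-x-positive}, then invoke Przytycki's criterion \cite[Theorem~4.2(b)]{przytycki-fundamentals}. Your explicit reduction to the closed case by capping off sphere boundaries is a detail the paper leaves implicit; note only that if some $Y_i$ has a boundary component of positive genus then $\hat Y_i$ is not closed, but in that case $b_1(\hat Y_i)\geq 1$ by half-lives-half-dies, so $H_1$ is not $2$-torsion and you land in the easy branch of the case split anyway.
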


In particular, the following conjecture would now imply Conjecture~\ref{conj:kbsm-torsion}.

\begin{conjecture} \label{conj:kbsm-rp3}
Suppose that $Y$ is a closed oriented $3$-manifold that is not homeomorphic to $S^3$.  Then $\kbsm(Y\#\RP^3)$ has non-trivial torsion.
\end{conjecture}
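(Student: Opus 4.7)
What sets Conjecture~\ref{conj:kbsm-rp3} apart from Theorem~\ref{thm:skein-connected-sum} is that every $A \in \SL(2,\C)$ with $A^2 = I$ satisfies $A = \pm I$, so the only representations $\pi_1(\RP^3) \cong \Z/2\Z \to \SL(2,\C)$ have central image. Consequently $\cX(Y\#\RP^3) \cong \cX(Y) \sqcup \cX(Y)$, and whenever $\cX(Y)$ is zero-dimensional---for instance when $Y$ is a lens space of odd order---Bullock's criterion $\dim_\C \cX \geq 1$ fails and the representation-theoretic strategy underlying Theorem~\ref{thm:skein-connected-sum} gives no information. My plan is therefore to combine a purely topological construction of a candidate torsion element in $\kbsm(Y\#\RP^3)$ with a representation-theoretic detection argument drawing on the results of the present paper.

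Topologically, the $\RP^3$-summand contributes an essential one-sided $\RP^2$ to $Y\#\RP^3$, and I would first try to adapt Przytycki's ``Dirac belt trick''---which exploits a non-separating $S^2$ to force torsion in the skein module---to the setting of this non-separating $\RP^2$. The starting point is the relation $\alpha^2 = 1$ in $\pi_1(\RP^3)$, where $\alpha$ is a framed knot whose core generates $\pi_1(\RP^3)$: traversing $\alpha$ twice is isotopic to a suitably framed pushoff, and I would try to rewrite this isotopy as an explicit Kauffman-bracket identity that is forced to be a torsion relation by a framing obstruction coming from the one-sidedness of the $\RP^2$.

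To show that this candidate element is actually non-zero, I would invoke Theorem~\ref{thm:sl2-abelian-main}---and, when $H_1(Y;\Z)$ is not $2$-torsion, the simpler arguments used in the proof of Theorem~\ref{thm:dim-x-positive}---to produce a non-abelian representation $\rho\colon \pi_1(Y) \to \SL(2,\C)$ whenever $Y \not\cong \#^r \RP^3$, and combine $\rho$ with the sign representation of $\pi_1(\RP^3)$ to obtain a representation of $\pi_1(Y\#\RP^3)$. Evaluating Bullock's map from $\kbsm(Y\#\RP^3) / (A+1)$ to $\C[\cX(Y\#\RP^3)]$ against this representation should, in favourable cases, separate the candidate torsion class from zero. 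The residual case in which $Y$ is itself a non-trivial connected sum of copies of $\RP^3$ should then be accessible inductively, starting from Hoste--Przytycki's computation of $\kbsm(\RP^3)$ and stacking $\RP^3$ summands one at a time. I expect the hardest step to be the Dirac trick itself: unlike the two-sided $S^2$ case, a one-sided $\RP^2$ does not give an immediate framing-swap relation, and extracting a torsion identity that survives the representation-theoretic detection above will likely require a careful analysis of how links interact with the M\"obius band in the $\RP^3$ summand.
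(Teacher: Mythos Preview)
The statement you are attempting to prove is labelled as a \emph{conjecture} in the paper, and the paper does not prove it. The authors explicitly leave Conjecture~\ref{conj:kbsm-rp3} open: they observe that it would complete the proof of Przytycki's Conjecture~\ref{conj:kbsm-torsion}, and they note that only the single case $Y=\RP^3$ is known, by Mroczkowski's direct computation of $\kbsm(\RP^3\#\RP^3)$. There is therefore no proof in the paper to compare your proposal against.

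Your proposal is best read as a research programme rather than a proof, and you are candid about this. A few concrete concerns nonetheless deserve flagging. First, the Dirac belt trick in \cite{przytycki-fundamentals} genuinely uses the two-sidedness of a non-separating $S^2$: rotating a framed point once around the sphere changes the framing by a full twist while returning the underlying knot to itself, and this is exactly what produces a relation of the form $(A^6-1)\cdot x = 0$. A one-sided $\RP^2$ does not support this move, and you have not identified a substitute isotopy that would yield a comparable skein identity; this is the step on which the entire argument rests. Second, your detection scheme evaluates at $A=-1$ via Bullock's map, but torsion elements in $\kbsm$ are typically annihilated by polynomials in $A$ that may well vanish at $A=-1$, so the character-variety evaluation need not distinguish them from zero. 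Finally, the base case $Y=\RP^3$ that you propose to handle ``inductively'' is already the content of Mroczkowski's rather intricate paper, and there is no evident induction step that would propagate his computation to higher connected sums. In short, the obstacles you identify are real, and the paper offers no route around them.
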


We note that at least the case $Y = \RP^3$ of Conjecture~\ref{conj:kbsm-rp3} is known: the Kauffman bracket skein module of $\RP^3\#\RP^3$ was completely determined by Mroczkowski \cite{mroczkowski-rp3}, who showed in \cite[Proposition~4.17]{mroczkowski-rp3} that $\kbsm(\RP^3\#\RP^3)$ does contain torsion.

\subsection{Outline of the proof of Theorem~\ref{thm:sl2-abelian-main}}

Just as for $\SL(2,\C)$, we will say that $Y$ is \emph{$\SU(2)$-abelian} if every $\rho: \pi_1(Y) \to \SU(2)$ has abelian image; in contrast with the $\SL(2,\C)$ case, this is the same as being $\SU(2)$-reducible.  We will use gauge theory to show that many of the 3-manifolds under consideration are not $\SU(2)$-abelian, which means that they are not $\SU(2)$-reducible and hence not $\SL(2,\C)$-reducible either.

With this in mind, we let $Y$ be an $\SL(2,\C)$-reducible $3$-manifold, and we suppose that $H_1(Y;\Z) \cong (\Z/2\Z)^r$ for some $r \geq 0$.  We can assume without loss of generality that $Y$ is prime, since otherwise each of its summands also is $\SL(2,\C)$-reducible with $2$-torsion homology.

Theorem~\ref{thm:sl2-abelian-main} follows quickly for several large classes of $3$-manifolds: Thurston proved that closed hyperbolic 3-manifolds are never $\SL(2,\C)$-reducible \cite[Proposition~3.1.1]{cs-splittings}, and among the prime Seifert fibered $3$-manifolds with $2$-torsion homology, work of the second and third author \cite{sz-menagerie} implies that among these only $\RP^3$ is $\SU(2)$-abelian.  Thus if $Y \not\cong \RP^3$ is $\SL(2,\C)$-reducible, then we use the geometrization theorem to conclude that $Y$ contains an incompressible torus, and this torus must be separating since $b_1(Y) = 0$. 

We now decompose $Y$ along this torus $T$, writing
\[ Y = M_1 \cup_T M_2 \]
where each $M_i$ is compact and irreducible with incompressible torus boundary.  Then we can write the $\SU(2)$ representation variety of $Y$ as a fiber product
\[ R(Y) = R(M_1) \times_{R(T)} R(M_2), \]
so it suffices to find representations $\rho_j: \pi_1(M_j) \to \SU(2)$ for $j=1,2$ whose restrictions to $\pi_1(T)$ coincide.  In fact, we need only find these up to conjugation, so we end up studying the images
\[ i_j^*: X(M_j) \to X(T) \]
of the respective $\SU(2)$ character varieties in the $\SU(2)$ character variety of the torus, known as the \emph{pillowcase} orbifold.  We will generally aim to show that these images intersect, since the points of intersection correspond to representations $\pi_1(Y) \to \SU(2)$; if we know that one of the images at such a point corresponds to an irreducible representation of $\pi_1(M_j)$, then the representation of $\pi_1(Y)$ will also be irreducible, as desired.

Each $M_i$ comes equipped with a distinguished peripheral curve up to orientation, namely the \emph{rational longitude} $\lambda_i$: this generates the kernel of the inclusion map $H_1(\partial M_j; \Q) \to H_1(M_j;\Q)$, but may be either zero or torsion in $H_1(M_j;\Z)$.  If $\lambda_2$ is nullhomologous in $M_2$ then there is a standard degree-1 map that pinches $M_2$ onto a solid torus (see Proposition~\ref{prop:solid-torus-pinch}), and hence there is a degree-1 map
\[ Y \to M_1(\lambda_2) \]
onto the Dehn filling of $M_1$ along the slope $\lambda_2 \subset T$.  This induces a surjection $\pi_1(Y) \to \pi_1(M_1(\lambda_2))$, so if $Y$ is $\SL(2,\C)$-reducible then $M_1(\lambda_2)$ must be as well.  Similarly, if $[\lambda_1] = 0$ in $H_1(M_1;\Z)$ then we deduce that $M_2(\lambda_1)$ is also $\SL(2,\C)$-reducible.

By choosing an appropriate Dehn filling of $M_j$ we may write it as the complement of a nullhomologous knot $K_j$ in a closed $3$-manifold $Y_j$, with meridian $\mu_j \subset \partial M_j$, such that each $H_1(Y_j;\Z)$ is $2$-torsion and one of the following applies:
\begin{enumerate}
\item both of the $K_j$ are nullhomologous, with longitudes $\lambda_j$, and we glue $\partial M_1 \xrightarrow{\cong} \partial M_2$ so that
\begin{enumerate}
\item $\mu_1 \sim \lambda_2$ and $\lambda_1 \sim \mu_2$, or \label{eq:intro-case-1a}
\item $\mu_1 \sim \mu_2^{-1}$ and $\lambda_1 \sim \mu_2^2 \lambda_2$; \label{eq:intro-case-1b}
\end{enumerate}
\item or without loss of generality $[\lambda_1]$ is $2$-torsion in $H_1(M_1;\Z)$, and then $[\lambda_2] = 0$ in $H_1(M_2;\Z)$.  In this case we glue $\partial M_1 \xrightarrow{\cong} \partial M_2$ so that $\mu_1 \sim \lambda_2$ and $\lambda_1 \sim \mu_2$. \label{eq:intro-case-2}
\end{enumerate}
(This list is shown to be exhaustive in \S\ref{ssec:basis-inessential} and \S\ref{ssec:basis-essential}.)  Cases \ref{eq:intro-case-1a} and \ref{eq:intro-case-2} are handled similarly, so we will summarize the arguments in cases \ref{eq:intro-case-1a}, \ref{eq:intro-case-2}, and \ref{eq:intro-case-1b} in that order below.

\vspace{1em}\noindent
{\bf Case~\ref{eq:intro-case-1a}} (Theorem~\ref{thm:splice-rep}): The above discussion says that each of
\begin{align*}
Y_1 &= M_1(\mu_1) = M_1(\lambda_2), \\
Y_2 &= M_2(\mu_2) = M_2(\lambda_1)
\end{align*}
is $\SL(2,\C)$-reducible.  In particular we can follow work of Lidman, Pinz\'on-Caicedo, and the third author \cite{lpcz} essentially verbatim to construct an irreducible representation $\rho: \pi_1(Y) \to \SU(2)$, giving a contradiction.  The rough idea is that by using work of \cite{zentner}, we know that each of the images $i_j^*\big(X(M_j)\big)$ must contain a closed essential curve in the twice-punctured pillowcase, and the gluing map guarantees that these two curves will intersect.

The only change from \cite{lpcz} is that we replace Floer's instanton homology for homology 3-spheres with the \emph{irreducible instanton homology} of each $Y_j$.  This invariant is generated as a complex by gauge equivalence classes of irreducible flat connections on the trivial $\SU(2)$-bundle $P \to Y_j$, and the theory works in exactly the same way when $H_1(Y_j;\Z)$ is $2$-torsion, because the reducible flat connections on $P$ all have central holonomy.  See \S\ref{sec:instanton} for further discussion.

\vspace{1em}\noindent
{\bf Case~\ref{eq:intro-case-2}} (Theorem~\ref{thm:splice-essential-rep}): This is similar to case~\ref{eq:intro-case-2}, but a priori we do not know that $Y_2$ is $\SL(2,\C)$-abelian: we cannot pinch $M_1$ onto a solid torus, because the class $[\lambda_1] \in H_1(M_1;\Z)$ is $2$-torsion rather than zero.  In \S\ref{sec:pinching}, we construct a replacement that should be of independent interest.
\begin{proposition}[Proposition~\ref{prop:klein-bottle-pinch}] \label{prop:main-klein-bottle-pinch}
Let $M$ be a compact, oriented $3$-manifold with torus boundary, and suppose that the rational longitude $\lambda_M \subset \partial M$ has order $2$ in $H_1(M)$.  Then there is a degree-1 map
\[ f: M \to N, \]
where $N$ is the twisted $I$-bundle over the Klein bottle, such that $f$ restricts to a homeomorphism $\partial M \to \partial N$ sending $\lambda_M$ to a rational longitude $\lambda_N \subset \partial N$.
\end{proposition}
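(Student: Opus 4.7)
The plan is to reduce to the classical nullhomologous case on a double cover, exploiting that $N$ is the quotient of $T^2 \times I$ by a free involution $\tau$ exchanging the two boundary tori. Specifically, I would construct a double cover $p \colon \widetilde M \to M$ on which the rational longitude ``unwinds'' to become nullhomologous, apply the solid-torus pinch of Proposition~\ref{prop:solid-torus-pinch} upstairs, and then descend through the $\mathbb Z/2$-actions.

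First I would exhibit a surjection $\sigma \colon \pi_1(M) \twoheadrightarrow \mathbb{Z}/2$ with $\sigma(\iota_M(\lambda_M)) = 1$. The existence of such a $\sigma$ is equivalent to the algebraic condition $[\iota_M(\lambda_M)] \notin 2 H_1(M)$, and I expect this to be the main obstacle: it requires a careful analysis of the torsion subgroup of $H_1(M)$ via the long exact sequence of the pair $(M, \partial M)$ together with Poincar\'e--Lefschetz duality, exploiting that $H_2(M, \partial M)\cong\mathbb{Z}$ is generated by a properly embedded surface whose boundary realizes $2\lambda_M$. Given $\sigma$, let $p \colon \widetilde M \to M$ be the associated connected double cover with deck transformation $\tilde\tau$; then $\partial \widetilde M$ is a torus and $\lambda_M$ lifts to a single closed curve $\tilde\lambda$ doubly covering $\lambda_M$.

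Next I would verify that $\tilde\lambda$ is nullhomologous in $\widetilde M$ by choosing a properly embedded surface $\Sigma \subset M$ with $\partial \Sigma = 2\lambda_M$ whose inclusion-induced $\pi_1$-image lies in $\ker \sigma$, so that $\Sigma$ lifts to a witness $\widetilde \Sigma \subset \widetilde M$ with $\partial \widetilde\Sigma = \tilde\lambda$; standard surgery arguments (tubing boundary components together and compressing along $\ker\sigma$-nontrivial loops) make this achievable. This puts me in position to run the solid-torus pinch upstairs, which I would carry out $\tilde\tau$-equivariantly by symmetrizing the surface $\widetilde\Sigma$ against $\tilde\tau$ and targeting $T^2 \times I$ rather than an actual solid torus, so that the free involution $\tau$ on $T^2\times I$ is intertwined with $\tilde\tau$ on $\widetilde M$. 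Quotienting by both $\mathbb{Z}/2$-actions then yields a continuous map $f \colon M \to N$. I would verify $\deg f = 1$ from the matching sheet counts~$2 : 2$, and conclude with a boundary-preserving homotopy arranging $f|_{\partial M}$ to be the prescribed homeomorphism sending $\lambda_M$ to $\lambda_N$.
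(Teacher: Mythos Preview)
Your covering-space strategy has an internal inconsistency at the boundary. You require $\sigma(\lambda_M)=1$, which makes the restricted double cover $\partial\widetilde M\to\partial M$ nontrivial, so $\partial\widetilde M$ is a \emph{single} torus (as you correctly note). But the double cover of $N$ you target, namely $T^2\times I$ with the free involution $\tau$ exchanging its two boundary tori, has \emph{disconnected} boundary. A $\tilde\tau$--equivariant map $\widetilde M\to T^2\times I$ restricting to a homeomorphism of boundaries therefore cannot exist: one side has one boundary torus, the other has two. If you look instead at the double covers of $N$ whose boundary is connected---those corresponding to homomorphisms $\pi_1(N)\to\Z/2$ that send the rational longitude $\lambda_0=b$ to $1$---a short computation with the presentation $\pi_1(N)=\langle a,b\mid aba^{-1}=b^{-1}\rangle$ shows each such cover is homeomorphic to $N$ itself, so passing to it gains nothing. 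Conversely, if you choose $\sigma$ trivial on $\pi_1(\partial M)$ so that $\partial\widetilde M$ really is two tori, then $\lambda_M$ no longer ``unwinds'': its two lifts $\tilde\lambda_1,\tilde\lambda_2$ have no evident reason to be nullhomologous, and you are back where you started. (Separately, the existence of $\sigma$ with $\sigma(\lambda_M)=1$ is equivalent to $\lambda_M\notin 2H_1(M)$, which is not obvious in the generality of the proposition: nothing rules out $H_1(M)\cong\Z\oplus\Z/4$ with $\lambda_M$ the unique order-$2$ element.)

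The paper's proof avoids covers altogether and builds $f$ directly. It realizes $N$ as the mapping torus of an annulus diffeomorphism $\psi_n=\tau^n\circ\phi$ (Lemma~\ref{lem:klein-bottle-mapping-torus}), keeping the Dehn-twist parameter $n$ free. One takes a connected rational Seifert surface $F\subset M$ with $\partial F$ two parallel copies of $\lambda_M$, stabilizes it once to $F'$ so that the exterior $E_{F'}$ contains a specific properly embedded disk $D$, and then defines $f$ in stages: first as a homeomorphism $\partial M\to\partial N$, then on $F'$ (mapped onto an annulus fiber of $N$), then on $D$. The complement of a neighborhood of the annulus fiber in $N$ is a solid torus $V$, and the crux is arranging that $f(\partial D)$ bounds a disk in $V$; this is exactly where the freedom in $n$ is spent, since changing $n$ alters $f(\partial D)$ by Dehn twists along the core of $V$. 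Once $f$ is extended over $D$, the remaining boundary maps to a $2$-sphere bounding a ball and Lemma~\ref{lem:pinch-sphere} finishes the construction.
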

\noindent Using Proposition~\ref{prop:main-klein-bottle-pinch}, we see that $N \cup_T M_2$ is $\SU(2)$-abelian if $Y$ is; this is enough to deduce that $Y_2$ is $\SU(2)$-abelian and understand the image $i_2^*\big(X(M_2)\big) \subset X(T)$ exactly as in case~\ref{eq:intro-case-1a}.  This leads us to an irreducible $\SU(2)$ representation of $\pi_1(N \cup_T M_2)$, and hence of $\pi_1(Y)$.

\vspace{1em}\noindent
{\bf Case~\ref{eq:intro-case-1b}} (Theorem~\ref{thm:gluing-rep}): Here the $\lambda_j$ are both nullhomologous again, but the analogous degree-1 maps from $Y$ have targets $(Y_j)_2(K_j)$ rather than $Y_j$.  This means that the $2$-surgeries on $K_j$ are $\SL(2,\C)$-reducible, and if one of them is toroidal then we may replace $Y$ with it and repeat.  We apply the following theorem of Rong \cite{rong} to say that this process must terminate after finitely many iterations:
\begin{theorem}[{\cite[Theorem~3.9]{rong}}] \label{thm:rong}
Suppose we have an infinite sequence of closed, oriented $3$-manifolds and degree-1 maps between them, of the form
\[ M_1 \xrightarrow{f_1} M_2 \xrightarrow{f_2} M_3 \xrightarrow{f_3} \cdots. \]
Then the map $f_i$ is a homotopy equivalence for all sufficiently large $i$.
\end{theorem}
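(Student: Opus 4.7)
The plan is to construct a complexity $c(M)$ of closed, orientable $3$-manifolds taking values in a well-ordered set, such that (i) $c(M) \geq c(N)$ whenever there is a degree-$1$ map $M \to N$, and (ii) equality forces that map to be a homotopy equivalence.  Given such a $c$, the non-increasing sequence $c(M_1) \geq c(M_2) \geq \cdots$ must eventually stabilize, and from then on each $f_i$ is a homotopy equivalence.  A natural candidate combines, in lexicographic order, the number of prime summands, the number of JSJ pieces of each summand, and geometric invariants (hyperbolic volume or Seifert data) of the resulting geometric pieces.

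The first step is to control the prime decomposition.  Homotoping $f_i$ to be transverse to a maximal system of reducing $2$-spheres in $M_{i+1}$, a degree argument shows that for each prime summand of $M_{i+1}$ there is a unique prime summand of $M_i$ mapped onto it with degree $1$, while the remaining summands are pinched to $S^3$.  Hence the number of non-$S^3$ prime summands is monotone non-increasing; after it stabilizes, the problem decouples into independent sequences on the summands.  Summands homeomorphic to $S^2\times S^1$ are handled directly, since a degree-$1$ self-map of $S^2\times S^1$ induces isomorphisms on all homology and is therefore a homotopy equivalence.  I may thus assume each $M_i$ is irreducible and not $S^2\times S^1$.  An analogous analysis using the incompressible tori of the JSJ decomposition, together with Kneser--Haken finiteness, shows that the number of JSJ pieces is also monotone non-increasing, so after further stabilization each $f_i$ can be arranged to restrict to a degree-$1$ map between corresponding geometric pieces.

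The final ingredient is geometric rigidity on the pieces.  For the hyperbolic pieces, Gromov's theorem on simplicial volume yields a non-increasing sequence of hyperbolic volumes, and the set of volumes of complete hyperbolic $3$-manifolds of finite volume is well-ordered by J{\o}rgensen--Thurston; once the volume stabilizes on each piece, the relative form of Mostow rigidity implies that $f_i$ is properly homotopic to an isometry on that piece.  Seifert fibered pieces are treated similarly, with an invariant built from the Euler number of the fibration and the Euler characteristic of the base orbifold.  The main obstacle I anticipate is the final gluing step: even after all piece-invariants stabilize, one must rule out Dehn-twist-like behavior of $f_i$ along the JSJ tori that could, in principle, prevent $f_i$ from being a global homotopy equivalence despite inducing homotopy equivalences on every piece.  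Handling this requires an additional monotone invariant sensitive to the gluing data, and is the technical heart of Rong's argument.
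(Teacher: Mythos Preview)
The paper does not prove this theorem at all: it is quoted as \cite[Theorem~3.9]{rong} and used as a black box, with the only comment being that Rong's original hypothesis (that the $M_i$ satisfy geometrization) is now automatic.  So there is no ``paper's own proof'' to compare your proposal against.

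That said, your outline is a reasonable sketch of the strategy Rong actually uses --- a lexicographic complexity built from prime decomposition, JSJ structure, hyperbolic volume, and Seifert invariants --- and you have correctly identified where the real work lies (controlling the gluing along the JSJ tori once the piece-by-piece invariants have stabilized).  But as written your proposal is not a proof: you explicitly flag the gluing step as an obstacle you have not resolved, and several of the earlier steps (e.g., arranging that after stabilization the $f_i$ respect the JSJ decomposition piece-by-piece, or handling the Seifert pieces with an appropriate monotone invariant) are asserted rather than carried out.  If you want to give a self-contained proof rather than cite Rong, those are the places where genuine arguments are needed.
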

\noindent
(We omit Rong's hypothesis that the $M_i$ belong to a set $\mathscr{G}_c$ of $3$-manifolds satisfying the geometrization conjecture, as this is now a theorem.)  Thus we can freely assume that the $\SL(2,\C)$-reducible $2$-surgeries are atoroidal.

Now we must have $(Y_j)_2(K_j) \cong \#^{n_j} \RP^3$ for some $n_j$.  With this simplification at hand, we prove in Theorem~\ref{thm:gluing-rep} that such $Y$ cannot be $\SU(2)$-abelian.  The key idea is to examine the subset
\[ R'_j = \{ \rho: \pi_1(Y_j \setminus N(K_j)) \to \SU(2) \mid \rho(\mu_j^2\lambda_j) = -1 \} \]
of each representation variety $R(M_j)$.  These $\rho$ do not descend to representations of
\[ \pi_1( (Y_j)_2(K_j) ) \cong \pi_1( \#^{n_j} \RP^3 ) \cong (\Z/2\Z)^{\ast n_j}, \]
but their \emph{adjoint} representations do, and we can understand the representation variety $\Hom( (\Z/2\Z)^{\ast n_j}, \SO(3) )$ explicitly enough to see that each path component of $R'_j$ contains an abelian representation.  This tells us in Proposition~\ref{prop:lines-of-slope-2} that for each $j$, the image $i_j^*\big(X(M_j)\big) \subset X(T)$ meets the line corresponding to the condition $\rho(\mu_j^2\lambda_j) = -1$ in a \emph{connected} arc.  It also contains an essential closed curve as in the previous cases, as well as the image of this curve under an involution of $X(T)$.  All of this ensures that the images $i_j^*(X(M_j))$ are too large to avoid each other, and where they intersect we get an irreducible representation after all, completing the proof.

\begin{remark}
If $Y$ is toroidal and $H_1(Y;\Z)$ is $2$-torsion then one might expect there to be an irreducible representation $\pi_1(Y) \to \SU(2)$, as shown for homology spheres in \cite{lpcz,bs-splicing}, but we do not prove this here.  The issue is that in case~\ref{eq:intro-case-1b}, we only get an $\SU(2)$ representation once we have reduced to the case where $\pi_1( (Y_j)_2(K_j) )$ is generated by elements of order $2$.  The reduction process gets stuck if $(Y_j)_2(K_j)$ is hyperbolic: in this case the degree-1 map $Y \to (Y_j)_2(K_j)$ tells us that $Y$ is not $\SL(2,\C)$-reducible and we stop there, but we cannot conclude that $Y$ is not $\SU(2)$-abelian because we do not know whether $(Y_j)_2(K_j)$ is.
\end{remark}

\begin{remark}
The proof of Theorem~\ref{thm:3-torsion-main}, carried out in Sections~\ref{sec:p-torsion} and \ref{sec:3-torsion}, is similar enough to that of Theorem~\ref{thm:sl2-abelian-main}, so we will not outline it here.  We note only that it can be reduced to an analogue of Case~\ref{eq:intro-case-1b}, namely Theorem~\ref{thm:glue-slope-3}, whose analysis is simpler because (unlike $\#^r \RP^3$) a nontrivial connected sum of order-3 lens spaces is not $\SL(2,\C)$-reducible.
\end{remark}

\subsection*{Organization}

In Section~\ref{sec:instanton} we discuss the needed background from instanton Floer homology, including some non-vanishing results; this includes a generalization of the usual surgery exact triangle, Theorem~\ref{thm:exact-triangle-2-torsion}, the details of which we postpone to Appendix~\ref{sec:exact-triangle-proof}.  Then in Section~\ref{sec:pillowcase} we use this to investigate the $\SU(2)$ character varieties of knot complements and their images in the \emph{pillowcase}, the $\SU(2)$ character variety of $T^2$.  In Section~\ref{sec:pinching} we study the twisted $I$-bundle over the Klein bottle in depth, and construct ``pinch'' maps  onto it from knot manifolds with rational longitudes of order 2 (Proposition~\ref{prop:klein-bottle-pinch}).

Building on this, Sections~\ref{sec:splice-torsion} and \ref{sec:glue-zhs} prove the existence of irreducible $\SU(2)$-representations for various toroidal 3-manifolds built by gluing together the complements of knots in $3$-manifolds whose homology is $2$-torsion; these are Theorems~\ref{thm:splice-rep}, \ref{thm:splice-essential-rep}, and \ref{thm:gluing-rep}, respectively.  Notably, in Subsection~\ref{ssec:splice-essential} we apply the pinch maps of Proposition~\ref{prop:klein-bottle-pinch} to study the case where one of the knots has a homologically essential rational longitude.

Finally, in Section~\ref{sec:knot-exteriors} we prove that if $Y$ is a toroidal $3$-manifold whose first homology is $p$-torsion for some prime $p$, then $Y$ can be decomposed into a union of knot complements in one of a few standard ways; when $p=2$ these are precisely the forms studied in Sections~\ref{sec:splice-torsion} and \ref{sec:glue-zhs}.  Thus allows us to complete the proof of Theorem~\ref{thm:sl2-abelian-main}, which we do in Section~\ref{sec:main-proof}.  In Section~\ref{sec:p-torsion} we study the case where $H_1(Y)$ is instead $p$-torsion for some odd prime $p > 2$, and we conclude by applying this in Section~\ref{sec:3-torsion} to prove Theorem~\ref{thm:3-torsion-main}.

\subsection*{Acknowledgments}

We thank Ali Daemi, Tye Lidman, and Mike Miller Eismeier for helpful conversations about the irreducible instanton homology of $3$-manifolds $Y$ such that $H_1(Y)$ is $2$-torsion.  We also thank Rhea Palak Bakshi and Renaud Detcherry for discussions about the relation between character varieties and torsion in skein modules.  We are grateful to the Max Planck Institute for Mathematics for hosting all three of us for the bulk of this work.

\section{Instanton Floer homology} \label{sec:instanton}

Let $I_*(Y)$ denote Floer's instanton homology \cite{floer-instanton}, associated to any integer homology 3-sphere, and $I^w_*(Y)$ the variant assigned to a Hermitian line bundle $w \to Y$ such that $c_1(w)$ has odd evaluation on some homology class.  (This means that we fix a $\mathrm{U}(2)$-bundle $E \to Y$ and an isomorphism $\wedge^2E \cong w$, and let $I^w_*(Y)$ be the $\SO(3)$ instanton homology of the admissible bundle $\ad(E) \to Y$, following \cite[\S5.6]{donaldson-book}.)  Kronheimer and Mrowka proved the following important result en route to their proof of the property P conjecture.

\begin{theorem}[\cite{km-p}] \label{thm:km-zero-surgery}
Let $K \subset S^3$ be a nontrivial knot.  Then $I^w_*(S^3_0(K)) \neq 0$.
\end{theorem}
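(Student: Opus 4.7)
The plan is to follow the classical chain linking topology to gauge theory: nontrivial knot $\Rightarrow$ positive Seifert genus $\Rightarrow$ taut foliation $\Rightarrow$ symplectic structure on an auxiliary $4$-manifold $\Rightarrow$ non-vanishing Floer homology. Set $Y = S^3_0(K)$ and let $g = g(K) \geq 1$; capping off a minimal-genus Seifert surface gives a closed oriented surface $\hat\Sigma \subset Y$ of genus $g$ generating $H_2(Y;\Z)$. Choose $w \to Y$ with $c_1(w)$ odd on $[\hat\Sigma]$, so that $I^w_*(Y)$ is defined.

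First I would apply Gabai's theorem to produce a smooth taut foliation $\mathcal{F}$ on $Y$ having $\hat\Sigma$ as a compact leaf, and then use Eliashberg--Thurston to perturb $\mathcal{F}$ into a weakly symplectically semi-fillable positive contact structure $\xi$ on $Y$; tautness guarantees that the resulting symplectic form on the weak filling can be arranged to restrict on $Y$ to a closed $2$-form pairing positively with $[\hat\Sigma]$. I would then invoke Eliashberg's symplectic capping theorem to extend this weak filling to a closed symplectic $4$-manifold $(X,\Omega)$ with $b^+(X) \geq 2$, inside which $Y$ sits as a separating hypersurface cutting $X$ into two pieces $X_1, X_2$.

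The gauge-theoretic input is Taubes' theorem, which gives $SW_X(\germ{s}_\Omega) = \pm 1$ for the canonical $\mathrm{Spin}^c$ structure on $(X,\Omega)$. Via the Pidstrygach--Tyurin/Feehan--Leness program relating Seiberg--Witten and Donaldson invariants, this non-vanishing transfers to a Donaldson polynomial invariant $D_X$ computed with the $\mathrm{U}(2)$-bundle on $X$ extending $w$. Finally, the Donaldson--Floer pairing theorem expresses $D_X$ as a pairing of relative Donaldson invariants of $(X_1, Y)$ and $(X_2, Y)$ through $I^w_*(Y)$; non-vanishing of $D_X$ therefore forces $I^w_*(Y) \neq 0$.

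The hard part will be the gluing/pairing step. One must extend $w$ across the symplectic cap so that admissibility is preserved on both sides, and the decomposition of $D_X$ requires careful control of orientations, grading, and the cohomology classes used to cut down moduli spaces, so that the top-dimensional contribution actually lands in $I^w_*(Y)$ rather than getting killed by a shift in Floer degree. A cleaner alternative route is to work directly with the action of the operator $\mu(\hat\Sigma)$ on $I^w_*(Y)$ and locate a nontrivial element in its top generalized eigenspace (eigenvalue $2g$), whose existence can be extracted from the symplectic geometry of $X$ via Mu\~{n}oz-type spectral computations; this bypasses any appeal to the Witten conjecture but still requires the same package of Gabai, Eliashberg--Thurston, Eliashberg and Taubes to provide the symplectic input.
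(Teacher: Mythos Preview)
The paper does not supply its own proof of this statement: it is quoted verbatim as a theorem of Kronheimer and Mrowka \cite{km-p} and used as a black box. So there is no ``paper's proof'' to compare against, only the original argument in \cite{km-p}.

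Your outline is in fact an accurate summary of that original argument. Kronheimer and Mrowka proceed exactly along the chain you describe: Gabai's taut foliation on $S^3_0(K)$, the Eliashberg--Thurston perturbation to a weakly semi-fillable contact structure, Eliashberg's (and independently Etnyre's) symplectic cap to embed $S^3_0(K)$ as a hypersurface in a closed symplectic $4$-manifold with $b^+>1$, Taubes' nonvanishing of the Seiberg--Witten invariant, the Feehan--Leness comparison to obtain a nonzero Donaldson invariant, and finally the Donaldson--Floer pairing to conclude $I^w_*(S^3_0(K))\neq 0$. The concerns you flag about extending the bundle over the cap and about the pairing step are real, but they were handled in \cite{km-p}; your ``cleaner alternative'' via the $\mu(\hat\Sigma)$-eigenspace is closer in spirit to the later excision-based nonvanishing result (Theorem~\ref{thm:irreducible-nonzero} here), which the present paper also quotes rather than proves.
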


Here, we cap off a Seifert surface $\Sigma$ for $K$ to get a closed surface $\hat\Sigma$ generating $H_1(Y_0(K)) \cong \Z$, and then we let $w \to Y_0(K)$ be the unique non-trivial line bundle with $\langle c_1(w), [\hat\Sigma]\rangle = 1$.  (In general we will write $w$ for both the line bundle and its first Chern class, and this should not cause any confusion.)  In fact, we have the following more general nonvanishing result.

\begin{theorem}[{\cite[Theorem~7.21]{km-excision}}] \label{thm:irreducible-nonzero}
Let $Y$ be an irreducible $3$-manifold, and suppose that there is a line bundle $w \to Y$ and an embedded surface $R \subset Y$ such that $w\cdot R = 1$.  Then $I^w_*(Y) \neq 0$.
\end{theorem}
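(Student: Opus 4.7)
The plan is to deduce nonvanishing of $I^w_*(Y)$ from the nonvanishing of sutured instanton Floer homology for taut sutured manifolds, which is the main technical result developed by Kronheimer--Mrowka in \cite{km-excision}. First I would replace $R$ by a Thurston norm minimizer in its $\Z$-homology class; since $w \cdot R = 1$ is odd, any representative still pairs nontrivially with $w$, and by irreducibility of $Y$ the minimizer may be taken connected (spherical components would bound balls and could be discarded).

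Next, cutting $Y$ along this minimizing surface produces a balanced sutured manifold $(M,\gamma)$ whose sutures are two parallel copies of $\partial R$ with opposite orientations. Irreducibility of $Y$ together with Thurston norm minimality of $R$ guarantees that $(M,\gamma)$ is \emph{taut}: $M$ is irreducible and its boundary surface is incompressible and norm-minimizing in its relative homology class. The central step is then to invoke Kronheimer--Mrowka's theorem that for any taut balanced sutured manifold, the sutured instanton Floer homology $SHI(M,\gamma)$ is nonzero. This in turn is proved by taking a Gabai sutured-manifold hierarchy and iteratively applying a decomposition formula showing that $SHI$ of a taut sutured manifold injects into $SHI$ of the decomposition along a norm-minimizing surface, terminating at a product sutured manifold for which $SHI \cong \Z$.

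Finally, I would identify $SHI(M,\gamma)$ as a direct summand of $I^w_*(Y)$. The group $SHI(M,\gamma)$ is defined by choosing a ``closure'' of $(M,\gamma)$: a closed $3$-manifold $Y'$ containing an embedded closure surface $R'$ with a line bundle $w'$ satisfying $w' \cdot R' = 1$, and then extracting the top-eigenvalue summand of $I^{w'}_*(Y')$ under the surface-point operator $\mu(R')$. The pair $(Y,w,R)$ from our hypothesis is essentially already such a closure (after auxiliary stabilization to raise the genus of $R$ if necessary, so that the eigenvalue spectrum is large enough to apply the sutured formalism). Kronheimer--Mrowka's excision theorem compares different closures and shows that non-vanishing in one closure forces non-vanishing in any other, so $SHI(M,\gamma) \neq 0$ implies $I^w_*(Y) \neq 0$.

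The main obstacle is precisely this last matching step: bridging the gap between the closure construction used to define $SHI$ (which prefers a high-genus closure surface to ensure the existence of a top-eigenvalue summand free of reducibles) and our original surface $R$, whose genus we have no control over. The excision isomorphism from \cite{km-excision} is the essential tool for pushing the nonvanishing back down to our given $(Y,w)$, and verifying that our setup lies in the framework where excision applies is the key technical check in the argument.
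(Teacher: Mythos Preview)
The paper does not give its own proof of this statement: Theorem~\ref{thm:irreducible-nonzero} is quoted directly from Kronheimer--Mrowka \cite[Theorem~7.21]{km-excision} and used as a black box throughout. So there is no ``paper's proof'' to compare against.

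That said, your outline is a fair summary of how Kronheimer--Mrowka actually establish the result in \cite{km-excision}. The ingredients you name---replacing $R$ by a norm-minimizing representative, cutting to obtain a taut balanced sutured manifold, invoking the nonvanishing theorem for $SHI$ of taut sutured manifolds (proved via a Gabai hierarchy and the surface decomposition theorem), and identifying $SHI$ with a generalized eigenspace summand of $I^w_*(Y)$ under the operator $\mu(R)$---are exactly the pieces of their argument. Your caution about the last step is well placed: the eigenspace decomposition of $I^w_*(Y)$ for $\mu(R)$ requires $g(R) \geq 1$ (and in practice one stabilizes to $g \geq 2$ to avoid degeneracies), and one must check that the top eigenspace is nonempty. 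Kronheimer--Mrowka handle this by the excision argument you allude to, which identifies the top eigenspace with $SHI$ of the cut-open manifold regardless of the auxiliary choices. If $R$ is a sphere, irreducibility of $Y$ forces it to bound a ball, contradicting $w \cdot R = 1$; so the genus hypothesis is automatic once a norm-minimizer is chosen.
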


Theorem~\ref{thm:km-zero-surgery} was crucial in the proof of Theorem~\ref{thm:zentner-main} in \cite{zentner}.  Lidman, Pinz\'on-Caicedo, and Zentner \cite[Theorem~1.3]{lpcz} used Theorem~\ref{thm:irreducible-nonzero} to generalize this to knots $K \subset Y$ with irreducible, boundary-incompressible exterior, proving for such $K$ that if $Y$ is an integer homology sphere with $I_*(Y) = 0$ then $I^w_*(Y_0(K)) \neq 0$, and they used this to produce $\SU(2)$ representations for toroidal homology spheres.  We would like to further generalize it to other $Y$ in order to prove Theorem~\ref{thm:sl2-abelian-main}, but the problem is that $I_*(Y)$ does not make sense unless $Y$ is a homology sphere.

In this section we will discuss a version of Floer's instanton homology for 3-manifolds whose first homology is $2$-torsion.  This does not seem to have appeared explicitly in the literature in this form, but we make no claim of originality here; these ideas appear in recent work of Daemi, Lidman, and Miller Eismeier \cite[\S2.1]{dlme}, and have been elaborated on in much greater detail by Daemi and Miller Eismeier \cite{daemi-miller-eismeier} under the name of \emph{irreducible} instanton homology.

By way of motivation, Floer originally defined $I_*(Y)$ for a homology sphere $Y$ \cite{floer-instanton} in terms of a chain complex generated by gauge equivalence classes of \emph{irreducible} flat connections on the trivial $\SU(2)$-bundle $P \to Y$.  This construction ignores the trivial connection $\theta$ completely, except as a way of lifting the relative $\Z/8\Z$ grading to an absolute one.  The reason that we can safely omit $\theta$ is that it has central holonomy.

As an example of why this matters, we recall that $d^2 = 0$ because the matrix coefficients $\langle d^2 a, b\rangle$ count pairs of rigid flowlines, meaning ASD connections on $\R\times P$ belonging to $0$-dimensional moduli spaces, of the form
\[ (A_1,A_2) \in \hat\cM_0(a,c) \times \hat\cM_0(c,b) \]
as $c$ ranges over generators of the chain complex.  This count is meant to equal the number of points in the boundary of the compactification of a 1-dimensional moduli space $\hat\cM_1(a,b)$, which is zero, and indeed this is the case as long as no sequence in $\hat\cM_1(a,b)$ limits to a broken flowline $a \to \theta \to b$ that breaks at the omitted connection $\theta$.  We rule out this problematic case by observing that rigid flowlines $a \to \theta$ and $\theta \to b$ can only be glued into a moduli space $\hat\cM(a,b)$ of dimension at least $4$, because the gluing map in this case involves an extra $\SO(3)$ factor coming from the isotropy group of $\theta$.  The proof that $I_*(Y)$ is an invariant similarly only relies on the fact that $\theta$ is central.

Having said this, we can define $I_*(Y)$ for manifolds with $H_1(Y;\Z) \cong (\Z/2\Z)^r$ for some $r \geq 0$, by repeating the material in \cite[\S5]{donaldson-book} essentially verbatim.  We need only observe that all of the reducible flat connections on $P$ have central holonomy, due to the fact that every homomorphism $(\Z/2\Z)^r \to \SU(2)$ has image in the center $\{\pm1\}$.

\begin{theorem} \label{thm:instanton-2-torsion}
Let $Y$ be a closed, oriented rational homology 3-sphere, and suppose that $H_1(Y;\Z)$ is $2$-torsion.  Then there is an \emph{irreducible instanton homology} group
\[ I_*(Y) \]
defined exactly as in \cite{floer-instanton,donaldson-book}.  It is the homology of a chain complex whose generators are gauge equivalence classes of \emph{irreducible} flat connections on the trivial $\SU(2)$-bundle $P\to Y$, and its differential counts anti-self-dual connections on the product $\R\times P \to \R \times Y$.
\end{theorem}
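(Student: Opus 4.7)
The plan is to verify that the construction of Floer instanton homology as presented in \cite{floer-instanton} and \cite[\S5]{donaldson-book} goes through verbatim under the weaker hypothesis $H_1(Y;\Z)\cong (\Z/2\Z)^r$, and the only thing that really needs checking is that every reducible flat connection on the trivial $\SU(2)$-bundle $P\to Y$ has central holonomy. To see this, observe that a reducible flat connection corresponds, up to gauge, to a homomorphism $\pi_1(Y)\to \SU(2)$ with abelian image, hence to a homomorphism $H_1(Y;\Z)\to \SU(2)$. Since $H_1(Y;\Z)$ is $2$-torsion, the image of such a homomorphism consists of elements $A\in\SU(2)$ with $A^2=I$; combined with $\det A=1$, the only possibilities are $A=\pm I$. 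Thus there are exactly $2^r$ reducible flat connections $\theta_1,\dots,\theta_{2^r}$, each with holonomy in the center $\{\pm I\}$ and hence with full isotropy $\SU(2)$.

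With this in hand I would carry out the construction step by step, following \cite[\S5]{donaldson-book}. First, choose a generic holonomy perturbation $\pi$ supported away from the reducibles so that every irreducible perturbed flat connection is non-degenerate in the Morse sense; this is possible by the standard transversality arguments, which concern only the irreducible locus of the configuration space. Define $CI_*(Y)$ to be the free abelian group on gauge equivalence classes of irreducible perturbed flat connections, with relative $\Z/8\Z$-grading given by spectral flow and an absolute lift fixed by declaring a chosen reducible $\theta_i$ to lie in a prescribed degree. Define the differential $d$ as the usual count of rigid anti-self-dual connections on $\R\times P$ modulo translation.

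The key verification that $d^2=0$ is exactly the gluing/compactness argument in the motivating discussion before the theorem. A sequence in the $1$-dimensional moduli space $\hat\cM_1(a,b)$ of trajectories between two irreducibles can, a priori, break as $a\to\theta_i\to b$ for some reducible $\theta_i$; but because $\theta_i$ has full $\SU(2)$ isotropy, gluing a pair of rigid trajectories $a\to\theta_i$ and $\theta_i\to b$ yields a stratum of dimension at least $1+\dim\SU(2)=4$ in the compactification of a larger moduli space, never in $\hat\cM_1(a,b)$. The only other potential sources of non-compactness---bubbling of instantons and breaking at other irreducibles---are handled identically to the homology sphere case, since bubbling only affects moduli spaces of dimension at least $8$ when we fix a small enough energy window, and irreducible-to-irreducible breakings pair up as always. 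Invariance of $I_*(Y)$ under the choice of metric and perturbation follows by the same argument applied to the parametrized moduli spaces defining continuation maps and chain homotopies, once again using only that reducibles have central holonomy and full isotropy.

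The main potential obstacle here is conceptual rather than technical: one must ensure that the presence of multiple (rather than a single) central reducible connection does not introduce new phenomena. But since each $\theta_i$ is isolated in the character variety (as $H_1(Y;\Z)$ is finite), has full isotropy, and sees no flowlines from or to itself in the relevant low-dimensional moduli spaces by the $\SO(3)$-factor dimension count, the standard proofs apply without modification. This is essentially the observation made in \cite[\S2.1]{dlme} and fleshed out in \cite{daemi-miller-eismeier}, so we can cite those sources for any remaining details.
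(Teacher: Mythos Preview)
Your proposal is correct and follows essentially the same approach as the paper: the paper does not give a formal proof of this theorem, but rather explains in the preceding discussion that the construction in \cite[\S5]{donaldson-book} goes through verbatim once one observes that every reducible flat connection has central holonomy (since any $(\Z/2\Z)^r \to \SU(2)$ lands in $\{\pm1\}$), so that breaking at a reducible contributes an extra $\SO(3)$ gluing factor and hence only appears in moduli spaces of dimension at least $4$. Your write-up fleshes out this sketch with more detail than the paper itself provides, and correctly identifies that the presence of $2^r$ central reducibles rather than one introduces no new difficulties.
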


\begin{remark}
For other rational homology spheres the story is much more complicated, and we will say nothing more about it here.  See \cite[\S7]{daemi-miller-eismeier} for details.
\end{remark}

The key property we will need from irreducible instanton homology is a surgery exact triangle, which goes back to Floer \cite{floer-surgery,braam-donaldson} for the case of knots in homology spheres.

\begin{theorem} \label{thm:exact-triangle-2-torsion}
Let $Y$ be a closed, oriented 3-manifold such that $H_1(Y;\Z)$ is $2$-torsion, and let $K \subset Y$ be a nullhomologous knot.  Then there is an exact triangle
\[ \cdots \to I_*(Y) \to I^w_*(Y_0(K)) \to I_*(Y_1(K)) \to \cdots, \]
where the Hermitian line bundle $w \to Y_0(K)$ has $c_1(w)$ Poincar\'e dual to a meridian of $K$.
\end{theorem}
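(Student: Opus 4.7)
The plan is to follow the classical surgery exact triangle argument, developed by Floer \cite{floer-surgery} and Braam--Donaldson \cite{braam-donaldson} for knots in integer homology spheres and re-derived via excision by Kronheimer--Mrowka \cite{km-excision}, and to verify that the construction goes through essentially unchanged when Floer's $I_*$ is replaced by the irreducible instanton homology of Theorem~\ref{thm:instanton-2-torsion}. The key observation is that the reducible flat $\SU(2)$-connections on the trivial bundle over $Y$ and over $Y_1(K)$---both of which have $2$-torsion first homology---all have central $\{\pm 1\}$ holonomy, while $w \to Y_0(K)$ is admissible and so carries no reducible flat connection at all. Thus the same $\SO(3)$-isotropy dimension count that proves $d^2 = 0$ in this setting also justifies every gluing and neck-stretching argument in the proof of the triangle.

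Concretely, I would first construct three chain maps
\[ f_0: I_*(Y) \to I^w_*(Y_0(K)), \quad f_1: I^w_*(Y_0(K)) \to I_*(Y_1(K)), \quad f_2: I_*(Y_1(K)) \to I_*(Y) \]
from the standard $2$-handle cobordisms $Y \to Y_0(K) \to Y_1(K) \to Y$, in each case equipped with an admissibly extended bundle chosen so that the cobordism moduli spaces contain no non-central reducibles in low dimension. For the first two cobordisms the extension of $w$ is determined by a Seifert surface for $K$, and for the third one uses the cocore of the final $2$-handle to produce the needed admissibility class. The maps are then defined by counting rigid anti-self-dual connections in the usual way.

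Next I would show that each composition $f_{i+1}\circ f_i$ is chain-homotopic to zero by stretching the neck along the intermediate torus in the composite cobordism $W_i \cup W_{i+1}$ and reading off the null-homotopy from the resulting $1$-parameter family of ASD moduli spaces, and I would establish exactness via the Kronheimer--Mrowka excision theorem applied across this same torus, as in \cite[\S7]{km-excision}: the mapping cone of $f_i$ is identified with the next Floer group up to quasi-isomorphism. The main obstacle is bookkeeping for reducible strata on the cobordisms themselves, since the $4$-manifolds $W_i$ can have $H_1(W_i;\Z)$ containing both $2$-torsion and a $\Z$ summand and so admit more reducible flat connections than in the homology sphere case. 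However, every such reducible with $\{\pm 1\}$ holonomy still has an $\SO(3)$ stabilizer, which raises its expected dimension by $3$ relative to the nearby irreducible strata; this suffices to remove its contribution from every moduli space of dimension $\leq 2$ entering the triangle, so that the Braam--Donaldson argument then applies verbatim.
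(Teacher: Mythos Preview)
Your overall architecture is sound and matches the paper's: set up the three $2$-handle cobordism maps, prove the compositions are null-homotopic, and then prove exactness, checking along the way that reducible strata do not pollute the low-dimensional moduli spaces.  The paper follows Scaduto's version of the argument rather than invoking Kronheimer--Mrowka excision; your proposed use of excision to identify mapping cones is a genuine departure, and you would have to explain carefully how excision (normally stated for admissible bundles) interacts with the \emph{irreducible} Floer groups $I_*(Y)$ and $I_*(Y_1(K))$ on trivial bundles, which is not automatic.

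The real gap, though, is in your treatment of reducibles on the cobordisms.  You correctly observe that the reducible flat connections on $Y$ and $Y_1(K)$ are central, with $\SO(3)$ isotropy.  But on the composite cobordisms $W_{01}$, $W_{012}$, $W_{201}$ there are \emph{abelian} reducible instantons---reductions to $S^1$ coming from splittings of the $U(2)$-bundle into line bundles---which are not central and have only circle stabilizer.  Your sentence ``every such reducible with $\{\pm 1\}$ holonomy still has an $\SO(3)$ stabilizer'' does not cover these, and your assertion that the bundle can be ``chosen so that the cobordism moduli spaces contain no non-central reducibles in low dimension'' is exactly the statement that needs proof.  The paper handles this by computing the surface classes $c_s$ explicitly, showing $c_{01}^2 \equiv c_{012}^2 \equiv c_{201}^2 \equiv -1 \pmod 4$, and using this together with $b_2^+(W_s)=1$ to bound the normal index of any abelian instanton by $N(\Lambda) \geq -2$, hence $\ind(\Lambda) \geq -4$.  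Only then does the broken-trajectory count (irreducible pieces of index $\geq 1$ glued to $\Lambda$ along two central limits, picking up a correction of $5$) give total dimension $\geq 3$, keeping these configurations out of the $\leq 2$-dimensional moduli spaces needed for Scaduto's argument.  Without this index computation your proof is incomplete.
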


We note in Theorem~\ref{thm:exact-triangle-2-torsion} that $H_1(Y) \cong H_1(Y_1(K))$ is $2$-torsion, so Theorem~\ref{thm:instanton-2-torsion} says that all of the groups in the exact triangle are well-defined.  The proof of Theorem~\ref{thm:exact-triangle-2-torsion} follows an argument given by Scaduto in \cite{scaduto}, after one checks that the relevant compactifications of moduli spaces do not include broken flowlines with reducible connections in the middle.  We discuss the details in Appendix~\ref{sec:exact-triangle-proof}.

\begin{theorem} \label{thm:exact-triangle-1/n}
Let $Y$ be a closed, oriented $3$-manifold such that $H_1(Y;\Z)$ is $2$-torsion, and let $K \subset Y$ be a nullhomologous knot.  For any $n\in\Z$, there is an exact triangle
\[ \cdots I_*(Y_{1/n}(K)) \to I^w_*(Y_0(K)) \to I_*(Y_{1/(n+1)}(K)) \to \cdots, \]
where the Hermitian line bundle $w \to Y_0(K)$ has $c_1(w)$ Poincar\'e dual to a meridian of $K$.
\end{theorem}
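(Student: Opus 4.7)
The plan is to reduce Theorem~\ref{thm:exact-triangle-1/n} to the case $n=0$ of Theorem~\ref{thm:exact-triangle-2-torsion} via the standard ``dual knot'' reparametrization: realize $Y_{1/(n+1)}(K)$ as $+1$-surgery on the core of the solid torus glued into $Y$ when forming $Y_{1/n}(K)$, and verify that the resulting $0$-surgery is $Y_0(K)$ with the correct line bundle.

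First I would set up notation. Write $Y_{1/n}(K) = (Y \setminus N(K)) \cup V$, where the meridional disk of the filling solid torus $V$ bounds the slope $\mu + n\lambda$ on $\partial N(K)$, with $(\mu,\lambda)$ the meridian and Seifert longitude of $K$. Let $K' \subset V$ denote the core. Since $K \subset Y$ is nullhomologous, $\lambda$ is nullhomologous in $Y \setminus N(K)$; consequently the filling slope $\mu + n\lambda$ equals $\mu$ in $H_1(Y \setminus N(K))$, so $H_1(Y_{1/n}(K)) \cong H_1(Y)$ is $2$-torsion, and the hypothesis of Theorem~\ref{thm:exact-triangle-2-torsion} is satisfied.

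Next I would verify that $K' \subset Y_{1/n}(K)$ is a nullhomologous knot and identify its surgery slopes. In the $(\mu,\lambda)$ basis on $\partial N(K) = \partial N(K')$, the meridian of $K'$ is $\mu_{K'} = \mu + n\lambda$, while the Seifert longitude of $K'$ is the curve on $\partial N(K')$ nullhomologous in $Y_{1/n}(K) \setminus N(K') = Y \setminus N(K)$; this is precisely $\lambda$. Thus $0$-surgery on $K'$ fills along $\lambda$, yielding $Y_0(K)$, while $1$-surgery on $K'$ fills along $\mu_{K'} + \lambda_{K'} = \mu + (n+1)\lambda$, yielding $Y_{1/(n+1)}(K)$. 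Moreover $K'$ is nullhomologous in $Y_{1/n}(K)$ because it is isotopic into $\partial V$ where it is parallel to $\lambda$, which is nullhomologous in the exterior.

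Finally I would apply Theorem~\ref{thm:exact-triangle-2-torsion} to the nullhomologous knot $K' \subset Y_{1/n}(K)$, producing an exact triangle
\[ \cdots \to I_*(Y_{1/n}(K)) \to I^{w'}_*(Y_0(K)) \to I_*(Y_{1/(n+1)}(K)) \to \cdots, \]
where $c_1(w')$ is Poincar\'e dual to a meridian of $K'$, namely the class of $\mu + n\lambda$ in $H_1(Y_0(K))$. Since $\lambda$ bounds the core disk of the filling solid torus in $Y_0(K)$, we have $\lambda = 0$ in $H_1(Y_0(K))$, so $\mu + n\lambda$ and $\mu$ represent the same class, and hence $w' \cong w$, the bundle whose Chern class is Poincar\'e dual to a meridian of $K$. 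This gives the required triangle.

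I do not anticipate any serious obstacle: every step is a routine homological verification, with the main content already contained in Theorem~\ref{thm:exact-triangle-2-torsion}. The one point that merits explicit care is the identification $w' \cong w$ as line bundles on $Y_0(K)$, but this follows immediately from $\lambda = 0$ in $H_1(Y_0(K))$ together with the classification of complex line bundles by $c_1$.
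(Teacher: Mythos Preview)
Your proposal is correct and follows essentially the same approach as the paper: apply Theorem~\ref{thm:exact-triangle-2-torsion} to the core $K'$ of the $\tfrac{1}{n}$-surgery solid torus, after checking that $H_1(Y_{1/n}(K))$ is $2$-torsion, that $K'$ is nullhomologous with meridian $\mu+n\lambda$ and longitude $\lambda$, and hence that its $0$- and $1$-surgeries are $Y_0(K)$ and $Y_{1/(n+1)}(K)$. You are slightly more explicit than the paper in verifying the bundle identification $w'\cong w$ via $\lambda=0$ in $H_1(Y_0(K))$, but this is the same argument.
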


\begin{proof}
We let $Y' = Y_{1/n}(K)$, with $K' \subset Y'$ the core of this surgery.  Then $H_1(Y') \cong H_1(Y)$ is $2$-torsion, and $1$-surgery on $K'$ is the same as Dehn filling the exterior of $K'$ along the curve $\mu_{K'}\lambda_{K'} = (\mu_K\lambda_K^n)(\lambda_K) = \mu_K\lambda_K^{n+1}$, which produces $Y_{1/(n+1)}(K)$.  The desired triangle is thus the result of applying Theorem~\ref{thm:exact-triangle-2-torsion} to the pair $(Y',K')$.
\end{proof}

With all of this at hand, we can now provide the desired generalization of \cite[Theorem~1.3]{lpcz}.

\begin{theorem} \label{thm:2-torsion-zero-surgery}
Let $Y$ be a closed, orientable, $\SU(2)$-abelian $3$-manifold, and suppose that $H_1(Y;\Z)$ is $2$-torsion.  Let $K \subset Y$ be a nullhomologous knot with irreducible, boundary-incompressible exterior.  Then $I^w_*(Y_0(K)) \neq 0$, where $w$ is Poincar\'e dual to a meridian of $K$.
\end{theorem}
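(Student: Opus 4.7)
The plan is to apply Theorem~\ref{thm:irreducible-nonzero} (Kronheimer--Mrowka nonvanishing) directly to the $0$-surgery $Y_0(K)$. This requires verifying two conditions: that $Y_0(K)$ is irreducible, and that there is an embedded surface $R \subset Y_0(K)$ with $w \cdot R = 1$.

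The second condition should follow immediately from the nullhomologous hypothesis. Since $K$ bounds a Seifert surface $\Sigma \subset Y$, restricting $\Sigma$ to $M = Y \setminus N(K)$ gives a properly embedded surface with boundary a longitude $\lambda$, and capping off by a meridian disk of the solid torus used in the $0$-filling yields a closed surface $\hat\Sigma \subset Y_0(K)$ with $\hat\Sigma \cdot \mu_K = 1$.  Since $w$ is Poincar\'e dual to $\mu_K$ by hypothesis, we obtain $w \cdot \hat\Sigma = 1$ as desired.

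The irreducibility of $Y_0(K)$ is the main step, and here I would invoke Gabai's theorem on taut foliations.  Since $M$ is irreducible with incompressible torus boundary and $K$ is nullhomologous, a minimum-genus Seifert surface for $K$ endows $M$ with a taut sutured manifold structure whose sutures are a pair of longitudinal annuli on $\partial M$.  Gabai's sutured manifold hierarchy then produces a taut foliation on $M$ whose leaves meet $\partial M$ transversely along curves of longitudinal slope; this foliation extends across the filling solid torus in $Y_0(K)$ using meridian disks, giving a taut Reebless foliation of $Y_0(K)$, which by Novikov's theorem forces $Y_0(K)$ to be irreducible.

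With both conditions verified, Theorem~\ref{thm:irreducible-nonzero} applied to $Y_0(K)$ with $R = \hat\Sigma$ yields $I^w_*(Y_0(K)) \neq 0$. The main obstacle I anticipate is making the invocation of Gabai's theorem rigorous in this setting: while the original statements are often phrased for knots in $S^3$ or integer homology spheres, the construction is entirely local to the exterior $M$ and depends only on the irreducibility and incompressibility hypotheses together with the nullhomologous structure, so the extension to general $Y$ with $2$-torsion first homology should be routine.  I remark that the $\SU(2)$-abelian hypothesis on $Y$ does not directly enter this particular argument; it is natural in the broader structure of the paper, ensuring that the groups $I_*(Y)$ and $I_*(Y_{1/n}(K))$ of Theorem~\ref{thm:exact-triangle-1/n} are defined, and it could alternatively be exploited via the exact triangle to derive a contradiction from the assumption $I^w_*(Y_0(K)) = 0$ if the direct approach runs into trouble.
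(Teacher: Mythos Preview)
Your proposal is correct but takes a genuinely different route from the paper. The paper, following \cite{lpcz} verbatim, uses the $\SU(2)$-abelian hypothesis to get $I_*(Y)=0$, then assumes $I^w_*(Y_0(K))=0$ and iterates the exact triangles (Theorems~\ref{thm:exact-triangle-2-torsion} and \ref{thm:exact-triangle-1/n}) to deduce $I_*(Y_{1/4}(K))=0$; Gordon's formula $Y_{1/4}(K)\cong Y_1(K_{2,1})$ and one more triangle then give $I^w_*(Y_0(K_{2,1}))=0$, contradicting Theorem~\ref{thm:irreducible-nonzero} since $Y_0(K_{2,1})$ is irreducible by an elementary torus-gluing argument (it is $E_K$ glued to the $0$-surgered cable space in $S^1\times D^2$ along incompressible boundaries). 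Your approach instead invokes Gabai to show $Y_0(K)$ irreducible directly and applies Theorem~\ref{thm:irreducible-nonzero} at once; this is shorter and, as you correctly observe, makes no use of the $\SU(2)$-abelian hypothesis, so it actually proves a stronger statement. The tradeoff is that you import Gabai's deep foliation machinery, whereas the paper's cabling detour uses only tools already set up in the paper plus an elementary irreducibility check for one specific manifold. Your Gabai invocation is sound---the sutured-manifold hierarchy needs only that $E_K$ is irreducible and carries a positive-genus Seifert surface, both of which follow from the hypotheses---but since the $0$-surgery irreducibility is usually stated in the literature only for knots in $S^3$ (Property~R), a careful write-up should note explicitly that the construction is local to the exterior and goes through unchanged for general $Y$.
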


\begin{proof}
We repeat the proof of \cite[Theorem~1.3]{lpcz} verbatim, including the details here for convenience.  Since $Y$ is $\SU(2)$-abelian, there are no irreducible flat connections on the product $\SU(2)$-bundle over $Y$, so $I_*(Y) = 0$.  Supposing that $I^w_*(Y_0(K)) = 0$ as well, we apply Theorem~\ref{thm:exact-triangle-2-torsion} to get $I_*(Y_1(K)) = 0$, and then Theorem~\ref{thm:exact-triangle-1/n} with $n=1,2,3$ in succession to get
\[ I_*(Y_{1/2}(K)) = I_*(Y_{1/3}(K)) = I_*(Y_{1/4}(K)) = 0. \]
Then Gordon \cite[Corollary~7.3]{gordon} showed that $Y_{1/4}(K) \cong Y_1(K_{2,1})$, where $K_{2,1}$ denotes the $(2,1)$-cable of $K$, so we apply Theorem~\ref{thm:exact-triangle-2-torsion} to get
\[ I^w_*(Y_0(K_{2,1})) = 0. \]
But $Y_0(K_{2,1})$ is irreducible, because it can be built by gluing two irreducible $3$-manifolds -- the exterior of $K$ and the $0$-surgery on the $(2,1)$-cable knot in $S^1\times D^2$ -- along their incompressible boundaries.  Thus $I^w_*(Y_0(K_{2,1}))$ is nonzero, by Theorem~\ref{thm:irreducible-nonzero}, and we have a contradiction.  We conclude that $I^w_*(Y_0(K)) \neq 0$ after all.
\end{proof}

\section{Closed curves in the pillowcase} \label{sec:pillowcase}

\subsection{The pillowcase}

Here we review basic facts about the pillowcase, following \cite[\S3.1--3.2]{lpcz}.  Given a manifold $Y$, we define its $\SU(2)$-representation variety 
\[ R(Y) = \Hom(\pi_1(Y), \SU(2)), \]
and let $R^\irr(Y)$ denote the subspace consisting of irreducible representations.  (We recall that an $\SU(2)$ representation is irreducible if and only if its image is non-abelian.)  These both carry an action of $\SU(2)$ by conjugation, and we define the character varieties
\begin{align*}
X(Y) &= R(Y) / \SU(2), \\
X^\irr(Y) &= R^\irr(Y) / \SU(2)
\end{align*}
as the quotients by this action. Note that we use a plain font ($R$, $X$) for the $\SU(2)$ representation and character varieties, in contrast to the calligraphic $\cR$ and $\cX$ for their $\SL(2,\C)$ counterparts.

If $K$ is a nullhomologous knot in a 3-manifold $Y$, with exterior $E_K = Y \setminus N(K)$, then the inclusion $i: \partial E(K) \hookrightarrow E(K)$ induces a map
\[ i^*: X(E_K) \to X(\partial E_K) \cong X(T^2). \]
Letting $\mu,\lambda$ be a meridian--longitude basis of $\pi_1(\partial E_K)$, every representation $\rho$ of either $\pi_1(E_K)$ or $\pi_1(T^2)$ is conjugate to one in which
\begin{align*}
\rho(\mu) &= \begin{pmatrix} e^{i\alpha} & 0 \\ 0 & e^{-i\alpha} \end{pmatrix}, &
\rho(\lambda) &= \begin{pmatrix} e^{i\beta} & 0 \\ 0 & e^{-i\beta} \end{pmatrix},
\end{align*}
for some $\alpha, \beta \in \R/2\pi\Z$, and these coordinates are almost unique: the only ambiguity is that the representations corresponding to $(\alpha,\beta)$ and $(-\alpha,-\beta)$ are conjugate to each other.  Thus the pair $\mu,\lambda$ leads to an identification
\[ X(T^2) = \frac{(\R/2\pi\Z) \times (\R/2\pi\Z)}{(\alpha,\beta) \sim (-\alpha,-\beta)}, \]
and this quotient orbifold is called the \emph{pillowcase}.  See Figure~\ref{fig:pillowcase-example} for an example.

\begin{figure}
\begin{tikzpicture}[style=thick]
\begin{scope}[xscale=1.5,yscale=2]
  \draw plot[mark=*,mark size = 0.5pt] coordinates {(0,0)(2,0)(2,3)(0,3)} -- cycle; 
  \begin{scope}[decoration={markings,mark=at position 0.55 with {\arrow[scale=1]{>>}}}]
    \draw[postaction={decorate}] (0,0) -- (0,1.5);
    \draw[postaction={decorate}] (0,3) -- (0,1.5);
  \end{scope}
  \begin{scope}[decoration={markings,mark=at position 0.575 with {\arrow[scale=1]{>>>}}}]
    \draw[postaction={decorate}] (2,0) -- (2,1.5);
    \draw[postaction={decorate}] (2,3) -- (2,1.5);
  \end{scope}
  \begin{scope}[decoration={markings,mark=at position 0.75 with {\arrow[scale=1]{>>>>}}}]
    \draw[postaction={decorate}] (2,3) -- (0,3);
    \draw[postaction={decorate}] (2,0) -- (0,0);
  \end{scope}
  \draw[dotted] (0,1.5) -- (2,1.5);
  \draw[thin,|-|] (0,-0.2) node[below] {\small$0$} -- node[midway,inner sep=1pt,fill=white] {$\alpha$} ++(2,0) node[below] {\small$\vphantom{0}\pi$};
  \draw[thin,|-|] (-0.2,0) node[left] {\small$0$} -- node[midway,inner sep=1pt,fill=white] {$\beta$} ++(0,3) node[left] {\small$2\pi$};

  \begin{scope}[color=blue, style=ultra thick]
    \draw (0,0) -- (2,0);
    \draw (1/3,3) -- (1,0) (1,3) -- (5/3,0);
  \end{scope}
  \node at (2.75,1.5) {\Large$\cong$};
\end{scope}
\begin{scope}[xscale=1.5,yscale=2,xshift=3.5cm]
  \draw (0,0.75) rectangle (2,2.25);
  \draw[very thin] (0,1.5) to[bend right=10] (2,1.5);
  \draw[very thin,dashed] (0,1.5) to[bend left=5] (2,1.5);
  \begin{scope}[color=blue, style=ultra thick]
    \draw[densely dotted] (1/3,0.75) -- (2/3,2.25);
    \draw (2/3,2.25) -- (1,0.75);
    \draw[densely dotted] (1,0.75) -- (4/3,2.25);
    \draw (4/3,2.25) -- (5/3,0.75);
    \draw (0,0.75) -- ++(2,0);
  \end{scope}
  \end{scope}
\end{tikzpicture}
\caption{The image $i^*(X(E_K))$ in the pillowcase, where $E_K$ is the exterior of the right-handed trefoil in $S^3$.}
\label{fig:pillowcase-example}
\end{figure}
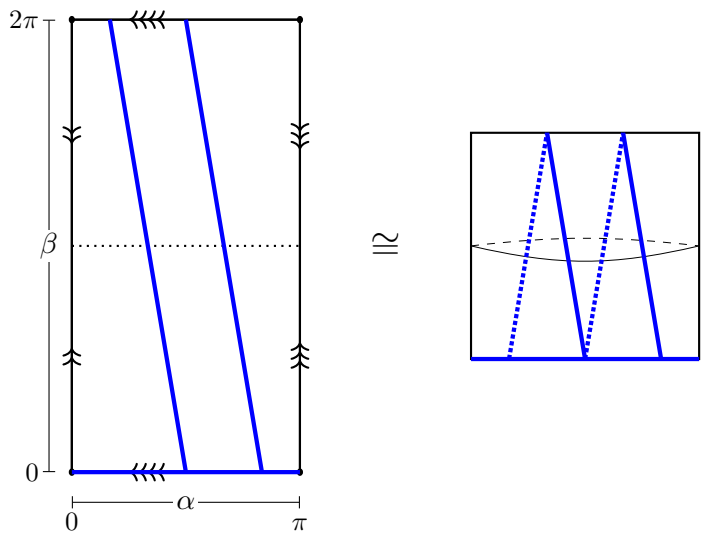

The following is one of the key technical results of \cite{zentner}, though it is only applied there to non-trivial knots in $S^3$.

\begin{proposition} \label{prop:curve-in-pillowcase}
Let $K$ be a nullhomologous knot in a 3-manifold $Y$, and let $w \in H^2(Y_0(K);\Z)$ be Poincar\'e dual to a meridian of $K$.  Suppose that $I^w_*(Y_0(K)) \neq 0$, and that the pillowcase image $i^*(X(E_K))$ does not contain the points
\[ P = (0,\pi), \ Q = (\pi,\pi) \in X(T^2). \]
Then there is a topologically embedded curve $C \subset i^*(X(E_K))$ that is homologically essential in
\[ X(T^2) \setminus \{P,Q\} \cong (0,1) \times S^1. \]
\end{proposition}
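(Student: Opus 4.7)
I would adapt Zentner's argument for non-trivial knots in $S^3$ from \cite{zentner} essentially verbatim. The only new input is that in our setting $H_1(Y;\Z)$ is $2$-torsion rather than trivial, so I use Theorem~\ref{thm:instanton-2-torsion} to ensure that $I^w_*(Y_0(K))$ and the associated Casson-type count are well-defined.

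First I would translate the hypothesis $I^w_*(Y_0(K)) \neq 0$ into the existence of an irreducible representation $\rho_0 : \pi_1(E_K) \to \SU(2)$ with $\rho_0(\lambda) = -I$. This uses the standard correspondence between flat connections on the $w$-admissible $\SO(3)$ bundle over $Y_0(K)$ and $\SU(2)$-representations of $\pi_1(E_K)$ sending $\lambda$ to $-I$: the condition $\rho(\lambda) = -I$ records that the meridian disk of the $0$-surgery, bounded by $\lambda$, carries the nontrivial class $w_2$. Nonvanishing of the Floer chain complex yields at least one irreducible generator, and the usual unperturbation argument produces such a $\rho_0$. In pillowcase coordinates, $i^*[\rho_0]$ lies on the segment $\{\beta = \pi\}$ joining $P$ to $Q$, and by hypothesis avoids the endpoints.

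Next I would use the fact that $X^\irr(E_K)$ is a real algebraic variety of expected real dimension $1$ at generic points, so $i^*(X^\irr(E_K))$ is locally a smooth $1$-manifold in the pillowcase.  After a small holonomy perturbation if necessary, I can arrange that this $1$-manifold meets $\{\beta=\pi\}$ transversally at $i^*[\rho_0]$.  Following the arc through $i^*[\rho_0]$ in both directions, it either closes up into an embedded loop or limits to the reducible (abelian) locus; since $\lambda$ is null-homologous in $E_K$, every abelian representation of $\pi_1(E_K)$ kills $\lambda$, so abelian limits land on the horizontal arc $\{\beta=0\}$.  Concatenating with a subsegment of $\{\beta = 0\} \subset i^*(X(E_K))$ if necessary, I extract an embedded closed curve $C \subset i^*(X(E_K))$ through $i^*[\rho_0]$.

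Essentialness of $C$ then amounts to showing $C \cdot \{\beta=\pi\} \equiv 1 \pmod 2$, since the class in $H_1(X(T^2)\setminus\{P,Q\}) \cong \Z$ is detected by mod-$2$ intersection with the proper arc $\{\beta=\pi\}\setminus\{P,Q\}$, and the $\{\beta=0\}$ portion of $C$ contributes nothing to that intersection. The main obstacle is controlling the parity of these crossings: the component through $i^*[\rho_0]$ could in principle return to $\{\beta=\pi\}$ transversally an even number of additional times. This is handled by matching the total signed count of transverse intersections of $i^*(X^\irr(E_K))$ with $\{\beta=\pi\}$ against a Casson-type invariant obtained from $I^w_*(Y_0(K))$, which ensures that at least one embedded closed curve extracted from $i^*(X(E_K))$ has odd intersection with $\{\beta=\pi\}$ and is therefore essential.
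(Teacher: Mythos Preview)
There is a genuine gap in the final step. Your parity argument tries to match intersections with $\{\beta=\pi\}$ against a ``Casson-type invariant obtained from $I^w_*(Y_0(K))$,'' but the hypothesis is that the Floer \emph{homology} is nonzero, not that its Euler characteristic (or any signed count of generators) is nonzero. A chain complex can easily have nonzero homology with vanishing Euler characteristic, so nothing forces the signed or mod-$2$ intersection of your extracted curve with $L_\pi$ to be odd. Your arc-following procedure could perfectly well produce a closed curve meeting $L_\pi$ an even number of times and bounding a disk in $X(T^2)\setminus\{P,Q\}$.

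The paper's argument avoids parity entirely by proving a much stronger topological fact: \emph{every} embedded path $\gamma$ from $P$ to $Q$ that avoids $L_0$ must intersect $i^*(X(E_K))$, not just the particular segment $L_\pi$. The mechanism is the holonomy-perturbation result \cite[Theorem~4.2]{zentner}: one can perturb the Chern--Simons functional so that $I^w_*(Y_0(K))$ is computed by a chain complex generated by $i^*(X(E_K))\cap\gamma'$ for a path $\gamma'$ that is $C^0$-close to $\gamma$. If some $\gamma$ missed the image, then by compactness so would an open neighborhood, and the perturbed complex (hence the homology) would be zero, contradicting the hypothesis. This shows that $P$ and $Q$ lie in different components of $S^2\setminus i^*(X(E_K))$; since the image is an embedded finite graph, a purely topological lemma \cite[Lemma~7.3]{zentner} then extracts the essential embedded curve. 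The key idea you are missing is that the nonvanishing of $I^w_*$ is robust under deformations of the ``probe'' arc, and this robustness --- rather than any count --- is what gives essentialness.

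One minor point: the proposition as stated does not assume $H_1(Y;\Z)$ is $2$-torsion. The group $I^w_*(Y_0(K))$ is defined via the admissible bundle on $Y_0(K)$ regardless of $Y$, so invoking Theorem~\ref{thm:instanton-2-torsion} is unnecessary here.
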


\begin{proof}
This is proved in \cite[\S7]{zentner}; we sketch the argument here.  We first observe that $I^w_*(Y_0(K))$ is generated as a chain complex by gauge equivalence classes of flat connections on the associated $\SO(3)$ bundle over $Y_0(K)$ that do lift to $\SU(2)$ connections over $E_K$, but that do not lift over all of $Y_0(K)$ because the lifted connections over $E_K$ have holonomy $-1$ along $\lambda$.  Equivalently, these are conjugacy classes of representations
\[ \rho: \pi_1(E_K) \to \SU(2) \]
such that $\rho(\lambda) = -1$.  Thus the complex used to define $I^w_*(Y_0(K))$ is generated by the points of $X(E_K)$ whose images lie on the line segment $L_\pi = \{\beta \equiv \pi\pmod{2\pi}\}$ in the pillowcase.

The next step is to show as in \cite[Theorem~7.2]{zentner} that if
\[ \gamma: [0,1] \to X(T^2) \]
is a topologically embedded path from $\gamma(0)=P$ to $\gamma(1)=Q$ that avoids the line $L_0 = \{\beta \equiv 0\pmod{2\pi}\}$, then $\gamma$ intersects the image $i^*(X(E_K))$.   Now the chain complex for $I^w_*(Y_0(K))$ is generated by the intersection of $i^*(X(E_K))$ with one such path, namely the line $L_\pi$.  Supposing we have another such path $\gamma$ that avoids $i^*(X(E_K))$ completely, then since $i^*(X(E_K))$ is compact it must actually be disjoint from an open neighborhood $U$ of this path.  Now $I^w_*(Y_0(K))$ is defined using a certain Chern--Simons functional, and \cite[Theorem~4.2]{zentner} says that we can modify it using holonomy perturbations so that $I^w_*(Y_0(K))$ is instead defined by the intersection of $i^*(X(E_K))$ with a path that is arbitrarily $C^0$-close to $\gamma$.  We take this path to lie in $U$, and then the intersection is empty, so $I^w_*(Y_0(K))$ is the homology of the zero complex and this is a contradiction.  So every such $\gamma$ must intersect $i^*(X(E_K))$.

Now just as in the proof of \cite[Theorem~7.1]{zentner} we know that $\Gamma = i^*(X(E_K))$ is an embedded finite graph in the pillowcase $X(T^2) \cong S^2$.  The graph $\Gamma$ contains the entire line $L_0 = \{\beta\equiv 0\pmod{2\pi}\}$, as the image of the reducible characters of $\pi_1(E_K)$, but by assumption it contains neither $P$ nor $Q$, so the above argument says that $P$ and $Q$ lie in different components of the complement $S^2 \setminus \Gamma$.  We use \cite[Lemma~7.3]{zentner} to conclude that $\Gamma$ contains a topologically embedded, homologically essential curve in $S^2 \setminus \{P,Q\}$.
\end{proof}

The following lemma will be useful in conjunction with Proposition~\ref{prop:curve-in-pillowcase}, in order to understand when the essential curve $C$ can pass through the corners of the pillowcase.

\begin{lemma} \label{lem:corners-limit}
Suppose that $H_1(Y;\Z)$ is 2-torsion, and let $K \subset Y$ be a nullhomologous knot.  If either $(0,0)$ or $(\pi,0)$ is a limit point of the image $i^*(X^\irr(E_K))$, then there must be a representation $\rho: \pi_1(E_K) \to \SU(2)$ with non-abelian image such that $\rho(\mu) = \rho(\lambda) = 1$.  In particular, neither $Y$ nor any Dehn surgery $Y_{p/q}(K)$ is $\SU(2)$-abelian.
\end{lemma}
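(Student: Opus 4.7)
The plan is to use compactness of the $\SU(2)$ representation variety to extract a representation with the prescribed peripheral values, and then a deformation-theoretic vanishing argument to force it to be non-abelian. First I would reduce the two corner cases to one: since $K$ is nullhomologous in $Y$, we have $H_1(E_K) \cong \Z\langle\mu\rangle \oplus H_1(Y)$, so there is a unique homomorphism $\chi: \pi_1(E_K) \to \{\pm 1\}$ with $\chi(\mu) = -1$ and $\chi|_{H_1(Y)}$ trivial. Because $\chi$ is central-valued, twisting each $\rho_n$ by $\chi$ preserves irreducibility and shifts its pillowcase image by $(\pi,0)$, reducing the $(\pi,0)$ case to the $(0,0)$ case. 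I henceforth assume $(0,0)$ is the limit point.

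Next I would pass to a limit. The variety $R(E_K)$ is compact, being a closed subset of $\SU(2)^k$ for any finite generating set of $\pi_1(E_K)$, so a sequence $\rho_n \in R^{\irr}(E_K)$ with $i^*[\rho_n] \to (0,0)$ admits, after suitable conjugation and passage to a subsequence, a limit $\rho_n \to \rho$ in $R(E_K)$. Continuity of evaluation at $\mu$ and $\lambda$ gives $\rho(\mu) = 1$ and $\rho(\lambda) = 1$. If $\rho$ has non-abelian image, it is the desired representation and we are done.

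The main obstacle is to rule out the possibility that $\rho$ is abelian. If it were, then since $\lambda$ is nullhomologous in $E_K$ and $\rho(\mu) = 1$, the representation $\rho$ would factor through $H_1(E_K)/\langle\mu\rangle \cong H_1(Y)$; since $H_1(Y)$ is $2$-torsion, $\rho$ would then take values in $\{\pm 1\} \subset \SU(2)$. Twisting the sequence by $\rho^{-1}$ reduces further to $\rho_n \to \mathbf{1}$, the trivial representation, with each $\rho_n$ still irreducible. Now the key computation is that $H^2(E_K;\R) = 0$: the long exact sequence of the pair $(E_K, \partial E_K)$ together with the fact that $\mu$ generates $H_1(E_K;\R) = \R$ shows that $H_1(E_K,\partial E_K;\R) = 0$, and Poincar\'e--Lefschetz duality then gives the vanishing. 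Consequently $H^2(E_K; \mathfrak{su}(2)_{\mathbf{1}}) = 0$, so $R(E_K)$ is smooth at $\mathbf{1}$ of dimension $\dim H^1(E_K;\mathfrak{su}(2)_{\mathbf{1}}) = \dim \Hom(H_1(E_K), \mathfrak{su}(2)) = 3$. But the closed subvariety $\Hom(H_1(E_K),\SU(2)) \subset R(E_K)$ of abelian representations is also $3$-dimensional near $\mathbf{1}$, being locally a copy of $\SU(2)$ parameterizing the image of $\mu$; by equality of dimensions in a smooth variety, these two coincide in a neighborhood of $\mathbf{1}$. So every representation sufficiently close to $\mathbf{1}$ is abelian, contradicting $\rho_n \to \mathbf{1}$ with each $\rho_n$ irreducible. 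Hence $\rho$ must have non-abelian image.

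Finally, the Dehn surgery corollary is immediate: given such a non-abelian $\rho$ with $\rho(\mu) = \rho(\lambda) = 1$, for any slope $p/q \in \Q \cup \{\infty\}$ the element $\mu^p\lambda^q$ lies in $\ker\rho$, so $\rho$ descends to a non-abelian $\SU(2)$-representation of $\pi_1(Y_{p/q}(K)) = \pi_1(E_K)/\langle\langle \mu^p\lambda^q\rangle\rangle$, with the slope $\infty$ giving $Y$ itself. Neither $Y$ nor any Dehn surgery $Y_{p/q}(K)$ is therefore $\SU(2)$-abelian.
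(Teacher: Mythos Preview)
Your argument is correct and follows the same overall strategy as the paper: reduce the $(\pi,0)$ corner to $(0,0)$ via a central twist, extract a limit $\rho$ by compactness of $R(E_K)$, and show that if $\rho$ were abelian (hence central, since $H_1(Y)$ is $2$-torsion) then a dimension count would isolate it from irreducibles. The only substantive difference is in how this last step is carried out. The paper works in the $\SL(2,\C)$ variety $\cR(E_K)$, computes $\dim_\C T_\rho \cR(E_K)=3$, and invokes a lemma of Lubotzky--Magid: the injective morphism $\SL(2,\C)\to\cR(E_K)$ parametrizing abelian representations through $\rho$ has source of dimension $3$, so its image must contain a nonsingular neighborhood of $\rho$. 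You instead compute $H^2(E_K;\R)=0$ via Poincar\'e--Lefschetz duality and deduce smoothness of $R(E_K)$ at $\mathbf{1}$ from vanishing of obstructions, then match dimensions with the abelian locus. One point worth making explicit: obstructions to deforming a representation live in the group cohomology $H^2(\pi_1(E_K);\mathfrak{su}(2))$ rather than singular cohomology of $E_K$, but the classifying map $E_K\to K(\pi_1,1)$ induces an injection in $H^2$ with trivial coefficients, so your vanishing of $H^2(E_K;\R)$ does force the needed group-cohomological vanishing. Your extra twist by $\rho^{-1}$ to reduce to the trivial representation is harmless but not needed; the paper simply argues at the central $\rho$ itself.
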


\begin{proof}
Suppose we have a sequence of irreducible representations $\rho_n: \pi_1(E_K) \to \SU(2)$ such that the images
\[ i^*([\rho_n]) = (\alpha_n,\beta_n) \in X(T^2) \]
converge to either $(0,0)$ or $(\pi,0)$.  If their limit is $(\pi,0)$, then since $H_1(E_K) \cong H_1(Y) \oplus \Z$ with the $\Z$ summand generated by $\mu$, we can define a character
\[ \chi: \pi_1(E_K) \twoheadrightarrow H_1(E_K) \to \{\pm 1\} \]
which sends $H_1(Y)$ to $+1$ and $\mu$ to $-1$, and thus has central image.  In particular each
\[ \rho_n' = \chi \cdot \rho_n: \pi_1(E_K) \to \SU(2) \]
is an irreducible representation as well, and since $\rho'_n(\mu) = \chi(\mu)\rho(\mu) = -\rho_n(\mu)$ but $\rho'_n(\lambda) = \rho(\lambda)$, we have
\[ i^*([\rho'_n]) = (\alpha_n-\pi, \beta_n) \to (0,0). \]
Thus we may as well assume that $(\alpha_n,\beta_n) \to (0,0)$.  Moreover, since the $\SU(2)$ representation variety $R(E_K)$ is compact, we can pass to a subsequence to assume that the $\rho_n$ converge in $R(E_K)$; their limit is a representation
\[ \rho: \pi_1(E_K) \to \SU(2) \]
with $i^*([\rho]) = (0,0)$ and thus $\rho(\mu) = \rho(\lambda) = 1$.

The limiting representation $\rho$ factors as a composition
\[ \pi_1(E_K) \twoheadrightarrow \frac{\pi_1(E_K)}{\llangle \mu \rrangle} \cong \pi_1(Y) \xrightarrow{\rho_Y} \SU(2), \]
in which the last map $\rho_Y$ has the same image as $\rho$ itself.  If this image is abelian then $\rho_Y$ further factors through $H_1(Y)$; the latter is $2$-torsion, and $-1$ is the only order-2 element of $\SU(2)$, so then the image of $\rho$ lies in the center $\{\pm1\}$ of $\SU(2)$.  We will show that this is impossible, arguing along the same lines as in \cite[Lemma~3.1]{lpcz}, and this will imply that $\rho$ must not have abelian image after all.

To prove that $\rho$ cannot have central image, we think of it as a point of the $\SL(2,\C)$ representation variety $\cR(E_K)$, which is an affine variety over $\C$, and then since $\ad\rho$ is trivial we can identify
\[ T_\rho \cR(E_K) \cong H^1(E_K; \mathfrak{sl}(2,\C)_{\ad\rho}) \cong H^1(E_K; \C^3) \cong \C^3. \]
We have a finite-to-one (in fact, injective) morphism $f: \SL(2,\C) \to \cR(E_K)$, defined by sending $A \in \SL(2,\C)$ to the unique representation
\[ \rho_A: \pi_1(E_K) \twoheadrightarrow H_1(E_K;\Z) \cong H_1(Y) \oplus \Z \xrightarrow{\phi_A} \SL(2,\C) \]
such that $\phi_A|_{H_1(Y)} = \rho|_{H_1(Y)}$ and $\phi_A(\mu) = A$.  Then $f(1) = \rho_1 = \rho$ and 
\[ \dim_\C (\SL(2,\C)) = 3 = \dim_\C T_\rho \cR(E_K), \]
where on the left side we view $\SL(2,\C)$ as a complex variety and compute its dimension at the identity $1\in f^{-1}(\rho)$.  Thus \cite[Lemma~2.5]{lubotzky-magid} says that $\Img(f)$ contains a neighborhood of $\rho$ in $\cR(E_K)$, all of whose points are non-singular.  But then $\rho$ has a neighborhood in $R(E_K) \subset \cR(E_K)$ consisting only of points in $\Img(f)$, all of which have abelian image, and this contradicts the assumption that $\rho$ is a limit of irreducible representations.

In summary, we have shown that the representation $\rho$ must have non-abelian image, with $\rho(\mu) = \rho(\lambda) = 1$.  Now for any slope $\frac{p}{q}$, including $\frac{p}{q} = \frac{1}{0}$, we have $\rho(\mu^p\lambda^q) = 1$ and so $\rho$ factors as a composition
\[ \pi_1(E_K) \twoheadrightarrow \frac{\pi_1(E_K)}{\llangle \mu^p\lambda^q\rrangle} \cong \pi_1(Y_{p/q}(K)) \xrightarrow{\rho_{p/q}} \SU(2). \]
The map $\rho_{p/q}$ has the same image as $\rho$ itself, so its image is non-abelian and thus $Y_{p/q}(K)$ is not $\SU(2)$-abelian.
\end{proof}

\subsection{The cut-open pillowcase} \label{ssec:cut-open}

In some cases we can say more about the pillowcase image of $X(E_K)$ and can use this to simplify the statement of Proposition~\ref{prop:curve-in-pillowcase}.  For example:

\begin{lemma} \label{lem:alpha-pi}
Let $K$ be a nullhomologous knot in an $\SU(2)$-abelian 3-manifold $Y$, and fix a representation $\rho: \pi_1(E_K) \to \SU(2)$.  Suppose that $i^*([\rho])$ has coordinates $(\alpha,\beta)$ in the pillowcase, where $\alpha \in \pi\Z$.  Then $\rho$ has abelian image and $\beta\equiv 0\pmod{2\pi}$.
\end{lemma}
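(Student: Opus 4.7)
The plan is to split into two cases based on the value of $\rho(\mu)$, using the hypothesis $\alpha \in \pi\Z$ to observe that $\rho(\mu) \in \{\pm I\}$ lies in the center of $\SU(2)$.

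Suppose first that $\rho(\mu) = I$, so $\alpha \equiv 0 \pmod{2\pi}$. Since Dehn filling $E_K$ along $\mu$ recovers $Y$, we have $\pi_1(Y) \cong \pi_1(E_K)/\llangle \mu \rrangle$, and $\rho$ descends to a representation $\bar\rho\colon \pi_1(Y) \to \SU(2)$. The hypothesis that $Y$ is $\SU(2)$-abelian forces $\bar\rho$, and hence $\rho$, to have abelian image. Any such $\rho$ then factors through the abelianization $H_1(E_K)$. At this point the assumption that $K$ is nullhomologous enters: a Seifert surface $\Sigma \subset Y$ for $K$ intersects $E_K$ in a properly embedded surface whose boundary is the longitude $\lambda$, so $[\lambda] = 0$ in $H_1(E_K)$ and therefore $\rho(\lambda) = I$, which gives $\beta \equiv 0 \pmod{2\pi}$.

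For the second case, $\rho(\mu) = -I$, the idea is to twist $\rho$ by a sign character to reduce to the previous case. Because $K$ is nullhomologous one has a splitting $H_1(E_K) \cong H_1(Y) \oplus \Z\langle\mu\rangle$, which provides a homomorphism $\chi\colon \pi_1(E_K) \to \{\pm 1\}$ that is trivial on the $H_1(Y)$ summand and sends $\mu$ to $-1$. Setting $\rho' := \chi \cdot \rho$, one has $\rho'(\mu) = I$, while the fact that $\lambda$ bounds in $E_K$ gives $\chi(\lambda) = 1$ and hence $\rho'(\lambda) = \rho(\lambda)$. Applying the first case to $\rho'$ shows both that $\rho'(\lambda) = I$, so $\rho(\lambda) = I$, and that $\rho'$ has abelian image; since $\chi$ takes values in the center of $\SU(2)$, the original $\rho = \chi \cdot \rho'$ also has abelian image.

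No substantive obstacle arises here: the argument is a clean two-case analysis resting only on the standard observation that the longitude of a nullhomologous knot is nullhomologous in the exterior, together with the defining property of $\SU(2)$-abelian manifolds. The only point that warrants care is the twisting construction in the second case, where one must track that $\chi(\lambda) = 1$ in order to transfer the conclusion about $\rho'(\lambda)$ back to $\rho(\lambda)$.
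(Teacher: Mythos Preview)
Your proof is correct and follows essentially the same approach as the paper: both split on whether $\rho(\mu)=I$ or $\rho(\mu)=-I$, handle the first case by descending to $\pi_1(Y)$ and invoking the $\SU(2)$-abelian hypothesis, and reduce the second case to the first by twisting with the central character $\chi$ sending $\mu\mapsto -1$. The paper organizes things slightly differently by first observing once that abelian image forces $\beta\equiv 0$, but the content is the same.
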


\begin{proof}
The claim that $\beta\equiv 0$ will follow from knowing that $\Img(\rho)$ is abelian: then $\rho$ factors through $H_1(E_K;\Z)$, and the homology class $[\lambda]$ is zero, so we must have $\rho(\lambda) = 1$.  Thus we focus on the claim that $\rho$ has abelian image.

Suppose first that $\alpha \equiv 0 \pmod{2\pi}$.  Then $\rho(\mu) = 1$, and so $\rho$ descends to a representation
\[ \rho_Y: \pi_1(Y) \cong \frac{\pi_1(E_K)}{\llangle\mu\rrangle} \to \SU(2), \]
which must then have abelian image.  But $\rho$ has the same image as $\rho_Y$, so $\Img(\rho)$ is abelian as well.

In the remaining case, we have $\alpha \equiv \pi \pmod{2\pi}$, so $\rho(\mu) = -1$.  Then we can multiply by a central character $\chi: \pi_1(E_K) \to \{\pm1\}$ with $\chi(\mu) = -1$, just as in the proof of Lemma~\ref{lem:corners-limit}, to replace $\rho$ with $\rho'$ such that $\rho'(\mu) = 1$.  By the previous case we know that $\rho'$ has abelian image, hence so does $\rho$.
\end{proof}

Lemma~\ref{lem:alpha-pi} sometimes allows us to replace the pillowcase with the cut-open pillowcase
\[ \cP = [0,\pi] \times (\R/2\pi\Z). \]
In particular, the natural quotient map $\cP \to X(T^2)$ glues each point $(0,\beta)$ to $(0,2\pi-\beta)$, and $(\pi,\beta)$ to $(\pi,2\pi-\beta)$, so it is one-to-one except at points of the form $(\alpha,\beta)$ with $\alpha \in \pi\Z$ but $\beta \not\in \pi\Z$.  Lemma~\ref{lem:alpha-pi} says that if $Y$ is $\SU(2)$-abelian then $i^*(X(E_K))$ avoids the images of such points, so it lifts uniquely to $\cP$.  Thus for $\SU(2)$-abelian $Y$ we have a well-defined map
\[ j: X(E_K) \to \cP. \]

The following is now a quick application of Proposition~\ref{prop:curve-in-pillowcase}, generalizing \cite[Theorem~3.3]{lpcz}.

\begin{theorem} \label{thm:pillowcase-loop}
Let $K \subset Y$ be a nullhomologous knot in an $\SU(2)$-abelian 3-manifold, and suppose that $I^w_*(Y_0(K)) \neq 0$, where $w$ is Poincar\'e dual to a meridian of $K$ in $Y_0(K)$.  Then the image $j(X(E_K)) \subset \cP$ must contain a topologically embedded curve that is homologically essential in $H_1(\cP;\Z) \cong \Z$.
\end{theorem}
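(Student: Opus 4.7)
The plan is to deduce this directly from Proposition~\ref{prop:curve-in-pillowcase} and Lemma~\ref{lem:alpha-pi}; together with the observations at the start of \S\ref{ssec:cut-open}, these make the proof essentially mechanical, and I do not anticipate a substantial obstacle.

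First I would verify the hypotheses of Proposition~\ref{prop:curve-in-pillowcase}. The condition $I^w_*(Y_0(K)) \neq 0$ is given, so I only need to check that the points $P = (0, \pi)$ and $Q = (\pi, \pi)$ lie outside $i^*(X(E_K))$. This is immediate from Lemma~\ref{lem:alpha-pi}: since $Y$ is $\SU(2)$-abelian, any point of $i^*(X(E_K))$ whose $\alpha$-coordinate lies in $\pi\Z$ must satisfy $\beta \equiv 0 \pmod{2\pi}$, so $(0,\pi)$ and $(\pi,\pi)$ are both excluded. Proposition~\ref{prop:curve-in-pillowcase} then produces a topologically embedded curve $C \subset i^*(X(E_K))$ that is homologically essential in $X(T^2) \setminus \{P,Q\} \cong (0,1) \times S^1$.

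Next I would lift $C$ to the cut-open pillowcase. As noted just before the theorem statement, Lemma~\ref{lem:alpha-pi} yields a well-defined factorization $i^* = q \circ j$, where $q: \cP \to X(T^2)$ is the folding quotient and $j: X(E_K) \to \cP$ is continuous. Moreover, the only points of $j(X(E_K))$ on the boundary $\{\alpha \in \pi\Z\}$ of $\cP$ are the corners $(0,0)$ and $(\pi,0)$, at which $q$ is injective. Hence $q$ restricts to a homeomorphism $j(X(E_K)) \to i^*(X(E_K))$, and pulling $C$ back through this homeomorphism yields an embedded curve $\tilde C \subset j(X(E_K))$.

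Finally I would show that $[\tilde C] \neq 0$ in $H_1(\cP) \cong \Z$. Since $C$ avoids $P$ and $Q$, the lift $\tilde C$ lies in $\cP' := \cP \setminus \{(0,\pi), (\pi,\pi)\}$, and the inclusion $\cP' \hookrightarrow \cP$ is a homotopy equivalence (both deformation retract onto the core circle $\{\pi/2\} \times S^1$). It therefore suffices to show $[\tilde C] \neq 0$ in $H_1(\cP')$. But $q$ restricts to a continuous map $\cP' \to X(T^2) \setminus \{P, Q\}$ that sends the core circle of $\cP'$ homeomorphically onto the standard generator of $H_1(X(T^2) \setminus \{P,Q\}) \cong \Z$, so the induced map $q_*: H_1(\cP') \to H_1(X(T^2) \setminus \{P,Q\})$ is an isomorphism of $\Z$'s sending $[\tilde C]$ to the nonzero class $[C]$. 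This completes the argument.
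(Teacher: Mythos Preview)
Your proposal is correct and follows essentially the same approach as the paper: use Lemma~\ref{lem:alpha-pi} to verify the hypotheses of Proposition~\ref{prop:curve-in-pillowcase}, obtain the essential curve $C$, and then lift it through the quotient $\cP \to X(T^2)$ using the fact that $i^*(X(E_K))$ avoids the folding locus. The paper phrases the last step slightly differently---it notes that $C$ lies in $X(T^2) \setminus (\{0,\pi\}\times(0,2\pi))$, and that this set includes into $\cP$ by a homotopy equivalence---but your more explicit verification that $q_*: H_1(\cP') \to H_1(X(T^2)\setminus\{P,Q\})$ is an isomorphism amounts to the same thing.
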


\begin{proof}
The pillowcase image $i^*(X(E_K)) \subset X(T^2)$ does not contain the points $P=(0,\pi)$ or $Q=(\pi,\pi)$, by Lemma~\ref{lem:alpha-pi}.  Thus we can apply Proposition~\ref{prop:curve-in-pillowcase} to find an embedded curve
\[ C \subset i^*(X(E_K)) \]
that is homologically essential in $X(T^2) \setminus \{P,Q\}$.  Lemma~\ref{lem:alpha-pi} says that $C$ actually lies in
\[ X(T^2) \setminus \big( \{0,\pi\} \times (0,2\pi) \big), \]
where it is still homologically essential, and the inclusion of the latter into $\cP$ is a homotopy equivalence taking $C$ to its image $j(C)$, so $j(C)$ is a homologically essential curve in $\cP$.
\end{proof}

We can now deduce the following generalization of the main result of \cite{km-su2}.

\begin{theorem} \label{thm:km-su2-generalized}
Let $Y$ be an $\SU(2)$-abelian 3-manifold such that $H_1(Y)$ is 2-torsion, and let $K \subset Y$ be a nullhomologous knot with irreducible, boundary-incompressible complement.  Then for any $r\in\Q$ with $0 < |r| \leq 2$, there is a representation
\[ \rho: \pi_1(Y_r(K)) \to \SU(2) \]
with non-abelian image.
\end{theorem}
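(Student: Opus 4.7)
The plan is to adapt the pillowcase argument of \cite{km-su2} using the $2$-torsion instanton machinery developed in this section. First, Theorem~\ref{thm:2-torsion-zero-surgery} gives $I^w_*(Y_0(K)) \neq 0$, and then Theorem~\ref{thm:pillowcase-loop} produces a topologically embedded essential curve $C \subset j(X(E_K)) \subset \cP$ generating $H_1(\cP;\Z) \cong \Z$.

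Writing $r = p/q$ with $\gcd(p,q) = 1$ and $q > 0$, the hypothesis becomes $0 < |p| \leq 2q$. A non-abelian representation of $\pi_1(Y_r(K))$ corresponds to a non-abelian representation $\rho$ of $\pi_1(E_K)$ with $\rho(\mu^p \lambda^q) = 1$; in pillowcase coordinates this means $p\alpha + q\beta \equiv 0 \pmod{2\pi}$. Let $\gamma_r$ be the component of this level set passing through $(0,0)$, namely the proper arc $\{(\alpha, -r\alpha \bmod 2\pi) : \alpha \in [0,\pi]\}$ from the $\alpha=0$ side of $\cP$ to the $\alpha=\pi$ side. The key observation is that for $|r| \leq 2$, the interior of $\gamma_r$ is disjoint from the reducible locus $\{\beta = 0\}$: having $-r\alpha \equiv 0 \pmod{2\pi}$ with $\alpha \in (0,\pi)$ would require $|r|\alpha \geq 2\pi$, contradicting $|r|\alpha < 2\pi$. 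Since $[C]$ generates $H_1(\cP;\Z)$ while $[\gamma_r]$ generates $H_1(\cP,\partial\cP;\Z) \cong \Z$, the intersection pairing yields $[C] \cdot [\gamma_r] = \pm 1$, so $C$ and $\gamma_r$ meet.

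The conclusion follows from a case analysis on whether $C$ touches the corners $(0,0)$ or $(\pi,0)$ of $\cP$, which by Lemma~\ref{lem:alpha-pi} are the only points where $j(X(E_K))$ can meet $\partial\cP$. If $C$ avoids both corners, then $C$ lies entirely in the interior of $\cP$, so any intersection in $C \cap \gamma_r$ occurs in the interior of $\gamma_r$, where $\beta \neq 0$; this gives a non-abelian representation of $\pi_1(E_K)$ that descends to the desired representation of $\pi_1(Y_r(K))$. Otherwise $C$ passes through a corner, say $(0,0)$: since the graph $j(X(E_K))$ at $(0,0)$ has only one edge coming from reducible characters (the ray $\{\beta = 0, \alpha \in [0,\epsilon)\}$), the topologically embedded loop $C$ must also exit $(0,0)$ along an irreducible branch, so $(0,0)$ is a limit point of $j(X^{\irr}(E_K))$; Lemma~\ref{lem:corners-limit} then provides a non-abelian $\SU(2)$ representation of every Dehn surgery on $K$, including $Y_r(K)$.

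The main obstacle is the topological argument in the corner case: one must rule out the possibility that $C$ approaches a corner only through reducible characters. This is handled via the local edge count in the graph $j(X(E_K))$ together with the fact that a topologically embedded circle has two distinct local branches at every point, only one of which at the corner $(0,0)$ can lie in the reducible locus.
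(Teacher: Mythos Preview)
Your proof is correct and follows essentially the same route as the paper, invoking Theorem~\ref{thm:2-torsion-zero-surgery}, Theorem~\ref{thm:pillowcase-loop}, and Lemma~\ref{lem:corners-limit} in the same roles. The one organizational difference is that the paper eliminates your case analysis up front: it parametrizes a path $t\mapsto(\alpha_t,\beta_t)$ with $\beta_0=0$, $\beta_1=2\pi$, so that the endpoints are automatically limits of irreducible characters, and then applies Lemma~\ref{lem:corners-limit} in the contrapositive---since $Y$ is assumed $\SU(2)$-abelian, neither corner can be such a limit, hence $0<\alpha_0,\alpha_1<\pi$ outright and your ``corner case'' never occurs. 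After that the paper finds the required point by the intermediate value theorem on $p\alpha_t+q\beta_t$ rather than by an intersection-pairing argument, but that difference is cosmetic.
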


\begin{proof}
Proposition~\ref{thm:2-torsion-zero-surgery} tells us that $I^w_*(Y_0(K)) \neq 0$, where $w$ is Poincar\'e dual to a meridian of $K$.  By Theorem~\ref{thm:pillowcase-loop}, we can thus find a continuous path
\[ \gamma: [0,1] \to [0,\pi] \times [0,2\pi] \]
such that if we write $\gamma(t) = (\alpha_t, \beta_t)$, then
\begin{itemize}
\item $\beta_0 = 0$, $\beta_1 = 2\pi$, and $0 < \beta_t < 2\pi$ for $0 < t < 1$;
\item for each $t$, there is a representation $\rho_t: \pi_1(E_K) \to \SU(2)$ with
\begin{align*}
\rho_t(\mu) &= \begin{pmatrix} e^{i\alpha_t} & 0 \\ 0 & e^{-i\alpha_t} \end{pmatrix}, &
\rho_t(\lambda) &= \begin{pmatrix} e^{i\beta_t} & 0 \\ 0 & e^{-i\beta_t} \end{pmatrix};
\end{align*}
\item and $\rho_t$ is irreducible for $0 < t < 1$, since $0 < \beta_t < 2\pi$ implies that $\rho_t(\lambda) \neq 1$.
\end{itemize}
Since $(\alpha_t,\beta_t) \to (\alpha_0,0)$ as $t \searrow 0$, and since $R(E_K)$ is compact, some subsequence of the irreducibles
\[ \{ \rho_t \mid 0 < t < 1 \} \subset X^\irr(E_K) \]
converges to a representation $\bar\rho_0 \in R(E_K)$ with $j([\bar\rho_0]) = (\alpha_0,0)$.  Since $H_1(Y)$ is 2-torsion, we can apply Lemma~\ref{lem:corners-limit} to say that $\alpha_0$ is neither $0$ nor $\pi$.  The same argument says that $0 < \alpha_1 < \pi$ as well.

Now suppose without loss of generality that $0 < r \leq 2$, and write $r = \frac{p}{q}$ in lowest terms, so that $0 < p \leq 2q$.  We note for each $t \in [0,1]$ that
\[ \rho_t(\mu^p\lambda^q) = \begin{pmatrix} e^{i(p\alpha_t+q\beta_t)} & 0 \\ 0 & e^{-i(p\alpha_t+q\beta_t)} \end{pmatrix}, \]
and that $\alpha_0 < \pi$ and $p \leq 2q$ imply that
\[ p\alpha_0 + q\beta_0 = p\alpha_0 < p\pi \leq 2q\pi, \]
while $\alpha_1 > 0$ tells us that
\[ p\alpha_1 + q\beta_1 = p\alpha_1 + 2q\pi > 2q\pi. \]
Thus by continuity there is some $t\in(0,1)$ such that $p\alpha_t+q\beta_t = 2q\pi$, and then $\rho_t$ is an irreducible representation satisfying $\rho_t(\mu^p\lambda^q) = 1$, so it descends to the desired representation of $\pi_1(Y_r(K))$.
\end{proof}

\begin{remark}
It is not clear to us whether the hypotheses of Theorem~\ref{thm:km-su2-generalized} should imply the existence of a non-abelian representation $\pi_1(Y_0(K)) \to \SU(2)$, even when $Y=S^3$.  This is equivalent to there being an irreducible $\rho: \pi_1(E_K) \to \SU(2)$ with pillowcase image $i^*([\rho]) = (\alpha,0)$ for some $\alpha$.  If no such $\rho$ exists, then the representation $\bar\rho_0 \in R(E_K)$ constructed in the proof of Theorem~\ref{thm:km-su2-generalized} is a reducible limit of irreducible representations, and this implies that the Alexander polynomial satisfies $\Delta_K(e^{2i\alpha_0}) = 0$, cf.~\cite[Theorem~19]{klassen} in the case $Y=S^3$ or \cite[Theorem~2.7]{heusener-porti-suarez} more generally.

On the other hand, these hypotheses do imply that $I^w_*(Y_0(K)) \neq 0$, and hence there is an irreducible representation $\pi_1(Y_0(K)) \to \SO(3)$ that does not lift to an $\SU(2)$ representation.
\end{remark}

\section{Pinching and the twisted $I$-bundle over the Klein bottle} \label{sec:pinching}

In this section we will construct and study some degree-1 maps between compact $3$-manifolds with torus boundary.  As a warm-up exercise, we recall the well-known construction of ``pinching'' maps onto solid tori here; after doing so, we will study the twisted $I$-bundle over the Klein bottle in some detail, culminating in the construction of pinching maps onto it in Proposition~\ref{prop:klein-bottle-pinch}.  We will repeatedly make use of the following claim.

\begin{lemma} \label{lem:pinch-sphere}
Let $X$ be a compact $n$-manifold, and suppose we have a continuous map $f: \partial X \to S^{n-1}$.  Then $f$ can be extended to a continuous map $\tilde{f}: X \to D^n$, with $\tilde{f}^{-1}(\partial D^n) = \partial X$.
\end{lemma}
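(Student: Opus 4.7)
The plan is to use a collar neighborhood of $\partial X$ in $X$ and radially interpolate between $f$ on $\partial X$ and the constant map to $0 \in D^n$ on the rest of $X$.

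By the collar neighborhood theorem (Brown's theorem, which applies to any compact topological manifold with boundary), we can choose a closed embedding
$$c: \partial X \times [0,1] \hookrightarrow X, \qquad c(x,0) = x,$$
whose image $C$ is a closed neighborhood of $\partial X$. Write $C_{<1} = c(\partial X \times [0,1))$ for the open subset. View $S^{n-1} \subset D^n$ as the unit sphere, and define $\tilde f: X \to D^n$ by
$$\tilde f(p) = \begin{cases} (1-t)\, f(x), & p = c(x,t) \text{ with } (x,t) \in \partial X \times [0,1], \\ 0, & p \in X \setminus C_{<1}. \end{cases}$$
Note that $(1-t) f(x) \in D^n$ has norm $1-t \in [0,1]$, so the image lies in $D^n$, and at $t=1$ both pieces agree (giving $0$), so the two definitions are consistent on the overlap.

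Continuity is immediate: on the open set $C_{<1}$ the map is the composition of the continuous inverse of $c$, the continuous map $(x,t) \mapsto (1-t)f(x)$, and inclusion into $D^n$; on the open set $X \setminus c(\partial X \times [0,1/2])$ the map is either constantly $0$ or of the same interpolating form, and these agree where they overlap. Finally, $\tilde f(p) \in \partial D^n$ requires $|\tilde f(p)| = 1$, which fails on $X \setminus C_{<1}$ (where $\tilde f \equiv 0$) and holds on $c(x,t)$ precisely when $1-t = 1$, i.e.\ when $t=0$ and $p \in \partial X$. Hence $\tilde f^{-1}(\partial D^n) = \partial X$, and $\tilde f|_{\partial X} = f$.

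There is really no substantial obstacle here; the only nontrivial input is the existence of a collar, which is a standard fact for compact (topological) manifolds with boundary. Everything else is a one-line interpolation.
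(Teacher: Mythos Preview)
Your proof is correct and is essentially identical to the paper's: both take a collar neighborhood of $\partial X$, define $\tilde f$ on the collar by radially scaling $f$ toward the origin, and extend by $0$ outside the collar. The only cosmetic difference is the parametrization of the collar (boundary at $t=0$ versus $t=1$).
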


\begin{proof}
We identify a collar neighborhood $[0,1] \times \partial X$ of the boundary $\{1\}\times \partial X$, and then set
\[ \tilde{f}(t, x) = t\cdot f(x) \]
for all $(t,x) \in [0,1] \times \partial X$.  Then $\tilde{f}(\{0\}\times \partial X) = \{0\}$, so we extend $\tilde{f}$ to the rest of $X$ by setting $\tilde{f}(y) = 0$ for all $y \not\in [0,1]\times \partial X$.
\end{proof}

Lemma~\ref{lem:pinch-sphere} allows us to construct pinching maps onto solid tori as follows.

\begin{proposition} \label{prop:solid-torus-pinch}
Let $M$ be a compact, oriented $3$-manifold with torus boundary, and let $\lambda \subset \partial M$ be an essential curve that bounds a properly embedded, orientable surface $F \subset M$.  Then there is a degree-1 map
\[ f: M \to S^1\times D^2 \]
that restricts to a homeomorphism $\partial M \to S^1 \times \partial D^2$ and sends $\lambda$ to $\{\pt\} \times \partial D^2$.
\end{proposition}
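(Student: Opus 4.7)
The strategy is to split both sides into two pieces and invoke Lemma~\ref{lem:pinch-sphere} twice. On the target, fix a closed sub-arc $I_F \subset S^1$ and write $S^1 \times D^2 = B_F \cup B'$, where $B_F = I_F \times D^2$ is a thickened meridian disk and $B' = (\overline{S^1 \setminus I_F}) \times D^2$ is its complementary 3-ball; the two pieces meet along the pair of meridian disks $\partial I_F \times D^2$. On the source, since $F$ is orientable and $M$ is oriented, pick a product bicollar $N(F) \cong F \times [-1,1]$ so that $\lambda \times [-1,1] = \partial F \times [-1,1]$ sits inside $\partial M$ as an annular neighborhood of $\lambda$, and set $M' = \overline{M \setminus N(F)}$. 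Then $\partial M' = F_- \cup A \cup F_+$, where $F_\pm = F \times \{\pm 1\}$ and $A \subset \partial M$ is the annular complement of $\lambda \times (-1,1)$.

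Fix a homeomorphism $h: \lambda \to \partial D^2$ and apply Lemma~\ref{lem:pinch-sphere} with $X = F$ and $n = 2$ to extend $h$ to a map $\phi: F \to D^2$ that is a homeomorphism on $\partial F = \lambda$; this $\phi$ has degree $\pm 1$, and we orient $h$ to make $\deg \phi = +1$. Also fix a homeomorphism $g: [-1,1] \to I_F$. On $N(F)$, define
\[
f(x,t) = (g(t), \phi(x)) \in B_F.
\]
This is a degree-$1$ map restricting to the homeomorphism $(y,t) \mapsto (g(t), h(y))$ on $\lambda \times [-1,1]$ and sending $\lambda = \partial F \times \{0\}$ to the meridian $\{g(0)\} \times \partial D^2$. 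On $\partial M'$, define $f$ by the same formula on each $F_\pm$, and on $A$ use any homeomorphism onto $A' = (\overline{S^1 \setminus I_F}) \times \partial D^2 \subset \partial B'$ that extends the map already given on $\partial A$. With orientations chosen consistently, each of $F_+$, $A$, $F_-$ then maps with degree $+1$ onto its target piece of $\partial B' \cong S^2$, so the assembled boundary map $\partial M' \to \partial B'$ has degree $1$. Apply Lemma~\ref{lem:pinch-sphere} again, now with $X = M'$ and $n = 3$, to extend it to $f|_{M'}: M' \to B'$.

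Gluing produces $f: M \to S^1 \times D^2$. By construction $f|_{\partial M}$ is a homeomorphism onto $S^1 \times \partial D^2$ sending $\lambda$ to the meridian $\{g(0)\} \times \partial D^2$, and $f$ has degree $1$: for any regular value $q \in \mathrm{int}(D^2)$ of $\phi$ and any $s \in \mathrm{int}(I_F)$, the point $(s,q) \in \mathrm{int}(B_F)$ has a single preimage $(\phi^{-1}(q), g^{-1}(s))$ in $N(F)$ with local degree $+1$, and no preimage in $M'$. The only real subtlety is orienting $g$, $h$, and $A \to A'$ consistently so that the three pieces of the boundary map $\partial M' \to \partial B'$ contribute with matching signs; once this is arranged, the rest of the argument is automatic from Lemma~\ref{lem:pinch-sphere}.
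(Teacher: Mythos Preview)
Your proof is correct and follows essentially the same strategy as the paper's: both arguments invoke Lemma~\ref{lem:pinch-sphere} once in dimension $2$ to collapse $F$ onto a meridian disk and once in dimension $3$ to collapse the complement of $N(\partial M \cup F)$ (equivalently, your $M'$) onto a ball. The paper orders the steps as ``$\partial M$, then $F$, then the rest'' while you do ``$N(F)$, then $A$, then the rest,'' and you are more explicit about the degree computation via a regular-value count, but the underlying construction is the same.
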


\begin{proof}
We first define $f$ on $\partial M$ by choosing a homeomorphism $\partial M \cong S^1 \times \partial D^2$ that sends $\lambda$ to $\{\pt\} \times \partial D^2$, and then extend it to a homeomorphism between collar neighborhoods of both boundaries.  Now $f$ is defined on a collar neighborhood of $\partial F$ in $F$, and it sends the boundary of this collar to a circle in $\{\pt\} \times D^2$ that bounds a disk, so Lemma~\ref{lem:pinch-sphere} lets us extend $f$ across all of $F$.  Again we extend this to a collar neighborhood of $F$, so now $f$ is defined on $N(\partial M \cup F)$.  The boundary of this domain is sent to a 2-sphere that bounds a 3-ball (namely, the boundary component of $N\big((S^1 \times \partial D^2) \cup (\{\pt\}\times D^2)\big)$ that lies on the interior of $S^1\times D^2$), so we use Lemma~\ref{lem:pinch-sphere} to extend $f$ to the rest of $M$ and we are done.
\end{proof}

In the rest of a section we will work with \emph{rational longitudes}, so in order to define them we must first recall a standard fact about $3$-manifolds that we will use frequently in \S\ref{sec:knot-exteriors}.  If $M$ is a compact orientable 3-manifold with boundary, the ``half lives half dies'' principle (see for example \cite[Lemma~3.5]{hatcher-3}) says that over any field $\F$, the map
\[ i_*: H_1(\partial M;\F) \to H_1(M;\F) \]
has rank $\frac{1}{2}\dim H_1(\partial M;\F)$, which is $1$ if $\partial M$ is a torus.  (The orientability is needed to ensure that $M$ satisfies Poincar\'e--Lefschetz duality over $\F$.)  Applying this over $\F=\Q$, we deduce that there is a primitive integral class $\lambda \in H_1(\partial M;\Z)$ that generates the kernel of $i_*$ over $\Q$, and it is unique up to sign.  We call this the \emph{rational longitude} of $M$.  While $\lambda$ need not be nullhomologous in $M$, the integral class $i_*(\lambda)$ is always torsion.

\subsection{The twisted $I$-bundle over the Klein bottle}

We define an annulus
\[ A = [-1,1] \times (\R/2\pi\Z) \]
and a pair of orientation-preserving homeomorphisms $A\to A$ by the formulas
\begin{align*}
\phi(r,\theta) &= (-r,-\theta), \\
\tau(r,\theta) &= (r, \theta - \pi(r+1)).
\end{align*}
We note that the homeomorphism $\tau$ is a Dehn twist about the core $c = \{0\} \times (\R/2\pi\Z)$.

\begin{figure}
\begin{tikzpicture}
\draw[fill=gray!50,fill opacity=0.5, even odd rule] (0,0) ellipse (3 and 1) ellipse (0.75 and 0.25);
\draw[blue,very thick] (0,0) ellipse (1.875 and 0.625);
\draw[blue,very thick,fill=blue!50,fill opacity=0.5] (-1.875,0) -- ++(0,3) arc (180:0:1.875 and 0.625) -- ++(0,-3) arc (0:180:1.875 and 0.625);
\foreach \i in {-3,-0.75,0.75,3} { \draw (\i,0) -- ++(0,3); }
\draw (-0.75,0) arc (180:0:0.75 and 0.25);
\draw[fill=white,fill opacity=0.5] (-0.75,3) -- ++(0,-3) arc (180:360:0.75 and 0.25) -- ++(0,3) arc (360:180:0.75 and 0.25);
\draw[fill=gray!50,fill opacity=0.5, even odd rule] (0,3) ellipse (3 and 1) ellipse (0.75 and 0.25);
\draw[fill=white] (0,3) ellipse (0.75 and 0.25);
\draw[->] (0.75,0) arc (360:270:0.75 and 0.25);
\foreach \i in {0,15,...,345} { \draw[very thin] (0,3) ++ (\i:1 and 1/3) -- ++(\i:1.8 and 0.6); }
\foreach \j in {0,60,...,300} {
  \draw[thin,yscale=1/3,domain=0:360,smooth,samples=20,variable=\i] plot ({\i+\j}:{0.75+2.25*\i/360});
}
\draw[blue,very thick] (0,3) ellipse (1.875 and 0.625);
\draw[blue,very thick,fill=blue!50,fill opacity=0.5] (-1.875,3) -- ++(0,-3) arc (180:360:1.875 and 0.625) -- ++(0,3) arc (360:180:1.875 and 0.625);
\draw[fill=white,fill opacity=0.5] (-3,3) -- ++(0,-3) arc (180:360:3 and 1) -- ++(0,3) arc (360:180:3 and 1);
\node[left] at (-3,3) {$A\times\{1\}$};
\node[left] at (-3,0) {$A\times\{0\}$};
\node[blue,right] at (1.875,1.5) {$B$};
\draw[looseness=1,-latex] (0,3) ++ (30:3.3 and 1.1) -- ++(60:0.3 and 0.1) to[out=30,in=105] (4.5,3) to[out=285,in=75] node[midway,right] {$\psi_n$} ++(0,-3) to[out=255,in=-30] ($(330:3.3 and 1.1)+(300:0.3 and 0.1)$) -- ++(120:0.3 and 0.1);
\draw[->] (3,3) arc (0:90:3 and 1);
\draw[-Stealth] (-3,0) arc (180:270:3 and 1);
\draw[-Stealth] (-0.75,3) arc (180:90:0.75 and 0.25);
\end{tikzpicture}
\caption{The twisted $I$-bundle over the Klein bottle $B$, shown here as the mapping torus of $\psi_n: A \to A$, together with the fibration of the top and bottom annuli into intervals.}
\label{fig:klein-bottle-nbhd}
\end{figure}
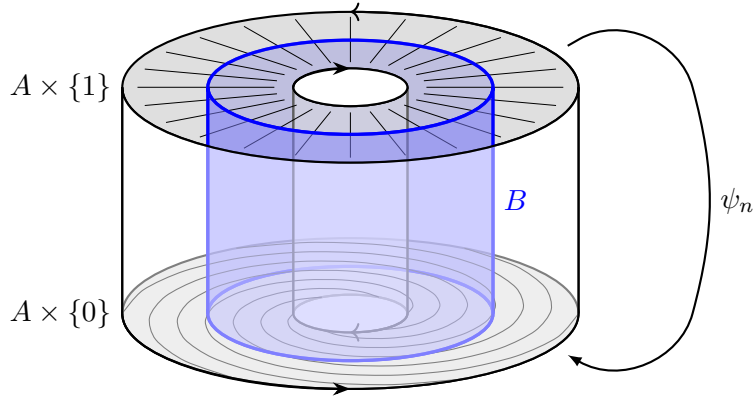

\begin{lemma} \label{lem:klein-bottle-mapping-torus}
Define a diffeomorphism $\psi_n: A \to A$ for each $n\in\Z$ by $\psi_n = \tau^n \circ \phi$.  Then the mapping torus $M_{\psi_n}$ is homeomorphic to the twisted $I$-bundle over the Klein bottle for all $n$, and if $\lambda \subset \partial M_{\psi_n}$ is the rational longitude then each annulus fiber generates $H_2(M_{\psi_n},\partial M_{\psi_n}) \cong \Z$ and has boundary homologous in $\partial M_{\psi_n}$ to $2\lambda$.
\end{lemma}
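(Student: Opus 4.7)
The plan is to proceed in two stages: first identify $M_{\psi_n}$ with the twisted $I$-bundle over the Klein bottle, and then perform explicit homology computations to verify the statements about $H_2(M_{\psi_n}, \partial M_{\psi_n})$ and the rational longitude $\lambda$.

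For the identification, the key observation is that $\tau$ is isotopic to the identity as a self-homeomorphism of $A$ without the rel-boundary constraint: the family $\tau_t(r,\theta) = (r, \theta - t\pi(r+1))$ provides such an isotopy. Thus $\psi_n = \tau^n\circ\phi$ is isotopic to $\phi$ for every $n$, and since mapping tori of isotopic self-homeomorphisms are homeomorphic, $M_{\psi_n}\cong M_\phi$. Now $\phi$ preserves the arc fibration of $A$ (sending the arc $[-1,1]\times\{\theta\}$ onto $[-1,1]\times\{-\theta\}$ via $r\mapsto -r$) and acts on the core $c$ as $\theta\mapsto -\theta$, whose mapping torus is a Klein bottle $B$. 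So $M_\phi$ inherits the structure of an $I$-bundle over $B$ with zero section $B$, and orientability of $M_\phi$ (since $\phi$ is orientation-preserving on $A$) forces this to be the twisted $I$-bundle over $B$.

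For the homological statements, the Wang sequence for $A\hookrightarrow M_{\psi_n}\to S^1$, with monodromy acting on $H_1(A)\cong\Z$ as $-1$ (because $\psi_n|_c$ has degree $-1$) and trivially on $H_0$, gives $H_2(M_{\psi_n})=0$ and $H_1(M_{\psi_n})\cong\Z\oplus\Z/2\Z$, with the $\Z$ factor detected by the projection to the base $S^1$ and the $\Z/2\Z$ factor generated by $[c]$. A direct check shows $\tau|_{\partial A}=\mathrm{id}$, so $\psi_n|_{\partial A}=\phi|_{\partial A}$ swaps the two boundary circles by $\theta\mapsto -\theta$; hence $\partial M_{\psi_n}$ is a single torus, which I parameterize by $(s,\theta)\in(\R/2\Z)\times(\R/2\pi\Z)$ so that $s\mapsto s\bmod 1$ realizes the degree-two covering $\partial M_{\psi_n}\to S^1$. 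Let $m=\{s=0\}$ and $\ell=\{\theta=0\}$ generate $H_1(\partial M_{\psi_n})$.

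To finish, the boundary of $F=A\times\{0\}$ consists of two circles at $s=0$ and $s=1$, each parallel to $m$ with matching orientation from the boundary-orientation convention, so $\partial[F]=2[m]$. Sweeping $m$ across $F$ to $c$ shows $i_*([m])$ is the $\Z/2\Z$ generator, while $i_*([\ell])$ has $\Z$-component $\pm 2$ since $\ell$ double-covers the base $S^1$; therefore the only primitive class in $H_1(\partial M_{\psi_n})$ with torsion image in $H_1(M_{\psi_n})$ is $\pm[m]$, so $\lambda=[m]$ up to sign and $\partial[F]=2\lambda$. Finally, using $H_2(M_{\psi_n})=0$, the long exact sequence of the pair identifies $H_2(M_{\psi_n},\partial M_{\psi_n})$ with $\ker(i_*)\subset H_1(\partial M_{\psi_n})$; the same computation shows this kernel equals $\langle 2[m]\rangle\cong\Z$, and since $\partial[F]=2[m]$ generates it, the class $[F]$ generates $H_2(M_{\psi_n},\partial M_{\psi_n})\cong\Z$. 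The main bookkeeping hurdle is confirming that the two boundary circles of $F$ contribute with the same sign to $\partial[F]$ rather than cancelling, which requires unwinding the identification that defines $\partial M_{\psi_n}$ carefully; once the parameterization above is fixed, everything else is a routine computation.
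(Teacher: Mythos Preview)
Your proof is correct and takes a genuinely different route from the paper's. For the identification with the twisted $I$-bundle, the paper writes down an explicit projection $\pi: A\times[0,1]\to c\times[0,1]$ that descends to an interval fibration $M_{\psi_n}\to B$ for each $n$ separately, whereas you use the isotopy $\tau_t$ to reduce to the single case $n=0$ and then read off the arc-fibration directly from $\phi$; your argument is shorter and more conceptual here. For the homological claims, the paper appeals to Poincar\'e--Lefschetz duality $H_2(M_{\psi_n},\partial M_{\psi_n})\cong H^1(M_{\psi_n})\cong H^1(B)\cong\Z$ and exhibits a closed curve meeting the fiber transversely once to show the fiber is primitive, while you instead compute $H_1$ and $H_2$ via the Wang sequence and then identify $H_2(M_{\psi_n},\partial M_{\psi_n})$ with $\ker(i_*)\subset H_1(\partial M_{\psi_n})$ through the long exact sequence of the pair. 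The paper's approach makes the geometric content (the dual curve) more visible, while yours is more self-contained and avoids the need to find a section of the mapping torus; either way the orientation bookkeeping you flag for $\partial[F]=2[m]$ is the genuine subtlety, and both proofs handle it by direct inspection.
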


\begin{proof}
We first observe that $\psi_n$ fixes the curve $c = \{0\}\times(\R/2\pi\Z)$ setwise, since both $\phi$ and $\tau$ do, but that it reverses the orientation of $c$: we have
\[ \psi_n(r,\theta) = \tau^n(\phi(r,\theta)) = \tau^n(-r,-\theta) = (-r, -\theta - n\pi(1-r)) \]
for all $r$ and $\theta$, and this sends the circle $\{r=0\}$ to itself.  Thus the mapping torus
\[ M_{\psi_n|_c} = \frac{c \times [0,1]}{(x,1) \sim (\psi_n(x), 0)} \]
of $\psi_n|_c$ is homeomorphic to a Klein bottle.

Next, the mapping torus of $\psi_n$ on all of $A$ is by definition
\[ M_{\psi_n} = \frac{A \times[0,1]}{(x,1) \sim (\psi_n(x),0)}, \]
and the Klein bottle $B = M_{\psi_n|_c}$ is a submanifold of $M_{\psi_n}$, identified as the image of $\{r=0\} \times [0,1]$ inside $A\times [0,1]$.  We can check that the projection map
\begin{align*}
\pi: A \times [0,1] &\to c \times [0,1] \\
\big((r,\theta), t\big) &\mapsto \big((0, \theta + (1-t)n\pi r), t\big)
\end{align*}
fixes all points of $c\times[0,1]$, i.e., where $r=0$, and that the fiber over each point is an interval:
\[ \pi^{-1}\big( (0,\theta), t \big) = \{((s,\theta-(1-t)n\pi s),t) \mid s \in [-1,1]\}. \]
Moreover, the monodromy $\psi_n$ identifies the fibers at $t=1$ with fibers at $t=0$: we have
\begin{align*}
\big((r,\theta),1\big) \sim (\psi_n(r,\theta), 0)&= \big((-r, -\theta - n\pi(1-r)), 0\big) \\
&= \big((-r, (-\theta - n\pi) - n\pi (-r)), 0 \big) \\
&\in \pi^{-1}\big( (0,-\theta-n\pi), 0 \big).
\end{align*}
Thus $\pi$ descends to a fibration $\pi: M_{\psi_n} \to B$ with interval fibers.  The total space is orientable whereas $B$ is not, so it must be the twisted $I$-bundle over $B$, as claimed.

Finally, consider the fiber $A_1 = A \times \{1\}$ of $M_{\psi_n}$.  This fiber is primitive
as an element of
\[ H_2(M_{\psi_n},\partial M_{\psi_n}) \cong H^1(M_{\psi_n}) \cong H^1(B) \cong \Z, \]
and hence generates it, because it has a single transverse point of intersection with the closed curve
\[ (0, -\tfrac{n\pi}{2}) \times [0,1] \subset \frac{A\times[0,1]}{(x,1)\sim(\psi_n(x),0)} = M_{\psi_n}. \]
(We note that this curve is closed because $\psi_n(0,-\frac{n\pi}{2}) = (0,-\frac{n\pi}{2})$.)  We orient the components of
\[ \partial A_1 = \big(\{\pm 1\} \times (\R/2\pi\Z)\big) \times \{1\} \]
as the boundary of $A_1$.  Then these components are isotopic to each other as oriented curves in the torus $\partial M_{\psi_n}$, because $\lambda = \big( \{+1\} \times (\R/2\pi\Z) \big) \times \{1\}$ is identified with $\big(\{-1\} \times (\R/2\pi\Z) \big) \times \{0\}$ in an orientation-reversing way.  In particular $\lambda$ is a rational longitude for $M_{\psi_n}$, and $\partial A_1$ is homologous in $\partial M_{\psi_n}$ to $2\lambda$ as claimed.
\end{proof}

The twisted $I$-bundle over the Klein bottle is depicted as a mapping torus in Figure~\ref{fig:klein-bottle-nbhd}.  With this construction at hand, we now study its $\SU(2)$ character variety.

\begin{proposition} \label{prop:klein-bottle-pillowcase}
Let $N$ be the twisted $I$-bundle over the Klein bottle, with rational longitude $\lambda_0$.  Then there is a unique peripheral curve $\mu_0 \subset \partial N$ with Dehn filling $N(\mu_0) \cong \RP^3\#\RP^3$, and $\mu_0$ is dual to $\lambda_0$.  Every other $\SU(2)$-abelian Dehn filling of $N$ has cyclic fundamental group, namely $\Z$ or $\Z/4k\Z$ for some integer $k \geq 1$.

Viewing $\SU(2)$ as the unit quaternions, every representation
\[ \rho: \pi_1(N) \to \SU(2) \]
is conjugate to one with $(\rho(\mu_0),\rho(\lambda_0))$ equal to either $(1,\pm1)$ or $(-1, e^{it})$ for some $t \in \R/2\pi\Z$, and every such value is realized by some $\rho$.  The image of $\rho$ is non-abelian if and only if $\rho(\lambda_0) \neq \pm1$.
\end{proposition}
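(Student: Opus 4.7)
The plan is to realize $N$ via the mapping torus description $N \cong M_{\psi_0}$ from Lemma~\ref{lem:klein-bottle-mapping-torus}, giving the presentation
\[
\pi_1(N) \cong K = \langle t,\theta \mid t\theta t^{-1}\theta \rangle
\]
of the Klein bottle group, with the peripheral subgroup $\pi_1(\partial N) \subset \pi_1(N)$ being the $\Z^2$ generated by $\theta$ (a boundary circle of the annulus fiber) and $t^2$ (the mapping torus loop traversed twice). A direct calculation using $t\theta t^{-1}=\theta^{-1}$ shows that $t^2$ is central in $K$, which explains why $\theta$ and $t^2$ commute. Since $H_1(N) \cong \Z \oplus \Z/2\Z$ with $t$ generating the free summand and $\theta$ generating the $2$-torsion, the rational longitude is $\lambda_0 = \theta$ (up to sign).

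To identify $\mu_0$, I take $\mu_0 = t^2$ and use the centrality of $t^2$ to compute
\[
\pi_1(N(t^2)) = K/\langle t^2 \rangle \cong \Z \rtimes \Z/2\Z,
\]
where $\Z/2\Z$ acts by inversion; this is the infinite dihedral group $D_\infty \cong \pi_1(\RP^3\#\RP^3)$, and identifying $N(t^2) \cong \RP^3\#\RP^3$ follows from geometrization (the filling is a small Seifert fibered space with this fundamental group). For the uniqueness of $\mu_0$ and the cyclic classification of the remaining $\SU(2)$-abelian fillings, I would treat each primitive slope $\mu = t^{2a}\theta^b$ with $\gcd(a,b)=1$: when $b$ is odd, combining $t^{2a}\theta^b = 1$ with the Klein-bottle relation collapses the quotient to a cyclic group of order $4|a|$ (or $\Z$ when $a = 0$); when $b$ is even with $(a,b)\neq (\pm 1, 0)$, the quotient becomes a binary dihedral or prism-type group (for instance $Q_8$ when $(a,b) = (1, \pm 2)$), each of which carries a standard non-central $\SU(2)$-representation and is therefore not $\SU(2)$-abelian.

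Finally, for the classification of representations $\rho: K \to \SU(2)$, write $T = \rho(t)$ and $S = \rho(\theta)$. The defining relation becomes $TST^{-1} = S^{-1}$, and I would split into cases according to whether $S$ is central. If $S \in \{\pm 1\}$ the relation is vacuous, $\rho$ factors through $K^{ab}$, and $\rho(\lambda_0) = S = \pm 1$. If $S \notin \{\pm 1\}$, conjugate so that $S = \mathrm{diag}(e^{is}, e^{-is})$ with $s \in (0,\pi)$; then the relation forces $T$ to be off-diagonal in $\SU(2)$, which forces $T^2 = -I$, so $\rho(\mu_0) = -1$, and further diagonal conjugation normalizes $T$ so that $\rho(\lambda_0) = e^{is}$. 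The image is non-abelian precisely when $S$ and $T$ fail to commute, equivalently when $S \notin \{\pm 1\}$, i.e.\ when $\rho(\lambda_0) \neq \pm 1$. The main obstacle I anticipate is the Dehn-filling bookkeeping: one has to identify each $b$-even filling as a specific non-abelian finite group admitting a standard non-central $\SU(2)$-embedding, in order to rule it out from the $\SU(2)$-abelian list and single out the advertised cyclic groups $\Z$ and $\Z/4k\Z$.
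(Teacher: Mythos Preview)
Your overall strategy matches the paper's: both use the presentation $\pi_1(N)=\langle a,b\mid aba^{-1}=b^{-1}\rangle$ with peripheral subgroup $\langle a^2,b\rangle$ (your $t,\theta$ are the paper's $a,b$), analyse the fillings slope by slope, and classify the $\SU(2)$ representations via the relation $\rho(a)\rho(b)\rho(a)^{-1}=\rho(b)^{-1}$. Two minor differences: the paper identifies $N(\mu_0)\cong\RP^3\#\RP^3$ by exhibiting two embedded M\"obius bands in $N$ whose boundaries are copies of $\mu_0$, rather than by computing $\pi_1$ and invoking geometrization; and the paper splits the representation analysis on whether $\rho(a)=\pm1$, whereas you split on $\rho(\theta)=\pm1$. (Your claim that ``$\rho$ factors through $K^{\mathrm{ab}}$'' when $S=\pm1$ is not literally true---$T$ is then unconstrained---but the image is cyclic, which is all you actually use.)

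There is, however, a genuine error in your Dehn-filling dichotomy. The claim that ``$b$ odd $\Rightarrow$ cyclic of order $4|a|$'' holds only for $|b|=1$. For example, with $(a,b)=(1,3)$ the relations $t\theta t^{-1}=\theta^{-1}$ and $t^{2}\theta^{3}=1$ force $\theta^{6}=1$ and $t^{2}=\theta^{3}$, yielding the dicyclic group $\mathrm{Dic}_{3}$ of order $12$, not $\Z/4\Z$; this group embeds non-abelianly in $\SU(2)$, so that filling is \emph{not} $\SU(2)$-abelian. Your odd/even split therefore misclassifies every odd $|b|\geq3$, and your argument as written does not establish the proposition. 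The paper's split is instead $|b|=1$ versus $|b|\geq2$: for $|b|=1$ one gets the cyclic groups exactly as you computed, and for each $|b|\geq2$ the paper simply writes down an explicit non-abelian representation (send $a\mapsto j$ and $b\mapsto e^{i\pi/b}$ when $a$ is odd, or $b\mapsto e^{2\pi i/b}$ when $a$ is even and $|b|\geq3$), never needing to identify the quotient group. This direct construction is both shorter and dissolves the ``obstacle'' you anticipated.
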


\begin{proof}
Taking $n=0$ in Lemma~\ref{lem:klein-bottle-mapping-torus}, the mapping torus $M_{\psi_0} = M_\phi \cong N$ has fundamental group
\[ \pi_1(M_\phi) \cong \pi_1(B) \cong \langle a,b \mid aba^{-1} = b^{-1} \rangle, \]
where $a$ is identified with the section
\[ (0,0) \times [0,1] \subset \frac{A\times [0,1]}{(x,1)\sim(\phi(x),0)} \]
of $M_\phi \to S^1$, and $b$ is identified with a core circle $c \times \{\frac{1}{2}\}$ of one of the annulus fibers.  Then $b$ is isotopic in $M_\phi$ to the rational longitude $\lambda_0 \subset \partial M_\phi$, while $a^2$ is isotopic to a dual peripheral curve
\[ \mu_0 = \{(1,0),(-1,0)\} \times [0,1] \subset M_\phi. \]
Thus $\pi_1(M_\phi)$ has peripheral subgroup $\langle \mu_0,\lambda_0 \rangle = \langle a^2, b\rangle$.

We first claim that the Dehn filling $N(\mu_0)$ is $\RP^3\#\RP^3$.  Indeed, by viewing $N$ as the mapping torus of $\phi: A \to A$, one can see that the annuli
\[ \left( [-1,1] \times \{\theta\} \right) \times [0,1] \subset A \times [0,1], \qquad \theta=0\text{ or }\pi \]
give rise to a pair of M\"obius bands in $N$, with tubular neighborhoods
\[ \frac{([-1,1] \times I) \times [0,1]}{(x,1) \sim (\phi(x),0)}, \qquad I = (-\tfrac{\pi}{2},\tfrac{\pi}{2}) \text{ or } (\tfrac{\pi}{2},\tfrac{3\pi}{2}),\]
whose boundaries are parallel copies of $\mu_0$.  The meridional disks in the Dehn filling solid torus complete each of these M\"obius bands to real projective planes in $N(\mu_0)$, and their tubular neighborhoods to punctured copies of $\RP^3$, producing the desired identification $N(\mu_0) \cong \RP^3 \# \RP^3$.

Next, we will show that $\mu_0$ is unique.  Given any other slope $\alpha = \mu_0^p\lambda_0^q = a^{2p}b^q$ in $\partial N$, we must have $q \neq 0$ and $\gcd(p,q)=1$; we take $q \geq 1$ without loss of generality.  If $q=1$ then we compute that
\[ \pi_1(N(\alpha)) \cong \langle a,b \mid aba^{-1}=b^{-1},\ b=a^{-2p}\rangle \cong \Z/4|p|\Z, \]
which is cyclic of order $4|p|$ if $p \neq 0$, and is $\Z$ otherwise (corresponding to $N(\lambda_0) \cong S^1\times S^2$).  If $q \geq 2$ and $p$ is odd then we can define a non-abelian representation
\[ \pi_1(N(\alpha)) \cong \langle a,b \mid aba^{-1} = b^{-1},\ a^{2p}b^q=1 \rangle \to \SU(2)\]
by sending $a\mapsto j$ and $b \mapsto e^{i\pi/q}$, so $N(\alpha)$ is not $\SU(2)$-abelian.  Similarly if $q \geq 3$ and $p$ is even then we can send $a \mapsto j$ and $b \mapsto e^{i\cdot 2\pi/q}$, and this is also non-abelian; the case where $q=2$ and $p$ is even does not occur because $p$ and $q$ are coprime.  Thus every $\SU(2)$-abelian Dehn filling of $N$ other than $N(\mu_0) \cong \RP^3\#\RP^3$ has fundamental group $\Z$ or $\Z/4|p|\Z$ for some $p$.

Finally, we consider an arbitrary representation $\rho: \pi_1(M_\phi) \to \SU(2)$, which must satisfy $\rho(aba^{-1}) = \rho(b^{-1})$.  If $\rho(a) = \pm1$ then $\rho$ is reducible and $\rho(b) = \rho(b^{-1})$ implies that $\rho(\lambda_0) = \rho(b) = \pm1$, while $\rho(\mu_0) = \rho(a^2) = 1$.  Otherwise up to conjugation we have $\rho(a) = e^{js}$ for some $s \not\in \pi\Z$; the relation $\rho(aba^{-1}) = \rho(b^{-1})$ implies that $\rho(a) = \pm j$ and that $\rho(b)$ has zero $j$-component, so up to another conjugation we can further arrange that $\rho(a) = j$ and $\rho(b) = e^{it}$ for some $t$, and any value of $t$ works.  In this case we have $\rho(\mu_0) = \rho(a^2) = -1$ and $\rho(\lambda_0) = \rho(b) = e^{it}$, and $\rho$ is irreducible unless $\rho(b) = e^{it}$ commutes with $\rho(a)=j$, i.e., unless $\rho(b)=\pm1$.
\end{proof}

\subsection{Pinching maps for rational longitudes of order 2}

In this subsection we will construct degree-1 maps from compact manifolds $M$ with torus boundary onto the twisted $I$-bundle over the Klein bottle.  In contrast to Proposition~\ref{prop:solid-torus-pinch}, which only works when the rational longitude of $M$ is nullhomologous, here we require the rational longitude to have order $2$.

\begin{proposition} \label{prop:klein-bottle-pinch}
Let $M$ be a compact, oriented $3$-manifold with torus boundary, and suppose that the rational longitude $\lambda_M \subset \partial M$ has order $2$ in $H_1(M)$.  Then there is a degree-1 map
\[ f: M \to N, \]
where $N$ is the twisted $I$-bundle over the Klein bottle, such that $f$ restricts to a homeomorphism $\partial M \to \partial N$ sending $\lambda_M$ to a rational longitude $\lambda_N \subset \partial N$.
\end{proposition}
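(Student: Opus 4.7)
The plan is to adapt the construction of Proposition~\ref{prop:solid-torus-pinch}, using the annulus fiber $A_1 \subset N$ (from Lemma~\ref{lem:klein-bottle-mapping-torus}) in place of the meridional disk of a solid torus. To start, I would fix a homeomorphism $h\colon \partial M \to \partial N$ sending $\lambda_M$ to $\lambda_N$, and extend it to a collar neighborhood of $\partial M$ using the product structure.

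The hypothesis that $[\lambda_M]$ has order $2$ in $H_1(M)$ implies that $[2\lambda_M]$ vanishes in $H_1(M)$, so by the long exact sequence of $(M,\partial M)$ it lies in the image of $\partial\colon H_2(M,\partial M) \to H_1(\partial M)$. Thus $[2\lambda_M]$ is realized by the boundary of a properly embedded orientable surface $F \subset M$, which I arrange so that $\partial F$ consists of two parallel copies of $\lambda_M$ and $h$ identifies $\partial F$ with $\partial A_1$. Extending $h$ across $F$ requires a map $f|_F\colon F \to A_1$ agreeing with $h$ on $\partial F$; since $A_1 \simeq S^1$, the extension obstruction lies in $H^2(F,\partial F;\Z) \cong \Z$ and is the sum of the degrees with which the components of $\partial F$ wrap around $A_1$. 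Using the orientation analysis from the proof of Lemma~\ref{lem:klein-bottle-mapping-torus}---the two components of $\partial A_1$ have opposite signs in $H_1(A_1)$---this sum is zero, so the extension exists.

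Finally, to extend $f$ to all of $M$, I would cut $M$ along $F$ to obtain a $3$-manifold $M'$ and cut $N$ along $A_1$ to obtain the solid torus $V = A \times [0,1]$ as described in Lemma~\ref{lem:klein-bottle-mapping-torus}. The remaining task is to construct a degree-$1$ map $M' \to V$ compatible with the boundary data already defined, after which composing with the quotient $q\colon V \to N$ (which identifies $A \times \{0\}$ with $A \times \{1\}$ via $\phi$) yields the desired $f\colon M \to N$. This pinch $M' \to V$ can be built by finding a properly embedded disk $D \subset M'$ whose boundary on $\partial M'$ corresponds to the boundary of a meridional disk of $V$ under the maps already defined, then mapping $D$ to that meridional disk and extending across the remaining $3$-ball complement by Lemma~\ref{lem:pinch-sphere}. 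The main obstacle will be verifying the existence of such a disk $D$, which requires careful analysis of $M'$; it may be necessary to choose $F$ to be an annulus whenever possible so that $M'$ has torus boundary and the existence of $D$ follows from an application of Proposition~\ref{prop:solid-torus-pinch}.
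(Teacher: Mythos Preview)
Your outline follows the same architecture as the paper's proof, and you have correctly identified the crux: after mapping $\partial M \cup F$ into $N$, one must extend over the complement $M'$ by finding a disk $D \subset M'$ whose boundary is sent to a meridian of the solid torus $V = N \setminus N(A_1)$.  However, you have not resolved this obstacle, and the two concrete suggestions you offer do not work.  First, there is no reason for $M'$ to contain a properly embedded disk whose boundary has the required image; the exterior of a surface in a general $3$-manifold need not contain any compressing disks at all.  Second, restricting to the case where $F$ is an annulus is not available in general, since the minimal-genus rational Seifert surface for $\lambda_M$ can have arbitrarily large genus.

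The paper supplies two ideas that you are missing.  The first is to \emph{stabilize} $F$: one chooses an arc $\alpha \subset F$ connecting the two boundary components, forms a closed curve $c$ in the boundary of the exterior $E_F$ from the two push-offs $\alpha_\pm$ together with arcs in $\partial M$, and then tubes $F$ along a push-off $c'$ of $c$ to obtain a new surface $F'$ with $g(F') = g(F)+1$.  The annulus $c \times [-1,0]$ minus its intersection with the tube is then a genuine properly embedded \emph{disk} $D$ in the exterior $E_{F'}$, with $\partial D$ meeting both copies of $F'$ and $\partial M$ in the required pattern.  The second idea is that even with this disk in hand, the image $f(\partial D)$ need not bound a disk in $V$: it is only a curve of some a priori unknown slope on $\partial V$.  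The paper exploits the fact (Lemma~\ref{lem:klein-bottle-mapping-torus}) that $N$ can be realized as the mapping torus of $\psi_n = \tau^n \circ \phi$ for \emph{any} integer $n$; changing $n$ changes the slope of $f(\partial D)$ by Dehn twists along the core of $A \times \{\epsilon\}$, so for a suitable $n$ the curve $f(\partial D)$ becomes nullhomologous in $V$ and hence bounds a disk there.  Only then can one extend $f$ over $D$ and finish with Lemma~\ref{lem:pinch-sphere}.
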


\begin{proof}
Using Lemma~\ref{lem:klein-bottle-mapping-torus}, we realize $N$ as the mapping torus of some self-diffeomorphism
\[ \psi_n: A \to A \]
of the annulus; we will choose the integer $n\in\Z$ later.  We will also let
\[ F \subset M \]
be a connected, properly embedded rational Seifert surface, whose boundary is two disjoint copies $\lambda^0_M, \lambda^1_M \subset \partial M$ of the rational longitude for $M$.  We will construct the map $f$ in stages: first we define it on $\partial M$, then we extend it to a rational Seifert surface $F'$ constructed by stabilizing $F$, and then we extend it across the remainder of $M$.

This last step requires substantially more care than did the pinching maps onto solid tori in Proposition~\ref{prop:solid-torus-pinch}: letting $M_0$ denote the remaining portion of $M$, we will want to send $M_0$ into $N$ minus a neighborhood of an annulus fiber, i.e., a solid torus.  In order to extend our initial map $\partial M_0 \to S^1 \times S^1$ to $M_0 \to S^1\times D^2$, we must arrange for some curve $\gamma \subset \partial M_0$ that is nullhomologous in $M_0$ to be sent to $\{\pt\} \times S^1$, so that we can collapse a surface in $M_0$ with boundary $\gamma$ to $\{\pt\} \times D^2$.  By contrast, the target in the analogous step of Proposition~\ref{prop:solid-torus-pinch} was a solid torus minus a disk fiber, which is a ball, and we could just apply Lemma~\ref{lem:pinch-sphere} to extend $\partial M_0 \to S^2$ to $M_0 \to D^3$ without any extra hypotheses.

We fix points $p_0 \in \lambda^0_M$ and $p_1 \in \lambda^1_M$, and a properly embedded, oriented arc $\alpha \subset F$ from $p_0$ to $p_1$.  Identifying a closed tubular neighborhood of $F$ as $F\times [-1,1] \subset M$, with $F = F \times \{0\}$, and letting 
\[ E_F = M \setminus \big(F \times(-1,1)\big) \]
be the exterior of $F$, we build a closed curve $c \subset \partial E_F$ as the union of the oriented arcs
\[ \alpha_{\pm} = \alpha \times \{\pm1\} \subset F \times \{\pm1\} \]
with a pair of arcs in $\partial M \cap E_F$ from $p_1 \times \{1\}$ to $p_0 \times \{-1\}$, and from $p_1\times\{-1\}$ to $p_0 \times \{1\}$.  See the top row of Figure~\ref{fig:rational-surface-curve}.
\begin{figure}
\begin{tikzpicture}
\begin{scope}
\foreach \p in {(1.5,3.25),(1.5,-1)} {\draw \p -- ++(3,0); }
\draw (0,4) -- ++(3,0);
\draw (0,-0.25) -- ++(1.45,0) ++(0.1,0) -- ++(1.45,0);
\foreach \i/\y in {0/1,1/2.75} {
  \shade[left color=gray, right color=white, opacity=0.5, shading angle=145] (0,\y) -- ++(1.5,-0.75) -- ++(3,0) -- coordinate[midway] (alpha\i) ++(-1.5,0.75) -- ++(-3,0);
  \draw[very thick] (0,\y) node[left,inner sep=1pt] {$\lambda^{\i}_M$} -- coordinate[midway] (p\i) ++(1.5,-0.75) -- ++(3,0) ++(-1.5,0.75) -- ++(-3,0);
  \draw[fill=black] (p\i) circle (0.05) node[below left, inner sep=1pt] {\small$p_{\i}$};
  \node[right] at (alpha\i) {$\alpha$};
}
\begin{scope}[every path/.style={ultra thick}]
  \draw[->] (p0) -- ++(1.5,0);
  \draw (p0) -- ++(3,0);
  \draw (p1) ++(3,0) -- ++(-3,0);
  \draw[->] (p1) ++(3,0) -- ++(-1.5,0);
\end{scope}
\draw (0,-0.25) -- ++(0,4.25) -- ++(1.5,-0.75) -- ++(0,-4.25) -- ++(-1.5,0.75);
\end{scope}
\begin{scope}[xshift=6.5cm]
\foreach \p in {(1.5,3.25),(1.5,-1)} {\draw \p -- ++(3,0); }
\draw (0,4) -- ++(3,0);
\draw (0,-0.25) -- ++(1.45,0) ++(0.1,0) -- ++(1.45,0);
\foreach \j/\dy in {m/-0.25,p/0.25} {
  \foreach \i/\y in {0/1,1/2.75} {
    \shade[left color=gray, right color=white, opacity=0.5, shading angle=145] (0,\y)  ++(0,\dy) -- ++(1.5,-0.75) -- ++(3,0) -- coordinate[midway] (alpha\i) ++(-1.5,0.75) -- ++(-3,0);
    \draw[thin] (0,\y) ++ (0,\dy) -- coordinate[midway] (p\i) ++(1.5,-0.75) -- ++(3,0) ++(-1.5,0.75) -- ++(-3,0);
  }
  \begin{scope}[every path/.style={ultra thick,blue}]
    \draw (p0) coordinate (p0\j) -- ++(3,0);
    \draw (p1) coordinate (p1\j) ++(3,0) -- ++(-3,0);
  \end{scope}
}
\foreach \y in {1,2.75} {
  \draw[thin,fill=gray, fill opacity=0.6] (0,\y) ++(0,0.25) -- ++(1.5,-0.75) coordinate (nw) -- ++(0,-0.5) -- ++(-1.5,0.75) -- cycle;
  \shade[left color=gray, right color=white, opacity=0.6] (nw) rectangle ++(3,-0.5);
  \draw[thin] (nw) -- ++(3,0) ++(0,-0.5) -- ++(-3,0);
}
\draw (0,-0.25) -- ++(0,4.25) -- coordinate[midway] (topmid) ++(1.5,-0.75) -- ++(0,-4.25) -- ++(-1.5,0.75) coordinate[midway] (botmid);
\begin{scope}[every path/.style={ultra thick,blue}]
  \draw (p0p) -- coordinate[midway] (midarrow) (p1m);
  \draw (p0m) -- coordinate[pos=0.3] (botarrow) (botmid);
  \draw (p1p) -- coordinate[pos=0.7] (toparrow) (topmid);
  \draw [->] (p1m) -- (midarrow);
  \draw [->] (botmid) -- (botarrow);
  \draw [->] (p1p) -- (toparrow);
  \path (p1p) ++(3,0) node[right] {$c$};
\end{scope}
\end{scope}
\begin{scope}[yshift=-6cm]
\foreach \p in {(1.5,3.25),(1.5,-1)} {\draw \p -- ++(3,0); }
\draw (0,4) -- ++(3,0);
\draw (0,-0.25) -- ++(1.45,0) ++(0.1,0) -- ++(1.45,0);

\foreach \j/\dy in {m/-0.25,p/0.25} {
  \foreach \i/\y in {0/1,1/2.75} {
    \path (0,\y) ++ (0,\dy) -- coordinate[midway] (p\i\j) ++(1.5,-0.75);
  }
}
\path (0,-0.25) -- ++(0,4.25) -- coordinate[midway] (topmid) ++(1.5,-0.75) -- ++(0,-4.25) -- ++(-1.5,0.75) coordinate[midway] (botmid);
\path (p0p) -- coordinate[midway] (midarrow) (p1m);
\path (p0m) -- coordinate[pos=0.3] (botarrow) (botmid);
\path (p1p) -- coordinate[pos=0.7] (toparrow) (topmid);

\begin{scope}[every path/.style={thick,blue}]
  \draw (botmid) -- (p0m) -- ++(3,0);
  \draw[red, ultra thick] (botmid) ++ (0.25,0) -- ($(p0m)+(0.25,-0.25)$) -- ++(2.75,0);
\end{scope}

\foreach \i/\y in {0/1} {
  \shade[left color=gray, right color=white, opacity=0.75, shading angle=145] (0,\y) -- ++(1.5,-0.75) -- ++(3,0) -- coordinate[midway] (alpha\i) ++(-1.5,0.75) -- ++(-3,0);
  \draw (0,\y) -- coordinate[midway] (p\i) ++(1.5,-0.75) -- ++(3,0) ++(-1.5,0.75) -- ++(-3,0);
}

\begin{scope}[every path/.style={thick,blue}]
  \draw (p1m) ++(3,0) -- ++(-3,0) -- (p0p) -- ++(3,0);
  \draw[red, ultra thick] (p0p) ++(3,0.25) -- ++(-2.75,0) -- ($(p1m)+(0.25,-0.25)$) -- ++(2.75,0);
\end{scope}

\foreach \i/\y in {1/2.75} {
  \shade[left color=gray, right color=white, opacity=0.75, shading angle=145] (0,\y) -- ++(1.5,-0.75) -- ++(3,0) -- coordinate[midway] (alpha\i) ++(-1.5,0.75) -- ++(-3,0);
  \draw (0,\y) -- coordinate[midway] (p\i) ++(1.5,-0.75) -- ++(3,0) ++(-1.5,0.75) -- ++(-3,0);
}

\begin{scope}[every path/.style={thick,blue}]
  \draw (p1p) ++(3,0) -- ++(-3,0) -- (topmid);
  \draw[red, ultra thick] (p1p) ++(3,0.25) node[right, inner sep=1pt] {$c'_{\vphantom{!}}$} -- ++(-2.75,0) -- ($(topmid)+(0.25,0)$);
\end{scope}

\draw (0,-0.25) -- ++(0,4.25) -- coordinate[midway] (topmid) ++(1.5,-0.75) -- ++(0,-4.25) -- ++(-1.5,0.75) coordinate[midway] (botmid);
\end{scope}
\begin{scope}[xshift=6.5cm,yshift=-6cm]
\foreach \p in {(1.5,3.25),(1.5,-1)} {\draw \p -- ++(3,0); }
\draw (0,4) -- ++(3,0);
\draw (0,-0.25) -- ++(1.45,0) ++(0.1,0) -- ++(1.45,0);

\foreach \j/\dy in {m/-0.25,p/0.25} {
  \foreach \i/\y in {0/1,1/2.75} {
    \path (0,\y) ++ (0,\dy) -- coordinate[midway] (p\i\j) ++(1.5,-0.75);
  }
}
\path (0,-0.25) -- ++(0,4.25) -- coordinate[midway] (topmid) ++(1.5,-0.75) -- ++(0,-4.25) -- ++(-1.5,0.75) coordinate[midway] (botmid);
\path (p0p) -- coordinate[midway] (midarrow) (p1m);
\path (p0m) -- coordinate[pos=0.3] (botarrow) (botmid);
\path (p1p) -- coordinate[pos=0.7] (toparrow) (topmid);

\begin{scope}
  \draw[thin,blue] (botmid) -- (p0m) -- ++(3,0);
  \draw[thin] ($(botmid)+(0.55,0)$) to[bend right=25] ++(-0.3,0);
  \shade[left color=gray, right color=white, opacity=0.75] ($(botmid)+(0.55,0)$) to[bend left=25] ++(-0.3,0) -- ($(p0m)+(0.25,-0.45)$) arc (180:90:0.2) -- ++(2.55,0) -- ++(0,-0.3) -- ++(-2.3,0) arc (90:180:0.15) -- ($(botmid)+(0.55,0)$);
  \draw ($(botmid)+(0.55,0)$) to[bend left=25] ++(-0.3,0) -- ($(p0m)+(0.25,-0.45)$) arc (180:90:0.2) -- ++(2.55,0) coordinate (rightend) ++(0,-0.3) -- ++(-2.3,0) arc (90:180:0.15) -- ($(botmid)+(0.55,0)$);
  \path (rightend) ++ (0,-0.15) node[right, inner sep=1pt] {$F'$};
\end{scope}

\foreach \i/\y in {0/1} {
  \shade[left color=gray, right color=white, opacity=0.75, shading angle=145] (0,\y) -- ++(1.5,-0.75) -- ++(3,0) -- coordinate[midway] (alpha\i) ++(-1.5,0.75) -- ++(-3,0);
  \draw (0,\y) -- coordinate[midway] (p\i) ++(1.5,-0.75) -- ++(3,0) ++(-1.5,0.75) -- ++(-3,0);
}

\begin{scope}
  \draw[thin,blue] (p1m) ++(3,0) -- ++(-1.1,0) to[bend left=20] ++(-0.5,0) -- ++(-1.4,0) -- (p0p) -- ++(3,0);
  \shade[left color=gray, right color=white, opacity=0.75] (p0p) ++(3,0.25) -- ++(-2.55,0) arc (270:180:0.2) -- ($(p1m)+(0.25,-0.45)$) arc (180:90:0.2) -- ++(2.55,0) -- ++(0,-0.25) -- ($(p1m)+(0.65,-0.5)$) arc (90:270:0.125) -- ($(p0p)+(3,0.5)$);
  \draw (p0p) ++(3,0.25) -- ++(-2.55,0) arc (270:180:0.2) -- ($(p1m)+(0.25,-0.45)$) arc (180:90:0.2) -- ++(2.55,0)
    ++(0,-0.25) -- ($(p1m)+(0.65,-0.5)$) arc (90:270:0.125) -- ($(p0p)+(3,0.5)$);
\end{scope}

\foreach \i/\y in {1/2.75} {
  \shade[left color=gray, right color=white, opacity=0.75, shading angle=145] (0,\y) -- ++(1.5,-0.75) -- ++(3,0) -- coordinate[midway] (alpha\i) ++(-1.5,0.75) -- ++(-3,0);
  \draw (0,\y) -- coordinate[midway] (p\i) ++(1.5,-0.75) -- ++(3,0) ++(-1.5,0.75) -- ++(-3,0);
}

\begin{scope}
  \draw[thin,blue] (p1p) ++(3,0) -- ++(-1.2,0) ++(-0.3,0) -- ++(-1.5,0) -- (topmid);
  \path (topmid) ++ (0.25,0) coordinate (tmoffset);
  \draw (tmoffset) to [bend left=25] ++(0.3,0);
  \path (tmoffset) to[bend right=25] ++(0.3,0) -- ($(p1p)+(0.55,0.7)$) arc (180:270:0.15) -- ++(2.3,0) -- ++(0,-0.3) -- ++(-1.2,0) -- ++(0,-0.5) coordinate (hole) to[bend left=25] ++(-0.3,0) -- ++(0,0.5) -- ++(-1.05,0) arc (270:180:0.2) -- (tmoffset);
  \draw[thin,fill=white] (hole) to[bend left=25] ++(-0.3,0) to[bend left=25] ++(0.3,0);
  \shade[left color=gray, right color=white, opacity=0.75, shading angle=145] (tmoffset) to[bend right=25] ++(0.3,0) -- ($(p1p)+(0.55,0.7)$) arc (180:270:0.15) -- ++(2.3,0) -- ++(0,-0.3) -- ++(-1.2,0) -- ++(0,-0.5) coordinate (hole) to[bend left=25] ++(-0.3,0) -- ++(0,0.5) -- ++(-1.05,0) arc (270:180:0.2) -- (tmoffset);
  \draw (tmoffset) to[bend right=25] ++(0.3,0) -- ($(p1p)+(0.55,0.7)$) arc (180:270:0.15) -- ++(2.3,0) ++(0,-0.3) -- ++(-1.2,0) -- ++(0,-0.5) ++(-0.3,0) -- ++(0,0.5) -- ++(-1.05,0) arc (270:180:0.2) -- (tmoffset);
\end{scope}

\draw (0,-0.25) -- ++(0,4.25) -- coordinate[midway] (topmid) ++(1.5,-0.75) -- ++(0,-4.25) -- ++(-1.5,0.75) coordinate[midway] (botmid);
\end{scope}
\end{tikzpicture}
\caption{Top left: the rational Seifert surface $F \subset M$.  Top right: the curve $c$ in the boundary of the exterior $E_F$.  Bottom left: the push-off $c' \subset M$ of the curve $c \subset \partial E_F$.  Bottom right: stabilizing $F$ to get a new rational Seifert surface $F' \subset M$.}
\label{fig:rational-surface-curve}
\end{figure}
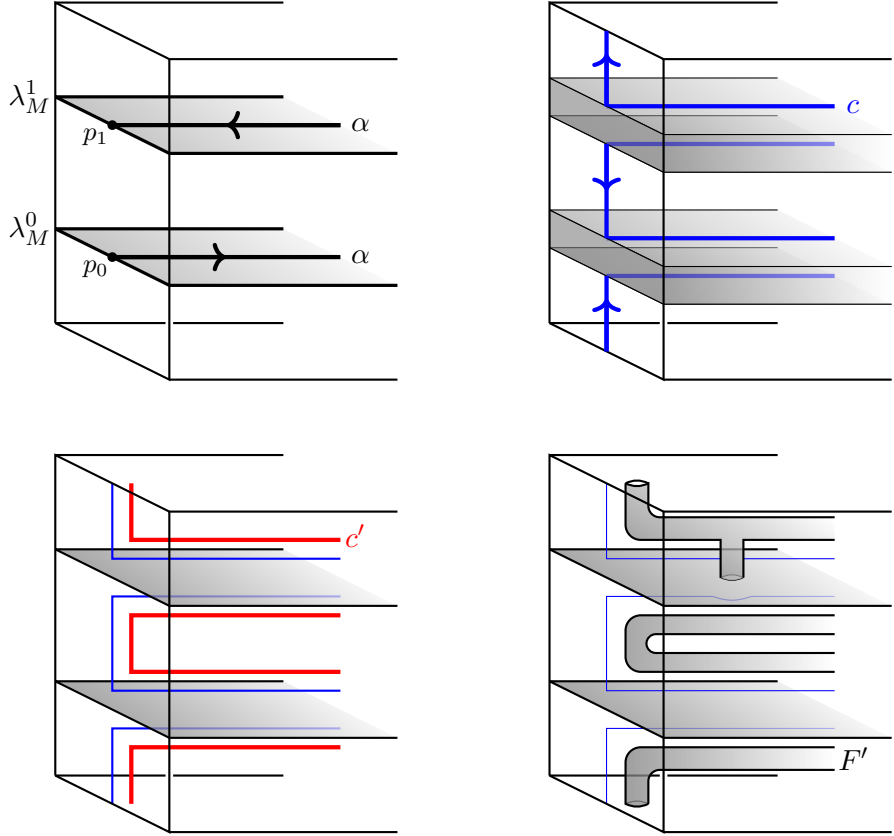

Next, we take a collar neighborhood $\partial E_F \times [-1,0] \subset E_F$ of the boundary of $E_F$, which we identify in these coordinates as $\partial E_F \times \{0\}$.  Then $c = c\times \{0\}$ and $c' = c\times \{-1\}$ cobound an annulus in $E_F$, namely the product $c\times[-1,0]$.  We take an arc $\beta$ connecting $c'$ to $F$ in the interior of $M$, chosen so that $\beta$ intersects the annulus $c \times [-1,0]$ in a separating arc.  Then we stabilize $F$ to get a new rational Seifert surface $F'$, with $g(F') = g(F)+1$, by attaching the boundary of a small tubular neighborhood of $c' \cup \beta$, as shown in the bottom row of Figure~\ref{fig:rational-surface-curve}; we also perturb the arc $\alpha_- \subset F\times\{-1\}$ slightly so that it avoids this neighborhood.  The end result is that we have a properly embedded disk $D$ in the exterior $E_{F'} \cong M \setminus \big(F'\times(-1,1)\big)$ of $F'$, consisting of the annulus $c\times[-1,0] \subset E_F$ minus a neighborhood of the arc $\beta$.  The intersection
\[ \partial D \cap \partial M = c \cap \partial M \]
consists of the two chosen arcs from $p_1 \times \{\pm1\}$ to $p_0 \times \{\mp1\}$, and the rest of $\partial D$ consists of a pair of properly embedded arcs
\begin{align*}
\alpha'_+ \times \{+1\} &\subset F' \times \{+1\}, \\
\alpha'_- \times \{-1\} &\subset F' \times \{-1\}
\end{align*}
from $p_1 \times \{\pm1\} \in \lambda^1_M \times \{\pm1\}$ to $p_0 \times \{\pm1\} \in \lambda^0_M \times \{\pm1\}$.

We are now ready to construct the desired map $f: M \to N$, where $N$ is the mapping torus
\[ M_{\psi_n} = \frac{A \times [0,1]}{(x,1) \sim (\psi_n(x),0)} \]
as described in Lemma~\ref{lem:klein-bottle-mapping-torus}.  We start by choosing a map
\[ g: (F',\partial F') \to (A, \partial A) \]
as follows: we choose an identification of $\partial F'$ with the two components of $\partial A$, and then extend this by sending the arc $\alpha'_- \subset F'$ homeomorphically onto some properly embedded arc $\gamma$ connecting the components of $\partial A$.  We extend this to collar neighborhoods of each, getting a partially defined homeomorphism
\[ g: N(\partial F' \cup \alpha'_-) \xrightarrow{\cong} N(\partial A \cup \gamma) \]
as shown in Figure~\ref{fig:seifert-to-annulus}.  This sends the circle
\[ \partial\left( N(\partial F' \cup \alpha'_-) \right) \setminus \partial F' \]
homeomorphically to a circle in $A$ that bounds a disk, and this circle bounds the portion of $F'$ on which $g$ has not yet been defined, so we now use Lemma~\ref{lem:pinch-sphere} to extend $g$ to the rest of $F'$.
\begin{figure}
\begin{tikzpicture}
\path (-2,0.35) coordinate (lhole) ++(2,0) coordinate (rhole);
\begin{scope}
\clip (lhole) -- ++(0,-0.5) -- ++(3,0) -- ++(0,3) -- ++(-3,0) -- cycle;
\draw[line width=4pt,blue] (lhole) to[bend left=30] ++(1,0);
\draw[line width=4pt,blue] (rhole) to[bend left=30] ++(1,0);
\end{scope}
\draw[fill=gray!20, fill opacity=0.5] (rhole) to[bend right=30] ++(1,0) -- ++(0,2) arc (0:180:1.5) -- ++(0,-2) to[bend right=30] ++(1,0) -- ++(0,0.75) arc (180:0:0.5) -- ++(0,-0.75);
\begin{scope}
\clip (lhole) ++(-0.4pt,0) -- ++(0,-0.5) -- ++(3,0) -- ++(0.8pt,0) -- ++(0,3) -- ++(-3,0) -- ++(-0.4pt,0) -- cycle;
\draw[line width=4pt, blue] (lhole) ++(-0.1,0.05) to[bend right=30] ++(1.1,-0.05) -- ++(0,0.75) arc (180:0:0.5) node[midway,above,inner sep=0] {\small$\alpha'_-$} -- ++(0,-0.75) to[bend right=30] ++(1.1,0.05);
\begin{scope}
\draw (-1,2.6) coordinate (hole) to[bend right=60] ++(1,0);
\clip (hole) to[bend right=60] ++(1,0) -- ++(0,1) -- ++(-1,0) -- ++(0,-1);
\draw[fill=white] (hole) ++ (0,-0.25) to[bend left=60] ++(1,0) -- ++(0,-1) -- ++(-1,0) -- ++(0,1);
\draw (hole) to[bend right=60] ++(1,0);
\end{scope}
\end{scope}
\draw[->] (1.5,2) -- ++(1,0);
\draw[line width=4pt, blue, fill=gray!20, even odd rule] (5,2) coordinate (acenter) circle (0.75) circle (2);
\draw[line width=4pt, blue] (acenter) ++(-0.75,0) -- node[midway,above,inner sep=1pt] {\small$\gamma$} ++(-1.25,0);
\end{tikzpicture}
\caption{Constructing a degree-1 map $g: (F',\partial F') \to (A, \partial A)$.}
\label{fig:seifert-to-annulus}
\end{figure}
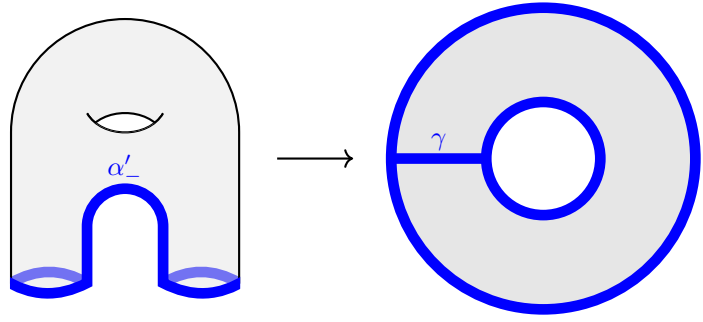

We now define $f: M \to N$ on the union of $\partial M$ and the rational Seifert surface $F'$ as follows.  We first choose a homeomorphism
\[ f|_{\partial M}: \partial M \to \partial N \]
that takes the two rational longitudes $\lambda^i_M$ to the components of
\[ \partial A \times \{1\} \subset \frac{A \times [0,1]}{(x,1) \sim (\psi_n(x),0)} \cong N. \]
Having done so, we use the above map $g: (F',\partial F') \to (A,\partial A)$ to set
\[ f(x) = (g(x),1) \in \frac{A \times [0,1]}{(x,1) \sim (\psi_n(x),0)} \]
for all $x \in F'$.  We can extend $f$ to a collar neighborhood of $\partial M$, and then to a neighborhood $F' \times [-1,1]$ of $F'$ such that
\begin{align*}
f(F' \times \{1\}) &\subset A \times \{\epsilon\}, \\
f(F' \times \{-1\}) &\subset A \times \{1-\epsilon\}
\end{align*}
where $\epsilon > 0$ is small (say $\epsilon=\frac{1}{10}$ for concreteness).
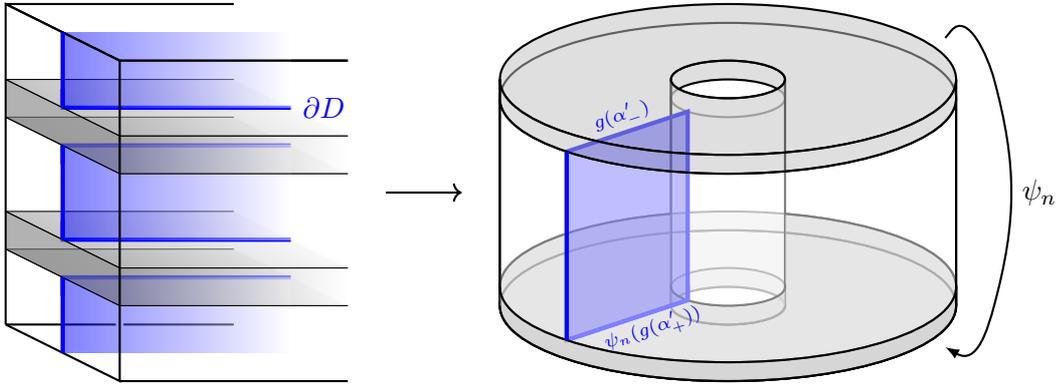
\begin{figure}
\begin{tikzpicture}
\begin{scope}
\foreach \p in {(1.5,3.25),(1.5,-1)} {\draw \p -- ++(3,0); }
\draw (0,-0.25) -- ++(1.45,0) ++(0.1,0) -- ++(1.45,0);
\foreach \j/\dy in {m/-0.25,p/0.25} {
  \foreach \i/\y in {0/1,1/2.75} {
    \shade[left color=gray, right color=white, opacity=0.5, shading angle=145] (0,\y)  ++(0,\dy) -- ++(1.5,-0.75) -- ++(3,0) -- coordinate[midway] (alpha\i) ++(-1.5,0.75) -- ++(-3,0);
    \draw[thin] (0,\y) ++ (0,\dy) -- coordinate[midway] (p\i) ++(1.5,-0.75) -- ++(3,0) ++(-1.5,0.75) -- ++(-3,0);
  }
  \begin{scope}[every path/.style={ultra thick,blue}]
    \draw (p0) coordinate (p0\j) ++(3,0) -- ++(-3,0) -- ++(0,\dy);
    \draw (p1) coordinate (p1\j) ++(3,0) -- ++(-3,0) -- ++(0,\dy);
  \end{scope}
}
\shade[left color=blue, right color=white, fill opacity=0.5] (p0m) rectangle ++(3,-1);
\foreach \y in {1} {
  \draw[thin,fill=gray, fill opacity=0.6] (0,\y) ++(0,0.25) -- ++(1.5,-0.75) coordinate (nw) -- ++(0,-0.5) -- ++(-1.5,0.75) -- cycle;
  \shade[left color=gray, right color=white, opacity=0.6] (nw) rectangle ++(3,-0.5);
  \draw[thin] (nw) -- ++(3,0) ++(0,-0.5) -- ++(-3,0);
}
\shade[left color=blue, right color=white, fill opacity=0.5] (p0p) rectangle ($(p1m)+(3,0)$);
\foreach \y in {2.75} {
  \draw[thin,fill=gray, fill opacity=0.6] (0,\y) ++(0,0.25) -- ++(1.5,-0.75) coordinate (nw) -- ++(0,-0.5) -- ++(-1.5,0.75) -- cycle;
  \shade[left color=gray, right color=white, opacity=0.6] (nw) rectangle ++(3,-0.5);
  \draw[thin] (nw) -- ++(3,0) ++(0,-0.5) -- ++(-3,0);
}
\shade[left color=blue, right color=white, fill opacity=0.5] (p1p) rectangle ++(3,1);
\draw (0,4) -- ++(3,0) (0,4) -- (1.5,3.25) -- ++(3,0);

\draw (0,-0.25) -- ++(0,4.25) -- coordinate[midway] (topmid) ++(1.5,-0.75) -- ++(0,-4.25) -- ++(-1.5,0.75) coordinate[midway] (botmid);
\begin{scope}[every path/.style={ultra thick,blue}]
  \draw (p0p) -- coordinate[midway] (midarrow) (p1m);
  \draw (p0m) -- coordinate[pos=0.3] (botarrow) (botmid);
  \draw (p1p) -- coordinate[pos=0.7] (toparrow) (topmid);
  \path (p1p) ++(3,0) node[right] {$\partial D$};
\end{scope}
\end{scope}
\draw[->] (5,1.5) -- ++(1,0);
\begin{scope}[xshift=9.5cm]
\draw (-3,2.75) arc (180:0:3 and 1);
\draw[fill=gray!50,fill opacity=0.5] (-0.75,2.75) arc (180:0:0.75 and 0.25) -- ++(0,0.25) arc (0:180:0.75 and 0.25) ++(0,-0.25);
\draw[fill=gray!50,fill opacity=0.5, even odd rule] (0,0) ellipse (3 and 1) ellipse (0.75 and 0.25);
\draw[fill=gray!50,fill opacity=0.5] (-3,0) ++(0,0.25) arc (180:0:3 and 1) -- ++(0,-0.25) arc (0:180:3 and 1);
\foreach \i in {-3,-0.75,0.75,3} { \draw (\i,0) -- ++(0,3); }
\draw (-0.75,2.75) arc (180:360:0.75 and 0.25);
\draw[fill=gray!50,fill opacity=0.5] (0.75,0) arc (0:180:0.75 and 0.25) -- ++(0,0.25) arc (180:0:0.75 and 0.25) ++(0,-0.25);
\draw[fill=white,fill opacity=0.5] (-0.75,3) -- ++(0,-3) arc (180:360:0.75 and 0.25) -- ++(0,3) arc (360:180:0.75 and 0.25);
\draw[fill=gray!50,fill opacity=0.5] (0.75,0) arc (360:180:0.75 and 0.25) -- ++(0,0.25) arc (180:360:0.75 and 0.25) ++(0,-0.25);
\begin{scope}
  \clip (-3.1,1.3) -- (-0.75,1.3) -- (-0.75,0.25) arc (180:360:0.75 and 0.25) -- (0.75,1.3) -- (3.1,1.3) -- (3.1,0.2) arc(360:180:3.1 and 1) -- cycle;
  \draw[fill=gray!50,fill opacity=0.5, even odd rule] (0,0.25) ellipse (3 and 1) ellipse (0.75 and 0.25);
\end{scope}
\draw[blue,fill=blue,fill opacity=0.5,ultra thick] (0,0.25) ++ (225:0.75 and 0.25) -- ++(0,2.5) -- coordinate[midway] (aminus) ++(225:2.25 and 0.75) -- ++(0,-2.5) -- coordinate[pos=0.67] (aplus) ++(45:2.25 and 0.75);
\draw[fill=gray!50,fill opacity=0.5, even odd rule] (0,3) ellipse (3 and 1) ellipse (0.75 and 0.25);
\draw[blue,ultra thick] (0,0.25) ++(225:2.97 and 0.99) -- ++(45:2.22 and 0.74) -- ++(0,0.2);
\draw[fill=white,fill opacity=0.5] (-3,2.75) -- ++(0,-2.75) arc (180:360:3 and 1) -- ++(0,2.75) arc (360:180:3 and 1);
\node[above,blue,inner sep=1.5pt,rotate=18.435] at (aminus) {\tiny$g(\alpha'_-)$};
\node[below,blue,inner sep=1.5pt,rotate=18.435] at (aplus) {\tiny$\psi_n(g(\alpha'_+))$};
\draw[fill=gray!50,fill opacity=0.5] (-3,0) ++(0,0.25) arc (180:360:3 and 1) -- ++(0,-0.25) arc (360:180:3 and 1);
\draw[looseness=1,-latex] (0,3) ++ (30:3.3 and 1.1) -- ++(75:0.3 and 0.1) to[out=50,in=105] (3.5,3) to[out=285,in=75] node[midway,right] {$\psi_n$} ++(0,-3) to[out=255,in=-30] ($(330:3.3 and 1.1)+(300:0.3 and 0.1)$) -- ++(120:0.3 and 0.1);
\draw[fill=gray!50,fill opacity=0.5] (-3,3) arc (180:360:3 and 1) -- ++(0,-0.25) arc (360:180:3 and 1) -- ++(0,0.25);
\draw[blue,ultra thick] (0,2.75) ++ (225:3 and 1) -- ++(0,-2.5);
\end{scope}
\end{tikzpicture}
\caption{The map $f: M \to N$, as partially defined on a neighborhood of $\partial M \cup F'$.}
\label{fig:partial-pinch-map}
\end{figure}%
This is illustrated in Figure~\ref{fig:partial-pinch-map}.

We note that so far the image of $f$, which has been defined on a neighborhood of $\partial M \cup F'$, is the union of
\[ A \times \left([0,\epsilon] \cup [1-\epsilon,1]\right) \]
and a neighborhood of $\partial N$.  The complement of that image is a solid torus $V \subset N$, and the properly embedded disk $D \subset E_{F'}$ has its boundary $\partial D$ sent to an essential curve in $\partial V$, consisting of
\begin{itemize}
\item one arc in each component of $\partial A \times [\epsilon,1-\epsilon]$;
\item the image $g(\alpha'_-)\times\{1-\epsilon\} = \gamma \times \{1-\epsilon\}$ of the arc $\alpha'_-\times \{-1\} \subset F'\times\{-1\}$;
\item the image $\psi_n(g(\alpha'_+)) \times \{\epsilon\}$ of the arc $\alpha'_+ \times \{+1\} \subset F'\times\{+1\}$.
\end{itemize}
  The curve $f(\partial D)$ may not bound a disk in $V$, but if we change the parameter $n$ in the monodromy $\psi_n$, then its intersection with $A \times \{\epsilon\}$ changes by the corresponding number of Dehn twists along the core of that annulus.  Thus by a suitable choice of $n$ we can arrange for $f(\partial D)$ to be nullhomologous in $V$, hence null-homotopic in $V$; we apply a further homotopy, supported away from $\partial M$, so that $f|_{\partial D}$ is a homeomorphism sending $\partial D$ to the boundary of a properly embedded disk in $V$.  We extend $f$ across $D$ by sending it homeomorphically to that disk, and then further extend $f$ to a collar neighborhood of $D$.

At this point we have defined $f$ on a neighborhood of $\partial M \cup F' \cup D$, and the boundary of the subdomain where $f$ remains undefined is sent to a $2$-sphere in $V \subset N$ that bounds a ball.  We thus apply Lemma~\ref{lem:pinch-sphere} again to extend $f$ to the rest of $M$, and this completes the proof.
\end{proof}

\section{Splicing knots in manifolds with $2$-torsion homology} \label{sec:splice-torsion}

In this section we study images of character varieties in the pillowcase to understand what happens when we splice the complements of knots in $3$-manifolds whose homology is $2$-torsion.

\subsection{The nullhomologous case} \label{ssec:splice-nullhomologous}

Our main result here is a generalization of \cite[Theorem~8.3]{zentner}, which describes the case where $Y_1 \cong Y_2 \cong S^3$, using the methods of \cite{lpcz}.

\begin{theorem} \label{thm:splice-rep}
Let $Y_1$ and $Y_2$ be closed, orientable $3$-manifolds such that $H_1(Y_1;\Z)$ and $H_1(Y_2;\Z)$ are both $2$-torsion, and let $K_1 \subset Y_1$ and $K_2 \subset Y_2$ be non-trivial nullhomologous knots with irreducible complements.  We splice their exteriors $E_{K_1} = Y_1 \setminus N(K_1)$ and $E_{K_2} = Y_2 \setminus N(K_2)$ to form a closed 3-manifold
\[ Y = E_{K_1} \cup_\partial E_{K_2}, \]
gluing the meridian and longitude $\mu_1$ and $\lambda_1$ in $\partial E_{K_1}$ to the longitude and meridian $\lambda_2$ and $\mu_2$ in $\partial E_{K_2}$, respectively.  Then there is a representation
\[ \rho: \pi_1(Y) \to \SU(2) \]
with non-abelian image.
\end{theorem}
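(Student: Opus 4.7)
I would argue by contradiction: assume that every $\SU(2)$-representation of $\pi_1(Y)$ has abelian image, and derive a contradiction. The strategy is to follow \cite[Theorem~8.3]{zentner} and the pillowcase analysis of \cite{lpcz} (which establish the analogous statement for knots in $S^3$), with the $2$-torsion machinery of Sections~\ref{sec:instanton} and \ref{sec:pillowcase} substituting for their integer-homology-sphere inputs.

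First, I would propagate the $\SU(2)$-abelianness of $Y$ to both $Y_1$ and $Y_2$. Since $K_i$ is nullhomologous in $Y_i$, the longitude $\lambda_i$ is nullhomologous in $E_{K_i}$, so Proposition~\ref{prop:solid-torus-pinch} yields a degree-one pinching map $E_{K_i} \to S^1 \times D^2$. Pinching $E_{K_2}$ alone while leaving $E_{K_1}$ fixed, and using the gluing condition $\mu_1 \sim \lambda_2$, produces a degree-one map $Y \to Y_1$; symmetrically one has $Y \to Y_2$. These induce surjections on $\pi_1$, so both $Y_i$ are also $\SU(2)$-abelian. I would also verify that each $E_{K_i}$ is boundary-incompressible: if not, irreducibility would force $E_{K_i} \cong S^1 \times D^2$, making $Y_i$ a lens space in which $K_i$ is the core of a Heegaard solid torus, which is incompatible with $K_i$ being both nullhomologous and non-trivial.

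With all hypotheses in place, Theorem~\ref{thm:2-torsion-zero-surgery} gives $I^{w_i}_*((Y_i)_0(K_i)) \neq 0$ for each $i$, and Theorem~\ref{thm:pillowcase-loop} then provides a topologically embedded, homologically essential curve $C_i \subset j(X(E_{K_i})) \subset \cP_i$. The gluing identifies the common torus character variety via the coordinate swap $(\alpha_1,\beta_1) \leftrightarrow (\beta_2,\alpha_2)$, so in common coordinates on $X(T^2) \cong S^2$ the curve $C_1$ separates the corner $(0,\pi)$ from $(\pi,\pi)$, while the transported $C_2$ separates $(\pi,0)$ from $(\pi,\pi)$. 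Together with the reducible arcs $\{\beta=0\}$ and $\{\alpha=0\}$ coming from the reducible characters of $E_{K_1}$ and transported $E_{K_2}$ respectively, a pillowcase pigeonhole argument along the lines of \cite{lpcz} forces $C_1$ and transported $C_2$ to meet at a point off both reducible arcs. At such a point we recover irreducible representations $\rho_i: \pi_1(E_{K_i}) \to \SU(2)$ that agree on $\pi_1(T)$ up to conjugation, and Seifert--van Kampen glues them into $\rho: \pi_1(Y) \to \SU(2)$ whose image contains that of the irreducible $\rho_1$ and is therefore non-abelian. This contradicts the standing assumption.

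The main obstacle I anticipate is the topological pigeonhole on the pillowcase: verifying that the two essential curves, together with the two reducible arcs, cannot be configured so as to avoid a common intersection point in the generic locus where both $\rho_i$ are irreducible. This is essentially the content of \cite{lpcz}, and I expect their argument to transfer with only superficial modification once the $2$-torsion non-vanishing inputs from Sections~\ref{sec:instanton} and \ref{sec:pillowcase} are in hand.
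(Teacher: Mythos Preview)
Your proposal is correct and follows essentially the same approach as the paper: reduce to the case where both $Y_i$ are $\SU(2)$-abelian via the degree-one pinch maps, invoke Theorem~\ref{thm:2-torsion-zero-surgery} and Theorem~\ref{thm:pillowcase-loop} to obtain essential curves in the cut-open pillowcase, and then intersect them after the coordinate swap. Your check of boundary-incompressibility is a good inclusion, since Theorem~\ref{thm:2-torsion-zero-surgery} requires it and the paper's proof leaves that step implicit.

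The only place where the paper is more explicit than your sketch is the intersection argument itself. Rather than appealing to a general pigeonhole on $X(T^2)\cong S^2$, the paper parametrizes each essential curve as a path $\gamma^\ell_t=(\alpha^\ell_t,\beta^\ell_t)$ in the rectangle $[0,\pi]\times[0,2\pi]$ with $\beta^\ell_0=0$, $\beta^\ell_1=2\pi$, uses Lemma~\ref{lem:corners-limit} to force $0<\alpha^1_t<\pi$ at the endpoints (this is where the $2$-torsion hypothesis on $H_1(Y_1)$ enters), and then observes that the transposed path $\tilde\gamma^2_t=(\beta^2_t,\alpha^2_t)$ separates the rectangle so that $\gamma^1_0$ and $\gamma^1_1$ lie in different components. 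This is a direct Jordan-arc argument rather than a citation of \cite{lpcz}, but it is exactly the mechanism you anticipate, and your identification of Lemma~\ref{lem:corners-limit} (via ``the $2$-torsion machinery of Sections~\ref{sec:instanton} and \ref{sec:pillowcase}'') as the needed input is on target.
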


\begin{proof}
Proposition~\ref{prop:solid-torus-pinch} gives us degree-1 maps
\[ Y \to E_{K_1}(\lambda_2) \cong Y_1 \quad\text{and}\quad Y \to E_{K_2}(\lambda_1) \cong Y_2, \]
which induce surjections $\pi_1(Y) \to \pi_1(Y_i)$ for $i=1,2$.  If there is some non-abelian representation $\pi_1(Y_i) \to \SU(2)$ then we can compose it with the surjection from $\pi_1(Y)$ to get the desired $\rho: \pi_1(Y) \to \SU(2)$.  Thus we may assume from now on that both $Y_1$ and $Y_2$ are $\SU(2)$-abelian.

Since neither $K_1$ nor $K_2$ is unknotted, we apply Theorem~\ref{thm:2-torsion-zero-surgery} to see that their zero-surgeries have non-trivial instanton homology: if $w_\ell \in H^2( (Y_\ell)_0(K_\ell); \Z)$ is Poincar\'e dual to a meridian of $K_\ell$, then
\[ I^{w_\ell}_*\big((Y_\ell)_0(K_\ell)\big) \neq 0 \]
for $\ell=1,2$.  Noting that each $Y_\ell$ is $\SU(2)$-abelian, Theorem~\ref{thm:pillowcase-loop} says that the pillowcase images
\[ j(X(E_{K_\ell})) \subset \cP \]
contain homologically essential loops.  This means that for $\ell=1,2$ we can find continuous paths
\[ \gamma^\ell_t = (\alpha^\ell_t,\beta^\ell_t): [0,1] \to [0,\pi] \times [0,2\pi] \]
such that
\begin{itemize}
\item $\beta^\ell_0 = 0$, $\beta^\ell_1 = 2\pi$, and $0 < \beta^\ell_t < 2\pi$ for $0 < t < 1$;
\item $0 < \alpha^\ell_t < \pi$ for $0 < t < 1$;
\item for each $t\in[0,1]$, there is a representation $\rho^\ell_t: \pi_1(E_{K_\ell}) \to \SU(2)$ such that
\begin{align*}
\rho^\ell_t(\mu_\ell) &= \begin{pmatrix} e^{i\alpha^\ell_t} & 0 \\ 0 & e^{-i\alpha^\ell_t} \end{pmatrix}, &
\rho^\ell_t(\lambda_\ell) &= \begin{pmatrix} e^{i\beta^\ell_t} & 0 \\ 0 & e^{-i\beta^\ell_t} \end{pmatrix};
\end{align*}
\item for $0 < t < 1$, each $\rho^\ell_t$ is irreducible, since $\rho^\ell_t(\lambda_\ell) \neq 1$.
\end{itemize}
Since $H_1(Y_1;\Z)$ is $2$-torsion, Lemma~\ref{lem:corners-limit} also tells us that
\begin{itemize}
\item $0 < \alpha^1_t < \pi$ for all $t \in [0,1]$,
\end{itemize}
since $(\alpha^1_0,\beta^1_0) = (\alpha^1_0,0)$ is a limit point of $(\alpha^1_t,\beta^1_t) \in j(X^\irr(E_{K_1}))$ as $t$ approaches $0$ from above, and likewise for $(\alpha^1_1,\beta^1_1)$.

Now if we let $\tau = \inf\{ t \in[0,1] \mid \beta^2_t = \pi \}$, so $0 < \tau < 1$, then the transposed path
\[ \tilde\gamma^2_t = (\beta^2_t,\alpha^2_t): [0,\tau] \to [0,\pi] \times [0,2\pi] \]
starts on the line $\{0\} \times [0,2\pi]$ and ends on the line $\{\pi\} \times [0,2\pi]$, with second coordinate $\alpha^2_t \in (0,\pi)$ for all $t \in (0,\tau]$.  Then $\tilde\gamma^2_t$ separates the rectangle $[0,\pi]\times[0,2\pi]$, with the subsets
\[ (0,\pi) \times \{0\} \quad\text{and}\quad (0,\pi) \times \{2\pi\} \]
in different path components of the complement of its image.  These subsets contain $\gamma^1_0 = (\alpha^1_0,\beta^1_0)$ and $\gamma^1_1 = (\alpha^1_1,\beta^1_1)$ respectively, so $\tilde\gamma^2$ must intersect the path $\gamma^1$ at some point
\[ \gamma^1_t = (\alpha^1_t,\beta^1_t), \quad 0 < t < 1. \]
Taking $\tilde{t} \in [0,\tau]$ so that $\gamma^1_t = \tilde\gamma^2_{\tilde{t}}$, it follows that
\begin{align*}
\rho^1_t(\mu_1) = \rho^2_{\tilde{t}}(\lambda_2) &= \begin{pmatrix} e^{i\alpha^1_t} & 0 \\ 0 & e^{-i\alpha^1_t} \end{pmatrix}, \\
\rho^1_t(\lambda_1) = \rho^2_{\tilde{t}}(\mu_2) &= \begin{pmatrix} e^{i\beta^1_t} & 0 \\ 0 & e^{-i\beta^1_t} \end{pmatrix}. 
\end{align*}
Thus $\rho^1_t$ and $\rho^2_{\tilde{t}}$ agree on the torus $\partial E_{K_1} = \partial E_{K_2}$ inside $Y$, and so they glue together to give a representation
\[ \rho: \pi_1(Y) \to \SU(2). \]
This representation restricts to $\pi_1(E_{K_1})$ as the irreducible $\rho^1_t$, where $0 < t < 1$, and its restriction to $\pi_1(E_{K_2})$ is likewise irreducible since $\rho(\lambda_2) = \rho(\mu_1) = \rho^1_t(\mu_1)$ is not the identity.  Thus $\rho$ is irreducible as well.
\end{proof}

\subsection{The homologically essential case} \label{ssec:splice-essential}

In this subsection, we consider what happens if we splice two knot complements where one of the knots is homologically essential.  We will ultimately prove the following.

\begin{theorem} \label{thm:splice-essential-rep}
Let $K_1 \subset Y_1$ and $K_2 \subset Y_2$ be knots in rational homology spheres such that $H_1(Y_1;\Z)$ and $H_1(Y_2;\Z)$ are both $2$-torsion.  Suppose that the exteriors $E_{K_1}$ and $E_{K_2}$ are irreducible, and that
\begin{itemize}
\item The rational longitude $\lambda_1 \subset \partial E_{K_1}$ has order $2$ in $H_1(E_{K_1};\Z)$;
\item $K_2$ is nullhomologous, with irreducible, boundary-incompressible complement.
\end{itemize}
We form a closed $3$-manifold
\[ Y = E_{K_1} \cup_{\partial} E_{K_2} \]
by splicing the exteriors along their boundaries so that $\mu_1 \sim \lambda_2$ and $\lambda_1 \sim \mu_2$.  Then there is a representation
\[ \rho: \pi_1(Y) \to \SU(2) \]
with non-abelian image.
\end{theorem}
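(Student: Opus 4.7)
I plan to argue by contradiction: supposing $Y$ is $\SU(2)$-abelian, I combine the Klein-bottle pinch of Proposition~\ref{prop:klein-bottle-pinch} with the pillowcase analysis of Proposition~\ref{prop:curve-in-pillowcase} to reach a topological contradiction.

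Apply Proposition~\ref{prop:klein-bottle-pinch} to $E_{K_1}$ (whose rational longitude $\lambda_1$ has order $2$) to produce a degree-1 pinch $f: E_{K_1} \to N$ with $f(\lambda_1) = \lambda_N$; by composing $f|_{\partial E_{K_1}}$ with a Dehn twist along $\lambda_N$ if needed, also arrange that $f(\mu_1) = \mu_N$, the distinguished dual curve from Proposition~\ref{prop:klein-bottle-pillowcase}. Gluing $f$ to the identity on $E_{K_2}$ yields a degree-1 map
\[ F: Y \longrightarrow N \cup_T E_{K_2}, \]
where the gluing torus identifies $\mu_N \sim \lambda_2$ and $\lambda_N \sim \mu_2$. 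Since $F$ surjects on $\pi_1$, the target is also $\SU(2)$-abelian. In coordinates $(\alpha_2,\beta_2)$ adapted to $\mu_2, \lambda_2$, Proposition~\ref{prop:klein-bottle-pillowcase} identifies the image $i_N^*(X(N))$ with $\{(0,0), (\pi,0)\} \cup \{(\alpha_2,\pi) : \alpha_2 \in [0,\pi]\}$. Any representation $\rho_2$ of $\pi_1(E_{K_2})$ with pillowcase image on the arc $\{\beta_2 = \pi\}$ must be irreducible, since $\lambda_2$ is nullhomologous in $E_{K_2}$ and so every abelian representation satisfies $\rho(\lambda_2) = 1$; pairing such a $\rho_2$ with the representation of $\pi_1(N)$ sending $(\mu_N,\lambda_N)$ to $(-1, e^{i\alpha_2})$ would produce a non-abelian representation of $\pi_1(N \cup_T E_{K_2})$, contradicting our assumption. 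Hence $i^*(X(E_{K_2}))$ is disjoint from $\{\beta_2 = \pi\}$, and in particular avoids $P = (0,\pi)$ and $Q = (\pi,\pi)$.

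On the other hand, the hypotheses on $K_2$ ensure that $(Y_2)_0(K_2)$ is irreducible, and the capped Seifert surface $\hat\Sigma$ of $K_2$ satisfies $w \cdot \hat\Sigma = 1$ for $w$ Poincar\'e dual to the meridian, so Theorem~\ref{thm:irreducible-nonzero} gives $I^w_*((Y_2)_0(K_2)) \neq 0$. Since $i^*(X(E_{K_2}))$ avoids $P$ and $Q$, Proposition~\ref{prop:curve-in-pillowcase} produces a topologically embedded curve $C \subset i^*(X(E_{K_2}))$ that is homologically essential in $X(T^2) \setminus \{P,Q\} \cong (0,1) \times S^1$. However, $C$ also lies in $X(T^2) \setminus \{\beta_2 = \pi\}$, which is $S^2$ minus a closed arc and hence an open disk; any loop in a disk is null-homotopic, hence null-homologous in the cylinder, contradicting the essentiality of $C$.

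The step I expect to need the most care is arranging the pinch map so that $f(\mu_1) = \mu_N$ exactly; if instead $f(\mu_1) = \mu_N\lambda_N^k$ for some $k \neq 0$, the critical arc on the pillowcase has slope $k$ rather than being horizontal, but its complement in $X(T^2) \cong S^2$ is still an open disk, so the same contradiction applies. The necessary Dehn-twist freedom should follow from the proof of Proposition~\ref{prop:klein-bottle-pinch}, in which the initial boundary homeomorphism is chosen freely before the monodromy parameter $n$ is adjusted to extend the map across the interior.
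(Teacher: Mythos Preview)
Your overall strategy matches the paper's closely: both argue by contradiction, pinch $E_{K_1}$ onto the twisted $I$-bundle $N$ via Proposition~\ref{prop:klein-bottle-pinch}, use the assumed $\SU(2)$-abelianness of $Y' = N \cup_\partial E_{K_2}$ to show that $i^*(X(E_{K_2}))$ misses the line $\{\beta_2 = \pi\}$, and then contradict the nonvanishing of $I^w_*((Y_2)_0(K_2))$. Your endgame (the essential curve from Proposition~\ref{prop:curve-in-pillowcase} would lie in a disk) is a nice repackaging but is equivalent to the paper's (the chain complex for $I^w_*$ is generated by exactly those characters with $\rho(\lambda_2)=-1$).

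There is, however, a genuine gap. You assert that ``the hypotheses on $K_2$ ensure that $(Y_2)_0(K_2)$ is irreducible'', but irreducibility and boundary-incompressibility of $E_{K_2}$ do not imply this; indeed the whole point of Theorem~\ref{thm:2-torsion-zero-surgery} and its cabling trick is to obtain $I^w_*((Y_2)_0(K_2)) \neq 0$ without any such hypothesis. The paper therefore first proves that $Y_2$ is $\SU(2)$-abelian: given a non-abelian $\rho: \pi_1(Y_2) \to \SU(2)$, one extends it to $\pi_1(Y')$ by setting $\rho(b)=1$ and letting $\rho(a)$ be a square root of $\rho(\lambda_2)$ in $\SU(2)$, contradicting the $\SU(2)$-abelianness of $Y'$. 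With $Y_2$ known to be $\SU(2)$-abelian, Theorem~\ref{thm:2-torsion-zero-surgery} applies directly.

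A secondary point: your fallback for $f(\mu_1)=\mu_0\lambda_N^k$ with $k$ odd does not work, since then the critical arc runs from $P=(0,\pi)$ to the corner $(\pi,0)$ rather than to $Q=(\pi,\pi)$, and its complement in the cylinder $X(T^2)\setminus\{P,Q\}$ still has the homotopy type of a cylinder. Your primary plan of arranging $k=0$ is in fact sound (one checks that $\psi_n|_{\partial A}$ is independent of $n$, so the boundary homeomorphism can be fixed before $n$ is chosen), but the paper takes a cleaner route: the further pinch $Y' \to N(\mu)$ forces $N(\mu)$ to be $\SU(2)$-abelian with $2$-torsion first homology, and Proposition~\ref{prop:klein-bottle-pillowcase} then forces $\mu=\mu_0$.
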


Unlike in Theorem~\ref{thm:splice-rep}, one key obstacle is that we cannot make use of a degree-1 pinching map
\[ Y \to E_{K_2}(\lambda_1) \cong Y_2, \]
because we may not be able to collapse $E_{K_1}$ onto a solid torus.  However, since the rational longitude $\lambda_{K_1}$ has order $2$ in homology, we can use Proposition~\ref{prop:klein-bottle-pinch} to pinch it to the next best thing, the twisted $I$-bundle over the Klein bottle.

\begin{proof}[Proof of Theorem~\ref{thm:splice-essential-rep}]
Suppose to the contrary that $Y$ is $\SU(2)$-abelian.  We first note that by the Mayer--Vietoris sequence, the homology $H_1(Y)$ is isomorphic to
\[ \frac{H_1(E_{K_1}) \oplus H_1(E_{K_2})}{\mu_1\sim \lambda_2,\ \lambda_1\sim \mu_2} \cong \frac{H_1(Y_1) \oplus H_1(E_{K_2})}{\lambda_1 \sim \mu_2} \cong H_1(Y_1) \oplus H_1(Y_2), \]
where we first use the fact that $[\lambda_2] = 0$ in $H_1(E_{K_2})$, and then that $H_1(E_{K_2}) \cong H_1(Y_2) \oplus \Z$ with the $\Z$ summand generated by $[\mu_2]$.  In particular $H_1(Y;\Z)$ is $2$-torsion.

Now we apply Proposition~\ref{prop:klein-bottle-pinch} to construct a degree-1 map
\[ Y \to N \cup_\partial E_{K_2}, \]
where $N$ is the twisted $I$-bundle over the Klein bottle.  The rational longitude $\lambda_0$ of $N$ is still glued to $\mu_2$, since this map preserves rational longitudes, but a priori we only know that some curve $\mu \subset \partial N$ that is dual to $\lambda_0$ has been glued to $\lambda_2$.  However, we can now pinch $E_{K_2}$ to a solid torus as in Proposition~\ref{prop:solid-torus-pinch}, so we have a composition of degree-1 maps
\[ Y \to N \cup_\partial E_{K_2} \to N(\lambda_2) = N(\mu). \]
This induces a surjection on $\pi_1$ and hence on $H_1$, so we conclude that $H_1(N(\mu))$ is $2$-torsion since $H_1(Y)$ is; and that both 
\[ Y' = N \cup_\partial E_{K_2} \]
and $N(\mu)$ are $\SU(2)$-abelian, since $Y$ is.  Since $N(\mu)$ is $\SU(2)$-abelian and its first homology is $2$-torsion, Proposition~\ref{prop:klein-bottle-pillowcase} says that $\mu$ is the unique slope $\mu_0$ such that $N(\mu_0) \cong \RP^3\#\RP^3$.

We now claim that $Y_2$ must be $\SU(2)$-abelian.  We know that $Y'$ is $\SU(2)$-abelian, and since the slopes $\mu_0$ and $\lambda_0$ are glued to $\lambda_2$ and $\mu_2$ respectively, we have
\[ \pi_1(Y') = \frac{\langle a,b \mid aba^{-1}=b^{-1}\rangle \ast \pi_1(E_{K_2})}{a^2=\mu_0\sim \lambda_2,\ b=\lambda_0\sim\mu_2}. \]
Suppose that there is a non-abelian representation $\pi_1(Y_2) \to \SU(2)$, or equivalently some non-abelian $\rho: \pi_1(E_{K_2}) \to \SU(2)$ with $\rho(\mu_2) = 1$.  Then since every element of $\SU(2)$ has a square root, we can extend $\rho$ to $\pi_1(Y')$ by setting $\rho(b) = 1$ and letting $\rho(a)$ be some square root of $\rho(\lambda_2)$.  This contradicts the fact that $Y'$ is $\SU(2)$-abelian, so $Y_2$ is $\SU(2)$-abelian after all.

Now we recall from Proposition~\ref{prop:klein-bottle-pillowcase} that for every $t \in \R/2\pi\Z$, there is a representation $\rho_t: \pi_1(N) \to \SU(2)$ with
\begin{align*}
\rho_t(\mu_0) &= -1, &
\rho_t(\lambda_0) &= \begin{pmatrix} e^{it} & 0 \\ 0 & e^{-it} \end{pmatrix},
\end{align*}
and that this has non-abelian image if $t \not\in \pi\Z$.  If for some $t$ we can find a representation $\rho_{K_2}: \pi_1(E_{K_2}) \to \SU(2)$ with
\begin{align*}
\rho_{K_2}(\mu_2) &= \begin{pmatrix} e^{it} & 0 \\ 0 & e^{-it} \end{pmatrix}, &
\rho_{K_2}(\lambda_2) &= -1,
\end{align*}
then we could glue this to the corresponding $\rho_t$ to get a representation $\rho': \pi_1(Y') \to \SU(2)$.  But then $\rho_{K_2}$ has non-abelian image, since otherwise we would have $\rho_{K_2}(\lambda_2) = 1$, and so $\rho'$ must be non-abelian as well.  This would also contradict the fact that $Y'$ is $\SU(2)$-abelian.

In conclusion, we have shown that if $Y$ is $\SU(2)$-abelian then there cannot be any representations $\rho: \pi_1(E_{K_2}) \to \SU(2)$ with $\rho(\lambda_2) = -1$.  But such representations up to conjugacy generate $I^w_*\big( (Y_2)_0(K_2) \big)$, where $w$ is dual to a meridian of $K_2$, so the latter invariant must be zero.  On the other hand, we know that $Y_2$ is $\SU(2)$-abelian, and that $K_2 \subset Y_2$ is nullhomologous with irreducible, boundary-incompressible complement, so Theorem~\ref{thm:2-torsion-zero-surgery} says that $I^w_*((Y_2)_0(K_2)) \neq 0$.  This is a contradiction, so we conclude that the spliced manifold $Y$ could not have been $\SU(2)$-abelian after all.
\end{proof}

\section{Gluing complements of knots in sums of $\RP^3$} \label{sec:glue-zhs}

Our goal in this somewhat lengthy section is to prove the following theorem.

\begin{theorem} \label{thm:gluing-rep}
Let $K_1 \subset Y_1$ and $K_2 \subset Y_2$ be nullhomologous knots in rational homology spheres whose $2$-surgeries satisfy
\begin{align*}
(Y_1)_2(K_1) &\cong \#^k \RP^3, &
(Y_2)_2(K_2) &\cong \#^\ell \RP^3
\end{align*}
for some integers $k, \ell \geq 1$, and suppose that their exteriors $E_{K_1}$ and $E_{K_2}$ are irreducible and not solid tori.  We form a closed $3$-manifold
\[ Y = E_{K_1} \cup_{\partial} E_{K_2} \]
by gluing the exteriors along their boundaries so that $\mu_1 \sim \mu_2^{-1}$ and $\lambda_1 \sim \mu_2^2 \lambda_2$.  Then there is a representation
\[ \pi_1(Y) \to \SU(2) \]
whose restrictions to each of $\pi_1(E_{K_1})$ and $\pi_1(E_{K_2})$ have non-abelian image.
\end{theorem}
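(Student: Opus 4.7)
The plan is to compare the pillowcase images $i_j^\ast\colon X(E_{K_j}) \to X(T^2)$ from the two sides of the gluing torus, and to force an intersection at a point corresponding to a representation whose restriction to each piece has non-abelian image. First I would set up coordinates: writing $(\alpha_j,\beta_j)$ for the meridian--longitude pillowcase coordinates of $E_{K_j}$, the gluing $\mu_1\sim\mu_2^{-1}$, $\lambda_1\sim\mu_2^2\lambda_2$ identifies $(\alpha_2,\beta_2)\leftrightarrow(-\alpha_2,\,2\alpha_2+\beta_2)$. In $\pi_1(T^2)$ the peripheral curves satisfy $\mu_1^2\lambda_1=\lambda_2$ and $\mu_2^2\lambda_2=\lambda_1$, so the slope-$(-2)$ line $\{\mu_j^2\lambda_j\equiv-1\}$ in one pillowcase translates, under the gluing, to the horizontal line $\{\beta_{3-j}\equiv\pi\}$ in the other.

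Next I would produce essential curves in each image. Each $E_{K_j}$ is irreducible and not a solid torus, so $(Y_j)_0(K_j)$ is irreducible and admits a capped-off Seifert surface pairing to $1$ with a meridian class $w_j$; Theorem~\ref{thm:irreducible-nonzero} then gives $I^{w_j}_\ast((Y_j)_0(K_j))\neq 0$. Unless $(0,\pi)$ or $(\pi,\pi)$ already lies in $i_j^\ast(X(E_{K_j}))$ (a case that yields a non-abelian representation at once, and which one handles separately), Proposition~\ref{prop:curve-in-pillowcase} produces a topologically embedded, homologically essential curve $C_j\subset i_j^\ast(X(E_{K_j}))$. Twisting by the central character $\chi_j\colon\pi_1(E_{K_j})\twoheadrightarrow H_1(E_{K_j})\to\{\pm1\}$ that is trivial on $H_1(Y_j)$ and sends $\mu_j\mapsto-1$ yields a second essential curve $\iota(C_j)$, the image of $C_j$ under the involution $(\alpha,\beta)\mapsto(\alpha+\pi,\beta)$ of $X(T^2)$.

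The crucial new input, which I expect to formulate as Proposition~\ref{prop:lines-of-slope-2}, concerns the slice
\[ R'_j = \{\rho\in R(E_{K_j}) : \rho(\mu_j^2\lambda_j)=-1\}. \]
For $\rho\in R'_j$ the adjoint $\ad\rho$ kills $\mu_j^2\lambda_j$ and so factors through $\pi_1((Y_j)_2(K_j))\cong(\Z/2\Z)^{\ast n_j}$ into $\SO(3)$. The representation variety $\Hom((\Z/2\Z)^{\ast n_j},\SO(3))$ decomposes according to which generators are sent to $\pi$-rotations versus the identity; each resulting piece is a product of $\RP^2$'s and points, and in particular each contains the trivial representation in its closure. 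Lifting back to $\SU(2)$, every path component of $R'_j$ is then connected to an abelian representation. All abelian $\rho\in R'_j$ must satisfy $\rho(\lambda_j)=1$ and hence $\rho(\mu_j)^2=-1$, so their pillowcase image is a single point on the slope-$(-2)$ line; combined with the continuity of $i_j^\ast$ on each component, this shows that $A_j:=i_j^\ast(R'_j)$ is a \emph{connected} arc inside $\{2\alpha_j+\beta_j\equiv\pi\}$.

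Once this is in hand, I would translate everything into $E_{K_1}$-coordinates: $S_1=i_1^\ast(X(E_{K_1}))$ contains the reducible line $\{\beta_1\equiv 0\}$, the essential curves $C_1,\iota(C_1)$, and the slope-$(-2)$ arc $A_1$, while the translated $S_2'$ contains the reducible line $\{2\alpha_1+\beta_1\equiv 0\}$, translated essential curves $C_2',\iota(C_2')$, and the arc $A_2'$ on the horizontal line $\{\beta_1\equiv\pi\}$. The essential curves in $S_1$ each cross every horizontal circle, so $C_1\cup\iota(C_1)$ meets $\{\beta_1\equiv\pi\}$, and a continuity/separation argument of the type used in the proof of Theorem~\ref{thm:splice-rep} — applied to the partition of the pillowcase determined by the translated essential curve $C_2'$ together with the connected arc $A_2'$ — forces such a crossing to land in $A_2'$. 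At that point the corresponding representation of $\pi_1(E_{K_1})$ lies on $C_1$ or $\iota(C_1)$ and so is irreducible, while the representation of $\pi_1(E_{K_2})$ lies on $A_2'$ away from the reducible line $\{2\alpha_1+\beta_1\equiv 0\}$ and so is also irreducible; gluing them along the torus produces the desired representation of $\pi_1(Y)$. The main obstacle I anticipate is the connectedness statement in Proposition~\ref{prop:lines-of-slope-2}: the analysis of $\Hom((\Z/2\Z)^{\ast n_j},\SO(3))$ and of the $\SU(2)$-lifting must be sharp enough to yield a \emph{single} arc, since otherwise the essential curves might cross $\{\beta_1\equiv\pi\}$ entirely outside $A_2'$ and no irreducible intersection would be forced.
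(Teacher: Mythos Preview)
Your overall strategy matches the paper's: produce essential curves $C_j$ via instanton nonvanishing, use the central character to get their $\tau$-images, prove the connectedness of the slope-$(-2)$ arc via the adjoint factoring through $\Hom((\Z/2\Z)^{\ast n_j},\SO(3))$, and then run a separation argument in the pillowcase.  The connectedness lemma you anticipate is exactly Proposition~\ref{prop:lines-of-slope-2}, and your proof sketch of it (rotating all axes to a common one) is the paper's argument.

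However, the genuine difficulty is not the connectedness but the final intersection argument, and your sketch of that step has a gap.  You assert that ``the partition of the pillowcase determined by $C_2'$ together with the connected arc $A_2'$'' forces $C_1\cup\iota(C_1)$ to meet $A_2'$, but this does not follow from what you have written: $C_1$ is only known to cross $L_\pi=\{\beta_1\equiv\pi\}$ \emph{somewhere}, and nothing so far pins that crossing inside $A_2'$.  The paper needs two additional ingredients.  First, a dichotomy (Proposition~\ref{prop:avoid-slope-2}): either $I_{K_j}$ contains the \emph{entire} slope-$(-2)$ line, or $C_j$ can be chosen disjoint from $\{2\alpha+\beta\in 2\pi\Z\}$ (in particular from the corners $(0,0)$ and $(\pi,0)$, which requires Lemma~\ref{lem:corners-limit}).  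Second, an ``innermost curve'' case analysis (Lemma~\ref{lem:intersect-if-not-P}): among $C_1,\tau(C_1),\tilde{C}_2,\tau(\tilde{C}_2)$ one looks at which first meets $L_\pi$ coming from $P$, and then uses the connected arc on the \emph{opposite} side---so both $A_1$ and $A_2'$ are needed, symmetrically.  Your argument uses only $A_2'$ and does not set up this separation.

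Two smaller points.  Your claim that $(Y_j)_0(K_j)$ is irreducible is not automatic from $E_{K_j}$ being irreducible and not a solid torus; it requires the argument of Proposition~\ref{prop:rp3-surgery-zero-surgery} (a distance bound between reducible fillings when $n_j\geq 2$, and a Seifert-fibered analysis when $n_j=1$).  And the case $P=(0,\pi)\in i_j^\ast(X(E_{K_j}))$ does \emph{not} ``yield a non-abelian representation at once'': a representation of $\pi_1(E_{K_j})$ with $\rho(\mu_j)=1$, $\rho(\lambda_j)=-1$ need not extend over the other piece.  The paper handles this case separately in Lemma~\ref{lem:intersect-if-P}, using that $P\in I_{K_j}$ forces the whole slope-$(-2)$ line into $I_{K_j}$, which the essential curve from the other side must then cross.
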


The gluing map used to construct $Y$ in Theorem~\ref{thm:gluing-rep} is not arbitrary: it produces a toroidal $3$-manifold whose homology is $2$-torsion, and we will eventually see in \S\ref{sec:knot-exteriors} that for such manifolds, this is essentially the only gluing map we need to consider that is not of the form $(\mu_1,\lambda_1) \sim (\lambda_2,\mu_2)$.  The proof of Theorem~\ref{thm:gluing-rep} will occupy the next several subsections.

\subsection{Knots with $\#^n \RP^3$ surgeries}

Suppose that $Y$ is a rational homology 3-sphere, and $K \subset Y$ is a nullhomologous knot such that $Y_2(K) \cong \#^n \RP^3$ for some $n\geq 1$.  If $Y_0(K)$ is irreducible then we have $I^w_*(Y_0(K)) \neq 0$, which we can use to understand something about the $\SU(2)$ character variety of the complement of $K$.  We wish to understand exactly when this happens, so that we can almost always guarantee that $I^w_*(Y_0(K))$ will be nonzero.

\begin{proposition} \label{prop:rp3-surgery-zero-surgery}
Let $Y$ be a rational homology 3-sphere, and suppose for some nullhomologous knot $K \subset Y$ with irreducible exterior $E_K = Y \setminus N(K)$ that either
\begin{itemize}
\item $Y_2(K) \cong \#^n \RP^3$, where $n \geq 1$; or
\item $Y_p(K)$ is a lens space of order $p$ for some prime $p$.
\end{itemize}
Then $Y_0(K)$ is irreducible unless $(Y,K) \cong (S^3,U)$.
\end{proposition}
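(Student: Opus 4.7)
The plan is to assume that $Y_0(K)$ is reducible and derive a contradiction outside the excluded case $(Y,K) \cong (S^3, U)$. The argument splits according to whether $\partial E_K$ is compressible in $E_K$.

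First I would handle the compressible-boundary case. Since $E_K$ is irreducible with torus boundary, the loop theorem forces $E_K$ to be a solid torus. Then $Y = E_K \cup (D^2 \times S^1)$ is a union of two solid tori along their boundary, so $Y$ has a genus-one Heegaard splitting; being a rational homology sphere (and so not $S^1\times S^2$), $Y$ is therefore a lens space $L(m,q)$, including $S^3$. The knot $K$ is isotopic to the core of the complementary Heegaard solid torus, so $[K]$ generates $H_1(Y) \cong \Z/m\Z$. That $K$ is nullhomologous forces $m=1$, whence $(Y,K) \cong (S^3,U)$, the excluded case. Note that this step uses neither the $\RP^3$-surgery nor the lens-space-surgery hypothesis.

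For the remainder I would assume $\partial E_K$ is incompressible, so that $E_K$ is an irreducible, boundary-irreducible compact $3$-manifold with torus boundary (a knot manifold). Then the longitude slope $\lambda$ is a reducible filling slope by the contradiction assumption. If $Y_2(K) \cong \#^n\RP^3$ with $n\geq 2$, then $Y_2(K)$ is also reducible, and the slope $\mu^2\lambda$ is a second reducible filling slope with $\Delta(\lambda,\mu^2\lambda)=2$. This contradicts the theorem of Gordon--Luecke that any two reducible filling slopes on a knot manifold have distance at most $1$. Otherwise we are in one of the overlapping subcases $Y_2(K)\cong \RP^3$ (the $n=1$ case) or $Y_p(K)$ a lens space of order an odd prime $p \geq 3$; in each, $\mu^p\lambda$ is a cyclic filling slope with $\Delta(\lambda,\mu^p\lambda) = p \geq 2$, contradicting the Boyer--Zhang bound (via the Culler--Shalen seminorm) that a reducible and a cyclic filling slope on a knot manifold have distance at most $1$.

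The principal obstacle is the precise invocation of these distance bounds, especially the reducible-versus-cyclic bound of $\leq 1$: it is subtler than the reducible-versus-reducible bound and is typically stated for hyperbolic knot manifolds. To apply it to an arbitrary irreducible, boundary-irreducible $E_K$, one may need to separately enumerate the remaining non-hyperbolic geometric types (Seifert-fibered pieces and cable spaces) and check that the combination of $K$ nullhomologous with the prescribed exceptional surgery excludes those configurations. By contrast, the solid-torus case and the verification that the two hypothesis cases cover every situation (up to the overlap at $Y_2(K)\cong\RP^3$) are routine once the framework is set up.
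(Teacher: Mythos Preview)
Your overall strategy matches the paper's: the $n\geq 2$ case is dispatched via Gordon--Luecke exactly as you do, and for the lens space case the paper also invokes Boyer--Zhang on the pair (reducible slope $\lambda$, finite-cyclic slope $\mu^p\lambda$) at distance $p\geq 2$. Your separate treatment of the solid-torus case is a clean addition.

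However, the ``principal obstacle'' you flag is not a formality, and your proposal stops short of resolving it. The Boyer--Zhang result used in the paper (\cite[Theorem~1.2]{boyer-zhang-seminorms}) does not give a distance bound of $1$ outright; rather, it says that if a reducible and a finite filling sit at distance $\geq 2$, then $E_K$ is either a simple Seifert fibered manifold or a cable on the twisted $I$-bundle over the Klein bottle. The paper then rules out the cable case by observing that any filling would contain a Klein bottle, which cannot embed in $\RP^3$ or in any lens space of odd order. The Seifert fibered case is the substantive one: the paper argues that the base orbifold is a disk, deduces that $Y_0(K)\cong S^1\times S^2$ (either because the fibration extends, or via Heil's result when $\lambda$ is the fiber slope), and then appeals to the Baker--Buck--Lecuona classification \cite[Theorem~1.18]{baker-buck-lecuona} of knots in $S^1\times S^2$ with lens space surgeries to conclude that the core $K'$ must be $S^1\times\{\pt\}$, forcing $(Y,K)\cong(S^3,U)$. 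This last classification step is the genuine input you are missing; without it, the Seifert fibered case does not obviously collapse to the excluded pair.
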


\begin{proof}
We first consider the case where $Y_2(K) \cong \#^n \RP^3$ for some $n \geq 2$.  In this case, the Dehn filling $E_K(\mu^2\lambda)$ produces the connected sum $\#^n \RP^3$, which is reducible.  Since $E_K$ is irreducible, any pair of slopes producing reducible fillings must have distance $1$ \cite[Theorem~1.2]{gordon-luecke-reducible}; but then $\lambda$ has distance $2$ from $\mu^2\lambda$, so $E_K(\lambda) \cong Y_0(K)$ must be irreducible as well.

From now on we suppose that $Y_p(K) \cong L(p,q)$ for some prime $p$, which may be either $2$ or odd, so that $Y$ is an integral homology sphere.  We will suppose that $Y_0(K)$ is reducible.  Then the exterior $E_K$ has a reducible Dehn filling (of slope $0$) and a Dehn filling with finite fundamental group (of slope $p$), and these filling slopes have distance $p \geq 2$.  A theorem of Boyer and Zhang \cite[Theorem~1.2]{boyer-zhang-seminorms} thus asserts that one of the following must hold: either
\begin{itemize}
\item $E_K$ is a simple (i.e., irreducible and atoroidal) Seifert fibered manifold, or
\item $E_K$ is a cable on the twisted $I$-bundle over the Klein bottle.
\end{itemize}
In the latter case any Dehn filling of $E_K$ must contain a Klein bottle, but we know that there are no embedded Klein bottles in $\RP^3$ \cite{bredon-wood}.  There are also no Klein bottles in a lens space $L(p,q)$ where $p$ is odd: any Klein bottle $B$ would be non-separating, hence $[B]$ would be a nonzero class in $H_2(L(p,q);\Z/2\Z) \cong 0$.  Thus $E_K$ must be Seifert fibered instead.  We refer to \cite{scott} for the facts about Seifert fibered 3-manifolds that we will use below.

If we fix a Seifert fibration on $E_K$, then it extends over any Dehn filling of $\partial E_K$ as long as the filling in question is not along the fiber slope.  In particular, the Seifert fibration extends over either $Y_p(K) \cong L(p,q)$ or $Y_0(K)$.  If it extends over $Y_0(K)$ then we know that the only non-prime Seifert fibered space is $\RP^3 \# \RP^3$, so since $Y_0(K)$ is not a rational homology sphere it must be prime; then $Y_0(K)$ is reducible by assumption and also prime, so it must be $S^1\times S^2$.  In either case, every Seifert fibration on $L(p,q)$ and on $S^1\times S^2$ has base orbifold homeomorphic to $S^2$, so the fibration on $E_K$ has base orbifold homeomorphic to a disk.

Next, we claim that $Y_0(K) \cong S^1\times S^2$.  We have already argued that this is the case if the Seifert fibration on $E_K$ extends over $Y_0(K)$.  If it does not, then the longitude of $K$ must have been the fiber slope, and since the base orbifold of $E_K$ is orientable, it follows that $Y_0(K)$ is a connected sum of lens spaces and copies of $S^1\times S^2$ \cite[Proposition~2]{heil}.  Then from $H_1(Y_0(K);\Z)\cong\Z$ we must have $Y_0(K) \cong S^1\times S^2$ as claimed.

We have shown that the core of $0$-surgery on $K \subset Y$ is a knot $K' \subset S^1\times S^2$ that admits an $L(p,q)$ surgery, and whose exterior is Seifert fibered.  Baker, Buck, and Lecuona \cite[Theorem~1.18]{baker-buck-lecuona} showed that the only such knots are $(a,b)$-torus knots in $S^1\times S^2$, and that if $a\geq 2$ then the corresponding lens spaces are $L(na^2,nab+1)$ for $n\in\Z$; but these do not have homology of prime order, hence cannot be $L(p,q)$.  Thus $K'$ must be isotopic to $S^1 \times \{\pt\} \subset S^1\times S^2$, and it follows that $(Y,K) \cong (S^3,U)$.
\end{proof}

\begin{corollary} \label{cor:su2-abelian-rp3-surgery}
Let $K \subset Y$ be a nullhomologous knot with irreducible complement in a rational homology sphere, and suppose that $Y_2(K) \cong \#^n\RP^3$ for some $n \geq 1$ but that $(Y,K) \not\cong (S^3,U)$.  Then $Y$ is not $\SU(2)$-abelian.
\end{corollary}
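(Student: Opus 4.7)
The plan is to assume for contradiction that $Y$ is $\SU(2)$-abelian and invoke Theorem~\ref{thm:km-su2-generalized} at the slope $r = 2$. The theorem then yields a non-abelian representation
\[ \rho: \pi_1(Y_2(K)) \cong \pi_1(\#^n \RP^3) \cong (\Z/2\Z)^{\ast n} \longrightarrow \SU(2). \]
However, the only elements $g \in \SU(2)$ with $g^2 = 1$ are $\pm 1$: the two eigenvalues of such a $g$ lie in $\{\pm 1\}$ and must coincide because $\det g = 1$. Hence $\rho$ sends each of the $n$ free generators of $(\Z/2\Z)^{\ast n}$ into the center $\{\pm 1\}$, forcing its image to be abelian. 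This contradiction will show that $Y$ cannot be $\SU(2)$-abelian.

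To apply Theorem~\ref{thm:km-su2-generalized} I must first verify that $H_1(Y;\Z)$ is $2$-torsion and that $E_K$ is both irreducible and boundary-incompressible. Since $K$ is nullhomologous, a rational Seifert surface gives $[\lambda] = 0$ in $H_1(E_K;\Z)$, so $H_1(E_K;\Z) \cong H_1(Y;\Z) \oplus \Z\langle[\mu]\rangle$. The filling slope $\mu^2\lambda$ kills only $2[\mu]$, so
\[ (\Z/2\Z)^n \cong H_1(Y_2(K);\Z) \cong H_1(Y;\Z) \oplus \Z/2\Z, \]
forcing $H_1(Y;\Z) \cong (\Z/2\Z)^{n-1}$. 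Irreducibility of $E_K$ is assumed. For boundary-incompressibility I use that an irreducible $3$-manifold with compressible torus boundary must be a solid torus; if $E_K$ were a solid torus, then $Y$ would be a union of two solid tori, hence $S^3$, $S^1 \times S^2$, or a lens space. Being a rational homology sphere rules out $S^1 \times S^2$, and in the remaining cases the core $K$ generates $H_1(Y;\Z)$, so $[K] = 0$ forces $Y \cong S^3$ with $K$ unknotted, contradicting $(Y,K) \not\cong (S^3,U)$.

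I do not anticipate any substantive obstacle; the proof should be a direct assembly of tools already developed. The hypothesis $(Y,K) \not\cong (S^3,U)$ is consumed precisely at the step that rules out the solid-torus case of $E_K$, and the collapse of every $(\Z/2\Z)^{\ast n}$-representation in $\SU(2)$ to the center is what makes $r = 2$ the natural slope at which to apply Theorem~\ref{thm:km-su2-generalized}.
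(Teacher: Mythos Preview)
Your proof is correct and follows essentially the same approach as the paper: verify the hypotheses of Theorem~\ref{thm:km-su2-generalized}, apply it at slope $r=2$, and observe that $(\Z/2\Z)^{\ast n}$ admits no non-abelian $\SU(2)$ representations. The only difference is that the paper first establishes $I^w_*(Y_0(K)) \neq 0$ directly via Proposition~\ref{prop:rp3-surgery-zero-surgery} and Theorem~\ref{thm:irreducible-nonzero} (showing $Y_0(K)$ is irreducible), whereas you instead verify boundary-incompressibility of $E_K$ explicitly so that Theorem~\ref{thm:km-su2-generalized} applies as stated; both routes serve the same purpose.
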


\begin{proof}
Proposition~\ref{prop:rp3-surgery-zero-surgery} says that $Y_0(K)$ is irreducible, so if $w \in H^2(Y_0(K);\Z)$ is Poincar\'e dual to a meridian of $K$, then $I^w_*(Y_0(K)) \neq 0$ by Theorem~\ref{thm:irreducible-nonzero}.  The homology of $Y$ is $2$-torsion since
\[ H_1(Y;\Z) \oplus (\Z/2\Z) \cong H_1(Y_2(K);\Z) \cong (\Z/2\Z)^n \]
is $2$-torsion, so if $Y$ were $\SU(2)$-abelian then Theorem~\ref{thm:km-su2-generalized} would give us a non-abelian representation $\pi_1(Y_2(K)) \to \SU(2)$.  But this is impossible, since every representation of
\[ \pi_1(Y_2(K)) \cong \pi_1(\#^n\RP^3) \cong (\Z/2\Z) \ast \dots \ast (\Z/2\Z) \]
into $\SU(2)$ must send each $\Z/2\Z$ factor into $\{\pm1\}$ and thus have central image, so $Y$ must not be $\SU(2)$-abelian after all.
\end{proof}

\subsection{The pillowcase image of a knot with a $\#^n\RP^3$ surgery}

Suppose that $Y$ is a rational homology sphere and that $K \subset Y$ is a nullhomologous knot with irreducible complement such that $Y_2(K) \cong \#^n\RP^3$, and that $(Y,K) \not\cong (S^3,U)$.  Since Corollary~\ref{cor:su2-abelian-rp3-surgery} tells us that $Y$ is not $\SU(2)$-abelian, we cannot argue as in Lemma~\ref{lem:alpha-pi} that the pillowcase image
\[ i^*(X(E_K)) \subset X(T^2) \]
avoids the lines $\{\alpha=0\}$ and $\{\alpha=\pi\}$: there may be a non-abelian representation $\pi_1(Y)\to \SU(2)$ that sends the homotopy class of the longitude $\lambda$ to something non-trivial.  In particular, it no longer makes sense to talk about the image $j(X(E_K))$ in the cut-open pillowcase.  Thus in what follows we will stick to the pillowcase, identified as
\begin{equation} \label{eq:pillowcase-quotient}
X(T^2) \cong \frac{(\R/2\pi\Z) \times (\R/2\pi\Z)}{(\alpha,\beta) \sim (-\alpha,-\beta)}.
\end{equation}
We will also describe it in terms of a fundamental domain for the above quotient, namely as
\begin{equation} \label{eq:pillowcase-domain}
X(T^2) \cong \frac{[0,\pi] \times [0,2\pi]}{\left\{\begin{array}{c}(0,\beta) \sim (0,2\pi-\beta), \\ (\pi,\beta) \sim (\pi,2\pi-\beta), \\ (\alpha,0) \sim (\alpha,2\pi) \end{array}\right\}},
\end{equation}
which equips it with a quotient map $[0,\pi] \times [0,2\pi] \to X(T^2)$.

\begin{proposition} \label{prop:lines-of-slope-2}
Let $K \subset Y$ be a nullhomologous knot in a rational homology sphere with $Y_2(K) \cong \#^n\RP^3$ for some $n \geq 0$.  Then the pillowcase image
\[ i^*(X(E_K)) \subset X(T^2) \]
does not contain any points $(\alpha,\beta)$ with $2\alpha+\beta \in 2\pi\Z$, except for $(0,0)$ and $(\pi,0)$.  Moreover, its intersection with the line
\[ \{ 2\alpha + \beta \equiv \pi \!\!\!\pmod{2\pi} \} \subset X(T^2) \]
is connected and contains the point $(\frac{\pi}{2},0)$.
\begin{figure}
\begin{tikzpicture}[style=thick]
\begin{scope}
  \draw plot[mark=*,mark size = 0.5pt] coordinates {(0,0)(3,0)(3,6)(0,6)} -- cycle; 
  \begin{scope}[decoration={markings,mark=at position 0.55 with {\arrow[scale=1]{>>}}}]
    \draw[postaction={decorate}] (0,0) -- (0,3);
    \draw[postaction={decorate}] (0,6) -- (0,3);
  \end{scope}
  \begin{scope}[decoration={markings,mark=at position 0.575 with {\arrow[scale=1]{>>>}}}]
    \draw[postaction={decorate}] (3,0) -- (3,3);
    \draw[postaction={decorate}] (3,6) -- (3,3);
  \end{scope}
  \begin{scope}[decoration={markings,mark=at position 0.75 with {\arrow[scale=1]{>>>>}}}]
    \draw[postaction={decorate}] (3,6) -- (0,6);
    \draw[postaction={decorate}] (3,0) -- (0,0);
  \end{scope}
  \draw[dotted] (0,3) -- (3,3);
  \draw[thin,|-|] (0,-0.4) node[below] {\small$0$} -- node[midway,inner sep=1pt,fill=white] {$\alpha$} ++(3,0) node[below] {\small$\vphantom{0}\pi$};
  \draw[thin,|-|] (-0.3,0) node[left] {\small$0$} -- node[midway,inner sep=1pt,fill=white] {$\beta$} ++(0,6) node[left] {\small$2\pi$};
  \begin{scope}[color=red,style=ultra thick,shorten >=-4pt, shorten <=-4pt]
    \draw[o-o] (0,6) -- node[above,sloped] {avoid this line} (3,0);
  \end{scope}
  \begin{scope}[color=blue,style=ultra thick]
    \draw[loosely dotted, very thick] (0,3) -- coordinate[pos=0.6] (x) (1.5,0) (1.5,6) -- coordinate[pos=0.4] (y) (3,3);
    \draw (x) -- node[above,sloped] {conn-} (1.5,0);
    \draw (1.5,6) -- node[above,sloped] {-ected} (y);
  \end{scope}
\end{scope}
\end{tikzpicture}
\caption{If $Y_2(K) \cong \#^n \RP^3$, then the image $i^*(X(E_K))$ must avoid the line $\{2\alpha+\beta\in 2\pi\Z\}$, except possibly at its endpoints, and its intersection with the line $\{2\alpha+\beta\equiv \pi\pmod{2\pi}\}$ must be connected.  (Note that the points $(\frac{\pi}{2},0)$ and $(\frac{\pi}{2},2\pi)$ are in fact the same.)}
\label{fig:slope-2}
\end{figure}
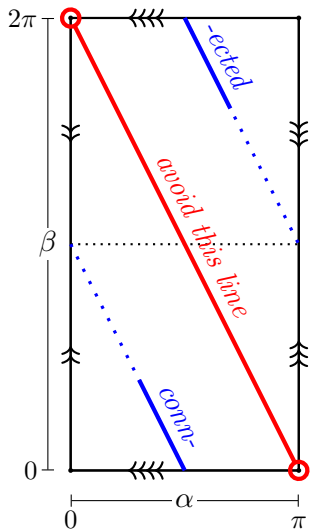
(See Figure~\ref{fig:slope-2}.)
\end{proposition}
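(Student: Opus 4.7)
I would prove the two claims separately; the first is essentially algebraic, while the second requires a deformation argument using adjoint representations.

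For the first claim, suppose $(\alpha,\beta) \in i^*(X(E_K))$ with $2\alpha+\beta \in 2\pi\Z$. Since $\pi_1(\partial E_K)$ is abelian, I can simultaneously diagonalize the commuting pair $\rho(\mu),\rho(\lambda)$ for a lifting $\rho \in R(E_K)$, and then the constraint gives $\rho(\mu^2\lambda)=1$. So $\rho$ descends to $\pi_1(Y_2(K)) \cong (\Z/2\Z)^{*n}$, and since every generator of this free product has order at most $2$ and the only such elements of $\SU(2)$ are $\pm 1$, the image of $\rho$ is central. Hence $\rho(\mu),\rho(\lambda) \in \{\pm 1\}$, forcing $(\alpha,\beta)$ into the four corners $\{0,\pi\}^2$; the linear constraint then leaves only $(0,0)$ and $(\pi,0)$.

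For the second claim I would introduce
\[ R'(E_K) := \{\rho \in R(E_K) \mid \rho(\mu^2\lambda) = -1\}, \]
whose image under $i^*$ is exactly $i^*(X(E_K)) \cap L_{-1}$, where $L_{-1} = \{2\alpha+\beta \equiv \pi \pmod{2\pi}\}$. Any abelian $\rho \in R'(E_K)$ factors through $H_1(E_K;\Z)$, in which $\lambda$ is nullhomologous, so $\rho(\lambda)=1$ and $\rho(\mu)^2 = -1$; up to conjugation this forces $\rho(\mu) = \mathrm{diag}(e^{\pm i\pi/2}, e^{\mp i\pi/2})$, so every abelian element of $R'(E_K)$ has pillowcase image exactly $(\pi/2,0)$. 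My plan is to show that every path component of $R'(E_K)$ contains an abelian representation; this presents $i^*(X(E_K)) \cap L_{-1}$ as a union of connected sets all containing $(\pi/2,0)$, hence as a connected set containing $(\pi/2,0)$.

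Given any $\rho \in R'(E_K)$, its adjoint $\Ad\rho: \pi_1(E_K) \to \SO(3)$ sends $\mu^2\lambda$ to $1$, so it factors as $\bar\phi \circ q$ with $\bar\phi: (\Z/2\Z)^{*n} \to \SO(3)$. Each generator $t_i$ then maps into the subvariety $V = \{A \in \SO(3) \mid A^2=1\} = \{1\} \sqcup \RP^2$, where $\RP^2$ parametrizes unoriented axes of rotations by $\pi$. Thus $\Hom((\Z/2\Z)^{*n},\SO(3)) \cong V^n$, and within the path component of $\bar\phi$ I can continuously move each axis through $\RP^2$ to a common fixed axis $\hat z$, ending at $\bar\phi_1$ whose image is contained in $\{1, R_\pi^{\hat z}\}$. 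Since $\SU(2) \to \SO(3)$ is a double cover, the induced map $R(E_K) \to \Hom(\pi_1(E_K),\SO(3))$ is a covering onto its image with finite discrete fiber (an $H^1(E_K;\Z/2\Z)$-torsor), so the path $\bar\phi_s \circ q$ lifts uniquely to a continuous path $\rho_s \in R(E_K)$ starting at $\rho$. By continuity $\rho_s(\mu^2\lambda) \in \{\pm 1\}$ is constantly equal to $-1$, keeping the whole path inside $R'(E_K)$, and the image of $\rho_1$ lies in the $\SU(2)$-preimage $\{\pm 1, \pm i\}$ of $\{1, R_\pi^{\hat z}\}$, which is cyclic of order $4$ and in particular abelian. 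The subtlest step I anticipate is the path-lifting claim; it follows from the fact that the lift map has discrete fiber and the standard local description of $\SU(2) \to \SO(3)$ as a double cover.
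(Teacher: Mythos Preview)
Your proposal is correct and follows essentially the same route as the paper. The first claim is handled identically (factor through $(\Z/2\Z)^{*n}$, observe the image is central); for the second, both you and the paper pass to $\Ad\rho$, deform the axes of the order-$2$ rotations in $\SO(3)$ to a common line, and lift the resulting path back to $\SU(2)$, with $\rho_t(\mu^2\lambda)\equiv -1$ by continuity. The only cosmetic difference is in justifying the lift: the paper phrases it as ``$w_2(\bar\rho_t)=w_2(\bar\rho_0)=0$ by continuity, hence each $\bar\rho_t$ lifts,'' while you invoke path-lifting for the map $R(E_K)\to\Hom(\pi_1(E_K),\SO(3))$ directly; these are the same argument, and your covering-space description (restriction of the finite regular cover $\SU(2)^k\to\SO(3)^k$ to the saturated closed set $R(E_K)$, using finite presentability of $\pi_1(E_K)$) is a valid way to make it precise.
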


\begin{proof}
Let $\rho: \pi_1(E_K) \to \SU(2)$ be a representation with $i^*([\rho]) = (\alpha,\beta)$ and $2\alpha+\beta \in \pi\Z$.  Then up to conjugacy we have
\[ \rho(\mu^2\lambda) = \begin{pmatrix} e^{i(2\alpha+\beta)} & 0 \\ 0 & e^{-i(2\alpha+\beta)} \end{pmatrix} = \pm1. \]
We will consider each value separately below.

If $\rho(\mu^2\lambda) = 1$, then $2\alpha+\beta\equiv 0 \pmod{2\pi}$, and $\rho$ factors through
\[ \frac{\pi_1(E_K)}{\llangle \mu^2\lambda \rrangle} \cong \pi_1(Y_2(K)) \cong \pi_1(\#^n\RP^3) \cong (\Z/2\Z)^{\ast n}. \]
Every homomorphism $(\Z/2\Z)^{\ast n} \to \SU(2)$ has central image, because each $\Z/2\Z$ factor must be sent to $\{\pm1\}$, so in particular this means that $\rho$ must have central image.  But then $\rho$ factors through $H_1(E_K;\Z)$, and thus it sends the nullhomologous $\lambda$ to $1$.  This is equivalent to $\beta\equiv 0\pmod{2\pi}$, and then $\alpha\in\pi\Z$ as claimed.

We assume from now on that $\rho(\mu^2\lambda) = -1$, so $2\alpha+\beta \equiv \pi \pmod{2\pi}$.  If the only such representations satisfy $(\alpha,\beta) = (\frac{\pi}{2},0)$ then there is nothing to show, so we will assume that $(\alpha,\beta)$ is different from $(\frac{\pi}{2},0)$.  In particular, since $\beta \not\equiv 0 \pmod{2\pi}$ we know that $\rho(\lambda) \neq 1$, and thus $\rho$ must have non-abelian image.

In this case, while $\rho$ itself no longer factors through $\pi_1(Y_2(K))$, the representation 
\[ \ad\rho: \pi_1(E_K) \to \SO(3) \]
does send $\mu^2\lambda$ to the identity.  This means that $\ad\rho$ factors as a composition
\[ \pi_1(E_K) \twoheadrightarrow \frac{\pi_1(E_K)}{\llangle \mu^2\lambda \rrangle} \cong (\Z/2\Z)^{\ast n} \xrightarrow{\phi} \SO(3). \]
It must also have non-trivial image, since otherwise the image of $\rho$ would have been abelian.

Now we let $x_1,\dots,x_n$ be generators of the $\Z/2\Z$ factors of $(\Z/2\Z)^{\ast n}$.  The map $\phi$ sends each $x_i$ to an element $\phi(x_i) \in \SO(3)$ of order at most $2$, hence either to the identity or to a $180$-degree rotation about some axis $L_i$; and it sends at least one $x_i$ to such a rotation, since $\ad\rho$ is non-trivial.  The space of such rotations is connected and homeomorphic to $\RP^2$, since each rotation is uniquely determined by its axis and vice versa.  Thus we can define a family of homomorphisms
\[ \phi_t: (\Z/2\Z)^{\ast n} \to \SO(3), \]
with $\phi_0 = \phi$ and $\phi_1$ having abelian image, as follows:
\begin{itemize}
\item If $\phi(x_i) = 1$ then we let $\phi_t(x_i) = 1$ for all $t \in [0,1]$.
\item If $\phi(x_i)$ is a $180$-degree rotation around an axis $L_i$, then we choose a path $\gamma_i: [0,1] \to \RP^2$ from $[L_i]$ to $[1:0:0]$ and let $\phi_t(x_i)$ be the $180$-degree rotation about $\gamma_i(t)$.
\end{itemize}
We see that $\phi_1$ has abelian image of order $2$, since it sends each $x_i$ to either $1$ or the $180$-degree rotation about the $x$-axis, and at least one of the $\phi_1(x_i)$ is a rotation.

The corresponding continuous family of homeomorphisms
\[ \bar\rho_t: \pi_1(E_K) \twoheadrightarrow \frac{\pi_1(E_K)}{\llangle \mu^2\lambda \rrangle} \cong (\Z/2\Z)^{\ast n} \xrightarrow{\phi_t} \SO(3) \]
satisfies $\bar\rho_t(\mu^2\lambda) = 1$ for all $t \in [0,1]$ by construction.  Moreover, we know that $\bar\rho_0 = \ad\rho$ lifts to a representation $\pi_1(E_K) \to \SU(2)$, so the obstruction $w_2(\bar\rho_0)$ to lifting must be zero, and then since $w_2(\bar\rho_t) = w_2(\bar\rho_0)$ by continuity, it follows that all of the $\bar\rho_t$ lift to a continuous family of representations
\[ \rho_t: \pi_1(E_K) \to \SU(2). \]

We now have $\ad \rho_t(\mu^2\lambda) = 1$, so $\rho_t(\mu^2\lambda)$ must be either $1$ or $-1$, and then
\[ \rho_t(\mu^2\lambda) = \rho_0(\mu^2\lambda) = -1 \]
for all $t$.  This says that the pillowcase images $i^*([\rho_t]) = (\alpha_t,\beta_t)$ all lie on the line $2\alpha+\beta \equiv \pi \pmod{2\pi}$.  Moreover, since the image in $\SO(3)$ of $\bar\rho_1 = \ad\rho_1$ has order $2$, it follows that the lift $\rho_1$ has cyclic image of order $4$ in $\SU(2)$.  But then $\rho_1$ has abelian image, so we must have $(\alpha_1,\beta_1) = (\frac{\pi}{2},0)$, and then the points $(\alpha_t,\beta_t)$ trace a continuous path in $i^*(X(E_K))$ from our original $(\alpha,\beta) = (\alpha_0,\beta_0)$ to $(\alpha_1,\beta_1) = (\frac{\pi}{2},0)$.

We conclude that the intersection
\[ i^*(X(E_K)) \cap \{ 2\alpha + \beta \equiv \pi \!\!\!\pmod{2\pi} \} \subset X(T^2) \]
is connected, since it contains a continuous path from every one of its points to $(\frac{\pi}{2},0)$.  It must also contain the point $(\frac{\pi}{2},0)$, as claimed, as the image of an abelian representation
\[ \pi_1(E_K) \twoheadrightarrow H_1(E_K;\Z) \cong H_1(Y) \oplus \Z \to \SU(2) \]
which is trivial on $H_1(Y)$ and sends the meridian generating the $\Z$ summand to $\left(\begin{smallmatrix} i & 0 \\ 0 & -i \end{smallmatrix}\right)$.
\end{proof}

\begin{proposition} \label{prop:avoid-slope-2}
Let $K \subset Y$ be a nullhomologous knot in a rational homology sphere with $Y_2(K) \cong \#^n\RP^3$ for some $n \geq 1$, and suppose that the exterior $E_K$ is irreducible and that $(Y,K) \not\cong (S^3,U)$.  Then the pillowcase image
\[ i^*(X(E_K)) \subset X(T^2) \]
satisfies exactly one of the following:
\begin{enumerate}
\item The image $i^*(X(E_K))$ contains the entire line $\{2\alpha+\beta \equiv \pi \pmod{2\pi}\}$.
\item The image $i^*(X(E_K))$ contains neither $P=(0,\pi)$ nor $Q=(\pi,\pi)$, and then it contains a homologically essential simple closed curve
\[ C \subset i^*(X(E_K)) \subset X(T^2) \setminus \{P,Q\} \]
that is disjoint from the line $\{2\alpha+\beta\in 2\pi\Z\}$.
\end{enumerate}
\end{proposition}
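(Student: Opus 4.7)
My plan is to establish the nonvanishing $I^w_*(Y_0(K)) \neq 0$, reduce by a natural symmetry to two cases, and then handle each case in turn. Nonvanishing is immediate: Proposition~\ref{prop:rp3-surgery-zero-surgery} guarantees that $Y_0(K)$ is irreducible under our hypotheses, and a capped-off Seifert surface of $K$ provides a closed surface $R \subset Y_0(K)$ with $w\cdot R = 1$ for $w$ Poincar\'e dual to a meridian, so Theorem~\ref{thm:irreducible-nonzero} applies. Write $\Gamma := i^*(X(E_K))$ and $A := \{(\alpha,\beta) \in X(T^2) : 2\alpha + \beta \in 2\pi\Z\}$; note that $A$ is a topological arc from $(0,0)$ to $(\pi,0)$.

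Next, since $H_1(E_K;\Z/2\Z) \cong H_1(Y;\Z/2\Z) \oplus (\Z/2\Z)\langle\mu\rangle$, there is a character $\chi: \pi_1(E_K) \to \{\pm 1\}$ sending $\mu \mapsto -1$ and $H_1(Y;\Z/2\Z)\mapsto 1$. Multiplication by $\chi$ takes $\SU(2)$ representations to $\SU(2)$ representations and acts on the pillowcase via the involution $\sigma:(\alpha,\beta)\mapsto(\alpha+\pi,\beta)$, which preserves $\Gamma$ and swaps $P \leftrightarrow Q$. Hence $P \in \Gamma$ if and only if $Q \in \Gamma$, so we are always in one of the two cases below.

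If $P, Q \in \Gamma$, then the line $L := \{2\alpha+\beta \equiv \pi \pmod{2\pi}\}$ in $X(T^2)$ is a topological arc from $P$ through $(\pi/2, 0)$ to $Q$, and Proposition~\ref{prop:lines-of-slope-2} tells us that $\Gamma \cap L$ is closed, connected, and contains $P$, $(\pi/2, 0)$, and $Q$; any closed connected subset of an arc containing both endpoints must be the entire arc, so $L \subseteq \Gamma$, giving conclusion~(1).

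If instead $P, Q \notin \Gamma$, then Proposition~\ref{prop:curve-in-pillowcase} provides an essential curve $C \subseteq \Gamma$ in $X(T^2) \setminus \{P, Q\}$, and Proposition~\ref{prop:lines-of-slope-2} restricts $C \cap A$ to $\{(0, 0), (\pi, 0)\}$. The principal obstacle is to arrange $C$ to avoid these two corner points so that $C$ is genuinely disjoint from $A$. My approach is to work inside the topological disk $D := X(T^2) \setminus A$: since $\Gamma$ and $A$ only meet at the two corners we have $D \cap \Gamma = \Gamma \setminus \{(0, 0), (\pi, 0)\}$, and since $P, Q$ lie in different components of $X(T^2) \setminus \Gamma$ and $D \setminus \Gamma \subseteq X(T^2) \setminus \Gamma$, they also lie in different components of $D \setminus \Gamma$. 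An adaptation of the Jordan-type argument of \cite[Lemma~7.3]{zentner} inside the disk $D$ then extracts an essential simple closed curve $C \subset \Gamma \setminus \{(0, 0), (\pi, 0)\}$ separating $P$ from $Q$; such a $C$ is automatically disjoint from $A$, proving conclusion~(2). Conclusions~(1) and~(2) are mutually exclusive because the former requires $P \in \Gamma$ and the latter forbids it.
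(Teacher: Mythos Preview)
Your setup, the nonvanishing of $I^w_*(Y_0(K))$, the $\chi$-symmetry establishing $P\in\Gamma \Leftrightarrow Q\in\Gamma$, and the treatment of case~(1) are all correct and match the paper.

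The gap is in case~(2), in the step where you invoke ``an adaptation of \cite[Lemma~7.3]{zentner} inside the disk $D$''. This adaptation does not exist in the generality you need. The set $\Gamma\cap D = \Gamma\setminus\{(0,0),(\pi,0)\}$ is a \emph{non-compact} graph in the open disk $D$, and separation of $P$ from $Q$ by a non-compact graph in a disk does not imply the existence of a separating simple closed curve in that graph. Concretely: nothing in your argument rules out the configuration where $\Gamma$ consists of the reducible line $L_0=\{\beta=0\}$ together with a single arc from $(0,0)$ to $(\pi,0)$, whose union with $L_0$ is the \emph{only} essential simple closed curve in $\Gamma$ --- and that curve passes through both corners. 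In this configuration $\Gamma\cap D$ is two disjoint open arcs, separating $P$ from $Q$ in $D$ yet containing no simple closed curve whatsoever. Your separation-in-$D$ hypothesis is in fact automatic (the open arc $L_0\setminus\{(0,0),(\pi,0)\}$ already separates $P$ from $Q$ in $D$, since $A\cup L_0$ is an essential curve in $X(T^2)\setminus\{P,Q\}$), so it carries no extra information.

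The paper fills this gap by a different mechanism: it takes the curve $C$ produced by Proposition~\ref{prop:curve-in-pillowcase} as is, and shows directly that $C$ cannot pass through a corner $(k\pi,0)$. If it did, then near that corner $C$ cannot stay inside the segment $[0,\pi]\times\{0\}$ (a continuous injection from an open interval cannot land in a closed interval with an interior point sent to an endpoint), so some sequence on $C$ with $\beta\neq 0$ --- hence consisting of irreducible characters --- converges to $(k\pi,0)$. Lemma~\ref{lem:corners-limit} (using that $H_1(Y)$ is $2$-torsion, which follows from $H_1(Y_2(K))\cong(\Z/2\Z)^n$) then forces every Dehn surgery on $K$ to fail to be $\SU(2)$-abelian, contradicting $Y_2(K)\cong\#^n\RP^3$. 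This is the missing ingredient: you need Lemma~\ref{lem:corners-limit}, not a topological trick in $D$.
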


\begin{proof}
Proposition~\ref{prop:lines-of-slope-2} tells us that $i^*(X(E_K))$ contains all of $\{2\alpha+\beta \equiv \pi \pmod{2\pi}\}$ if and only if it contains both of the endpoints $P=(0,\pi)$ and $Q=(\pi,\pi)$, since its intersection with this line is connected.  We further observe that it contains $P$ if and only if it contains $Q$, since we can multiply a representation
\[ \rho: \pi_1(E_K) \to \SU(2) \]
with $\rho(\mu) = \pm1$ and $\rho(\lambda) = -1$ by a central character
\[ \chi: \pi_1(E_K) \twoheadrightarrow H_1(E_K) \cong H_1(Y) \oplus \Z \to \{\pm1\} \]
sending the meridian (as a generator of the $\Z$ summand) to $-1$ in order to get a new representation $\tilde\rho$ with $\tilde\rho(\mu) = \mp1$ and $\tilde\rho(\lambda) = -1$.  Thus if $i^*(X(E_K))$ does not contain the entire line $\{2\alpha+\beta\equiv\pi\}$, then it cannot contain either of $P$ or $Q$.

Now we suppose that $P,Q \not\in i^*(X(E_K))$.  Since $E_K$ is irreducible and $(Y,K) \not\cong (S^3,U)$, Proposition~\ref{prop:rp3-surgery-zero-surgery} and Theorem~\ref{thm:irreducible-nonzero} tell us that $I^w_*(Y_0(K)) \neq 0$, where $w \in H^2(Y_0(K))$ is Poincar\'e dual to a meridian of $K$.  Using the assumption that $P,Q \not\in i^*(X(E_K))$, we can now apply Proposition~\ref{prop:curve-in-pillowcase} to get the desired essential curve $C \subset i^*(X(E_K)) \setminus \{P,Q\}$.

Finally, suppose that the curve $C$ intersects the line $\{2\alpha+\beta\in 2\pi\Z\}$; according to Proposition~\ref{prop:lines-of-slope-2}, this can only happen at a point of the form $(k\pi,0)$ where $k$ is $0$ or $1$.  Parametrizing $C$ by a continuous, injective map
\[ f: \R/\Z \hookrightarrow i^*(X(E_K)) \hookrightarrow X(T^2) \]
so that $f(0) = (k\pi,0)$, we claim that there must be a sequence $t_n \to 0$ such that $f(t_n)$ is not on the line $\{\beta=0\}$: assuming otherwise, there is some $\epsilon > 0$ such that $f$ restricts to a continuous, injective map
\[ f|_{(-\epsilon,\epsilon)}: (-\epsilon,\epsilon) \hookrightarrow [0,\pi] \times \{0\} \hookrightarrow X(T^2), \]
with $0$ sent to an endpoint $(k\pi,0)$ of the line segment $[0,\pi] \times \{0\}$, and this is impossible.  Now since $f(t_n) \in i^*(X^\irr(E_K))$ and $t_n \to 0$, we see that $f(0) = (k\pi,0)$ is a limit point of the image $i^*(X^\irr(E_K))$.  But $H_1(Y)$ is $2$-torsion, since $H_1(Y) \oplus (\Z/2\Z) \cong (\Z/2\Z)^{\oplus n}$, so Lemma~\ref{lem:corners-limit} says this can only happen if $Y_2(K) \cong \#^n \RP^3$ is not $\SU(2)$-abelian, a contradiction.  We conclude that $C$ cannot pass through $(k\pi,0)$ after all.
\end{proof}

\subsection{Symmetries of the pillowcase} \label{ssec:pillowcase-symmetries}

We do not need to use instanton homology for the remainder of this section, since Propositions~\ref{prop:lines-of-slope-2} and \ref{prop:avoid-slope-2} will suffice for the proof of Theorem~\ref{thm:gluing-rep}.  Given a nullhomologous knot $K \subset Y$, we will therefore write
\[ I_K = i^*(X(E_K)) \subset X(T^2) \]
for the pillowcase image of the $\SU(2)$-character variety of $K$.

With the gluing map of Theorem~\ref{thm:gluing-rep} in mind, we now define a map
\[ \sigma: X(T^2) \to X(T^2) \]
in terms of the coordinates \eqref{eq:pillowcase-quotient}, by the formula
\[ \sigma(\alpha,\beta) = (-\alpha, 2\alpha + \beta) = (\alpha, 2\pi-(2\alpha+\beta)). \]
See Figure~\ref{fig:sigma-tau}.  It is straightforward to check that this is well-defined, that $\sigma^2 = \mathrm{Id}$ and thus $\sigma$ is a homeomorphism, and that
\begin{align*}
\sigma(0,\beta) &= (0,\beta), \\
\sigma(\pi,\beta) &= (\pi,\beta)
\end{align*}
for all $\beta$.  
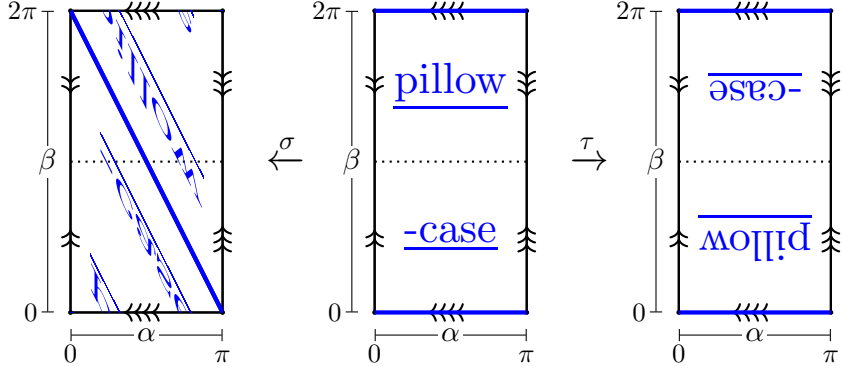
\begin{figure}
\begin{tikzpicture}[style=thick]
\foreach \i in {-4,0,4} {
\begin{scope}[xshift=\i cm]
  \draw plot[mark=*,mark size = 0.5pt] coordinates {(0,0)(2,0)(2,4)(0,4)} -- cycle;
  \begin{scope}[decoration={markings,mark=at position 0.55 with {\arrow[scale=1]{>>}}}]
    \draw[postaction={decorate}] (0,0) -- (0,2);
    \draw[postaction={decorate}] (0,4) -- (0,2);
  \end{scope}
  \begin{scope}[decoration={markings,mark=at position 0.575 with {\arrow[scale=1]{>>>}}}]
    \draw[postaction={decorate}] (2,0) -- (2,2);
    \draw[postaction={decorate}] (2,4) -- (2,2);
  \end{scope}
  \begin{scope}[decoration={markings,mark=at position 0.65 with {\arrow[scale=1]{>>>>}}}]
    \draw[postaction={decorate}] (2,4) -- (0,4);
    \draw[postaction={decorate}] (2,0) -- (0,0);
  \end{scope}
  \draw[dotted] (0,2) -- (2,2);
  \draw[thin,|-|] (0,-0.3) node[below] {\small$0$} -- node[midway,inner sep=1pt,fill=white] {$\alpha$} ++(2,0) node[below] {\small$\vphantom{0}\pi$};
  \draw[thin,|-|] (-0.3,0) node[left] {\small$0$} -- node[midway,inner sep=1pt,fill=white] {$\beta$} ++(0,4) node[left] {\small$2\pi$};
\end{scope}
}

\begin{scope}[every node/.style={blue,scale=1.5}]
  \node at (1,3) {\underline{pillow}};
  \node at (1,1) {\underline{-case}};
  \draw[blue,ultra thick] (0,0) -- (2,0) (0,4) -- (2,4);
\end{scope}

\node at (2.825,2.13) {\Large$\xrightarrow{\tau}$};
\begin{scope}[transform shape,rotate around={180:(3,2)}]
\begin{scope}[every node/.style={blue,scale=1.5}]
  \node at (1,3) {\underline{pillow}};
  \node at (1,1) {\underline{-case}};
  \draw[blue,ultra thick] (0,0) -- (2,0) (0,4) -- (2,4);
\end{scope}
\end{scope}

\node at (-1.175,2.13) {\Large$\xleftarrow{\sigma}$};
\begin{scope}[xshift=-4cm]
\clip (0,0) rectangle (2,4);
\foreach \i in {0,4,8} {
  \begin{scope}[yshift=\i cm,transform shape,yscale=-1,yslant=2]
    \begin{scope}[every node/.style={blue,scale=1.5}]
      \node at (1,3) {\underline{pillow}};
      \node at (1,1) {\underline{-case}};
      \draw[blue,ultra thick] (0,0) -- (2,0) (0,4) -- (2,4);
    \end{scope}
  \end{scope}
}
\end{scope}

\end{tikzpicture}
\caption{The involutions $\sigma$ and $\tau$ of the pillowcase $X(T^2)$.}
\label{fig:sigma-tau}
\end{figure}
Our goal in this subsection is to understand the image under $\sigma$ of the image $I_K = i^*(X(E_K))$ in the pillowcase, where $K \subset Y$ is a knot as in the statement of Theorem~\ref{thm:gluing-rep}.  Indeed, the representation promised by Theorem~\ref{thm:gluing-rep} will eventually come from finding a point in the pillowcase where one image $I_{K_1}$ intersects another skewed image $\sigma(I_{K_2})$.

\begin{lemma} \label{lem:pillowcase-involution}
Define an involution of the pillowcase $X(T^2)$ by
\[ \tau(\alpha,\beta) = (\pi-\alpha, 2\pi-\beta) \]
in either of the coordinates \eqref{eq:pillowcase-quotient} or \eqref{eq:pillowcase-domain}, as shown in Figure~\ref{fig:sigma-tau}.  If $K \subset Y$ is a nullhomologous knot in an arbitrary 3-manifold, then the pillowcase image
\[ I_K := i^*(X(E_K)) \subset X(T^2) \]
is invariant (as a set) under $\tau$, meaning that $\tau(I_K) = I_K$, and so is the image $\sigma(I_K)$.
\end{lemma}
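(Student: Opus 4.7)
The plan is to realize $\tau$ as the effect on pillowcase images of multiplying representations by a suitable central character of $\pi_1(E_K)$, and then to deduce the $\tau$-invariance of $\sigma(I_K)$ from the fact that $\sigma$ and $\tau$ commute as self-maps of $X(T^2)$.

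First, since $K$ is nullhomologous in $Y$ we have (as in the proof of Lemma~\ref{lem:corners-limit}) a splitting $H_1(E_K;\Z) \cong H_1(Y;\Z) \oplus \Z\langle \mu\rangle$, and the longitude $\lambda$ is nullhomologous in $E_K$ because it bounds a Seifert surface. I can therefore define a central character
\[ \chi : \pi_1(E_K) \twoheadrightarrow H_1(E_K;\Z) \to \{\pm1\} \subset \SU(2) \]
that sends $\mu$ to $-1$ and is trivial on $H_1(Y;\Z)$, so in particular $\chi(\lambda)=1$. For any $\rho \in R(E_K)$, the pointwise product $\chi\rho$ is again an $\SU(2)$-representation (because $\{\pm 1\}$ is central in $\SU(2)$), and this operation descends to a well-defined involution of $X(E_K)$. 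If $[\rho]$ has pillowcase coordinates $(\alpha,\beta)$, then $(\chi\rho)(\mu) = -\rho(\mu)$ while $(\chi\rho)(\lambda) = \rho(\lambda)$, so $[\chi\rho]$ has coordinates $(\alpha+\pi, \beta)$ in $(\R/2\pi\Z)^2$; applying the pillowcase identification $(\alpha+\pi,\beta)\sim(-\alpha-\pi,-\beta)=(\pi-\alpha,2\pi-\beta)$ matches this with $\tau(\alpha,\beta)$. Hence $\tau$ restricts to a bijection $I_K \to I_K$, proving the first claim.

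Second, to deduce that $\tau(\sigma(I_K)) = \sigma(I_K)$, I would verify by a short direct calculation that $\sigma$ and $\tau$ commute. In $(\R/2\pi\Z)^2$ one computes
\[ \sigma\tau(\alpha,\beta) = \sigma(\pi-\alpha,\, 2\pi-\beta) = (\alpha-\pi,\, 4\pi-2\alpha-\beta) \]
and
\[ \tau\sigma(\alpha,\beta) = \tau(-\alpha,\, 2\alpha+\beta) = (\pi+\alpha,\, 2\pi-2\alpha-\beta), \]
and these agree modulo $2\pi$ in each coordinate. Combined with the first step, this immediately yields $\tau(\sigma(I_K)) = \sigma(\tau(I_K)) = \sigma(I_K)$.

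There is no real obstacle here; the only points requiring a moment's care are that the character $\chi$ is well-defined (which uses the meridian splitting of $H_1(E_K;\Z)$ for a nullhomologous knot in an arbitrary ambient manifold) and that $\chi(\lambda)=1$ (which uses nullhomologousness again). Notably no hypothesis on $H_1(Y;\Z)$ is required.
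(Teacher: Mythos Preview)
Your proof is correct and follows essentially the same approach as the paper: both construct the central character $\chi$ sending $\mu\mapsto -1$ to realize $\tau$ on $I_K$, and then verify by direct computation that $\sigma$ and $\tau$ commute. The only cosmetic difference is that the paper carries out the commutation check using the form $\sigma(\alpha,\beta)=(\alpha,2\pi-(2\alpha+\beta))$ rather than $\sigma(\alpha,\beta)=(-\alpha,2\alpha+\beta)$.
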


\begin{proof}
We work with \eqref{eq:pillowcase-quotient} for convenience.  Take a point $(\alpha,\beta) \in I_K$, which means that there is some representation $\rho: \pi_1(E_K) \to \SU(2)$ such that
\begin{align*}
\rho(\mu) &= \begin{pmatrix} e^{i\alpha} & 0 \\ 0 & e^{-i\alpha} \end{pmatrix}, &
\rho(\lambda) &= \begin{pmatrix} e^{i\beta} & 0 \\ 0 & e^{-i\beta} \end{pmatrix}.
\end{align*}
We take a central character
\[ \chi: \pi_1(E_K) \twoheadrightarrow H_1(E_K) \cong H_1(Y) \oplus \Z \to \{\pm1\}, \]
defined by sending $H_1(Y)$ to $+1$ and the meridian $\mu$ (which generates the $\Z$ summand) to $-1$, and then we get a new representation
\[ \tilde\rho = \chi \cdot \rho: \pi_1(E_K) \to \SU(2) \]
such that $i^*([\tilde\rho]) = (\alpha+\pi,\beta)$.  In $X(T^2)$ we can identify
\[ (\alpha+\pi,\beta) \sim (-\alpha-\pi, -\beta) = (\pi-\alpha, 2\pi - \beta) = \tau(\alpha,\beta), \]
so $\tau(\alpha,\beta)$ also lies in the image $I_K$.  This proves that $I_K$ is $\tau$-invariant.

We now claim that $\sigma \circ \tau = \tau \circ \sigma$, which we can check directly by computing
\begin{align*}
\sigma(\tau(\alpha,\beta)) &= \sigma(\pi-\alpha,2\pi-\beta) \\
&= (\pi-\alpha, 2\pi - (2(\pi-\alpha) + (2\pi-\beta))) \\
&= (\pi-\alpha, -2\pi + 2\alpha + \beta) \\
&= \tau(\alpha, 2\pi-(2\alpha-\beta)) \\
&= \tau(\sigma(\alpha,\beta)).
\end{align*}
But then we apply this to the $\tau$-invariant set $I_K$ to get
\[ \sigma(I_K) = \sigma(\tau(I_K)) = \tau(\sigma(I_K)), \]
so $\sigma(I_K)$ is $\tau$-invariant as well.
\end{proof}

We can now use the involution $\tau$ to study the skewed image $\sigma(I_K)$ of the character variety of $K$.

\begin{lemma} \label{lem:skew-image}
Let $K \subset Y$ be a nullhomologous knot in a rational homology sphere with $Y_2(K) \cong \#^n \RP^3$ for some $n \geq 0$, and suppose that $K$ has irreducible complement and that $(Y,K) \not\cong (S^3,U)$.  Then exactly one of the following must be true:
\begin{enumerate}
\item The image $\sigma(I_K) \subset X(T^2)$ contains the entire line $L_\pi = \{\beta \equiv \pi \pmod{2\pi}\}$.
\item The image $\sigma(I_K) \subset X(T^2)$ avoids the points $P=(0,\pi)$ and $Q=(\pi,\pi)$, and contains a homologically essential simple closed curve
\[ \tilde{C} \subset X(T^2) \setminus \{P,Q\} \]
such that $\tilde{C}$ is disjoint from the line $L_0 = \{\beta \in 2\pi\Z \}$.
\end{enumerate}
Moreover, the intersection $\sigma(I_K) \cap L_\pi$ is connected.
\end{lemma}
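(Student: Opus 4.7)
The plan is to deduce the lemma from Propositions~\ref{prop:avoid-slope-2} and \ref{prop:lines-of-slope-2} by pushing the stated properties of $I_K$ forward through the involution $\sigma$. The first step is to record the relevant behavior of $\sigma(\alpha,\beta)=(-\alpha,\,2\alpha+\beta)=(\alpha,\,2\pi-(2\alpha+\beta))$: namely that $\sigma$ fixes each of $P=(0,\pi)$ and $Q=(\pi,\pi)$ individually (a direct check), and that for every constant $c$ it maps the line $\{2\alpha+\beta\equiv c\pmod{2\pi}\}$ bijectively onto $\{\beta\equiv -c\pmod{2\pi}\}$. In particular $\sigma$ carries the slanted line $\{2\alpha+\beta\equiv\pi\pmod{2\pi}\}$ onto $L_\pi$ and $\{2\alpha+\beta\in 2\pi\Z\}$ onto $L_0$.

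Next I would apply Proposition~\ref{prop:avoid-slope-2} to $I_K$. In its first alternative, $I_K$ contains the entire slanted line $\{2\alpha+\beta\equiv\pi\pmod{2\pi}\}$, and so $\sigma(I_K)$ contains $L_\pi$, giving alternative~(1) of the present lemma. In its second alternative, $I_K$ avoids $P$ and $Q$ and contains a homologically essential simple closed curve $C\subset X(T^2)\setminus\{P,Q\}$ disjoint from $\{2\alpha+\beta\in 2\pi\Z\}$. Because $\sigma$ fixes both $P$ and $Q$, the image $\sigma(I_K)$ also avoids them, and $\tilde{C}:=\sigma(C)$ is a simple closed curve in $X(T^2)\setminus\{P,Q\}$ disjoint from $L_0$. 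It remains homologically essential because $\sigma$ restricts to a self-homeomorphism of $X(T^2)\setminus\{P,Q\}$, hence acts on $H_1\big(X(T^2)\setminus\{P,Q\}\big)\cong\Z$ by $\pm 1$ and preserves nonzero classes. These two alternatives are mutually exclusive because $L_\pi$ passes through $P$ and $Q$.

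For the final connectedness claim, Proposition~\ref{prop:lines-of-slope-2} says that $I_K\cap\{2\alpha+\beta\equiv\pi\pmod{2\pi}\}$ is connected; since $\sigma$ is a homeomorphism carrying the slanted line bijectively onto $L_\pi$, the equality
\[
\sigma(I_K)\cap L_\pi \;=\; \sigma\!\left(I_K\cap\{2\alpha+\beta\equiv\pi\!\!\!\pmod{2\pi}\}\right)
\]
exhibits $\sigma(I_K)\cap L_\pi$ as the continuous image of a connected set, hence connected.

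The hard part will really only be the bookkeeping in the first step—verifying carefully that $\sigma$ acts as advertised on the distinguished points and lines, and confirming that its action on $H_1\big(X(T^2)\setminus\{P,Q\}\big)$ does not destroy essentiality. Once those are in place, the rest of the argument is a formal transport of the previous two propositions through $\sigma$, and no further input from gauge theory or character varieties is needed.
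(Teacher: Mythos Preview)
Your proposal is correct and follows essentially the same approach as the paper: both proofs transport the dichotomy of Proposition~\ref{prop:avoid-slope-2} and the connectedness from Proposition~\ref{prop:lines-of-slope-2} through the involution $\sigma$, using that $\sigma$ fixes $P$ and $Q$ and carries the slanted lines $\{2\alpha+\beta\equiv c\}$ to the horizontal lines $\{\beta\equiv -c\}$. Your version is slightly more explicit than the paper's in justifying mutual exclusivity of the two cases and in spelling out why $\sigma$ preserves homological essentiality, but the underlying argument is the same.
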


\begin{proof}
Proposition~\ref{prop:avoid-slope-2} tells us that either $I_K$ contains the line $L'_\pi = \{2\alpha+\beta \equiv \pi\pmod{2\pi}\}$, or it avoids $P$ and $Q$ and contains a homologically essential simple closed curve
\[ C \subset I_K \subset X(T^2) \setminus \{P,Q\} \]
that is disjoint from the line $L'_0 = \{2\alpha+\beta \equiv 0 \pmod{2\pi}\}$.  In the first case, $\sigma(I_K)$ contains the line $\sigma(L'_\pi) = \{\beta\equiv\pi\pmod{2\pi}\} = L_\pi$.

In the second case, we note that $\sigma$ fixes both $P$ and $Q$, hence restricts to a homeomorphism
\[ X(T^2) \setminus \{P,Q\} \xrightarrow{\cong} X(T^2) \setminus \{P,Q\}. \]
But then $\sigma(C)$ avoids $P$ and $Q$, just as $C$ does, and it remains homologically essential in their complement.  We take $\tilde{C} = \sigma(C)$, and note that $\tilde{C}$ is disjoint from the line $\sigma(L'_0)$, which is precisely $\{\beta\in2\pi\Z\} = L_0$.

Finally, in either case Proposition~\ref{prop:lines-of-slope-2} tells us that the intersection $I_K \cap L'_\pi$ is connected, so the same is true of its image under $\sigma$, which is $\sigma(I_K) \cap \sigma(L'_\pi) = \sigma(I_K) \cap L_\pi$.
\end{proof}

\subsection{Intersections of pillowcase images}

We are now ready to prove the following proposition, which will imply Theorem~\ref{thm:gluing-rep}, as discussed at the beginning of the previous subsection.

\begin{proposition} \label{prop:skew-intersection}
Let $Y_1$ and $Y_2$ be rational homology spheres, and let $K_1 \subset Y_1$ and $K_2 \subset Y_2$ be nullhomologous knots with irreducible exteriors such that $(Y_\ell)_2(K_\ell) \cong \#^{n_\ell} \RP^3$ for $\ell=1,2$, where $n_1,n_2 \geq 1$.  Suppose that neither pair $(Y_\ell,K_\ell)$ is homeomorphic to $(S^3,U)$.  Then the subsets
\[ I_{K_1},\ \sigma(I_{K_2}) \subset X(T^2) \]
intersect at some point $(\alpha,\beta)$, where neither $2\alpha+\beta$ nor $\beta$ is an integer multiple of $2\pi$.
\end{proposition}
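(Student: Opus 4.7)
My plan is to combine Proposition~\ref{prop:avoid-slope-2} applied to $K_1$ with Lemma~\ref{lem:skew-image} applied to $\sigma(I_{K_2})$.  This produces a dichotomy for each of $I_{K_1}$ and $\sigma(I_{K_2})$: each either contains an entire arc from $P$ to $Q$---namely $\{2\alpha+\beta\equiv\pi\pmod{2\pi}\}$ for $I_{K_1}$, and $L_\pi=\{\beta\equiv\pi\pmod{2\pi}\}$ for $\sigma(I_{K_2})$---or else avoids $\{P,Q\}$ and contains a simple closed curve (call them $C$ and $\tilde{C}$) which is homologically essential in $X(T^2)\setminus\{P,Q\}$ and disjoint from the complementary arc ($\{2\alpha+\beta\in 2\pi\Z\}$ and $L_0=\{\beta\in 2\pi\Z\}$ respectively).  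This yields four cases.

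Three of these are straightforward.  In the ``line-line'' case the two arcs meet precisely at $\{P,Q\}$, at which $\beta=\pi$ and $2\alpha+\beta=\pi$ are both nonzero modulo $2\pi$.  In either ``line-curve'' case the line is an arc from $P$ to $Q$ that must cross the separating curve; the line's defining equation together with the curve's disjointness from the complementary $L$-line yield the desired intersection off both $L_0$ and $\{2\alpha+\beta\in 2\pi\Z\}$.

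The main obstacle is the ``curve-curve'' case, in which both sides provide only the essential curves $C$ and $\tilde{C}$.  If $C\cap\tilde{C}\neq\emptyset$ we are immediately done, since any such point automatically avoids both $L_0$ (as $\tilde{C}$ does) and $\{2\alpha+\beta\in 2\pi\Z\}$ (as $C$ does).  Otherwise I will exploit the $\tau$-invariance of $I_{K_1}$ and $\sigma(I_{K_2})$ from Lemma~\ref{lem:pillowcase-involution}.  The hypothesis $C\cap\{2\alpha+\beta\in 2\pi\Z\}=\emptyset$ forces $(0,0)$ and $(\pi,0)$ to lie on the same side of $C$, so $C$ bounds a disk in $X(T^2)\cong S^2$ containing exactly one of $P,Q$ among the four corners, and likewise for $\tilde{C}$.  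Since $\tau$ swaps $P\leftrightarrow Q$ while $\tau(C)\subset I_{K_1}$ and $\tau(\tilde{C})\subset\sigma(I_{K_2})$, by replacing $C$ or $\tilde{C}$ with its $\tau$-image if necessary I can arrange that both curves bound disks containing only $Q$.  Disjointness then forces these two $Q$-disks to be nested, one inside the other.

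To conclude I use the connected arcs $A_1=I_{K_1}\cap\{2\alpha+\beta\equiv\pi\pmod{2\pi}\}$ and $A_2=\sigma(I_{K_2})\cap L_\pi$ provided by Proposition~\ref{prop:lines-of-slope-2} and Lemma~\ref{lem:skew-image}, which contain $(\pi/2,0)$ and $(\pi/2,\pi)$ respectively.  Since $C$ separates $P$ from $Q$ while $\{2\alpha+\beta\equiv\pi\}$ is an arc from $P$ to $Q$, we have $\emptyset\neq C\cap\{2\alpha+\beta\equiv\pi\}\subset I_{K_1}\cap\{2\alpha+\beta\equiv\pi\}=A_1$, and similarly $\emptyset\neq\tilde{C}\cap L_\pi\subset A_2$.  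Suppose WLOG that $C$ sits inside the $Q$-disk bounded by $\tilde{C}$: then $(\pi/2,0)\in L_0$ lies outside this $\tilde{C}$-disk (since $\tilde{C}$ avoids $L_0$ and $L_0$ contains the corners, which are on the opposite side), while the point of $C\cap\{2\alpha+\beta\equiv\pi\}$ lies on $C$ inside it.  The connectedness of $A_1$ then forces $A_1$ to cross $\tilde{C}$, giving the desired point of $I_{K_1}\cap\sigma(I_{K_2})$ that lies on $\{2\alpha+\beta\equiv\pi\}$ (hence off $\{2\alpha+\beta\in 2\pi\Z\}$) and on $\tilde{C}$ (hence off $L_0$).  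The reverse nesting is handled symmetrically, replacing $A_1$ and $\tilde{C}$ with $A_2$ and $C$.
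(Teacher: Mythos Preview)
Your proof is correct and uses the same ingredients as the paper (Proposition~\ref{prop:avoid-slope-2}, Lemma~\ref{lem:skew-image}, the $\tau$-invariance of Lemma~\ref{lem:pillowcase-involution}, and the connected arcs on $L'_\pi$ and $L_\pi$).  Your line--line and line--curve cases together recover Lemma~\ref{lem:intersect-if-P}.  In the curve--curve case the paper (Lemma~\ref{lem:intersect-if-not-P}) takes a different tack: it locates whichever of $C_1,\tau(C_1),\tilde C_2,\tau(\tilde C_2)$ meets $L_\pi$ closest to $P$, argues that the complementary essential curve separates $\tilde C_2$ from $\tau(\tilde C_2)$ (or $C_1$ from $\tau(C_1)$), and then uses the connected arc to find a crossing.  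Your nested-disk argument is a more direct alternative; it is worth remarking that the reverse-nesting case really is symmetric, because $(\tfrac{\pi}{2},\pi)$ lies on the arc $\{2\alpha+\beta\in 2\pi\Z\}$---which has the same endpoints $(0,0),(\pi,0)$ as $L_0$ does---and $C$ avoids that arc, so $(\tfrac{\pi}{2},\pi)$ also lies outside the $Q$-disk of $C$.

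One small gap to patch: after replacing $C$ or $\tilde C$ by its $\tau$-image you invoke ``disjointness'', but the hypothesis $C\cap\tilde C=\emptyset$ does not by itself give, say, $\tau(C)\cap\tilde C=\emptyset$.  You should first observe (as the paper does) that if \emph{any} of $\{C,\tau(C)\}$ meets \emph{any} of $\{\tilde C,\tau(\tilde C)\}$ then you are already done, since $\tau$ preserves both $\{2\alpha+\beta\in 2\pi\Z\}$ and $L_0$; only after ruling this out can you freely replace and assume the resulting curves are disjoint.
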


We split the proof of Proposition~\ref{prop:skew-intersection} into two cases, which occupy the following two lemmas.

\begin{lemma} \label{lem:intersect-if-P}
Proposition~\ref{prop:skew-intersection} holds if at least one of the pillowcase images $I_{K_1}$ and $I_{K_2}$ contains the point $P = (0,\pi)$.
\end{lemma}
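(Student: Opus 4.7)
The plan is to combine the dichotomies of Proposition~\ref{prop:avoid-slope-2} (for $I_{K_1}$) and Lemma~\ref{lem:skew-image} (for $\sigma(I_{K_2})$) with the observation that any essential simple closed curve in $X(T^2)\setminus\{P,Q\}$ must meet every arc in $X(T^2)$ joining $P$ to $Q$. The preliminary step is to note that if $P\in I_{K_1}$ then $Q\in I_{K_1}$ too (use the central-character trick $\rho\mapsto \chi\cdot\rho$ with $\chi(\mu)=-1$, as in the beginning of the proof of Proposition~\ref{prop:avoid-slope-2}), and then the intersection $I_{K_1}\cap L'_\pi$, which is connected by Proposition~\ref{prop:lines-of-slope-2} and contains $(\pi/2,0)$ as well as both endpoints $P,Q$ of the arc $L'_\pi$, must equal all of $L'_\pi$. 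So $P\in I_{K_1}$ places us in case (1) of Proposition~\ref{prop:avoid-slope-2}. Because $\sigma$ fixes $P$, the same reasoning (applied through Lemma~\ref{lem:skew-image}) shows that $P\in I_{K_2}$ places us in case (1) of that lemma, so that $\sigma(I_{K_2})\supset L_\pi$.

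I would then split into three cases according to which image contains $P$. In Case~A, $P\in I_{K_1}$ and $P\notin I_{K_2}$: then $I_{K_1}\supset L'_\pi$ and $\sigma(I_{K_2})$ avoids $\{P,Q\}$ and contains a simple closed curve $\tilde C$ that is homologically essential in $X(T^2)\setminus\{P,Q\}$ and disjoint from $L_0=\{\beta\in 2\pi\Z\}$. Since $L'_\pi$ is a properly embedded arc from $P$ to $Q$ in $X(T^2)\cong S^2$, its complement is an open disk, so $\tilde C$ cannot be contained in it; any point of $\tilde C\cap L'_\pi$ gives a point of $I_{K_1}\cap\sigma(I_{K_2})$ with $2\alpha+\beta\equiv \pi\pmod{2\pi}$ (not in $2\pi\Z$) and $\beta\notin 2\pi\Z$ (by disjointness of $\tilde C$ from $L_0$). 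Case~B is symmetric: now $\sigma(I_{K_2})\supset L_\pi$, while $I_{K_1}$ contains an essential loop $C$ avoiding $\{P,Q\}$ and the line $L'_0=\{2\alpha+\beta\in 2\pi\Z\}$, so $C$ must meet the arc $L_\pi$ at a point with $\beta\equiv\pi\pmod{2\pi}$ and $2\alpha+\beta\notin 2\pi\Z$.

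In Case~C, $P$ is in both images, and then $P$ itself lies in $I_{K_1}\cap\sigma(I_{K_2})$ (using $\sigma(P)=P$); at $P=(0,\pi)$ both $\beta=\pi$ and $2\alpha+\beta=\pi$ lie outside $2\pi\Z$, so $P$ is already a witness.

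The main obstacle I anticipate is not really a technical hurdle but rather a bookkeeping one: verifying that the two preliminary reductions (from ``$P$ is in the image'' to ``the whole line is in the image'') really follow from the connectedness statement of Proposition~\ref{prop:lines-of-slope-2} together with the central-character symmetry, so that the case~(1)/case~(2) dichotomies can be applied as above. Once that is in place, the essential-curve-versus-arc intersection argument in an $S^2$ with two punctures is standard, and the verification that the intersection points actually avoid the lines $\{\beta\in 2\pi\Z\}$ and $\{2\alpha+\beta\in 2\pi\Z\}$ is immediate from the constraints built into each dichotomy.
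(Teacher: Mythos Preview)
Your proposal is correct and follows essentially the same argument as the paper's proof: both split into the three cases according to which of $I_{K_1}$, $I_{K_2}$ contain $P$, use $\sigma(P)=P$ to dispose of Case~C immediately, and in the asymmetric cases combine the ``line inclusion'' alternative of Proposition~\ref{prop:avoid-slope-2}/Lemma~\ref{lem:skew-image} for one image with the ``essential curve'' alternative for the other, observing that an essential curve in $X(T^2)\setminus\{P,Q\}$ must meet any arc from $P$ to $Q$. Your preliminary reduction (that $P\in I_{K_j}$ forces case~(1) of the relevant dichotomy) is exactly the content of the first paragraph of the proof of Proposition~\ref{prop:avoid-slope-2}, so you may simply cite that proposition directly as the paper does.
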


\begin{proof}
We note that $\sigma(P) = P$, so if $P$ belongs to both $I_{K_1}$ and $I_{K_2}$ then it also belongs to $\sigma(I_{K_2})$ and hence to $I_{K_1} \cap \sigma(I_{K_2})$; in this case we have $2\alpha+\beta = \beta = \pi \not\in 2\pi\Z$, as desired.

Now suppose that $P \in I_{K_1}$, but that $P \not\in I_{K_2}$ and hence $P = \sigma(P) \not\in \sigma(I_{K_2})$.  Then Proposition~\ref{prop:avoid-slope-2} says that $I_{K_1}$ contains the entire line
\[ L'_\pi = \{2\alpha+\beta \equiv \pi\!\!\!\pmod{2\pi} \}, \]
whose endpoints are at $P=(0,\pi)$ and $Q=(\pi,\pi)$; and Lemma~\ref{lem:skew-image} says that $\sigma(I_{K_2})$ contains a homologically essential simple closed curve
\[ \tilde{C}_2 \subset X(T^2) \setminus \{P,Q\} \]
disjoint from the line $\{\beta\in 2\pi\Z\}$.  Since $X(T^2) \setminus \{P,Q\}$ is topologically a twice-punctured sphere, with first homology $\Z$, we can measure the homology class of $\tilde{C}_2$ by counting its intersections with any arc from $P$ to $Q$.  The line $L'_\pi$ is such an arc, and since $\tilde{C}_2$ is non-zero in homology we conclude that they must intersect.  We let $(\alpha,\beta)$ be any point of the intersection $L'_\pi \cap \tilde{C}_2$; then $(\alpha,\beta)$ belongs to $I_{K_1} \cap \sigma(I_{K_2})$ by definition, and as a point of $L'_\pi$ and of $\tilde{C}_2$ it satisfies $2\alpha+\beta \not\in 2\pi\Z$ and $\beta \not\in 2\pi\Z$ respectively, as desired.

The remaining case, where $P\not\in I_{K_1}$ and $P \in \sigma(I_{K_2})$, is nearly identical.  In this case $\sigma(I_{K_2})$ contains the entire line $L_\pi = \{\beta\equiv\pi\pmod{2\pi}\}$ from $P$ to $Q$ by Lemma~\ref{lem:skew-image}, while Proposition~\ref{prop:avoid-slope-2} gives us an essential curve $C_1 \subset X(T^2)\setminus\{P,Q\}$ in the image $I_{K_1}$, with $C_1$ disjoint from $\{2\alpha+\beta \in 2\pi\Z\}$.  Since $C_1$ is essential it must intersect the arc $L_\pi$ from $P$ to $Q$, and at any point $(\alpha,\beta)$ in the intersection we have $\beta\not\in 2\pi\Z$ since $(\alpha,\beta) \in L_\pi$, and $2\alpha+\beta \not\in 2\pi\Z$ since $(\alpha,\beta) \in C_1$.
\end{proof}

\begin{lemma} \label{lem:intersect-if-not-P}
Proposition~\ref{prop:skew-intersection} holds if neither $I_{K_1}$ nor $I_{K_2}$ contains the point $P=(0,\pi)$.
\end{lemma}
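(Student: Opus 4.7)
The plan is to mimic the intersection argument from Lemma~\ref{lem:intersect-if-P}, but with essential loops on both sides rather than an essential arc on one side. Applying Proposition~\ref{prop:avoid-slope-2}(2) to $K_1$ (valid since $P\notin I_{K_1}$) produces a homologically essential simple closed curve $C_1\subset I_{K_1}\subset X(T^2)\setminus\{P,Q\}$ disjoint from $L'_0=\{2\alpha+\beta\in 2\pi\Z\}$. Since $\sigma$ fixes $P$ and $Q$, the hypothesis $P\notin I_{K_2}$ gives $P\notin\sigma(I_{K_2})$, so Lemma~\ref{lem:skew-image}(2) applied to $K_2$ produces a homologically essential simple closed curve $\tilde C_2\subset\sigma(I_{K_2})\subset X(T^2)\setminus\{P,Q\}$ disjoint from $L_0=\{\beta\in 2\pi\Z\}$.

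The point of this setup is that any intersection point $(\alpha,\beta)\in C_1\cap\tilde C_2$ automatically satisfies the conclusion of the lemma: it lies in $I_{K_1}\cap\sigma(I_{K_2})$, and the disjointness conditions on $C_1$ and $\tilde C_2$ immediately give $2\alpha+\beta\notin 2\pi\Z$ (from $C_1\cap L'_0=\emptyset$) and $\beta\notin 2\pi\Z$ (from $\tilde C_2\cap L_0=\emptyset$). It thus suffices to show $C_1\cap\tilde C_2\neq\emptyset$.

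To prove this, I would work in the fundamental domain $[0,\pi]\times[0,2\pi]$ with the edge identifications recalled in \eqref{eq:pillowcase-domain}. There, $L_0$ consists of the top and bottom edges and $L'_0$ is the diagonal arc $\{2\alpha+\beta=2\pi\}$ from $(0,2\pi)$ to $(\pi,0)$. A homologically essential simple closed curve in $X(T^2)\setminus\{P,Q\}$ that avoids $L_0$ cannot close up using the top--bottom identification, so it must close up using one of the left or right edge identifications; in the fundamental domain it therefore appears as a ``semi-loop'' encircling either $P$ or $Q$. Similarly, any essential loop avoiding $L'_0$ is forced into such a semi-loop configuration, since a vertical loop closed via the top--bottom identification necessarily crosses the diagonal $L'_0$. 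The plan is then to use the additional structural information that $I_{K_1}$ contains $L_0$ together with the arc $A_1\subset L'_\pi$ through $(\pi/2,0)$ (from Proposition~\ref{prop:lines-of-slope-2}), and likewise that $\sigma(I_{K_2})$ contains $L'_0=\sigma(L_0)$ together with the arc $\sigma(A_2)\subset L_\pi$ through $(\pi/2,\pi)$, combined with the $\tau$-invariance of both $I_{K_1}$ and $\sigma(I_{K_2})$ (Lemma~\ref{lem:pillowcase-involution}), to rule out every configuration in which $C_1$ and $\tilde C_2$ are disjoint semi-loops around the same or opposite corner points.

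The main obstacle is this last configurational step: two semi-loops around the same puncture can in principle be concentric and disjoint, so purely homological reasoning in the twice-punctured sphere is not enough. The hard part of the argument will be showing that the structural arcs $L_0$, $L'_0$, $A_1$, and $\sigma(A_2)$ obstruct every such concentric arrangement, so that within the $\tau$-invariant graphs $I_{K_1}$ and $\sigma(I_{K_2})$ one can always select the essential loops to cross.
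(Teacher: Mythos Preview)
Your setup is correct and matches the paper's: you obtain the essential curve $C_1 \subset I_{K_1}$ disjoint from $L'_0$ and $\tilde{C}_2 \subset \sigma(I_{K_2})$ disjoint from $L_0$, and you correctly observe that any point of $C_1 \cap \tilde{C}_2$ would suffice. You also correctly identify the obstacle: two essential simple closed curves in the twice-punctured sphere can be concentric and disjoint.

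However, your proposed resolution --- ruling out concentric configurations via the ``structural arcs,'' or re-selecting loops so that they cross --- is not carried out, and in fact the paper does \emph{not} show that $C_1 \cap \tilde{C}_2 \neq \emptyset$. The intersection point it produces need not lie on both curves. The missing idea is this: bring in $\tau(C_1)$ and $\tau(\tilde{C}_2)$ as well (these lie in $I_{K_1}$ and $\sigma(I_{K_2})$ respectively, and remain disjoint from $L'_0$ and $L_0$ since $\tau$ fixes both lines setwise), and assume the two families $\{C_1,\tau(C_1)\}$ and $\{\tilde{C}_2,\tau(\tilde{C}_2)\}$ are disjoint from one another. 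Look at which of the four curves meets $L_\pi$ closest to $P$; call it $\gamma$, and let $D$ be the disk it bounds around $P$. If $\gamma \in \{\tilde{C}_2, \tau(\tilde{C}_2)\}$, one argues that $C_1$ (being essential, disjoint from $\gamma$, and disjoint from the arc $[0,\alpha_0]\times\{\pi\}$) cannot lie in $D$, and by applying $\tau$ neither can it lie in $\tau(D)$; hence $C_1$ separates $\tilde{C}_2$ from $\tau(\tilde{C}_2)$. Now comes the key step: $\sigma(I_{K_2}) \cap L_\pi$ is \emph{connected} (Lemma~\ref{lem:skew-image}) and contains points on both $\tilde{C}_2$ and $\tau(\tilde{C}_2)$, which lie on opposite sides of $C_1$, so this connected set must meet $C_1$. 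That meeting point lies in $I_{K_1} \cap \sigma(I_{K_2})$ with $\beta = \pi$ and (being on $C_1$) with $2\alpha+\beta \notin 2\pi\Z$. The case $\gamma \in \{C_1, \tau(C_1)\}$ is symmetric, using the connectedness of $I_{K_1} \cap L'_\pi$ from Proposition~\ref{prop:lines-of-slope-2}.

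So the connectedness statements in Proposition~\ref{prop:lines-of-slope-2} and Lemma~\ref{lem:skew-image} are not merely structural constraints on loop configurations; they supply the actual path along which the desired intersection is found, and that intersection is between a loop and an arc in the other image, not necessarily between the two loops.
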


\begin{proof}
Letting $Q=(\pi,\pi)$, Proposition~\ref{prop:avoid-slope-2} tells us that there is a homologically essential, simple closed curve
\[ C_1 \subset I_{K_1} \subset X(T^2) \setminus \{P,Q\} \]
that is disjoint from the line $\{2\alpha+\beta \in 2\pi\Z\}$.  Similarly, by Lemma~\ref{lem:skew-image} there is an essential curve
\[ \tilde{C}_2 \subset \sigma(I_{K_2}) \subset X(T^2) \setminus \{P,Q\} \]
that is disjoint from the line $\{\beta \in 2\pi\Z\}$.  We will let $\tau: X(T^2) \to X(T^2)$ be the involution of Lemma~\ref{lem:pillowcase-involution}, which exchanges the points $P$ and $Q$ and fixes the images $I_{K_1}$ and $\sigma(I_{K_2})$ setwise.  (In particular, we note that $\tau(\tilde{C}_2) \subset \sigma(I_{K_2})$ as well.)

First, we observe that if the intersection
\[ \big( C_1 \cup \tau(C_1) \big) \cap \big( \tilde{C}_2 \cup \tau(\tilde{C}_2) \big) \]
is nonempty, then any point $(\alpha,\beta)$ in the intersection will suffice.  It must satisfy $2\alpha+\beta \not\in 2\pi\Z$ since it lies on either $C_1$ or $\tau(C_1)$, and then $\beta\not\in 2\pi\Z$ since it lies on either $\tilde{C}_2$ or $\tau(\tilde{C}_2)$.  Thus we may assume from now on that the sets
\[ \big( C_1 \cup \tau(C_1) \big) \quad\text{and}\quad \big( \tilde{C}_2 \cup \tau(\tilde{C}_2) \big) \]
are disjoint.

Since each of the simple closed curves $C_1$, $\tau(C_1)$, $\tilde{C}_2$, and $\tau(\tilde{C}_2)$ is homologically essential in $X(T^2) \setminus \{P,Q\}$, they must all separate $P = (0,\pi)$ from $Q = (\pi,\pi)$ and intersect the line segment
\[ L_\pi = [0,\pi] \times \{\pi\} \]
from $P$ to $Q$.  Let $\alpha_0 \in [0,\pi]$ be the minimal coordinate such that at least one of these four curves passes through $(\alpha_0,\pi)$, and let $\gamma$ be the curve in question.  We split the remainder of the proof into two cases.

\vspace{1em}
\noindent{\bf Case 1}: The curve $\gamma$ is either $\tilde{C}_2$ or $\tau(\tilde{C}_2)$.

\begin{figure}
\begin{tikzpicture}[style=thick]
\begin{scope}
  \draw plot[mark=*,mark size = 0.5pt] coordinates {(0,0)(3,0)(3,6)(0,6)} -- cycle; 
  \draw[thin,|-|] (0,-0.4) node[below] {\small$0$} -- node[midway,inner sep=1pt,fill=white] {$\alpha$} ++(3,0) node[below] {\small$\vphantom{0}\pi$};
  \draw[thin,|-|] (-0.75,0) node[left] {\small$0$} -- node[midway,inner sep=1pt,fill=white] {$\beta$} ++(0,6) node[left] {\small$2\pi$};
  \coordinate (P) at (0,3);
  \coordinate (Q) at (3,3);
  \path[save path=\pqline,name path=L] (P) -- (Q);

  \begin{scope}[color=blue,style=ultra thick]
     \foreach \i in {180,0} {
       \begin{scope}[rotate around={\i:(1.5,3)}]
         \draw[save path=\gpath,name path=gp] (0,5) to[out=0,in=75] (0.75,3.5) to[out=255,in=90] (1.4,1.5) to[out=270,in=0]  coordinate[pos=0.75] (gamma\i) (0,1);
         \path[use path=\gpath,name path=gp];
         \path [name intersections={of=gp and L, by=gint\i}];
       \end{scope}
     }
     \draw[very thick] (gint0) -- (gint180);
     \node[below,inner sep=3pt] at (gamma0) {$\gamma$};
     \node[above,inner sep=2pt] at (gamma180) {$\tau(\gamma)$};
  \end{scope}
  \draw[blue,ultra thick,fill=gray!20,use path=\gpath];
  \node at (0.6,1.75) {$D$};
  
  \draw[fill=black] (P) circle (0.075) node[left] {$P$};
  \draw[fill=black] (Q) circle (0.075) node[right] {$Q$};
  \draw[dotted,use path=\pqline];
  \begin{scope}[decoration={markings,mark=at position 0.55 with {\arrow[scale=1]{>>}}}]
    \draw[postaction={decorate}] (0,0) -- (0,3);
    \draw[postaction={decorate}] (0,6) -- (0,3);
  \end{scope}
  \begin{scope}[decoration={markings,mark=at position 0.575 with {\arrow[scale=1]{>>>}}}]
    \draw[postaction={decorate}] (3,0) -- (3,3);
    \draw[postaction={decorate}] (3,6) -- (3,3);
  \end{scope}
  \begin{scope}[decoration={markings,mark=at position 0.6 with {\arrow[scale=1]{>>>>}}}]
    \draw[postaction={decorate}] (3,6) -- (0,6);
    \draw[postaction={decorate}] (3,0) -- (0,0);
  \end{scope}

  \draw[red,very thick] (0,5.5) to[out=0,in=105,looseness=0.6] node[pos=0.2,above right,inner sep=1pt] {$C_1$} (1.35,3) to[out=285,in=0,looseness=2.25] (0,0.5);
\end{scope}
\end{tikzpicture}
\caption{The case of Lemma~\ref{lem:intersect-if-not-P} where $\gamma$ is either $\tilde{C}_2$ or $\tau(\tilde{C}_2)$.  The curve $C_1 \subset I_{K_1}$ separates $\tilde{C}_2$ from $\tau(\tilde{C}_2)$, so it must meet a segment of $L_\pi=[0,\pi]\times\{\pi\}$ connecting one to the other, and this segment lies in $\sigma(I_{K_2})$, so the intersection $I_{K_1} \cap \sigma(I_{K_2})$ is non-empty.}
\label{fig:making-images-intersect}
\end{figure}
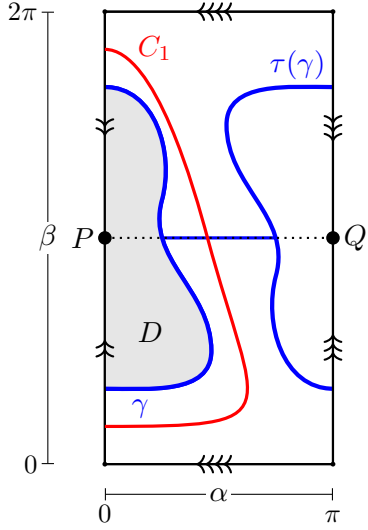
Letting $D$ be the disk component of $X(T^2) \setminus \gamma$ that contains $P$, we claim that in this case $C_1$ must be disjoint from $D$, as shown in Figure~\ref{fig:making-images-intersect}.  Assuming otherwise, it would be contained in the punctured disk $D \setminus \{P\}$ (since it is disjoint from $\gamma = \partial D$), and it is disjoint from the arc $[0,\alpha_0]\times \{\pi\}$ from $P$ to $\partial D$ by assumption, so it would necessarily be nullhomotopic in $D \setminus \{P\}$.  This would mean that $C_1$ bounds a disk $D' \subset D$ that does not contain $P$, and then $D'$ cannot contain $Q$ either since $Q \not\in D$, so $C_1 = \partial D'$ would not separate $P$ from $Q$ in $X(T^2)$, a contradiction.  Thus we know that the disk $D$ bounded by $\gamma$ is disjoint from $C_1$, and by an identical argument it is also disjoint from $\tau(C_1)$.

Applying the involution $\tau$, we know that the disk $\tau(D)$ is bounded by $\tau(\gamma)$ and contains $\tau(P) = (\pi,\pi) = Q$.  This disk must be disjoint from both $C_1$ and $\tau(C_1)$, since otherwise we could apply $\tau$ again to see that either $\tau(C_1)$ or $\tau^2(C_1) = C_1$ meets $\tau^2(D) = D$, which we know to be impossible.  So now the simple closed curve $C_1$ is disjoint from both $D$ and $\tau(D)$, whose boundaries are $\tilde{C}_2$ and $\tau(\tilde{C}_2)$ in some order, and it separates $P \in D$ from $Q \in \tau(D)$.  It follows that $D$ and $\tau(D)$ lie in different components of the complement $X(T^2) \setminus C_1$, and in particular so do their boundaries $\tilde{C}_2$ and $\tau(\tilde{C}_2)$.

Now the curves $\tilde{C}_2$ and $\tau(\tilde{C}_2)$ both intersect the line segment $L_\pi$: by assumption one of them does at $x = (\alpha_0,\pi)$, and then the other one must meet $L_\pi$ at $\tau(x) = (\pi-\alpha_0,\pi)$.  Then $x$ and $\tau(x)$ lie in different components of $X(T^2) \setminus C_1$.  They also belong to the intersection $\sigma(I_{K_2}) \cap L_\pi$, which is connected by Lemma~\ref{lem:skew-image}, so there must be some point
\[ (\alpha,\beta) \in \sigma(I_{K_2}) \cap L_\pi \]
that also lies in $C_1$.  Then $(\alpha,\beta)$ belongs to $I_{K_1} \cap \sigma(I_{K_2})$, we have $\beta = \pi \not\in 2\pi\Z$ since $(\alpha,\beta) \in L_\pi$, and similarly $2\alpha+\beta \not\in 2\pi\Z$ since $(\alpha,\beta) \in C_1$.  Thus $(\alpha,\beta)$ is our desired point of intersection, and this proves the lemma in the case where $\gamma$ is either $\tilde{C}_2$ or $\tau(\tilde{C}_2)$.

\vspace{1em}
\noindent{\bf Case 2}: The curve $\gamma$ is either $C_1$ or $\tau(C_1)$.

In this case, an identical argument shows that $C_1$ and $\tau(C_1)$ lie in different components of the complement $X(T^2) \setminus \tilde{C}_2$.  Now the line segment
\begin{align*}
L'_\pi &= \{ (\alpha, \pi-2\alpha) \mid 0 \leq \alpha \leq \pi \} \\
&= \{ 2\alpha+\beta \equiv \pi \!\!\!\pmod{2\pi} \}
\end{align*}
in $X(T^2)$ has its endpoints at $P$ and $Q$, so it must intersect any homologically essential curve in $X(T^2) \setminus \{P,Q\}$.  This includes the curve $C_1$, so we take a point
\[ x \in C_1 \cap L'_\pi \]
and note that $\tau(x) \in \tau(C_1) \cap L'_\pi$ as well, since $\tau$ fixes $L'_\pi$ setwise.

Now Proposition~\ref{prop:lines-of-slope-2} says that the intersection $I_{K_1} \cap L'_\pi$ is connected.  This intersection contains both $x$ and $\tau(x)$, which lie in different components of $X(T^2) \setminus \tilde{C}_2$ since they belong to $C_1$ and $\tau(C_1)$ respectively.  It follows that $I_{K_1} \cap L'_\pi$ must intersect $\tilde{C}_2 \subset \sigma(I_{K_2})$ at some point
\[ (\alpha,\beta) \in I_{K_1} \cap L'_\pi. \]
We have $2\alpha+\beta \not\in 2\pi\Z$ since $(\alpha,\beta) \in L'_\pi$, and $\beta \not\in 2\pi\Z$ since $(\alpha,\beta) \in \tilde{C}_2$, so $(\alpha,\beta)$ is the desired point and we are done.
\end{proof}

\begin{proof}[Proof of Proposition~\ref{prop:skew-intersection}]
One of the following must apply: either $P = (0,\pi)$ lies in at least one of $I_{K_1}$ and $I_{K_2}$, or it lies in neither of them.  In the first case we apply Lemma~\ref{lem:intersect-if-P}, and in the second case we apply Lemma~\ref{lem:intersect-if-not-P}.
\end{proof}

\subsection{Constructing a representation}

We are finally ready to prove Theorem~\ref{thm:gluing-rep}, using the information provided by Proposition~\ref{prop:skew-intersection}.  We recall the hypotheses of Theorem~\ref{thm:gluing-rep} here for convenience: we have nullhomologous knots $K_1 \subset Y_1$ and $K_2 \subset Y_2$, whose exteriors are irreducible and not solid tori, and these satisfy
\begin{align*}
(Y_1)_2(K_1) &\cong \#^k \RP^3, &
(Y_2)_2(K_2) &\cong \#^\ell \RP^3.
\end{align*}
The closed manifold $Y = E_{K_1} \cup_\partial E_{K_2}$ is then formed from their exteriors by gluing $\mu_1$ and $\lambda_1$ to $\mu_2^{-1}$ and $\mu_2^2\lambda_2$, respectively.

\begin{proof}[Proof of Theorem~\ref{thm:gluing-rep}]
Since the exteriors $E_{K_1}$ and $E_{K_2}$ are not solid tori, Proposition~\ref{prop:skew-intersection} tells us that there is a point
\[ (\alpha,\beta) \in i^*(X(E_{K_1})) \cap \sigma\big( i^*(X(E_{K_2})) \big) \]
in the pillowcase $X(T^2)$ such that $\beta \not\in 2\pi\Z$ and $2\alpha+\beta \not\in 2\pi\Z$.  Since $(\alpha,\beta)$ lies in $i^*(X(E_{K_1}))$, this means that there is a representation
\[ \rho_1: \pi_1(E_{K_1}) \to \SU(2) \]
such that
\begin{align*}
\rho_1(\mu_1) &= \begin{pmatrix} e^{i\alpha} & 0 \\ 0 & e^{-i\alpha} \end{pmatrix}, &
\rho_1(\lambda_1) &= \begin{pmatrix} e^{i\beta} & 0 \\ 0 & e^{-i\beta} \end{pmatrix},
\end{align*}
and $\rho_1$ has non-abelian image, since $\beta \not\in 2\pi\Z$ implies that $\rho_1(\lambda_1) \neq 1$.

Similarly, since $\sigma$ is an involution of the pillowcase, there is a unique point $(\gamma,\delta) \in X(T^2)$ such that
\[ \sigma(\gamma,\delta) = (\alpha,\beta). \]
Then since $(\alpha,\beta)$ lies in $\sigma\big( i^*(X(E_{K_2})) \big)$ we have $(\gamma,\delta) \in i^*(X(E_{K_2}))$, so there is a representation
\[ \rho_2: \pi_1(E_{K_2}) \to \SU(2) \]
such that
\begin{align*}
\rho_2(\mu_2) &= \begin{pmatrix} e^{i\gamma} & 0 \\ 0 & e^{-i\gamma} \end{pmatrix}, &
\rho_2(\lambda_2) &= \begin{pmatrix} e^{i\delta} & 0 \\ 0 & e^{-i\delta} \end{pmatrix}.
\end{align*}
We have $(\gamma,\delta) = \sigma(\alpha,\beta) = (-\alpha, 2\alpha+\beta)$ as well, so the condition $2\alpha+\beta \not\in 2\pi\Z$ is equivalent to $\delta \not\in 2\pi\Z$.  This means that $\rho_2(\lambda_2) \neq 1$, so $\rho_2$ also has non-abelian image.  

Now from $(\alpha,\beta) = \sigma(\gamma,\delta) = (-\gamma, 2\gamma+\delta)$ we conclude that, up to replacing $\rho_2$ with a conjugate to replace the coordinates $(\gamma,\delta)$ with the equivalent $(-\gamma,-\delta)$, we have
\begin{align*}
\rho_1(\mu_1) &= \rho_2(\mu_2^{-1}), \\
\rho_1(\lambda_1) &= \rho_2(\mu_2^2\lambda_2).
\end{align*}
This says that when we form the closed 3-manifold
\[ Y = E_{K_1} \cup_\partial E_{K_2} \]
by gluing $\mu_1$ to $\mu_2^{-1}$ and $\lambda_1$ to $\mu_2^2\lambda_2$, the representations $\rho_1$ and $\rho_2$ agree on the common torus $\partial E_{K_1} \sim \partial E_{K_2}$, and so we can glue them together to define
\[ \rho: \pi_1(Y) \to \SU(2) \]
whose restrictions to $E_{K_1}$ and $E_{K_2}$ are the non-abelian representations $\rho_1$ and $\rho_2$, as desired.
\end{proof}

\section{Toroidal manifolds as unions of knot complements} \label{sec:knot-exteriors}

In this section we prepare to prove Theorem~\ref{thm:sl2-abelian-main} by studying toroidal 3-manifolds of the form
\[ Y = M_1 \cup_T M_2, \]
where $H_1(Y;\Z)$ is $p$-torsion for some prime $p$ and each $M_i$ is a compact orientable 3-manifold with boundary $T$.  Our goal is to express the $M_i$ as complements of knots in rational homology spheres, which can be glued together in a standard way so that when $p=2$, we will be able to find representations of each $\pi_1(M_i)$ by using the results of the previous sections.

\subsection{Nullhomologous rational longitudes} \label{ssec:basis-inessential}

In this subsection, we suppose that $Y = M_1 \cup_T M_2$ has a separating incompressible torus $T$, and that the rational longitudes of $M_1$ and $M_2$ are both nullhomologous.  Under these assumptions, we will express $M_1$ and $M_2$ as the complements of nullhomologous knots in closed 3-manifolds, with a short list of standard forms for the gluing map $\partial M_1 \cong \partial M_2$.

\begin{proposition} \label{prop:nice-basis}
Let $Y = M_1 \cup_T M_2$ be a closed 3-manifold with $H_1(Y;\Z) \cong (\Z/p\Z)^r$, for some prime $p$ and integer $r \geq 0$, and with separating incompressible torus $T$.  Suppose that the rational longitudes of $M_1$ and $M_2$ are both nullhomologous.  Then there are closed 3-manifolds $Y_1$ and $Y_2$, with
\begin{align*}
H_1(Y_1;\Z) &\cong (\Z/p\Z)^k, \\
H_1(Y_2;\Z) &\cong (\Z/p\Z)^\ell
\end{align*}
for some integers $k$ and $\ell$, and nullhomologous knots $K_1 \subset Y_1$ and $K_2 \subset Y_2$ with exteriors 
\[ M_i \cong Y_i \setminus N(K_i) \qquad (i=1,2), \]
such that one of the following holds.
\begin{enumerate}
\item $k+\ell = r$, and the identification $\partial M_1 \cong \partial M_2$ sends $(\mu_1,\lambda_1)$ to $(\lambda_2,\mu_2)$.  In this case there are degree-1 maps $Y \to Y_i$ for each $i=1,2$. \label{i:y2-hrp3}

\item $k+\ell = r-1$, and the identification $\partial M_1 \cong T \cong \partial M_2$ equates
\begin{align*}
\mu_1 &= a\mu_2 + b\lambda_2, &
\lambda_1 &= p\mu_2 + c\lambda_2
\end{align*}
in $H_1(T)$, for some integers $a,b,c$ with $ac-bp=-1$ and $0 \leq b < c < p$.  Then there are degree-1 maps
\[ Y \to (Y_1)_{-p/a}(K_1) \qquad\text{and}\qquad Y \to (Y_2)_{p/c}(K_2). \]
In particular, when $p=2$ we must have $(a,b,c)=(-1,0,1)$, so that $(\mu_1,\lambda_1)$ is sent to $(-\mu_2,2\mu_2+\lambda_2)$ and there are degree-1 maps $Y \to (Y_i)_2(K_i)$ for $i=1,2$. \label{i:y2-hs3}
\end{enumerate}
Each of these degree-1 maps induces a surjection on $\pi_1$ with non-trivial kernel.
\end{proposition}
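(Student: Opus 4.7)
My plan is to realize each $M_i$ as the exterior of a nullhomologous knot $K_i \subset Y_i$, compute $H_1(Y)$ via Mayer--Vietoris, and then normalize the gluing matrix by changes of basis. First, a rank count in the Mayer--Vietoris sequence for $Y$ together with half-lives-half-dies forces $b_1(M_1)+b_1(M_2) \leq 2$ and each $\geq 1$, so $b_1(M_i) = 1$ and $H_1(M_i) \cong \Z \oplus T_i$ for the torsion subgroup $T_i$. Because $\lambda_i$ is nullhomologous, it bounds a properly embedded oriented surface $F_i \subset M_i$, and algebraic intersection with $F_i$ defines a homomorphism $H_1(M_i) \to \Z$ sending any primitive $\mu_i \in H_1(\partial M_i)$ dual to $\lambda_i$ to $\pm 1$; thus such $\mu_i$ generates $H_1(M_i)/\mathrm{torsion}$. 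I then set $Y_i = M_i(\mu_i)$ with $K_i$ the core of the attached solid torus and verify via Mayer--Vietoris that $M_i \cong Y_i \setminus N(K_i)$, that $[K_i] = 0$ in $H_1(Y_i)$, and that $H_1(Y_i) \cong T_i$.

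Next, write the gluing as $\mu_1 \sim a\mu_2 + b\lambda_2$ and $\lambda_1 \sim d\mu_2 + c\lambda_2$ with $ac - bd = \pm 1$. Using that each $\lambda_j$ dies in $H_1(M_j)$, the Mayer--Vietoris sequence for $Y = M_1 \cup_T M_2$ reduces to the relations $\mu_1 \equiv a\mu_2$ and $d\mu_2 \equiv 0$, giving
\[ H_1(Y) \cong T_1 \oplus T_2 \oplus \Z/|d|\Z. \]
Requiring this to be $(\Z/p\Z)^r$ forces $|d| \in \{1,p\}$ and each $T_i \cong (\Z/p\Z)^{k_i}$, yielding $k + \ell = r$ when $|d|=1$ and $k + \ell = r - 1$ when $|d|=p$.

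Now normalize the gluing matrix $\bigl(\begin{smallmatrix}a&b\\d&c\end{smallmatrix}\bigr)$. Changing $\mu_1$ by a multiple of $\lambda_1$ is the row operation $R_1 \to R_1 + nR_2$; changing $\mu_2$ by a multiple of $\lambda_2$ is the column operation $C_2 \to C_2 - mC_1$; and negating $\mu_i$ or $\lambda_i$ toggles signs and hence $\det$. When $|d|=1$, first sign so $d=1$, then $R_1 \to R_1 - aR_2$ kills the $(1,1)$ entry, $C_2 \to C_2 - cC_1$ kills the $(2,2)$ entry, and a final sign brings the matrix to $\bigl(\begin{smallmatrix}0&1\\1&0\end{smallmatrix}\bigr)$, giving Case \ref{i:y2-hrp3}. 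When $|d|=p$, sign so $d=p$; a column operation reduces $c$ modulo $p$ into $[1,p-1]$ (the value $c=0$ is excluded since it forces $bp = 1$); a subsequent row operation reduces $b$ modulo $c$ into $[0,c)$ without altering $c$; and a final sign achieves $\det = -1$, so $a = (bp-1)/c$ is determined. This is Case \ref{i:y2-hs3}, specializing for $p=2$ to $(a,b,c)=(-1,0,1)$.

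Finally, Proposition~\ref{prop:solid-torus-pinch} gives a degree-1 pinch $M_2 \to S^1 \times D^2$ sending $\lambda_2$ to a meridian, which extends by the identity on $M_1$ to a degree-1 map $Y \to M_1(\lambda_2)$. Inverting the gluing matrix, $\lambda_2$ is identified with $\mu_1$ in Case \ref{i:y2-hrp3} (giving $Y_1$) and with $p\mu_1 - a\lambda_1$ in Case \ref{i:y2-hs3} (giving $(Y_1)_{-p/a}(K_1)$); the symmetric construction gives the $M_1$-side map. Degree-1 maps always induce $\pi_1$-surjections, and the kernel is non-trivial because the incompressibility of $T$ implies $\pi_1(M_2) \hookrightarrow \pi_1(Y)$ by van Kampen, while the rational longitude $\lambda_2 \in \pi_1(M_2)$ becomes null-homotopic in the solid torus replacement of $M_2$ and hence dies in $\pi_1(Y_1)$. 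The main technical point is the normalization in Case \ref{i:y2-hs3}: the constraints on $b$ and $c$ are coupled, but the above order of operations (reduce $c$ first, then $b$) succeeds because the row operation used on $b$ leaves $c$ untouched.
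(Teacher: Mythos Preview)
Your argument is essentially correct and follows the same strategy as the paper's proof (which is split across Lemmas~\ref{lem:m_i-as-exteriors} and~\ref{lem:Y_i-homology-spheres}): realize each $M_i$ as a knot exterior by filling along a curve dual to $\lambda_i$, analyze $H_1(Y)$ via Mayer--Vietoris, and normalize the gluing matrix by reparametrizing meridians. Your route is in fact a bit more direct: using intersection with a Seifert surface to see that any $\mu_i$ dual to $\lambda_i$ generates $H_1(M_i)/\mathrm{torsion}$, and hence that $H_1(Y_i) \cong T_i$, lets you read off $H_1(Y) \cong T_1 \oplus T_2 \oplus \Z/|d|\Z$ in one step, whereas the paper first bounds $H_1(M_1(\lambda_2))$ via a pinching map and then compares to $H_1(Y_1)$ by a separate surgery argument.

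There is one small slip. In Case~\ref{i:y2-hs3} you claim that ``a final sign achieves $\det = -1$,'' but once you have arranged $d = p$, $0 < c < p$, and $0 \leq b < c$, any single sign flip of a $\mu_i$ or $\lambda_i$ will violate one of these constraints. The simplest fix is the one the paper uses: since $Y$ is oriented and $T$ inherits opposite boundary orientations from $M_1$ and $M_2$, choosing each basis $(\mu_i,\lambda_i)$ positively oriented in $\partial M_i$ forces the gluing matrix to have determinant $-1$ from the start, and your row and column operations then preserve this. Alternatively, if you end up with $ac - bp = +1$, negating $\mu_1$ and (when $b > 0$) applying one more $R_1 \to R_1 + R_2$ replaces $(a,b)$ by $(p-a,\,c-b)$, which still satisfies $0 \leq c-b < c$ and now has determinant $-1$.
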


To prove Proposition~\ref{prop:nice-basis}, we begin by finding candidate pairs $(Y_i,K_i)$ somewhat arbitrarily, and then in the subsequent lemma we will use Dehn surgery to make better choices as needed.  

\begin{lemma} \label{lem:m_i-as-exteriors}
Let $Y$ be a closed 3-manifold satisfying $H_1(Y;\Z) \cong (\Z/p\Z)^r$ for some prime $p$ and integer $r \geq 0$, and suppose that $Y$ has an incompressible torus $T$ separating it into $Y = M_1 \cup_T M_2$.  Suppose in addition that the rational longitudes of $M_1$ and $M_2$ are both nullhomologous.  Then there is a pair of closed 3-manifolds $Y_1$ and $Y_2$ with
\begin{align*}
H_1(Y_1;\Z) &\cong (\Z/p\Z)^k, &
H_1(Y_2;\Z) &\cong (\Z/p\Z)^\ell,
\end{align*}
together with nullhomologous knots $K_1 \subset Y_1$ and $K_2 \subset Y_2$ whose exteriors are
\begin{align*}
M_1 &\cong Y_1 \setminus N(K_1), &
M_2 &\cong Y_2 \setminus N(K_2)
\end{align*}
respectively, such that if $(\mu_i,\lambda_i)$ are meridian-longitude pairs for each $K_i$ then either
\begin{enumerate}
\item $k+\ell = r$, and the gluing $\partial M_1 \cong \partial M_2$ identifies $\mu_1$ with $\lambda_2$ and $\lambda_1$ with $\mu_2$;
\item or $k+\ell=r-1$, and in $\partial M_1 \cong \partial M_2$ we have $\lambda_1 = p\mu_2+c\lambda_2$ for some $c\in\Z$ that is not a multiple of $p$.
\end{enumerate}
\end{lemma}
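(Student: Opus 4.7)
The plan is to realize each $M_i$ as a knot exterior by Dehn filling along a curve $\mu_i \subset \partial M_i$ dual to the rational longitude $\lambda_i$, and then to analyze the two possible forms the gluing map $\partial M_1 \cong \partial M_2$ can take, using the constraint that $H_1(Y;\Z)$ be $p$-torsion. The key numerical invariant will be the geometric intersection number $d := |\lambda_1 \cdot \lambda_2|$ on $T$; I expect to show $d \in \{1, p\}$, corresponding exactly to the two conclusions of the lemma.

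First I would fix, provisionally, arbitrary dual curves $\mu_1, \mu_2 \subset T$ so that each $\{\mu_i, \lambda_i\}$ is an integral basis of $H_1(T;\Z)$, and set $Y_i := M_i(\mu_i)$ with $K_i$ the core of the Dehn filling. Since $\lambda_i$ is nullhomologous in $M_i$ (hence in $Y_i$), each $K_i$ is a nullhomologous knot in $Y_i$ with Seifert longitude $\lambda_i$ and meridian $\mu_i$. Using the long exact sequence of the pair $(Y_i, M_i)$ together with the fact that $[K_i] = 0$ forces the map $H_2(Y_i) \to H_2(Y_i, M_i) \cong \Z$ to be surjective, I obtain a short exact sequence
\[ 0 \to \Z\langle [\mu_i] \rangle \to H_1(M_i) \to H_1(Y_i) \to 0 \]
which splits via intersection with a Seifert surface for $K_i$ pushed into $M_i$, giving $H_1(M_i) \cong H_1(Y_i) \oplus \Z$.

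Next comes the main computation, a Mayer--Vietoris calculation of $H_1(Y)$. Expanding $\lambda_1 = c_1 \mu_2 + c_2 \lambda_2$ and $\mu_1 = e_1 \mu_2 + e_2 \lambda_2$ in the basis $\{\mu_2, \lambda_2\}$ of $H_1(T)$, the relations in the cokernel of $H_1(T) \to H_1(M_1) \oplus H_1(M_2)$ reduce to $[\mu_1] \equiv -e_1 [\mu_2]$ and $c_1 [\mu_2] = 0$, yielding
\[ H_1(Y) \cong H_1(Y_1) \oplus H_1(Y_2) \oplus \Z/|c_1|\Z. \]
Since $|c_1| = d$ and this group is isomorphic to $(\Z/p\Z)^r$, I conclude that $d \in \{1, p\}$ and that each $H_1(Y_i)$ is of the form $(\Z/p\Z)^{k_i}$ (being a direct summand of an elementary abelian $p$-group), with $k+\ell$ equal to $r$ or $r-1$ accordingly.

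Finally, I would refine the choice of meridians in each case. If $d = 1$, then $\{\lambda_1, \lambda_2\}$ is already a basis of $H_1(T)$, so re-choosing $\mu_1 := \lambda_2$ and $\mu_2 := \lambda_1$ directly yields the gluing $\mu_1 \leftrightarrow \lambda_2$, $\lambda_1 \leftrightarrow \mu_2$ of conclusion (1); this re-choice does not alter $H_1(Y_i) \cong H_1(M_i)/\langle[\mu_i]\rangle$, since any two dual curves to $\lambda_i$ differ by a multiple of $\lambda_i$, which vanishes in $H_1(M_i)$. If $d = p$, then after possibly replacing $\mu_2$ by $-\mu_2$ so that $c_1 = +p$, the primitivity of $\lambda_1$ in $H_1(T)$ forces $\gcd(p, c_2) = 1$, so setting $c := c_2$ gives conclusion (2). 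The main thing to verify carefully is the sign bookkeeping in the Mayer--Vietoris computation and the assertion that swapping a meridian for another dual curve does not affect $H_1(Y_i)$; both are routine but essential for the indexing of $k$ and $\ell$.
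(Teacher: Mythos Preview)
Your approach is correct and essentially the same as the paper's (both rest on the Mayer--Vietoris sequence for $Y = M_1 \cup_T M_2$), but you organize it more directly: by computing $H_1(Y) \cong H_1(Y_1) \oplus H_1(Y_2) \oplus \Z/d\Z$ in one stroke, you read off both that each $H_1(Y_i)$ is elementary abelian of rank $p$ and that $d \in \{1,p\}$. The paper instead first uses the degree-$1$ pinch map $Y \to M_1(\lambda_2)$ to constrain $H_1(M_1(\lambda_2))$ and then compares Dehn fillings to deduce the form of $H_1(Y_1)$, before running Mayer--Vietoris on the free parts only. Your version is cleaner for this lemma in isolation; the paper's detour through pinching maps is not wasted, since those maps are reused in the surrounding Proposition.

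One slip to fix: in your long exact sequence argument you say that $[K_i]=0$ forces $H_2(Y_i) \to H_2(Y_i,M_i) \cong \Z$ to be \emph{surjective}. It is the opposite: under Poincar\'e--Lefschetz duality this map is identified with the restriction $H^1(Y_i) \to H^1(N(K_i))$, which vanishes precisely because $[K_i]=0$ in $H_1(Y_i)$. What you actually need is that the connecting map $H_2(Y_i,M_i) \to H_1(M_i)$, sending the meridional disk to $[\mu_i]$, is \emph{injective}; this follows from the map $H_2(Y_i) \to H_2(Y_i,M_i)$ being zero, or equivalently from your own Seifert-surface intersection argument, which already shows $[\mu_i]$ has infinite order. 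The rest of your argument is unaffected.
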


\begin{proof}
Let $j_i: T \hookrightarrow M_i$ denote inclusion, and let $\lambda_1$ and $\lambda_2$ be the rational longitudes of $M_1$ and $M_2$, so that the classes $(j_1)_*(\lambda_1)$ and $(j_2)_*(\lambda_2)$ are both zero by assumption.

We let $M_1(\lambda_2)$ denote the Dehn filling of $M_1$ along the slope $\lambda_2$.  Since $\lambda_2$ is nullhomologous in $M_2$, Proposition~\ref{prop:solid-torus-pinch} provides a degree-1 pinching map $Y \to M_1(\lambda_2)$ that collapses $M_2$ to a solid torus.  Degree-1 maps are surjective on fundamental groups and hence on first homology, so there is a surjection
\[ H_1(Y;\Z) \twoheadrightarrow H_1(M_1(\lambda_2);\Z) \]
and hence $H_1(M_1(\lambda_2);\Z)$ is a quotient of $H_1(Y;\Z) \cong (\Z/p\Z)^r$.  This means that $H_1(M_1(\lambda_2);\Z) \cong (\Z/p\Z)^s$ for some $s \leq r$.

We now pick a curve $\mu_1 \subset T$ such that $\{\lambda_1, \mu_1\}$ is an integral basis of $H_1(T)$; if $\{\lambda_1,\lambda_2\}$ is an integral basis then we will insist that $\mu_1 = \lambda_2$.  We then let
\[ Y_1 = M_1(\mu_1), \]
and we take $K_1 \subset Y_1$ to be the core of this Dehn filling.  Then $K_1$ is nullhomologous, with meridian $\mu_1$ and longitude $\lambda_1$.  The manifold $M_1(\lambda_2)$ can be built by Dehn surgery on $K_1$, say with some slope $\frac{a}{b}$, and then
\[ H_1(Y_1;\Z) \oplus (\Z/a\Z) \cong H_1(M_1(\lambda_2);\Z) \cong (\Z/p\Z)^s \]
implies that $|a|$ divides $p$.  The homology $H_1(Y_1;\Z)$ must then have the form $(\Z/p\Z)^k$, where $k$ is either $s$ or $s-1$ depending on whether $|a|$ is $1$ or $p$, respectively.

By the same argument we can pick a Dehn filling $Y_2 = M_2(\mu_2)$, with nullhomologous core $K_2$, so that $H_1(Y_2;\Z) \cong (\Z/p\Z)^\ell$.  Again, if $\{\lambda_1,\lambda_2\}$ is an integral basis of $H_1(T)$ then we will take $\mu_2 = \lambda_1$; in this case we have $\mu_1 = \lambda_2$ as well, which is consistent with the fact that the identification $\partial M_1 \cong \partial M_2$ is orientation-reversing.

We now compute that $H_2(Y;\Z) = 0$, so the Mayer-Vietoris sequence for $Y = M_1 \cup_T M_2$ consists in part of a short exact sequence
\[ 0 \to \underbrace{H_1(T)}_{\cong \Z^2} \xrightarrow{((j_1)_*,(j_2)_*)} \underbrace{H_1(M_1)}_{\cong \Z \oplus (\Z/p)^k} \oplus \underbrace{H_1(M_2)}_{\cong \Z\oplus(\Z/p)^\ell} \to \underbrace{H_1(Y)}_{\cong (\Z/p)^r} \to 0. \]
The image of $(j_1)_*$ is spanned by $(j_1)_*(\mu_1) = (1,0) \in \Z \oplus (\Z/p)^k$ and by $(j_1)_*(\lambda_1) = (0,0)$, so it is precisely the $\Z$ summand of $H_1(M_1)$, and likewise for the image of $(j_2)_*$.  Thus $H_1(Y) \cong (\Z/p\Z)^r$ is homeomorphic to $(\Z/p\Z)^{k+\ell}$ plus the cokernel of the injective map
\[ H_1(T) \xrightarrow{j} \Z\langle\mu_1\rangle \oplus \Z\langle\mu_2\rangle, \]
whose codomain is viewed as a subset of $H_1(M_1) \oplus H_1(M_2)$.

We observe that $\coker(j)$ is cyclic, since the image of $j$ contains $j(\mu_1) = (1,a)$ for some $a\in\Z$.  We also know that $\coker(j)$ is $p$-torsion, as a summand of $H_1(Y) \cong (\Z/p\Z)^r$.  Thus either
\begin{itemize}
\item $\coker(j) \cong \Z/p\Z$ and $k+\ell = r-1$, or
\item $j$ is onto and $k+\ell = r$.
\end{itemize}
If $j$ is onto then it sends some element $a\mu_1+b\lambda_1$ to $(0,1)$; we must have $a=0$, since $(j_1)_*(\lambda_1)=0$ in $H_1(M_1)$, and then $b\lambda_1 \mapsto (0,1)$ implies that $b=\pm1$.  This means that $\lambda_1$ is homologous to $\pm \mu_2$ as elements of $H_1(M_2)$, hence $\lambda_1 = \pm(\mu_2 + n\lambda_2)$ in $H_1(T)$.  But then in $H_1(T)$ we have
\begin{align*}
\Span\{\lambda_1,\lambda_2\} &= \Span\{\mu_2+n\lambda_2,\lambda_2\} \\
&= \Span\{\mu_2,\lambda_2\} \\
&= H_1(T),
\end{align*}
so we must have taken $\mu_1 = \lambda_2$ and $\lambda_1 = \mu_2$, as claimed.

In the remaining case, where $\coker(j) \cong \Z/p\Z$ and $k+\ell = r-1$, we know that
\begin{align*}
j(\mu_1) &= (1,a), \\
j(\lambda_1) &= (0,b)
\end{align*}
for some $a,b \in \Z$, and since $\{\mu_1,\lambda_1\}$ is an integral basis of $H_1(T)$ we have $|b| = |{\coker(j)}| = p$.  Up to changing the orientation of $K_2$, and hence the sign of $\mu_2$, we can take $b = p$.  This means that as classes in $H_1(T)$ we have
\[ \lambda_1 = p\mu_2 + c\lambda_2 \]
for some $c \in \Z$, and since $\lambda_1$ is primitive we must have $p \nmid c$.
\end{proof}

In what follows we will always let $\mu_1,\lambda_1$ and $\mu_2,\lambda_2$ be meridian--longitude pairs for $K_1$ and $K_2$ respectively.  We will suppose that they are identified as classes in $H_1(T)$ by the relations
\begin{align} \label{eq:ml2-from-ml1}
\mu_2 &= a\mu_1 + b\lambda_1, &
\lambda_2 &= c\mu_1 + d\lambda_1.
\end{align}
Since both $(\mu_1,\lambda_1)$ and $(\mu_2,\lambda_2)$ are integral bases of $H_1(T) \cong \Z^2$, and the gluing map $\partial M_1 \to \partial M_2$ reverses orientation, we must have $ad-bc=-1$, and then we can rewrite these relations as
\begin{align} \label{eq:ml1-from-ml2}
\mu_1 &= -d\mu_2 + b\lambda_2, &
\lambda_1 &= c\mu_2 - a\lambda_2.
\end{align}

We now study the case of Lemma~\ref{lem:m_i-as-exteriors} where $k+\ell = r-1$.

\begin{lemma} \label{lem:Y_i-homology-spheres}
Suppose that we have $Y = M_1 \cup_T M_2$ as in Lemma~\ref{lem:m_i-as-exteriors}, with $H_1(Y;\Z) \cong (\Z/p\Z)^r$, and that the resulting $K_1 \subset Y_1$ and $K_2 \subset Y_2$ satisfy
\begin{align*}
H_1(Y_1;\Z) &\cong (\Z/p\Z)^k, \\
H_1(Y_2;\Z) &\cong (\Z/p\Z)^\ell
\end{align*}
where $k+\ell=r-1$.  Then we can arrange the gluing map to identify
\begin{equation} \label{eq:gluing-skew-coefficients}
\begin{aligned}
\mu_1 &= a\mu_2 + b\lambda_2 \\
\lambda_1 &= p\mu_2 + c\lambda_2
\end{aligned}
\end{equation}
as elements of $H_1(T;\Z)$, where the coefficients satisfy $ac-bp=-1$ and $0 \leq b < c < p$.  In particular, when $p=2$ we have $(\mu_1,\lambda_1) = (-\mu_2,2\mu_2+\lambda_2)$, or equivalently $(\mu_2,\lambda_2) = (-\mu_1,2\mu_1+\lambda_1)$.
\end{lemma}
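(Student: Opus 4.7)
The starting point is case~(2) of Lemma~\ref{lem:m_i-as-exteriors}, which already gives $\lambda_1 = p\mu_2 + c\lambda_2$ in $H_1(T)$ with $p \nmid c$. Writing $\mu_1 = a\mu_2 + b\lambda_2$, the fact that $(\mu_1,\lambda_1)$ and $(\mu_2,\lambda_2)$ are both integral bases of $H_1(T)$ under an orientation-reversing identification forces $\det\begin{pmatrix} a & b \\ p & c \end{pmatrix} = -1$, i.e.\ $ac - bp = -1$. The plan is to exploit freedom in the choice of meridians to put $(a,b,c)$ into the normal form $0 \leq b < c < p$.

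The key freedom is that I may replace $\mu_i$ by $\mu_i + n_i\lambda_i$ for any $n_i \in \Z$: since $\lambda_i$ is nullhomologous in $M_i$, the classes $[\mu_i]$ and $[\mu_i + n_i\lambda_i]$ coincide in $H_1(M_i;\Z)$, so the Dehn fillings $M_i(\mu_i)$ and $M_i(\mu_i + n_i\lambda_i)$ have identical first homology. Thus the new $Y_i$ still satisfies $H_1(Y_i;\Z) \cong (\Z/p\Z)^{k_i}$ and the numerical hypotheses $k+\ell=r-1$ of Lemma~\ref{lem:m_i-as-exteriors} are preserved, although the knot $K_i$ and ambient manifold $Y_i$ themselves may change.

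A direct computation shows that the move $\mu_2 \mapsto \mu_2 + m\lambda_2$ acts on coefficients by $(a,b,c) \mapsto (a,\, b - am,\, c - mp)$, while $\mu_1 \mapsto \mu_1 + n\lambda_1$ acts by $(a,b) \mapsto (a + np,\, b + nc)$, preserving both $c$ and the determinant condition $ac - bp = -1$. The reduction then proceeds in two steps: first, since $p \nmid c$, choose $m$ so that $0 < c - mp < p$; then, with the new $c$ fixed in $(0,p)$, choose $n$ so that $0 \leq b + nc < c$. The coefficient $a$ is determined uniquely by $ac - bp = -1$, producing the desired normal form. When $p = 2$ the inequalities collapse to $c = 1$, $b = 0$, and hence $a = -1$, yielding $\mu_1 = -\mu_2$ and $\lambda_1 = 2\mu_2 + \lambda_2$; inverting this $\SL_2(\Z)$ change of basis gives $(\mu_2,\lambda_2) = (-\mu_1,\, 2\mu_1 + \lambda_1)$, as claimed. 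The only subtlety that needs checking is that each substitution does not alter the oriented rational longitude $\lambda_i$, but $\lambda_i$ is an intrinsic invariant of $M_i$ and so is automatically stable under any re-choice of meridian.
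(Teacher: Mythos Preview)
Your proof is correct and follows essentially the same approach as the paper. The paper phrases the re-choice of meridians geometrically, performing $\tfrac{1}{q}$-surgery on $K_2$ (so $\mu_2 \mapsto \mu_2 + q\lambda_2$) and then $-\tfrac{1}{n}$-surgery on $K_1$ (so $\mu_1 \mapsto \mu_1 - n\lambda_1$), whereas you describe the same moves algebraically as substitutions $\mu_i \mapsto \mu_i + n_i\lambda_i$; the effect on the coefficients and the two-step reduction to $0 \leq b < c < p$ are identical.
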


\begin{proof}
We already know from Lemma~\ref{lem:m_i-as-exteriors} that $\lambda_1 = p\mu_2 + c\lambda_2$ for some integer $c\not\equiv 0\pmod{p}$.  We write $c=qp+r$, with $0<r<p$, and then let $Y'_2$ be the result of $(1/q)$-surgery on $K_2 \subset Y_2$, with $K'_2 \subset Y'_2$ the core of this surgery.  Then $Y'_2$ has the same homology as $Y_2$, and $K'_2$ is still nullhomologous, with longitude $\lambda'_2 = \lambda_2$ and meridian
\[ \mu'_2 = \mu_2 + q\lambda_2. \]
In terms of the peripheral curves for $K'_2$, we have
\begin{align*}
\lambda_1 &= p\mu_2 + (pq+r)\lambda_2 \\
&= p(\mu_2+q\lambda_2) + r\lambda_2 \\
&= p\mu'_2 + r\lambda'_2.
\end{align*}
We thus replace $(Y_2,K_2)$ with $(Y'_2,K'_2)$, so now we have $\lambda_1 = p\mu_2+c'\lambda_2$ where $c'=r$ is strictly between $0$ and $p$.

Having arranged that $0<c<p$ as above, we have an identification
\begin{align*}
\mu_1 &= a\mu_2 + b\lambda_2 \\
\lambda_1 &= p\mu_2 + c\lambda_2
\end{align*}
for some integers $a$ and $b$.  Since the gluing map $\partial M_1 \cong \partial M_2$ is an orientation-reversing homeomorphism, we have $ac-bp = -1$.  We now write $b = nc + s$, where $n\in\Z$ and $0 \leq s < c$, and let $Y'_1$ be the result of $(-\frac{1}{n})$-surgery on $K_1 \subset Y_1$, with core $K'_1$.  Then $K'_1$ has meridian $\mu'_1 = \mu_1 - n\lambda_1$ and longitude $\lambda'_1$, so that we identify $\lambda'_1 = \lambda_1 = p\mu_2 + c\mu_2$ and
\begin{align*}
\mu'_1 &= (a\mu_2 + b\lambda_2) - n(p\mu_2 + c\lambda_2) \\
&= (a-np)\mu_2 + (b-nc)\lambda_2 \\
&= a'\mu_2 + b'\lambda_2
\end{align*}
with $a' = a-np$ and $b' = b-nc = s$.  We can thus arrange that the coefficient $b'=s$ satisfies $0 \leq b' < c$, as desired.

We now have arranged for the coefficients of \eqref{eq:gluing-skew-coefficients} to satisfy $0 < c < p$ and $0 \leq b < c$, as desired.  The relation $ac-bp = -1$ is an immediate consequence of the fact that the identification $\partial M_1 \cong \partial M_2$ is an orientation-reversing homeomorphism.  And finally, in the case $p=2$ these relations imply one after the other that $c=1$, $b=0$, and $a=-1$, as claimed.
\end{proof}

Putting the above lemmas together now allows us to prove Proposition~\ref{prop:nice-basis}.

\begin{proof}[Proof of Proposition~\ref{prop:nice-basis}]
Lemma~\ref{lem:m_i-as-exteriors} combines with Lemma~\ref{lem:Y_i-homology-spheres} (in the case $k+\ell=r-1$) to show that we can find $K_1 \subset Y_1$ and $K_2 \subset Y_2$ as claimed.

Since $\lambda_2$ is nullhomologous in $M_2 = Y_2 \setminus N(K_2)$, there is a degree-1 map
\[ h: Y \to M_1(\lambda_2), \]
built from Proposition~\ref{prop:solid-torus-pinch} by preserving $M_1$ but pinching $M_2$ to a solid torus in which $\mu_1 \subset T$ bounds a disk.  The same argument, with the roles of $M_1$ and $M_2$ switched, gives a degree-1 map $Y \to M_2(\lambda_1)$.  Now in the case $k+\ell=r$ we have
\begin{align*}
M_1(\lambda_2) &= M_1(\mu_1) \cong Y_1, \\
M_2(\lambda_1) &= M_2(\mu_2) \cong Y_2,
\end{align*}
so $Y$ admits degree-1 maps onto both $Y_1$ and $Y_2$.  Otherwise, we have $k+\ell=r-1$, with identifications
\[ (\mu_1,\lambda_1) = (a\mu_2+b\lambda_2, p\mu_2 + c\lambda_2) \]
in $H_1(T)$ where $ac-bp=-1$, implying that
\[ p\mu_1 - a\lambda_1 = (pb-ac)\lambda_2 = \lambda_2. \]
Then we can fill each of $M_1$ and $M_2$ along the rational longitudes $\lambda_2$ and $\lambda_1$ to get
\begin{align*}
M_1(\lambda_2) &= M_1(p\mu_1-a\lambda_1) \cong (Y_1)_{-p/a}(K_1), \\
M_2(\lambda_1) &= M_2(p\mu_2+c\lambda_2) \cong (Y_2)_{p/c}(K_2),
\end{align*}
so there are degree-1 maps from $Y$ onto each of these.  In both cases, each of the maps from $Y$ induce surjections on $\pi_1$, because they have degree $1$; to see that these surjections have non-trivial kernel, we note that $\pi_1(T)$ injects into $\pi_1(Y)$, and yet either $\lambda_2$ or $\lambda_1$ (whichever one is filled to produce the respective maps) is a homotopically essential curve in $T$, and hence in $Y$, that bounds a disk in the quotient.
\end{proof}

\begin{remark} \label{rem:c-at-most-p/2}
In Lemma~\ref{lem:Y_i-homology-spheres}, and hence in Proposition~\ref{prop:nice-basis}, we can replace the condition $0 \leq b < c < p$ with $0 \leq b < c \leq \frac{p}{2}$ if we are allowed to possibly reverse the orientation of $Y$.  Indeed, in the proof of the lemma we wrote $c=qp+r$ and replaced $c$ with $r$ by performing $\frac{1}{q}$-surgery on $K_2$.  We chose the remainder $r$ to satisfy $0 < r < p$, but suppose that we arrange for $-\frac{p}{2} < r \leq \frac{p}{2}$ instead.  In this case, if $c'=r$ is negative then we can replace $Y$ with $-Y$ before continuing: we reverse the orientations of $Y_1$ and $Y_2$, and also reverse the string orientation of $K_1$ but not that of $K_2$.  This reverses the peripheral curves $\lambda_1$ and $\mu_2$, but not $\mu_1$ or $\lambda_2$, and so the relation
\[ \lambda_1 = p\mu_2 + c' \lambda_2, \qquad -\tfrac{p}{2} < c' < 0 \]
becomes $\lambda_1 = p\mu_2 + (-c') \lambda_2$, where the coefficient $-c'$ is now positive and less than $\frac{p}{2}$.  We then replace $c$ with $-c'$ and follow the rest of the proof as written to achieve $0 \leq b < c \leq \frac{p}{2}$.
\end{remark}

\subsection{The homologically essential case} \label{ssec:basis-essential}

In this subsection, we consider decompositions of the form $Y=M_1\cup_T M_2$ where at least one of the rational longitudes of the $M_i$ is homologically essential.  Our goal is to prove the following.

\begin{proposition} \label{prop:nice-essential-basis}
Let $Y = M_1 \cup_T M_2$ be a toroidal manifold, where $T$ is an incompressible torus, and suppose that $H_1(Y;\Z) \cong (\Z/p\Z)^r$ for some prime $p$ and integer $r \geq 0$.  Let $\lambda_1,\lambda_2 \subset T$ be the rational longitudes of $M_1$ and $M_2$, respectively, and suppose that $\lambda_1$ is not nullhomologous in $M_1$.  Then there are closed 3-manifolds $Y_1$ and $Y_2$, with
\begin{align*}
H_1(Y_1;\Z) &\cong (\Z/p\Z)^k, &
H_1(Y_2;\Z) &\cong (\Z/p\Z)^\ell
\end{align*}
where $k \geq 2$ and $k+\ell=r$, and knots $K_1 \subset Y_1$ and $K_2 \subset Y_2$ satisfying the following.
\begin{enumerate}
\item The knot $K_1$ is homologically essential, of order $p$, while $K_2$ is nullhomologous.
\item $M_1$ and $M_2$ are the exteriors of $K_1$ and $K_2$, i.e., $M_i \cong Y_i \setminus N(K_i)$.
\item The identification $\partial M_1 \cong T \cong \partial M_2$ sends $\mu_1$ to $\lambda_2$ and $\lambda_2$ to $\mu_1$.
\item There is a degree-1 map $Y \to Y_1$, inducing a surjection $\pi_1(Y) \to \pi_1(Y_1)$ with non-trivial kernel.
\end{enumerate}
\end{proposition}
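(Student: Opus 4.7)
The plan is to adapt the Mayer--Vietoris/Dehn-filling analysis of Proposition~\ref{prop:nice-basis} to the homologically essential case, keeping careful track of the torsion contributed by $[\lambda_1]\in H_1(M_1)$. Throughout I will use that half-lives-half-dies forces $H_1(M_i)\cong\Z\alpha_i\oplus T_i$ with $T_i$ finite, and that by hypothesis $[\lambda_1]=\tau_1\in T_1$ has order $n_1\ge 2$, while $[\lambda_2]=\tau_2\in T_2$ has some order $n_2\ge 1$.

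First I would choose an auxiliary dual curve $\mu\subset T$ forming an integer basis with $\lambda_1$, so that $i_{1*}(\mu)=s\alpha_1+\eta_1$ for some integer $s\ge 1$ (minimal after adjusting $\mu\mapsto\mu+k\lambda_1$) and some $\eta_1\in T_1$; write $\lambda_2=a\mu+b\lambda_1$ in $H_1(T)$ with $\gcd(a,b)=1$. Then $H_1(Y)$ is the cokernel of the explicit $2\times 2$ block matrix presentation
\[
\Z^2 \;\xrightarrow{\;\phi\;}\; (\Z\alpha_1\oplus T_1)\oplus(\Z\alpha_2\oplus T_2),
\]
with $\phi(\mu)=(s\alpha_1+\eta_1,\;i_{2*}(\mu))$ and $\phi(\lambda_1)=(\tau_1,\;i_{2*}(\lambda_1))$.

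The crux step is to extract from the hypothesis $H_1(Y)\cong(\Z/p\Z)^r$ the four algebraic conclusions:
(i) $n_2=1$, so $\lambda_2$ is nullhomologous in $M_2$;
(ii) $n_1=p$;
(iii) $a=\pm 1$, so $\{\lambda_2,\lambda_1\}$ is an integer basis of $H_1(T)$;
(iv) $s$ is a power of $p$, and after the freedom to change $\mu$ by multiples of $\lambda_1$ we may assume $s=p$.
Each of these follows from the observation that any invariant factor of the cokernel must be exactly $p$: a prime $\ell\ne p$ dividing any of $n_1,n_2,a,s$ would produce $\ell$-torsion in $H_1(Y)$, while $n_1\ne p$ or an index-$s>p$ in the $\Z$-direction would produce an element of order $p^2$. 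Carrying this out is essentially a Smith-normal-form calculation, and is the main obstacle.

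With (i)--(iv) in place, the construction of the decomposition is immediate: define the meridian $\mu_1\subset\partial M_1$ to equal $\lambda_2$ (using (iii)), set $Y_1=M_1(\mu_1)$ with core $K_1$, and similarly define $\mu_2\subset\partial M_2$ to equal $\lambda_1$ under the gluing and set $Y_2=M_2(\mu_2)$ with core $K_2$. By construction the gluing identifies $\mu_1\sim\lambda_2$ and $\lambda_1\sim\mu_2$. The class of $K_1$ in $H_1(Y_1)$ is $\tau_1$ of order $p$, giving one $\Z/p$ summand; the index-$s=p$ contribution in the $\Z\alpha_1$ direction gives a second, independent $\Z/p$ summand, hence $k\ge 2$. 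A parallel computation gives $H_1(Y_2)\cong(\Z/p)^\ell$ with $\ell=n_2\cdot(\text{rank contribution})$, and comparing cokernels yields $k+\ell=r$. Finally, because $\lambda_2$ is nullhomologous in $M_2$ by (i), Proposition~\ref{prop:solid-torus-pinch} produces a degree-1 map $Y\to M_1(\lambda_2)=Y_1$ pinching $M_2$ onto a solid torus; this map is degree one so surjective on $\pi_1$, and its kernel is nontrivial because $\lambda_2\subset T$ is essential in $Y$ (as $T$ is incompressible) but bounds a disk in the target solid torus.
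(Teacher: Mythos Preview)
Your approach is essentially the paper's: it proves the same algebraic facts (your (i)--(iv)) via Mayer--Vietoris and half-lives-half-dies, then Dehn fills along the opposite rational longitudes and pinches via Proposition~\ref{prop:solid-torus-pinch}. Two points in your outline are not quite right, though.

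First, in (iv) the adjustment $\mu\mapsto\mu+k\lambda_1$ does \emph{not} change $s$: since $i_{1*}(\lambda_1)=\tau_1$ is torsion, this only alters the $T_1$-component $\eta_1$, not the coefficient on $\alpha_1$. The integer $s$ is intrinsically the index of $\operatorname{im}(i_{1*})$ in $H_1(M_1)/T_1\cong\Z$, so no basis change on $T$ can reduce it. The correct argument (which you do allude to) is that $s=p^e$ by half-lives-half-dies at all primes, and then $e\ge 2$ would force an element of order $p^2$ in $\operatorname{coker}\phi=H_1(Y)$; the paper makes this explicit by exhibiting the class $((0,0,p^{e-1}),0)$.

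Second, your computation of $H_1(Y_1)$ is incomplete. You have $i_{1*}(\lambda_2)=\pm p\alpha_1+\tau$ for some $\tau\in T_1$, and you have not argued $\tau=0$. If $\tau\neq 0$ the quotient $(\Z\alpha_1\oplus T_1)/\langle p\alpha_1+\tau\rangle$ can acquire a $\Z/p^2\Z$ summand (e.g.\ $T_1=\Z/p$, $\tau$ a generator). The paper handles this by applying half-lives-half-dies over $\Z/p\Z$ to force the torsion component of $i_{1*}(\mu)$ to vanish in suitable coordinates. Alternatively, and more cheaply, you can reorder your argument: once (i) gives $\lambda_2$ nullhomologous, build the degree-$1$ map $Y\to Y_1$ \emph{first} and conclude $H_1(Y_1)$ is a quotient of $(\Z/p\Z)^r$, hence of the form $(\Z/p\Z)^k$; then $|H_1(Y_1)|=p\cdot|T_1|$ gives $k=n_1+1\ge 2$, and $k+\ell=r$ follows by comparing orders. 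The same remark applies to $H_1(Y_2)$.
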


We begin with some lemmas allowing us to find nice bases for $H_1(M_1)$ and $H_1(M_2)$, and to express the rational longitudes for each in these bases.

\begin{lemma} \label{lem:rational-lambda-basis}
Let $Y = M_1 \cup_T M_2$ be a toroidal manifold, with $H_1(Y;\Z) \cong (\Z/p\Z)^r$ for some prime $p$ and integer $r \geq 0$.  Let $\lambda_1, \lambda_2 \subset T$ be the rational longitudes of $M_1$ and $M_2$, respectively.  If some $\lambda_j$ is not nullhomologous in $M_j$, then the following are true.
\begin{itemize}
\item Exactly one of the $\lambda_j$ is not nullhomologous, and it satisfies $p\cdot (i_j)_*(\lambda_j) = 0$ in $H_1(M_j;\Z)$.
\item The curves $\lambda_1$ and $\lambda_2$ form an integral basis of $H_1(T;\Z) \cong \Z^2$.
\end{itemize}
Here the maps $i_j: T \hookrightarrow M_j$ are the respective inclusions of $\partial M_j$ into $M_j$.
\end{lemma}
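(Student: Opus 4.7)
The plan is to exploit Mayer--Vietoris for $Y=M_1\cup_T M_2$ together with a Poincar\'e--Lefschetz duality identity for each piece, namely that the order of the rational longitude of $M_j$ equals the ``meridional degree'' of any dual peripheral curve.

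First I choose dual peripheral curves $\mu_j\subset T$ with $\mu_j\cdot\lambda_j=1$ and record the gluing as $\mu_2=a\mu_1+b\lambda_1$, $\lambda_2=c\mu_1+d\lambda_1$ with $ad-bc=-1$. ``Half lives half dies'' over $\Q$ gives splittings $H_1(M_j)\cong \Z g_j\oplus T_j$ in which $(i_j)_*(\mu_j)=(n_j,s_j)$ with $n_j\geq 1$ and $(i_j)_*(\lambda_j)=(0,t_j)\in T_j$; write $\ell_j$ for the order of $t_j$. Since $Y$ is a rational homology sphere, $H_2(Y)=0$, and Mayer--Vietoris gives
\[
0 \to H_1(T) \xrightarrow{\phi} H_1(M_1)\oplus H_1(M_2) \to H_1(Y) \to 0.
\]
A quick rank count over $\Q$ shows $c\neq 0$, else the determinant $n_1 cn_2$ of the rank-free part of $\phi$ would vanish and $H_1(Y)$ would be infinite. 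Because $H_1(Y)\cong(\Z/p)^r$ is killed by $p$, every element in the middle satisfies $p\xi\in\mathrm{image}(\phi)$; applying this to the free generator $g_j$ forces $n_j\mid p$, while applying it to elements of $T_j$ (using $c\neq 0$) forces $pT_j=0$, so in particular $p\cdot(i_j)_*(\lambda_j)=0$.

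The key additional ingredient is the identity $n_M=\ell_M$ for any compact oriented $3$-manifold $M$ with connected torus boundary. I would derive it as follows: a direct Smith-normal-form computation of $H_1(M)/(i)_*(H_1(\partial M))$ gives $|H_1(M,\partial M)|=n_M\cdot |T_M|/\ell_M$; in particular $H_1(M,\partial M)$ is torsion, so Poincar\'e--Lefschetz and universal coefficients give $H_2(M)\cong H^1(M,\partial M)=0$, and a second application yields $H^2(M)\cong\mathrm{Ext}(H_1(M),\Z)\cong T_M$, of order $|T_M|$. Comparing through the duality isomorphism $H^2(M)\cong H_1(M,\partial M)$ forces $n_M=\ell_M$.

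Combining the two ingredients, each $n_j=\ell_j$ lies in $\{1,p\}$, and $\lambda_j$ is nullhomologous iff $n_j=1$. Assume without loss of generality that $\lambda_1$ is not nullhomologous, so $n_1=\ell_1=p$. Writing $pg_2=A\phi(\mu_1)+B\phi(\lambda_1)$ and reading off the four coordinates of $\Z\oplus T_1\oplus\Z\oplus T_2$: the first $\Z$-component gives $A=0$; the $T_1$-component then gives $Bt_1=0$, hence $p\mid B$ since $|t_1|=p$; and the second $\Z$-component reads $Bcn_2=p$. Together these force $|cn_2|=1$. With $n_2\in\{1,p\}$ this forces $n_2=1$ (so $t_2=0$, establishing that exactly one of the $\lambda_j$ is not nullhomologous) and $|c|=1$, which is precisely the statement that $\lambda_1$ and $\lambda_2=c\mu_1+d\lambda_1$ generate $H_1(T;\Z)$ integrally. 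The main technical point will be the duality identity $n_M=\ell_M$; the rest is careful bookkeeping within the Mayer--Vietoris short exact sequence.
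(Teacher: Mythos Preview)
Your argument is correct, and it takes a genuinely different route from the paper's.

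Both proofs start from the same Mayer--Vietoris short exact sequence
\[
0 \to H_1(T) \xrightarrow{\phi} H_1(M_1)\oplus H_1(M_2) \to H_1(Y) \to 0,
\]
but they diverge at the key technical step of controlling the ``meridional degree'' $n_j$. You isolate and prove the general duality identity $n_M=\ell_M$ for any compact oriented $3$-manifold with torus boundary (via $|H_1(M,\partial M)| = n_M|T_M|/\ell_M$ together with $H_1(M,\partial M)\cong H^2(M)\cong T_M$), and then the rest is clean linear algebra inside $\operatorname{coker}\phi$: you read off $|cn_2|=1$ directly, which simultaneously forces $n_2=\ell_2=1$ and $|c|=1$. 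The paper never states $n_M=\ell_M$; instead it applies half-lives-half-dies over $\Z/\ell$ for each prime $\ell$ to show that $n_1$ is a power of $p$, then uses a separate cokernel argument (looking at the image of $((0,0,p^{e-1}),0)$ in $H_1(Y)$) to force $n_1=p$, and finally identifies $\lambda_2$ explicitly as $\pm(\mu_1+pk\lambda_1)$ by locating an element of $\ker(i_2)_*$.

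Your approach has the advantage that the identity $n_M=\ell_M$ is a reusable standalone fact and makes the endgame purely mechanical. The paper's approach is more self-contained in that it only invokes half-lives-half-dies (already recalled earlier in the paper) and avoids the extra pass through Poincar\'e--Lefschetz duality for the pair $(M,\partial M)$; it also yields slightly more along the way, namely an explicit identification of $\lambda_2$ in terms of $\mu_1,\lambda_1$.
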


\begin{proof}
We examine the Mayer--Vietoris sequence for $Y = M_1 \cup_T M_2$, which reads in part
\begin{equation} \label{eq:mv-rational-lambda}
0 \to \underbrace{H_1(T)}_{\cong\Z^2} \xrightarrow{i=((i_1)_*,(i_2)_*)} H_1(M_1) \oplus H_1(M_2) \xrightarrow{q} \underbrace{H_1(Y)}_{\cong(\Z/p)^r} \to 0
\end{equation}
since $H_2(Y;\Z) \cong 0$.  Each of the $H_1(M_j)$ has rank at least $1$ by half lives half dies, and this sequence shows that their total rank is $2$, so we can write
\[ H_1(M_j;\Z) \cong \Z \oplus A_j \qquad(j=1,2) \]
where each $A_j$ is torsion.  Moreover, the image of the map $i$ is torsion-free, so $q$ sends $A_1\oplus A_2$ injectively into $(\Z/p\Z)^r$, hence we can write $A_j = (\Z/p\Z)^{n_j}$ for each $j$, and we have $n_1+n_2 \leq r$.

Suppose without loss of generality that $(i_1)_*(\lambda_1)$ is non-zero.  Since it is torsion it lies in $A_1$, hence $p\cdot (i_1)_*(\lambda_1) = 0$ as claimed.  In fact, it generates a $\Z/p\Z$ summand of the $(\Z/p\Z$)-vector space $A_1 = (\Z/p\Z)^{n_1}$, so $n_1 \geq 1$ and we can write
\[ H_1(M_1;\Z) \cong (\Z/p\Z) \oplus (\Z/p\Z)^{n_1-1} \oplus \Z \]
with the first summand generated by the rational longitude, i.e.,
\[ (i_1)_*(\lambda_1) = (1,0,0). \]
We pick a class $\mu_1 \subset H_1(T;\Z)$ that is dual to $\lambda_1$, meaning they form an integral basis of $H_1(T;\Z)$, and write $(i_1)_*(\mu_1) = (x,y,n)$ in these coordinates.  If $x\neq 0$ then we can replace $\mu_1$ with $\mu_1-x\lambda_1$ in order to arrange that $x=0$.  Moreover, applying half lives half dies over $\F=\Z/p\Z$, we see that
\[ \Span_{\Z/p\Z}\big( (i_1)_*(\lambda_1), (i_1)_*(\mu_1) \big) = \Span_{\Z/p\Z} \big( (1,0,0), (0,y, n\text{ mod }p) \} \]
is 1-dimensional, so $y$ is zero and $n$ is a multiple of $p$.  And then over $\F=\Z/\ell\Z$, where $\ell\neq p$ is prime, we have $H_1(M_1;\F) \cong \F$ and
\[ \Span_{\Z/\ell\Z}\big( (i_1)_*(\lambda_1), (i_1)_*(\mu_1) \big) = \Span_{\Z/\ell\Z} \big( 0, n\text{ mod }\ell \big), \]
which can only be $1$-dimensional if $n$ is not a multiple of $\ell$.  Thus up to changing the sign of the $\Z$ summand, we can write $n = p^e$ for some integer $e \geq 1$.

We now consider the pair
\[ z = \big( (0,0,p^{e-1}), 0 \big) \in H_1(M_1) \oplus H_1(M_2). \]
This cannot lie in the image of $i$: the element $(0,0,p^{e-1})$ is not in the image of $(i_1)_*$, since it does not belong to the span of $(i_1)_*(\lambda_1)=(1,0,0)$ and $(i_2)_*(\mu_1)=(0,0,p^e)$.  Thus $q(z)$ is nonzero by exactness.  Since $q(z) \neq 0$ lies in $H_1(Y) \cong (\Z/p\Z)^r$, it cannot be a multiple of $p$, so neither can $z$ and we must therefore have $e=1$.  Moreover, we know that $q(pz) = p\cdot q(z) = 0$, so the pair
\[ pz = \big( (0,0,p^e), 0\big) = \big( (i_1)_*(\mu_1), 0 \big) \]
lies in the image of $i$.  In other words, there is a class $\alpha \in H_1(T;\Z)$ such that
\begin{align*}
(i_1)_*(\alpha) &= (i_1)_*(\mu_1), \\
(i_2)_*(\alpha) &= 0.
\end{align*}
The first relation implies that $\alpha = \mu_1 + pk\lambda_1$ for some $k\in\Z$, so $\alpha$ and $\lambda_1$ are also dual classes.  This means that $\alpha$ is primitive, so the second relation now says that $\alpha$ is a rational longitude for $M_2$, hence $\alpha = \pm \lambda_2$.  In particular $\lambda_2$ is nullhomologous in $M_2$, and the elements $\{\lambda_1, \lambda_2\} = \{\lambda_1, \pm(\mu_1+pk\lambda_1)\}$ form an integral basis for $H_1(T;\Z)$, as claimed.
\end{proof}

\begin{lemma} \label{lem:essential-coordinates}
Under the hypotheses of Lemma~\ref{lem:rational-lambda-basis}, suppose that the rational longitude $\lambda_1$ is not nullhomologous.  Then we can write
\begin{align*}
H_1(M_1;\Z) &\cong (\Z/p\Z)^{n_1} \oplus \Z, &
H_1(M_2;\Z) &\cong (\Z/p\Z)^{n_2} \oplus \Z,
\end{align*}
with $n_1 \geq 1$, such that the integral basis $\lambda_1,\lambda_2$ of $H_1(T)$ satisfies
\begin{align*}
(i_1)_*(\lambda_1) &= ((1,0,\dots,0), 0), &
(i_2)_*(\lambda_1) &= (0,1), \\
(i_1)_*(\lambda_2) &= ((0,0,\dots,0), p), &
(i_2)_*(\lambda_2) &= (0,0)
\end{align*}
in these coordinates.  If $H_1(Y) \cong (\Z/p\Z)^r$ then we also have $n_1+n_2 = r-1$, and Dehn filling either of the $M_i$ along the other rational longitude gives us
\begin{align*}
H_1(M_1(\lambda_2);\Z) &\cong (\Z/p\Z)^{n_1+1}, &
H_1(M_2(\lambda_1);\Z) &\cong (\Z/p\Z)^{n_2}.
\end{align*}
\end{lemma}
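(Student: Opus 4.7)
The plan is to extract most of the coordinates directly from the proof of Lemma~\ref{lem:rational-lambda-basis}, and then use the hypothesis $H_1(Y;\Z) \cong (\Z/p\Z)^r$ to pin down the one remaining ambiguity. That earlier proof already establishes $H_1(M_j;\Z) \cong \Z \oplus (\Z/p\Z)^{n_j}$ with $n_1 \geq 1$, and produces a class $\mu_1 \subset T$ dual to $\lambda_1$ together with coordinates in which $(i_1)_*(\lambda_1)$ is a generator of a distinguished $\Z/p\Z$ summand of $H_1(M_1)$ while $(i_1)_*(\mu_1)$ equals $(0,\dots,0,p)$ in the $\Z$-summand. Since $\lambda_2 = \pm(\mu_1 + pk\lambda_1)$ in $H_1(T)$ and $p \cdot (i_1)_*(\lambda_1) = 0$, the formula $(i_1)_*(\lambda_2) = ((0,\dots,0), p)$ falls out after possibly reversing the generator of the $\Z$-summand of $H_1(M_1)$.

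For the $M_2$ side, half lives half dies shows that $\Img((i_2)_* \otimes \Q)$ is one-dimensional, and since $(i_2)_*(\lambda_2)=0$ we may write $(i_2)_*(\lambda_1) = (v, m x_2)$ with $v$ in the torsion summand of $H_1(M_2)$, $m \in \Z$, and $x_2$ a generator of the $\Z$-summand of $H_1(M_2)$. The key step is showing $|m| = 1$. I would feed the two generators of $\Img(i)$ into the Mayer--Vietoris cokernel computation for $H_1(Y)$: trying to express the element $p x_2$ of $H_1(M_1)\oplus H_1(M_2)$ as a $\Z$-combination $a\cdot i(\lambda_1) + b\cdot i(\lambda_2)$, the free coordinate coming from $H_1(M_1)$ forces $b = 0$, the distinguished $\Z/p\Z$-slot of the torsion forces $p \mid a$, and the free coordinate in $H_1(M_2)$ forces $am = p$. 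Together these give $m = \pm 1$; absorb the sign into $x_2$ to make $m = 1$, and then change basis by $x_2 \mapsto x_2 + v$ to kill the torsion part $v$, yielding $(i_2)_*(\lambda_1) = (0,1)$ as claimed.

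With these coordinates in hand the Dehn filling statements are immediate: $H_1(M_1(\lambda_2)) \cong H_1(M_1)/\langle(0,p)\rangle \cong (\Z/p\Z)^{n_1+1}$, while $H_1(M_2(\lambda_1)) \cong H_1(M_2)/\langle(0,1)\rangle \cong (\Z/p\Z)^{n_2}$. The identity $n_1 + n_2 = r-1$ then falls out of the same Mayer--Vietoris cokernel: the relation from $\lambda_1$ identifies the distinguished generator of the torsion of $H_1(M_1)$ with $-x_2$, so after eliminating it we are left with $(\Z/p\Z)^{n_1-1} \oplus \Z \oplus (\Z/p\Z)^{n_2} \oplus \Z$ subject only to $p x_1 = 0$ (from $\lambda_2$) and the induced $p x_2 = 0$, giving $(\Z/p\Z)^{n_1+n_2+1}$, which must equal $(\Z/p\Z)^r$.

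The main obstacle is the argument that $|m| = 1$: nothing in the geometry directly forces $(i_2)_*(\lambda_1)$ to be primitive, so one must genuinely combine the normal form for $H_1(M_1)$ supplied by Lemma~\ref{lem:rational-lambda-basis} with the hypothesis that $H_1(Y)$ has exponent exactly $p$ (rather than merely order a power of $p$) to rule out the case $m = p$.
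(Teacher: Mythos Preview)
Your proposal is correct and follows essentially the same overall architecture as the paper's proof: extract the $M_1$ coordinates from the proof of Lemma~\ref{lem:rational-lambda-basis}, pin down $(i_2)_*(\lambda_1)$, then read off the Dehn filling homologies and the value of $n_1+n_2$ from the Mayer--Vietoris cokernel.

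The one substantive difference is in how you establish $|m|=1$. The paper first shows that $m$ must be a prime power $p^f$ by applying half-lives-half-dies over $\Z/\ell\Z$ for each prime $\ell\neq p$, and then builds an explicit surjection $\coker(i)\twoheadrightarrow \Z/p^{f+1}\Z$ to force $f=0$. Your argument is more direct: since $\coker(i)\cong H_1(Y)$ has exponent $p$, the element $(0,p x_2)$ must lie in $\Img(i)$, and writing it as $a\,i(\lambda_1)+b\,i(\lambda_2)$ immediately yields $b=0$, $p\mid a$, and $am=p$, whence $m=\pm1$. This bypasses both the prime-by-prime analysis and the auxiliary surjection, and is a genuine simplification; the paper's route, on the other hand, makes the role of half-lives-half-dies more visible and would generalize more readily if one weakened the hypothesis on $H_1(Y)$.
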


\begin{proof}
We recall from the proof of Lemma~\ref{lem:rational-lambda-basis} that we can take coordinates
\[ H_1(M_1) \cong (\Z/p\Z) \oplus (\Z/p\Z)^{n_1-1} \oplus \Z \]
such that $n_1 \geq 1$ and 
\begin{align*}
(i_1)_*(\lambda_1) &= (1,0,0) \in H_1(M_1), \\
(i_1)_*(\lambda_2) &= (0,0,p) \in H_1(M_1)
\end{align*}
up to changing the sign of the $\Z$ summand.  We also know that we can write 
\[ H_1(M_2) \cong (\Z/p\Z)^{n_2} \oplus \Z, \]
and that the rational longitude $\lambda_2$ is nullhomologous in $M_2$, so that
\[ (i_2)_*(\lambda_2) = (0,0) \in H_1(M_2) \]
in these coordinates.  Thus by half lives half dies over $\Q$ the element $(i_2)_*(\lambda_1)$ must be non-torsion.  If we write
\[ (i_2)_*(\lambda_1) = (w,m) \in H_1(M_2) \]
then $m$ must therefore be nonzero; we choose a generator of the $\Z$ summand so that $m > 0$.  If some prime $\ell\neq p$ divides $m$, then we take $c\equiv \ell^{-1}\pmod{p}$ and we see that $(w,m) = \ell\cdot (cw,\frac{m}{\ell})$ is $\ell$ times an integral class, so $(i_2)_*$ is zero over $\F=\Z/\ell\Z$, contradicting half lives half dies.  Thus $m=p^f$ for some integer $f \geq 0$.

Returning to the Mayer--Vietoris sequence \eqref{eq:mv-rational-lambda}, we know that
\[ (\Z/p\Z)^r \cong \coker(i) \cong \frac{H_1(M_1) \oplus H_1(M_2)}{\langle i_*(\lambda_1), i_*(\lambda_2) \rangle}. \]
We can define a surjection $\coker(i) \to \Z/p^{f+1}\Z$ in the coordinates
\[ H_1(M_1) \oplus H_1(M_2) \cong \big( (\Z/p\Z) \oplus (\Z/p\Z)^{n_1-1} \oplus \Z \big) \oplus \big( (\Z/p\Z)^{n_2} \oplus \Z \big) \]
by sending $\big( (a,v_1,m_1),(v_2,m_2) \big) \mapsto m_2 - p^f a \pmod{p^{f+1}}$; this is well-defined, since $a\in \Z/p\Z$ defines a unique residue class $p^f a \in \Z/p^{f+1}\Z$, and since we have
\begin{align*}
i_*(\lambda_1) = \big((1,0,0),(w,p^f)\big) &\mapsto p^f - p^f \cdot 1 \equiv 0
\\
i_*(\lambda_2) = \big((0,0,p),(0,0)\big) &\mapsto 0.
\end{align*}
But $\coker(i) \cong (\Z/p\Z)^r$ can only surject onto $\Z/p^{f+1}\Z$ if $f=0$.  Thus $(i_2)_*(\lambda_1) = (w,1)$ for some $w \in (\Z/p\Z)^{n_2}$, and by a change of basis we can take this element rather than $(0,1)$ to be the generator of the $\Z$ summand of $H_1(M_2)$, so that $(i_2)_*(\lambda_1) = (0,1)$.

We have now found coordinates on each $H_1(M_j;\Z)$ so that the images $(i_j)_*(\lambda_1)$ and $(i_j)_*(\lambda_2)$ have the desired form.  The computations of
\begin{align*}
H_1(M_1(\lambda_2);\Z) &\cong \frac{H_1(M_1)}{\langle (i_1)_*(\lambda_2) \rangle}, &
H_1(M_2(\lambda_1);\Z) &\cong \frac{H_1(M_2)}{\langle (i_2)_*(\lambda_1) \rangle}
\end{align*}
follow immediately.  Moreover, in these coordinates we have
\[ \coker(i) \cong \frac{\big( (\Z/p\Z) \oplus (\Z/p\Z)^{n_1-1} \oplus \Z \big) \oplus \big( (\Z/p\Z)^{n_2} \oplus \Z \big)}{ \langle \big((1,0,0),(0,1)\big),\ \big((0,0,p),(0,0)\big) \rangle}, \]
which is readily checked to be isomorphic to $(\Z/p\Z)^{n_1+n_2+1}$.  But $\coker(i) \cong (\Z/p\Z)^r$ as well, so we conclude that $r = n_1+n_2+1$.
\end{proof}

We now complete the proof of Proposition~\ref{prop:nice-essential-basis}.

\begin{proof}[Proof of Proposition~\ref{prop:nice-essential-basis}]
Lemma~\ref{lem:rational-lambda-basis} says that the rational longitudes $\lambda_1$ and $\lambda_2$ form a basis of $H_1(T)$, and that $[\lambda_1] \in H_1(M_1)$ has order $p$ while $\lambda_2$ is nullhomologous in $H_1(M_2)$.  We thus define the $Y_i$ by Dehn fillings along these curves:
\begin{align*}
Y_1 &= M_1(\lambda_2), &
Y_2 &= M_2(\lambda_1)
\end{align*}
and we take $K_1 \subset Y_1$ and $K_2 \subset Y_2$ to be the cores of these Dehn fillings.  It follows that their respective meridians are $\mu_1 = \lambda_2$ and $\mu_2 = \lambda_1$, which are dual to their rational longitudes $\lambda_1$ and $\lambda_2$ respectively, so then $[K_1] \in H_1(Y_1)$ has order $p$ while $[K_2] = 0$ in $H_1(Y_2)$.

Lemma~\ref{lem:essential-coordinates} says that these $Y_i$ have homology of the form
\begin{align*}
H_1(Y_1) &= (\Z/p\Z)^k, &
H_1(Y_2) &= (\Z/p\Z)^\ell
\end{align*}
where $k=n_1+1$ and $\ell = n_2$ in the notation of that lemma, and that $k+\ell = (n_1+1)+n_2 = r$.  Since $n_1 \geq 1$ we have also $k \geq 2$, as claimed.

Finally, since $\lambda_2$ is nullhomologous in $M_2$, Proposition~\ref{prop:solid-torus-pinch} gives us a degree-1 pinching map
\[ Y \to M_1(\lambda_2) \cong Y_1 \]
in which $M_2$ is sent onto a solid torus.  The curve $\lambda_2$ lies in the image of the map $\pi_1(T) \to \pi_1(Y)$, which is injective since $T$ is incompressible; thus $\lambda_2$ is a nontrivial element of $\pi_1(Y)$, but it lies in the kernel of the homomorphism $\pi_1(Y) \to \pi_1(Y_1)$ induced by the above pinching map, so that homomorphism has nontrivial kernel.
\end{proof}

\section{Proof of Theorem~\ref{thm:sl2-abelian-main}} \label{sec:main-proof}

We will now use Proposition~\ref{prop:nice-basis} to prove Theorem~\ref{thm:sl2-abelian-main}, which we restate here for convenience.

\begin{theorem} \label{thm:sl2-abelian}
Let $Y$ be a closed, orientable 3-manifold with $H_1(Y;\Z) \cong (\Z/2\Z)^r$ for some $r \geq 0$.  If $Y$ is not homeomorphic to $\#^r \RP^3$, then there is an irreducible representation $\pi_1(Y) \to \SL(2,\C)$.
\end{theorem}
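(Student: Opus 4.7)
The plan is to proceed by strong induction on $r$, with base case $r=0$ handled by Theorem~\ref{thm:zentner-main}. Given $r\geq 1$, I would assume the theorem for all $r'<r$ and argue by contradiction: suppose some $\SL(2,\C)$-reducible $Y$ with $H_1(Y;\Z)\cong(\Z/2\Z)^r$ fails to be homeomorphic to $\#^r\RP^3$. Among all such counterexamples at rank $r$, I would choose $Y$ minimal under the partial order $Y\succ Y'$ defined by the existence of a degree-1 map $Y\to Y'$ that is not a homotopy equivalence; Theorem~\ref{thm:rong} guarantees this minimum exists.

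The first step is to reduce to $Y$ prime, irreducible, toroidal, and not Seifert fibered. A non-trivial connected-sum decomposition $Y\cong Y_1\# Y_2$ would make each $Y_i$ also $\SL(2,\C)$-reducible (any representation of a free-product factor extends to the free product), with $2$-torsion homology of strictly smaller rank (discarding $S^3$ summands using Theorem~\ref{thm:zentner-main}), so induction would give $Y\cong\#^r\RP^3$. Thurston rules out hyperbolic $Y$, and the classification in \cite{sz-menagerie} shows that a prime Seifert fibered $\SU(2)$-abelian manifold with $2$-torsion homology must be $\RP^3$, again forcing $Y\cong\#^r\RP^3$. Geometrization then provides an incompressible torus in $Y$, which is separating since $b_1(Y)=0$, yielding $Y=M_1\cup_T M_2$.

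The second step is to invoke Propositions~\ref{prop:nice-basis} and \ref{prop:nice-essential-basis} to identify each $M_i$ with the exterior of a knot $K_i\subset Y_i$ (with $H_1(Y_i)$ being $2$-torsion) and to put the gluing into one of three standard forms for $p=2$:
\begin{itemize}
\item[(A)] both rational longitudes nullhomologous, with $(\mu_1,\lambda_1)\sim(\lambda_2,\mu_2)$;
\item[(B)] both nullhomologous, with $(\mu_1,\lambda_1)\sim(-\mu_2,\,2\mu_2+\lambda_2)$;
\item[(C)] $[\lambda_1]$ of order $2$ in $H_1(M_1)$, with $(\mu_1,\lambda_1)\sim(\lambda_2,\mu_2)$.
\end{itemize}
Primeness of $Y$ and incompressibility of $T$ supply the auxiliary hypotheses needed below: irreducibility of each $E_{K_i}$, that neither $E_{K_i}$ is a solid torus (else $T$ would compress), and (in case~(C)) boundary-incompressibility of $E_{K_2}$. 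Cases (A) and (C) are then dispatched directly by Theorem~\ref{thm:splice-rep} and Theorem~\ref{thm:splice-essential-rep} respectively, each of which produces an irreducible $\SU(2)$-representation of $\pi_1(Y)$ and thus contradicts $\SL(2,\C)$-reducibility.

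The main obstacle is case (B), where Theorem~\ref{thm:gluing-rep} demands that both $(Y_i)_2(K_i)$ be homeomorphic to $\#^{n_i}\RP^3$ with $n_i\geq 1$. Here I would combine the two layers of induction. Proposition~\ref{prop:nice-basis}(2) supplies degree-1 maps $Y\to(Y_i)_2(K_i)$, each with non-trivial $\pi_1$-kernel (containing the essential filling curve on the incompressible $T$) and hence not a homotopy equivalence. Each target $(Y_i)_2(K_i)$ is $\SL(2,\C)$-reducible, with homology $(\Z/2\Z)^{k_i+1}$ where $k_1+k_2=r-1$; if $k_i+1<r$ the inductive hypothesis on $r$ identifies $(Y_i)_2(K_i)\cong\#^{k_i+1}\RP^3$, while if $k_i+1=r$ the Rong-minimality of $Y$ forces the same identification. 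Setting $n_i=k_i+1\geq 1$, Theorem~\ref{thm:gluing-rep} then produces the required irreducible $\SU(2)$-representation of $\pi_1(Y)$, delivering the final contradiction and completing the proof.
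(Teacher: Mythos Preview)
Your proposal is correct and follows essentially the same route as the paper: the same reduction to a prime toroidal $Y$, the same three-case analysis coming from Propositions~\ref{prop:nice-basis} and \ref{prop:nice-essential-basis}, and the same appeals to Theorems~\ref{thm:splice-rep}, \ref{thm:splice-essential-rep}, \ref{thm:gluing-rep}, and Rong's theorem.  The only difference is organizational: the paper handles case~(B) by an explicit iterative descent (either Theorem~\ref{thm:gluing-rep} applies, or one passes to a prime summand $Y'$ of some $(Y_i)_2(K_i)$ and repeats, with Rong eventually giving the contradiction), whereas you package the same descent as strong induction on $r$ together with a Rong-minimal counterexample at each rank, which lets you identify each $(Y_i)_2(K_i)$ as $\#^{n_i}\RP^3$ immediately and go straight to Theorem~\ref{thm:gluing-rep}.
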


We first verify Theorem~\ref{thm:sl2-abelian} in the atoroidal case before going on to prove it in general.

\begin{lemma} \label{lem:geometric}
Suppose that $Y$ is a closed, atoroidal 3-manifold, with $H_1(Y;\Z) \cong (\Z/p\Z)^r$ for some prime $p$ and some integer $r \geq 0$.  If $Y$ is $\SL(2,\C)$-reducible, then it must be either $\#^r \RP^3$ or a lens space of order $p \geq 3$.
\end{lemma}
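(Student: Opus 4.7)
I plan to split the argument into the prime and non-prime cases, combining geometrization with either prior work of Sivek--Zentner (for $p=2$) or a direct argument using triangle groups (for $p \geq 3$).

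\textbf{Prime case.} Suppose first that $Y$ is prime. Thurston's result that closed hyperbolic $3$-manifolds have positive-dimensional $\SL(2,\C)$ character varieties rules out the hyperbolic case, so by geometrization $Y$ admits a Seifert fibration. When $p=2$, I cite \cite{sz-menagerie}: among prime Seifert fibered $3$-manifolds with $2$-torsion homology, only $\RP^3$ is $\SU(2)$-abelian, and since $\SL(2,\C)$-reducibility implies $\SU(2)$-abelianness, $Y \cong \RP^3$ (and $Y \cong S^3$ when $r = 0$, by Theorem~\ref{thm:zentner-main}). When $p \geq 3$, the base orbifold of the Seifert fibration must be $S^2$: a non-orientable base contributes $2$-torsion to $H_1(Y)$, while an orientable base of positive genus would produce essential vertical tori incompatible with atoroidality. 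With base $S^2$ and at most two cone points, $Y$ is $S^3$, $S^2\times S^1$, or a lens space, and finiteness and the cyclic structure of $H_1(Y)$ force $Y\cong S^3$ (for $r=0$) or $Y\cong L(p,q)$ (for $r=1$).

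\textbf{Small Seifert fibered case.} The remaining possibility for $p \geq 3$ is that $Y$ is a small Seifert fibered space over $S^2(\alpha_1,\alpha_2,\alpha_3)$. Quotienting $\pi_1(Y)$ by its central regular-fiber subgroup yields the triangle group $T = T(\alpha_1,\alpha_2,\alpha_3)$. A presentation-matrix computation shows $|H_1(Y)|$ is divisible by $4$ when $(\alpha_1,\alpha_2,\alpha_3)=(2,2,2)$, contradicting $p$ being odd, so $T$ is non-abelian. If $\frac{1}{\alpha_1}+\frac{1}{\alpha_2}+\frac{1}{\alpha_3}>1$, then $\pi_1(Y)$ is a binary polyhedral group embedding faithfully in $\SU(2)\subset\SL(2,\C)$, giving an irreducible representation directly. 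Otherwise $Y$ is aspherical and $T$ admits a non-abelian representation into $\PSL(2,\R) \subset \PSL(2,\C)$, which pulls back to a non-abelian $\PSL(2,\C)$ representation of $\pi_1(Y)$; its obstruction to lifting to $\SL(2,\C)$ lies in $H^2(Y;\Z/2\Z) \cong H_1(Y;\Z/2\Z)$, which vanishes because $p$ is odd, so a lift exists. In every case this produces an irreducible $\SL(2,\C)$ representation of $\pi_1(Y)$, contradicting $\SL(2,\C)$-reducibility; I expect verifying this lifting argument in the non-spherical sub-case to require the most technical care.

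\textbf{Non-prime case.} Finally, suppose $Y$ is not prime and write $Y = Y_1 \# \cdots \# Y_n$ with each $Y_i$ prime and not $S^3$. Each $Y_i$ inherits atoroidality (tori in $Y_i$ remain tori in $Y$), has $H_1(Y_i)$ a $p$-torsion summand of $H_1(Y)$, and is itself $\SL(2,\C)$-reducible: any irreducible $\pi_1(Y_i) \to \SL(2,\C)$ would extend, by trivial representations on the other free factors of $\pi_1(Y) = \pi_1(Y_1) \ast \cdots \ast \pi_1(Y_n)$, to an irreducible representation of $\pi_1(Y)$. Applying the prime case, for $p=2$ each $Y_i \cong \RP^3$, so $Y \cong \#^r \RP^3$. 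For $p \geq 3$, each $Y_i$ would have to be a lens space $L(p,q_i)$, whose fundamental group admits a non-central representation sending a generator of $\Z/p\Z$ to $\operatorname{diag}(\zeta_p,\zeta_p^{-1})$ for a primitive $p$th root of unity $\zeta_p$. Proposition~\ref{prop:non-prime-curve} then forces $\dim_\C \cX(Y) > 0$, contradicting the fact that $\SL(2,\C)$-reducibility makes $\cX(Y)$ zero-dimensional (its points are characters of abelian representations, of which there are only finitely many since $H_1(Y)$ is finite). Hence the non-prime case cannot occur for $p \geq 3$, completing the proof.
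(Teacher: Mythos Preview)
Your overall architecture matches the paper's: reduce to prime summands, invoke geometrization to eliminate the hyperbolic case, analyse the Seifert fibered case, and then reassemble. For odd $p$, however, the paper does not argue via triangle groups; it cites the complete classification of $\SU(2)$-abelian Seifert fibered rational homology spheres from \cite[Theorem~1.2]{sz-menagerie} (lens spaces, $\RP^3\#\RP^3$, and certain spaces over $S^2(3,3,3)$ or $S^2(2,4,4)$) and then eliminates each non--lens-space family by a direct homology computation showing $|H_1|$ cannot be a prime power.

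Your triangle-group route for $p\geq 3$ has a genuine gap at the Euclidean base orbifold $S^2(3,3,3)$. You assert that in the aspherical case $T$ admits a non-abelian representation into $\PSL(2,\R)$, but $T(3,3,3)\cong\Z^2\rtimes(\Z/3\Z)$ is a Euclidean, not hyperbolic, triangle group and does not embed in $\PSL(2,\R)$. Worse, $T(3,3,3)$ has \emph{no} irreducible representation into $\SL(2,\C)$ at all: after diagonalising the image of one order-$3$ generator, the trace conditions $\operatorname{tr}(B)=\operatorname{tr}(AB)=-1$ force the image of the other generator to be triangular. This case is not vacuous---for suitable Seifert invariants one obtains $H_1(Y)\cong(\Z/3\Z)^2$---so it must be addressed when $p=3$. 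The irreducible $\SU(2)$ representations of such $\pi_1(Y)$ that do exist send the regular fiber to $-1$ and hence do \emph{not} factor through $T$, so no argument passing through the triangle-group quotient can detect them; this is precisely why the paper falls back on \cite{sz-menagerie}, which shows the $\SU(2)$-abelian $S^2(3,3,3)$ spaces all have $|H_1|$ divisible by $18$. (A smaller point: in the spherical case $\pi_1(Y)$ is not literally a binary polyhedral group---its isomorphism type depends on the Seifert invariants---though your lifting argument can be adapted to cover that case as well, since the classifying map $Y\to B\pi_1(Y)$ still induces $H^2(\pi_1(Y);\Z/2)\cong H^2(Y;\Z/2)=0$.)
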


\begin{proof}
If $Y$ is a connected sum then each of its summands must also be $\SL(2,\C)$-reducible with first homology $(\Z/p\Z)^{r'}$ for some $r' \leq r$, so we will assume for now that $Y$ is prime.  Then $Y$ is both prime and atoroidal, so by geometrization it must be either Seifert fibered or hyperbolic.  If $Y$ is hyperbolic then it has a holonomy representation $\pi_1(Y) \hookrightarrow \PSL(2,\C)$, and this always lifts to $\SL(2,\C)$ \cite[Proposition~3.1.1]{cs-splittings}, so $Y$ cannot be $\SL(2,\C)$-reducible.  This leaves only the Seifert fibered case.

Among Seifert fibered manifolds, we know from \cite[Theorem~1.2]{sz-menagerie} that the only rational homology spheres that are $\SU(2)$-abelian are
\begin{enumerate}
\item $S^3$ and lens spaces, \label{i:lens}
\item $\RP^3 \# \RP^3$, \label{i:2rp3}
\item those with base orbifold $S^2(3,3,3)$ and with $|H_1(Y)|$ even, \label{i:333}
\item and those with base orbifold $S^2(2,4,4)$. \label{i:244}
\end{enumerate}
In case~\eqref{i:lens}, the only $Y$ such that $H_1(Y)$ is $p$-torsion are $S^3$ and lens spaces of order $p$; and we can ignore case \eqref{i:2rp3} since it is not prime.  For cases \eqref{i:333} and \eqref{i:244}, we note that given a Seifert fibration
\[ Y \cong S^2((\alpha_1,\beta_1),(\alpha_2,\beta_2),(\alpha_3,\beta_3)), \]
we then have
\begin{equation} \label{eq:sfs-homology}
H_1(Y) = \coker \begin{pmatrix} \alpha_1 & 0 & 0 & \beta_1 \\ 0 & \alpha_2 & 0 & \beta_2 \\ 0 & 0 & \alpha_3 & \beta_3 \\ 1 & 1 & 1 & 0 \end{pmatrix}
\end{equation}
and in particular
\[ |H_1(Y)| = |\alpha_1\alpha_2\beta_3 + \alpha_1\beta_2\alpha_3 + \beta_1\alpha_2\alpha_3|. \]
(See \cite[Lemma~2.9]{sz-menagerie}.)  This quickly rules out case \eqref{i:333}, since if $(\alpha_1,\alpha_2,\alpha_3) = (3,3,3)$ then $|H_1(Y)|$ is always a multiple of $18$ (recalling that it must be even), hence not a prime power.  And for case \eqref{i:244}, where $(\alpha_1,\alpha_2,\alpha_3) = (2,4,4)$, we let $x,y,z,w$ be the generators specified by the presentation \eqref{eq:sfs-homology}, and we define a surjection
\begin{align*}
H_1(Y) &\twoheadrightarrow \Z/4\Z \\
x &\mapsto 2 \\
y,z &\mapsto 1 \\
w &\mapsto 0.
\end{align*}
Since $H_1(Y)$ surjects onto $\Z/4\Z$, it cannot possibly have the form $(\Z/p\Z)^r$ with $p$ prime.  We conclude that the only prime examples are $S^3$ and lens spaces of order $p$.

Finally, we note that if $p=2$ then every prime summand of $Y$ is $\RP^3$, so $Y \cong \#^r \RP^3$ as claimed.  If instead $p \geq 3$, then each summand is a lens space of order $p$; but then there cannot be more than one summand, or else $Y$ would not be $\SL(2,\C)$-reducible by exactly the same construction as in Proposition~\ref{prop:non-prime-curve}, so we have $r \leq 1$ and $Y$ is prime after all.
\end{proof}

\begin{proof}[Proof of Theorem~\ref{thm:sl2-abelian}]
We will suppose in what follows that $H_1(Y;\Z) \cong (\Z/2\Z)^r$ for some $r\geq 0$, but that $Y \not\cong \#^r \RP^3$ is $\SL(2,\C)$-reducible.  We will also assume that $Y$ is prime: otherwise, by assumption there must be a prime summand $Y' \not\cong \RP^3$, and then $Y'$ is also $\SL(2,\C)$-reducible with $2$-torsion homology, so we might as well replace $Y$ with $Y'$.  Lemma~\ref{lem:geometric} says that if $Y$ is atoroidal then $Y \cong \#^r \RP^3$, so we may also assume that $Y$ contains an incompressible torus.

Since $Y$ is prime and contains an incompressible torus $T$, we can write
\[ Y = M_1 \cup_T M_2 \]
where each $M_i$ is irreducible and has incompressible boundary.  (The torus $T$ must separate $Y$ because $Y$ is a rational homology sphere.) We split the argument into three cases, depending on the rational longitudes $\lambda_i$ of the $M_i$: in the first two we suppose that the $\lambda_i$ are nullhomologous, so one of the conclusions of Proposition~\ref{prop:nice-basis} applies, and we number these cases according to the relevant conclusion of that proposition.  In the remaining case, at least one of the $\lambda_i$ is essential, so Proposition~\ref{prop:nice-essential-basis} applies instead.  Propositions~\ref{prop:nice-basis} and \ref{prop:nice-essential-basis} each give us closed manifolds $Y_i$ and knots $K_i \subset Y_i$ whose exteriors are the $M_i$, so we will refer freely to these pairs $(Y_i,K_i)$ in the discussion below.

\vspace{1em}
\noindent{\bf Case \ref{i:y2-hrp3}.}  In this case the $M_i$ are complements of non-trivial, nullhomologous knots that have been spliced together by gluing meridians to longitudes and vice versa.  We apply Theorem~\ref{thm:splice-rep} to get an irreducible representation $\rho: \pi_1(Y) \to \SU(2)$, hence if $Y$ is $\SL(2,\C)$-reducible then this case cannot occur.

\vspace{1em}
\noindent{\bf Case \ref{i:y2-hs3}.}  In this case we have degree-1 maps $Y \to (Y_1)_2(K_1)$ and $Y \to (Y_2)_2(K_2)$, with
\begin{align*}
H_1((Y_1)_2(K_1)) &\cong (\Z/2\Z)^{k+1}, &
H_1((Y_2)_2(K_2)) & \cong (\Z/2\Z)^{\ell+1}
\end{align*}
and $k+\ell = r-1$.  The knots $K_i \subset Y_i$ are nullhomologous, and the degree-1 maps $Y \to (Y_i)_2(K_i)$ for $i=1,2$ tell us that each of the $(Y_i)_2(K_i)$ must be $\SL(2,\C)$-reducible as well.

If $(Y_1)_2(K_1) \cong \#^{k+1} \RP^3$ and $(Y_2)_2(K_2) \cong \#^{\ell+1} \RP^3$, then Theorem~\ref{thm:gluing-rep} tells us that $Y$ cannot be $\SL(2,\C)$-reducible or even $\SU(2)$-abelian, a contradiction.  Thus without loss of generality we must have $(Y_1)_2(K_1) \not\cong \#^{k+1} \RP^3$.  In particular $(Y_1)_2(K_1)$ is $\SL(2,\C)$-reducible with first homology $(\Z/2\Z)^{k+1}$, but it is not homeomorphic to $\#^{k+1} \RP^3$.  We let $Y'$ be a prime summand of $(Y_1)_2(K_1)$ different from $\RP^3$ (which may be $(Y_1)_2(K_1)$ itself), and then by collapsing the other prime summands to $S^3$ we have a degree-1 map
\[ Y \to (Y_1)_2(K_1) \to Y'. \]
Here $Y'$ is prime by construction, it is $\SL(2,\C)$-reducible since $(Y_1)_2(K_1)$ is, and $H_1(Y')$ is $2$-torsion since it is a summand of $H_1( (Y_1)_2(K_1) ) \cong (\Z/2\Z)^{k+1}$.

\vspace{1em}
\noindent{\bf Case 3.} In this case one of the $\lambda_i$ is essential, so we suppose without loss of generality that $\lambda_1$ is nonzero in $H_1(M_1;\Z)$.  Then Proposition~\ref{prop:nice-essential-basis} applies: we have
\begin{align*}
H_1(Y_1) &\cong (\Z/2\Z)^k, &
H_1(Y_2) &\cong (\Z/2\Z)^\ell
\end{align*}
where $k \geq 2$ and $k+\ell = r$; the knot $K_1$ is homologically essential in $Y_1$, with rational longitude of order $2$, while $K_2 \subset Y_2$ is nullhomologous; and the gluing of $\partial M_1$ to $\partial M_2$ identifies $\mu_1 \sim \lambda_2$ and $\lambda_1 \sim \mu_2$.  We now apply Theorem~\ref{thm:splice-essential-rep} to see that $Y$ cannot be $\SU(2)$-abelian, a contradiction.  Thus if $Y$ is $\SL(2,\C)$-reducible then this case does not occur.

\vspace{1em}
In each of the three cases above, we have found either a contradiction or a degree-1 map of the form $f:Y \to Y'$, where $Y' \not\cong \RP^3$ is prime and $\SL(2,\C)$-reducible and $H_1(Y')$ is $2$-torsion, and the map
\[ f_*: \pi_1(Y) \to \pi_1(Y') \]
is a surjection with non-trivial kernel.  We can thus replace $Y$ with $Y'$ and repeat.

This process produces an infinite sequence of closed, prime 3-manifolds and degree-1 maps
\[ Y = Y_1 \xrightarrow{f_1} Y_2 \xrightarrow{f_2} Y_3 \xrightarrow{f_3} \cdots, \]
in which none of the $f_i$ are homotopy equivalences because the maps $(f_i)_*: \pi_1(Y) \to \pi_1(Y_{i+1})$ are not injective.  But Theorem~\ref{thm:rong} says that such a sequence cannot exist, so we conclude that our original manifold $Y \not\cong \#^r\RP^3$ could not have been $\SL(2,\C)$-reducible after all.
\end{proof}

\section{From $\Z/p\Z$ to $p$-torsion homology} \label{sec:p-torsion}

In this section we consider $\SL(2,\C)$-reducible $3$-manifolds whose first homology is $p$-torsion for some odd prime $p$.  Our goal is to show the following, which in favorable situations reduces their classification to the case where the homology is in fact cyclic.

\begin{theorem} \label{thm:mod-p-to-p-torsion}
Let $p \geq 3$ be an odd prime with the property that every closed, $\SL(2,\C)$-reducible $3$-manifold with first homology $\Z/p\Z$ is a lens space.  If $Y$ is a closed, $\SL(2,\C)$-reducible $3$-manifold with $H_1(Y;\Z) \cong (\Z/p\Z)^r$ for some $r \geq 1$, then $r=1$ and $Y$ is a lens space.
\end{theorem}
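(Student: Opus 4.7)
The plan is to proceed by induction on $r$. The base case $r=1$ is immediate from the hypothesis. Assume now $r \geq 2$ and that $Y$ is $\SL(2,\C)$-reducible; I will derive a contradiction. First I reduce to $Y$ being prime: if $Y \cong Y_1 \# Y_2$ is a nontrivial connected sum, each $Y_i$ admits a non-central $\SL(2,\C)$-representation---either by factoring $\pi_1(Y_i) \twoheadrightarrow H_1(Y_i) \twoheadrightarrow \Z/p\Z \to \SL(2,\C)$ via $1 \mapsto \operatorname{diag}(\zeta_p, \zeta_p^{-1})$ when $H_1(Y_i)$ has nontrivial $p$-torsion (non-central because $p \geq 3$), or by Theorem~\ref{thm:zentner-main} when $Y_i$ is a nontrivial integer homology sphere. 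Proposition~\ref{prop:non-prime-curve} then gives $\dim_\C \cX(Y) \geq 1$; but $H_1(Y)$ being finite makes the reducible characters zero-dimensional, so $\cX(Y)$ must contain an irreducible character---contradicting $\SL(2,\C)$-reducibility. If $Y$ is prime and atoroidal, Lemma~\ref{lem:geometric} (with $p$ odd ruling out $\#^r \RP^3$) forces $Y$ to be a lens space of order $p$, hence $r=1$, again a contradiction.

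So $Y$ must be prime and toroidal. Decomposing $Y = M_1 \cup_T M_2$ along an incompressible separating torus, apply Proposition~\ref{prop:nice-basis} (if both rational longitudes are nullhomologous) or Proposition~\ref{prop:nice-essential-basis}. The key construction: whenever both sides of the decomposition have nontrivial $p$-torsion homology---that is, $k, \ell \geq 1$ in Proposition~\ref{prop:nice-basis}, or $\ell \geq 1$ in the essential case of Proposition~\ref{prop:nice-essential-basis}---I produce an irreducible $\SL(2,\C)$-representation of $\pi_1(Y)$ directly. For each $i$, pick a surjection $\phi_i: H_1(E_{K_i}) \twoheadrightarrow \Z/p\Z$ annihilating both $\mu_i$ and $\lambda_i$; this is possible because in the nullhomologous case $\lambda_i$ is already null in $E_{K_i}$ while $\mu_i$ generates a free $\Z$-summand, and in the essential case the explicit basis of Lemma~\ref{lem:essential-coordinates} makes the conditions $\phi_1((1,0,0)) = 0 = \phi_1((0,0,p))$ compatible with $\phi_1 \neq 0$ since $n_1 \geq 1$. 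Composing with $\Z/p\Z \hookrightarrow \SL(2,\C)$ sending $1$ to an order-$p$ element of a maximal torus $T_i$ gives $\rho_i: \pi_1(E_{K_i}) \to \SL(2,\C)$ that is trivial on $\pi_1(T)$. The $\rho_i$ therefore glue, independently of the gluing map $\partial M_1 \cong \partial M_2$, to $\rho: \pi_1(Y) \to \SL(2,\C)$; after conjugating $\rho_2$ so that $T_2 \neq T_1$, the representation $\rho$ is irreducible. Indeed, every finite-order element of a Borel subgroup of $\SL(2,\C)$ lies in its maximal torus, so if $\Img(\rho)$ were contained in some Borel the order-$p$ generators arising from both $\rho_i$'s would share a common maximal torus, contradicting $T_1 \neq T_2$. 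This contradicts $\SL(2,\C)$-reducibility.

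The remaining subcases are those in which one side of the decomposition (say $M_2$) is the complement of a knot in an integer homology sphere $Y_2$. If $Y_2 \neq S^3$, Theorem~\ref{thm:zentner-main} supplies an irreducible representation of $\pi_1(Y_2)$ that lifts to $\pi_1(Y)$ along the degree-1 map $Y \to Y_2$ from Proposition~\ref{prop:nice-basis} or \ref{prop:nice-essential-basis}, again contradicting $\SL(2,\C)$-reducibility. Otherwise $Y_2 \cong S^3$, and one retains the other degree-1 map $Y \to Y'$ with non-trivial kernel, where $Y'$ is $\SL(2,\C)$-reducible with $H_1(Y') \cong (\Z/p\Z)^r$. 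Iterating the whole argument on $Y'$ produces a sequence $Y = Y^{(0)} \to Y^{(1)} \to \cdots$ of degree-1 maps with non-trivial kernels; Rong's theorem (Theorem~\ref{thm:rong}) forbids such an infinite sequence (forcing eventual homotopy equivalences), so the iteration must terminate with one of the direct contradictions above. The main obstacle is verifying irreducibility in the key splice construction, which reduces to the structural fact that finite-order elements of any Borel subgroup of $\SL(2,\C)$ lie in its maximal torus; together with the freedom (afforded by $p \geq 3$) to choose $T_1 \neq T_2$, this delivers the required irreducible representation in every nontrivial subcase.
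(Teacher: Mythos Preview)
Your overall strategy mirrors the paper's: reduce to the prime toroidal case, decompose along an incompressible torus via Propositions~\ref{prop:nice-basis} and~\ref{prop:nice-essential-basis}, produce a degree-1 map with nontrivial kernel to another such manifold, and iterate until Rong's theorem (Theorem~\ref{thm:rong}) gives a contradiction. Your direct $\SL(2,\C)$ construction whenever both $k,\ell \geq 1$ is a natural broadening of the idea already present in the first paragraph of Proposition~\ref{prop:splice-s3-lens} and in Lemma~\ref{lem:p-torsion-prime}; it terminates the iteration more often than the paper does, but the underlying architecture is the same.

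There are, however, two genuine gaps. First, the irreducibility argument is incorrect as stated: your claim that every finite-order element of a Borel subgroup lies in ``its'' maximal torus fails, since for instance $\left(\begin{smallmatrix} \zeta_p & 1 \\ 0 & \zeta_p^{-1} \end{smallmatrix}\right)$ has order $p$ in the standard Borel but is not diagonal. The condition $T_1 \neq T_2$ is therefore not enough---two distinct maximal tori can share an eigenline, and then the subgroup they generate is upper-triangularizable. The fix is to choose $T_1$ and $T_2$ with \emph{no common eigenline} (for example, send the two generators to $\exp(\tfrac{2\pi}{p}j)$ and $\exp(\tfrac{2\pi}{p}k)$ in $\SU(2)$, as the paper does); then any common invariant line would be an eigenline for both, which is impossible.

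Second, the degree-1 map $Y \to Y_2$ you invoke when $Y_2 \neq S^3$ exists only in case~(\ref{i:y2-hrp3}) of Proposition~\ref{prop:nice-basis}. In case~(\ref{i:y2-hs3}) the available pinch maps land in $(Y_1)_{-p/a}(K_1)$ and $(Y_2)_{p/c}(K_2)$, not in $Y_2$ itself, and in Proposition~\ref{prop:nice-essential-basis} the only pinch map goes to $Y_1$ (since $\lambda_1$ does not bound). This shortcut therefore fails in those subcases. The remedy is simply to drop it: in every remaining subcase the ``other'' degree-1 map you describe still lands in a manifold $Y'$ with $H_1(Y') \cong (\Z/p\Z)^{r'}$ for some $r' \geq 2$, so one can always iterate, and Rong's theorem finishes the job exactly as in the paper.
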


In practice one has to check even less than the stated hypothesis: in Theorem~\ref{thm:p-torsion-strong} we will give a stronger, but much less concise, version of this theorem.

\begin{remark} \label{rem:order-37}
There are many odd primes $p$ that do not satisfy the hypothesis of Theorem~\ref{thm:mod-p-to-p-torsion}.  Indeed, Motegi \cite[\S3]{motegi} produced toroidal, $\SL(2,\C)$-abelian manifolds $Y$ by gluing together the exteriors of any two torus knots $T_{a,b}$ and $T_{c,d}$, identifying the meridian of one with the Seifert fiber of the other and vice versa; then $H_1(Y)$ is cyclic of order $|abcd-1|$, which may be prime.  For example, taking $T_{2,3}$ and $T_{-2,3}$ as our torus knots shows that the hypothesis fails for $p=37$; taking $T_{2,3}$ and $T_{\pm2,5}$ rules out $p=59$ and $p=61$; and so on.
\end{remark}

One important difference from the case $p=2$ is that if $p$ is odd, then $\SL(2,\C)$-reducible manifolds with $p$-torsion homology are always prime, as the following lemma shows.

\begin{lemma} \label{lem:p-torsion-prime}
Let $p \geq 3$ be an odd prime, and suppose that $Y$ is a closed, $\SL(2,\C)$-reducible $3$-manifold such that $H_1(Y;\Z)$ is $p$-torsion.  Then $Y$ is prime.
\end{lemma}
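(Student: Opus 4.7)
The plan is to proceed by contradiction: assume $Y$ admits a nontrivial prime decomposition $Y = Y_1 \# Y_2$ with neither $Y_i$ homeomorphic to $S^3$, and produce an irreducible $\SL(2,\C)$ representation of $\pi_1(Y) \cong \pi_1(Y_1) \ast \pi_1(Y_2)$. Since $H_1(Y;\Z) \cong H_1(Y_1;\Z) \oplus H_1(Y_2;\Z)$ is $p$-torsion, each $H_1(Y_i;\Z)$ is also $p$-torsion, and in particular every $Y_i$ is a rational homology sphere.

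The key step is to produce, for each $i \in \{1,2\}$, a representation $\rho_i: \pi_1(Y_i) \to \SL(2,\C)$ with non-central image. I would split into cases according to whether $H_1(Y_i;\Z)$ vanishes. If $H_1(Y_i;\Z)$ is nontrivial then it contains an element of order $p$, and I would factor $\rho_i$ through the abelianization
\[ \pi_1(Y_i) \twoheadrightarrow H_1(Y_i;\Z) \to \SL(2,\C), \]
sending this order-$p$ element to $\operatorname{diag}(\zeta, \zeta^{-1})$ where $\zeta$ is a primitive $p$-th root of unity. Because $p$ is odd and at least $3$, we have $\zeta \neq \pm 1$, so this diagonal matrix is not central and $\rho_i$ has non-central image. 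If instead $H_1(Y_i;\Z) = 0$, then $Y_i$ is a nontrivial integer homology sphere, so Theorem~\ref{thm:zentner-main} produces an irreducible $\SL(2,\C)$ representation, whose image is automatically non-central.

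With $\rho_1$ and $\rho_2$ in hand, Proposition~\ref{prop:non-prime-curve} gives $\dim_\C \cX(Y) \geq 1$. To finish, I would observe that every reducible $\SL(2,\C)$ representation of $\pi_1(Y)$ can be conjugated to be upper triangular, so its character agrees with that of an abelian representation factoring through $H_1(Y;\Z)$; since $H_1(Y;\Z)$ is finite, the set of reducible characters is a finite subset of $\cX(Y)$. A positive-dimensional component of $\cX(Y)$ therefore cannot consist entirely of reducible characters, so it must contain an irreducible representation, contradicting $\SL(2,\C)$-reducibility of $Y$. The main conceptual input is the use of oddness of $p$ to guarantee that $\Z/p\Z$ does not embed in the center $\{\pm 1\} \subset \SL(2,\C)$, which is precisely what fails in the $p = 2$ case and explains why non-prime examples like $\#^r \RP^3$ exist there but not for odd $p$. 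The rest is routine given the earlier results.
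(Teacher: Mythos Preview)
Your proof is correct, but it takes a different route than the paper. The paper observes first that neither $Y_i$ can be a homology sphere (else Theorem~\ref{thm:zentner-main} would already contradict $\SL(2,\C)$-reducibility of that summand, hence of $Y$), so both $H_1(Y_i)$ are nontrivial $p$-torsion. It then builds an explicit irreducible $\SU(2)$ representation: surject $\pi_1(Y) \twoheadrightarrow (\Z/p\Z)\ast(\Z/p\Z)$ and send the two generators to the non-commuting unit quaternions $\exp(\tfrac{2\pi}{p}j)$ and $\exp(\tfrac{2\pi}{p}k)$.

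Your argument instead combines the two non-central representations via Proposition~\ref{prop:non-prime-curve} to get $\dim_\C \cX(Y) \geq 1$, and then uses the finiteness of reducible characters (which is valid since $H_1(Y)$ is finite) to force an irreducible. This is perfectly sound, and it has the mild advantage of handling the homology-sphere case uniformly rather than eliminating it first. The paper's approach is more direct and yields a stronger conclusion --- an irreducible $\SU(2)$ representation rather than merely $\SL(2,\C)$ --- without needing the dimension-counting detour or the observation about reducible characters. Both arguments ultimately hinge on the same point you identified: for odd $p$, the cyclic group $\Z/p\Z$ maps non-centrally into $\SU(2)$.
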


\begin{proof}
Suppose not, and write $Y = Y_1 \# Y_2$, where neither summand is $S^3$.  Then neither $Y_1$ nor $Y_2$ can be a homology sphere, since otherwise it would not be $\SL(2,\C)$-reducible by Theorem~\ref{thm:zentner-main} and so neither would $Y$.  This means that each $H_1(Y_i)$ is $p$-torsion and non-trivial, so each $\pi_1(Y_i)$ surjects onto $H_1(Y_i)$ and hence onto $\Z/p\Z$, and then we have a surjection
\begin{equation} \label{eq:p-composite-surjection}
\pi_1(Y) \cong \pi_1(Y_1) \ast \pi_1(Y_2) \twoheadrightarrow (\Z/p\Z) \ast (\Z/p\Z).
\end{equation}
Since $p \geq 3$, there is a non-abelian homeomorphism
\begin{equation} \label{eq:p-p-su2}
(\Z/p\Z) \ast (\Z/p\Z) \to \SU(2)
\end{equation}
defined by sending generators of each $\Z/p\Z$ factor to the unit quaternions $\exp(\frac{2\pi}{p}j)$ and $\exp(\frac{2\pi}{p}k)$, respectively.  Composing \eqref{eq:p-composite-surjection} and \eqref{eq:p-p-su2} gives an irreducible representation $\pi_1(Y) \to \SU(2)$, so we have a contradiction.
\end{proof}

Lemma~\ref{lem:p-torsion-prime} simplifies some parts of the story, because we no longer have to worry about connected sums of $\SL(2,\C)$-reducible manifolds, as we did for $\#^r \RP^3$ in the $2$-torsion case.  We already encountered this fact in Lemma~\ref{lem:geometric}, where we saw that the only atoroidal examples are lens spaces of order $p$.

\subsection{Zero-surgery on knots in lens spaces}

We begin by generalizing Theorem~\ref{thm:km-zero-surgery} to nullhomologous knots in arbitrary lens spaces.

\begin{proposition} \label{prop:surgery-lens-space}
Let $K \subset Y$ be a nullhomologous knot in $S^3$ or a lens space, and let $w \in H^2(Y_0(K);\Z)$ be Poincar\'e dual to the image in $Y_0(K)$ of a meridian of $K$.  Then $I^w_*(Y_0(K)) \neq 0$ if and only if $K$ is not an unknot in $Y$.
\end{proposition}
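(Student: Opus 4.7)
The plan is to prove the two directions of the proposition separately. For the ``only if'' direction, suppose $K$ bounds an embedded disk in $Y$. Then $Y_0(K) \cong Y \# (S^1 \times S^2)$, and under this homeomorphism the meridian of $K$ corresponds to the $S^1$-factor of the $S^1 \times S^2$ summand. Consequently $c_1(w)$, being Poincar\'e dual to the meridian, evaluates to $1$ on the class of the sphere $\{pt\}\times S^2 \subset Y_0(K)$. The admissible $\SO(3)$-bundle therefore restricts non-trivially to this 2-sphere, and since non-trivial $\SO(3)$-bundles over $S^2$ admit no flat connections, neither does the bundle over $Y_0(K)$; hence $I^w_*(Y_0(K)) = 0$.

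For the ``if'' direction, the case $Y = S^3$ is Theorem~\ref{thm:km-zero-surgery}, so I focus on $Y = L(p,q)$. The capped-off Seifert surface $\hat\Sigma \subset Y_0(K)$ is a closed embedded surface with $w \cdot \hat\Sigma = 1$, so my plan is to verify the hypotheses of Theorem~\ref{thm:irreducible-nonzero}, splitting into two subcases based on whether the exterior $E_K$ is irreducible.

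If $E_K$ is irreducible, I will argue that $Y_0(K)$ is irreducible as well, after which Theorem~\ref{thm:irreducible-nonzero} applies directly to $(Y_0(K), \hat\Sigma)$. The exterior $E_K$ has a cyclic Dehn filling at slope $\mu$ (producing $L(p,q)$) and a potentially reducible filling at slope $\lambda$, at distance $1$; so by Boyer--Zhang and related work on reducible surgeries, a reducible $Y_0(K)$ would force $E_K$ to be either Seifert fibered or a cable space, both of which can be ruled out by a direct analysis along the lines of Proposition~\ref{prop:rp3-surgery-zero-surgery} (using that $L(p,q)$ contains no Klein bottle for odd $p$, and that the Seifert fibered exteriors of nullhomologous knots over a disk base admit few $L(p,q)$ fillings).

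If instead $E_K$ is reducible, a reducing sphere must bound a ball in $L(p,q)$ (since $L(p,q)$ is irreducible for $p \geq 2$), and this ball necessarily contains $K$. Thus $K$ is really a knot $K' \subset S^3$ sitting inside a ball $B \subset L(p,q)$, and non-triviality of $K$ in $L(p,q)$ is equivalent to non-triviality of $K'$ in $S^3$. Then $Y_0(K) \cong L(p,q) \# S^3_0(K')$ with $w$ supported on the $S^3_0(K')$ factor, and Theorem~\ref{thm:km-zero-surgery} gives $I^w_*(S^3_0(K')) \neq 0$. The main obstacle I anticipate is the remaining connected-sum step: deducing non-vanishing of $I^w_*(L(p,q) \# S^3_0(K'))$ from that of $I^w_*(S^3_0(K'))$, since the lens space summand has cyclic homology of odd order and lies outside the irreducible instanton framework of Section~\ref{sec:instanton}. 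A natural approach is to realize $L(p,q)$ as iterated $\pm 1$-surgery on an unknot lying in a ball away from $K'$, and to track the non-vanishing through repeated applications of a surgery exact triangle for admissible bundles, exploiting the fact that each intermediate manifold is of the form $(\text{smaller lens space}) \# S^3_0(K')$.
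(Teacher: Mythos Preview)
Your proposal has genuine gaps in both subcases of the ``if'' direction.

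In the irreducible-exterior subcase, the Boyer--Zhang theorem you are invoking (the one used in Proposition~\ref{prop:rp3-surgery-zero-surgery}) requires the reducible filling and the finite-$\pi_1$ filling to be at distance at least $2$. Here the meridian and longitude are at distance $1$, so no such conclusion follows. The argument in Proposition~\ref{prop:rp3-surgery-zero-surgery} works precisely because the lens space filling there has slope $\mu^p\lambda$, at distance $p \geq 2$ from $\lambda$; that is not the geometry you have. In fact the paper never claims that $Y_0(K)$ is irreducible in this setting, and there is no easy substitute argument.

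In the reducible-exterior subcase, your plan to realize $L(p,q)$ by iterated $\pm 1$-surgeries and propagate non-vanishing through exact triangles cannot run inside the paper's framework: Theorem~\ref{thm:exact-triangle-2-torsion} requires the ambient first homology to be $2$-torsion, and the irreducible instanton homology $I_*$ of Theorem~\ref{thm:instanton-2-torsion} is simply not defined for rational homology spheres whose $H_1$ has odd order.

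The paper sidesteps both obstacles by bringing in framed instanton homology $I^\#$. First, Hom--Lidman shows that for a nullhomotopic knot $K$ in a prime rational homology sphere, $Y_0(K)$ contains a non-separating $2$-sphere if and only if $K$ is unknotted; note this makes no claim about irreducibility of $Y_0(K)$. Second, Proposition~\ref{prop:nonzero-isharp} proves that for any closed $3$-manifold with $b_1=1$ and admissible $w$, one has $I^w_*(Y)=0$ if and only if $Y$ has a non-separating sphere. That proposition is established via the K\"unneth formula for $I^\#$, the Euler characteristic formula $\chi(I^\#) = |H_1|$ on rational homology sphere summands, and the equivalence between vanishing of $I^w_*$ and of $I^\#$ coming from the nilpotence of $u^2-64$. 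This handles the connected-sum step you were worried about and the possible reducibility of $Y_0(K)$ simultaneously.
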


\begin{proof}
The case $Y=S^3$ is Theorem~\ref{thm:km-zero-surgery}, so we can assume that $Y$ is a non-trivial lens space.
Since $K$ is nullhomologous in $Y$, we know that $K$ is in fact nullhomotopic in $Y$.  Hom and Lidman \cite[Corollary~1.2]{hom-lidman-spheres} proved that since $Y$ is a prime rational homology sphere and $K$ is nullhomotopic, the manifold $Y_0(K)$ contains a non-separating 2-sphere if and only if $K$ is unknotted, and then the proposition follows from Proposition~\ref{prop:nonzero-isharp} below.
\end{proof}

We devote the remainder of this subsection to proving Proposition~\ref{prop:nonzero-isharp}, which generalizes Theorem~\ref{thm:irreducible-nonzero} for manifolds $Y$ with first Betti number $1$; the key difference is that we do not require $Y$ to be irreducible.

\begin{proposition} \label{prop:nonzero-isharp}
Let $Y$ be a closed 3-manifold with $b_1(Y)=1$, and let $w \in H^2(Y;\Z)$ satisfy $w \cdot R = 1$ for some surface $R \subset Y$.  Then $I^w_*(Y) = 0$ if and only if $Y$ contains a non-separating two-sphere.
\end{proposition}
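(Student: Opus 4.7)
The plan is to prove each direction of the equivalence separately.

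For $(\Leftarrow)$: I will start by noting that $b_1(Y) = 1$ together with Poincar\'e duality forces $H_2(Y;\Z) \cong H^1(Y;\Z) \cong \Z$, and that a non-separating 2-sphere $S \subset Y$ represents a primitive generator of this group, since one can find a loop intersecting $S$ transversely in a single point (giving a dual generator of $H_1(Y;\Z)/\mathrm{tors}$).  Writing $[R] = n[S]$, the hypothesis $w \cdot R = 1$ will force $n(w\cdot S) = 1$, so $w \cdot S$ is odd.  The $\SO(3)$-bundle $P \to Y$ associated to $w$ thus restricts to the non-trivial $\SO(3)$-bundle over $S^2$.  Since $\pi_1(S^2) = 0$, any flat $\SO(3)$-connection on $S^2$ has trivial holonomy and therefore trivializes the bundle, so $P|_S$ admits no flat connection, and hence neither does $P$ itself.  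For sufficiently small holonomy perturbations of the Chern--Simons functional, compactness rules out the appearance of new critical points, so the chain complex $CI^w_*(Y)$ is empty and $I^w_*(Y) = 0$.

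For $(\Rightarrow)$: I will argue the contrapositive.  Suppose $Y$ contains no non-separating 2-sphere.  Its prime decomposition $Y = Y_1 \# \cdots \# Y_n$ then has every summand irreducible, because each $S^2$-bundle over $S^1$ (trivial or twisted) contains a non-separating sphere.  Since $b_1$ is additive under connected sum, exactly one summand, say $Y_1$, satisfies $b_1(Y_1) = 1$, while $Y_2, \ldots, Y_n$ are irreducible rational homology spheres.  After isotoping $R$ off each connect-sum sphere and capping with disks from the summand balls, $R$ splits as a disjoint union $R_1 \sqcup \cdots \sqcup R_n$ with each $R_i$ a closed surface in $Y_i$.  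For $i \geq 2$, the vanishing $H_2(Y_i;\Z) = 0$ forces $[R_i] = 0$ and so $w \cdot R_i = 0$; therefore $w|_{Y_1} \cdot R_1 = 1$, and $w_1 := w|_{Y_1}$ is admissible on the irreducible manifold $Y_1$.  Theorem~\ref{thm:irreducible-nonzero} then yields $I^{w_1}_*(Y_1) \neq 0$.

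To upgrade this to non-vanishing of $I^w_*(Y)$ itself, I will invoke a connect-sum formula for admissible instanton Floer homology.  Each connect-sum sphere is separating with $w \cdot S = 0$, so the $\SO(3)$-bundle restricts trivially to it and neck-stretching decomposes the relevant moduli spaces without introducing reducibles on the sphere.  The expected shape of the resulting formula (in the spirit of Fukaya's connect-sum theorem and Scaduto's framework) is
\[ I^w_*(Y_1 \# Y_i) \;\cong\; I^{w_1}_*(Y_1) \otimes V_*(Y_i), \]
with $V_*(Y_i) \neq 0$ for each irreducible rational-homology-sphere summand; iterating this over the full prime decomposition preserves non-vanishing.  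The main obstacle will be stating and applying this connect-sum formula precisely in the present mixed-admissibility setting, where $w$ is non-trivial only on $Y_1$ and trivial (possibly with torsion representative) on each $Y_i$ for $i \geq 2$.  A cleaner alternative is to apply Kronheimer--Mrowka's excision theorem from \cite{km-excision} directly, arguing that the limits of the neck-stretching moduli spaces do not pick up reducibles on the separating spheres (since the bundle is trivial there and admissible elsewhere), and concluding non-vanishing by transport of index data from $Y_1$ to $Y$.
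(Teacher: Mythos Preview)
Your $(\Leftarrow)$ argument is fine and in fact more direct than the paper's: the paper handles both directions simultaneously through the same machinery, whereas you observe outright that the bundle has no flat connections.

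The gap is in $(\Rightarrow)$, and you have correctly identified it yourself. The connect-sum formula you want for $I^w_*$ in the mixed-admissibility setting --- admissible bundle on the $b_1=1$ summand, possibly trivial or torsion $w_2$ on the rational-homology-sphere summands --- is not something you can just invoke. Fukaya's theorem is for trivial bundles on homology spheres, and excision in \cite{km-excision} is along tori and genus-$2$ surfaces, not spheres; neck-stretching along a separating $S^2$ with trivial bundle does produce reducibles (the trivial connection on $S^2$), so your parenthetical ``without introducing reducibles on the sphere'' is exactly where the argument breaks. You would need a careful analysis of how the reducible on the sphere interacts with both sides, and there is no off-the-shelf statement to cite.

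The paper sidesteps this entirely by passing through \emph{framed} instanton homology $I^\#(Y,\lambda)$. Three facts, all from Scaduto \cite{scaduto}, do the work: (i) $I^\#$ satisfies a K\"unneth formula under connected sum; (ii) $I^\#(Y_i,\lambda_i)\neq 0$ for any rational homology sphere, since $\chi(I^\#(Y_i,\lambda_i))=|H_1(Y_i;\Z)|>0$; and (iii) for an admissible bundle, $I^w_*(Y)=0$ if and only if $I^\#(Y,\lambda)=0$, because $I^\#$ is built from $\ker(u^2-64)$ on $I^w_*$ and $u^2-64$ is nilpotent. Combining these reduces the question to the prime summand $Y_0$ with $b_1(Y_0)=1$, where either $Y_0\cong S^1\times S^2$ or $Y_0$ is irreducible and Theorem~\ref{thm:irreducible-nonzero} applies. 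The detour through $I^\#$ is precisely what lets you avoid proving a connect-sum theorem for $I^w_*$ directly.
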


The proof of Proposition~\ref{prop:nonzero-isharp} makes use of several basic properties of framed instanton homology $I^\#(Y,\lambda)$ over a field of characteristic zero, including a connected sum theorem relating it to the usual instanton homology of an admissible bundle; we will refer to \cite{scaduto} for all of the needed results.

\begin{lemma} \label{lem:isharp-qhs3}
If $Y$ is a rational homology sphere, then $I^\#(Y,\lambda) \neq 0$ for any $\lambda$.
\end{lemma}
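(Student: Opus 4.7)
The plan is to deduce nonvanishing from a positive Euler characteristic. Framed instanton homology $I^\#(Y,\lambda)$ is defined over a field of characteristic zero (as surveyed in \cite{scaduto}) and admits a relative $\Z/2$-grading, so it has a well-defined Euler characteristic $\chi(I^\#(Y,\lambda))$. The point I would invoke is Scaduto's Euler characteristic formula: for $Y$ a rational homology sphere, one has
\[ \chi(I^\#(Y,\lambda)) = \pm\, |H_1(Y;\Z)|, \]
independently of the auxiliary class $\lambda$. This is the framed-instanton analogue of the classical Casson-type computation and is proved by reducing via the connected sum theorem $I^\#(Y_1\#Y_2)\cong I^\#(Y_1)\otimes I^\#(Y_2)$ for framed instanton homology to the admissible $\SO(3)$ instanton Euler characteristic on $Y\#T^3$, together with the Meng--Taubes / Fr\o{}yshov-type identification of that Euler characteristic with (a multiple of) the order of $H_1(Y;\Z)$.

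Granting this formula, the proof of the lemma is immediate: because $Y$ is a rational homology sphere, $H_1(Y;\Z)$ is a finite abelian group and in particular $|H_1(Y;\Z)|\geq 1$, so
\[ \chi(I^\#(Y,\lambda)) = \pm\,|H_1(Y;\Z)| \neq 0, \]
and hence $I^\#(Y,\lambda)$ cannot be the zero group. The conclusion does not depend on $\lambda$.

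The only real content of the plan is citing the Euler characteristic formula in the form we need; aside from verifying this reference in \cite{scaduto}, there is nothing to do. This lemma will then feed into the proof of Proposition~\ref{prop:nonzero-isharp}, where the $b_1(Y)=1$ case is reduced (via the surgery exact triangle or the connected sum/splitting arguments of \cite{scaduto}) to a statement about rational homology sphere summands, for which Lemma~\ref{lem:isharp-qhs3} provides the required nontriviality unless a non-separating $S^2$ is present.
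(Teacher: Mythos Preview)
Your approach is essentially identical to the paper's: both invoke Scaduto's Euler characteristic formula $\chi(I^\#(Y,\lambda)) = |H_1(Y;\Z)|$ (the paper cites \cite[Corollary~1.4]{scaduto} and states it without the sign ambiguity) and conclude nonvanishing from $|H_1(Y;\Z)| \geq 1$. The extra discussion of how the formula is proved and the forward-looking remarks about Proposition~\ref{prop:nonzero-isharp} are unnecessary but harmless.
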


\begin{proof}
The invariant $I^\#(Y,\lambda)$ is equipped with a $\Z/2\Z$ grading, and its Euler characteristic with respect to this grading is
\[ \chi(I^\#(Y,\lambda)) = |H_1(Y;\Z)| > 0 \]
according to \cite[Corollary~1.4]{scaduto}, so we must have $\dim I^\#(Y,\lambda) > 0$.
\end{proof}

\begin{lemma} \label{lem:admissible-nonzero}
Let $w \to Y$ be an admissible Hermitian line bundle, and $\lambda \in H_1(Y;\Z)$ the Poincar\'e dual of $c_1(w)$.  Then $I^w_*(Y) = 0$ if and only if $I^\#(Y,\lambda) = 0$.
\end{lemma}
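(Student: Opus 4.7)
The plan is to derive this from Scaduto's definition of framed instanton homology as the admissible instanton homology of a connected sum with $T^3$, together with his Künneth-type connected sum formula in characteristic zero.

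First I would recall from \cite{scaduto} that $I^\#(Y,\lambda)$ is defined as $I^{w'}_*(Y \# T^3)$, where $w' \to Y \# T^3$ is a Hermitian line bundle whose restriction to $Y$ agrees with our $w$ (so that $c_1(w'|_Y) = c_1(w)$ is Poincar\'e dual to $\lambda$), and whose restriction to $T^3$ is a standard admissible bundle $w_0$ with $c_1(w_0)$ Poincar\'e dual to an essential $2$-torus in $T^3$. Because both $w$ on $Y$ and $w_0$ on $T^3$ are admissible, so is $w'$ on $Y \# T^3$, so all the groups in sight are defined in the usual way.

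Next I would invoke Scaduto's connected sum formula for admissible instanton homology over a field of characteristic zero, which gives a natural isomorphism
\[ I^\#(Y,\lambda) \;=\; I^{w'}_*(Y \# T^3) \;\cong\; I^{w}_*(Y) \otimes_\Q I^{w_0}_*(T^3). \]
Since Floer's original computation (compare \cite{braam-donaldson}) shows that $I^{w_0}_*(T^3)$ is nonzero, the tensor product on the right vanishes precisely when $I^w_*(Y)$ does. This yields the desired equivalence.

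The only real obstacle is bookkeeping: one must check that the bundle data used in Scaduto's definition of $I^\#(Y,\lambda)$ matches the hypothesis that $c_1(w)$ is Poincar\'e dual to $\lambda$, and that the connected sum formula applies unchanged to the admissible pair $(w, w_0)$ rather than just to the standard framed case $\lambda=0$. Both of these are immediate from the setup in \cite{scaduto}, so no new analytic input is needed.
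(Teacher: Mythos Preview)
Your proposal has a gap at the key step. The connected-sum formula you attribute to Scaduto,
\[ I^{w'}_*(Y \# T^3) \;\cong\; I^{w}_*(Y) \otimes_\Q I^{w_0}_*(T^3), \]
is not what \cite{scaduto} proves. Scaduto's K\"unneth theorem (\S7.7 of that paper) is for the framed theory $I^\#$, not for $I^w_*$ of a connected sum of two admissible bundles; and the result in \cite{scaduto} that actually relates $I^\#(Y,\lambda)$ to $I^w_*(Y)$ is his Theorem~1.3, which gives
\[ I^\#(Y,\lambda) \cong \ker(u^2-64) \otimes H_*(S^3), \]
with $u=4\mu(\mathrm{pt})$ acting on $I^w_*(Y)$ and the kernel taken over four consecutive gradings. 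This is not a tensor product with all of $I^w_*(Y)$: your formula, compared against Theorem~1.3, would force $u^2-64=0$ identically on every admissible $I^w_*(Y)$, whereas only nilpotence of $u^2-64$ is known.

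The paper's proof proceeds exactly via Theorem~1.3. Since $u^2-64$ is nilpotent on $I^w_*(Y)$ by \cite{froyshov}, its kernel on four consecutive gradings vanishes if and only if $I^w_*(Y)$ vanishes in those gradings, and then the degree-$4$ isomorphism $u$ extends this to all eight gradings. You have located the right reference but invoked the wrong statement from it; once you replace your tensor-product formula with Scaduto's actual Theorem~1.3, the missing ingredient is precisely the nilpotence of $u^2-64$.
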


\begin{proof}
Scaduto \cite[Theorem~1.3]{scaduto} proved that
\[ I^\#(Y,\lambda) \cong \ker(u^2-64) \otimes H_*(S^3), \]
where $u=4\mu(\pt)$ is a degree-4 operator on the $\Z/8\Z$-graded invariant $I^w_*(Y)$, but only we take the kernel of the action of $u^2-64$ on four consecutive gradings.  The operator $u^2-64$ is nilpotent \cite{froyshov}, so $\ker(u^2-64) = 0$ if and only if $I^w_*(Y)$ is zero in those gradings; and then $u$ restricts to an isomorphism $I^w_*(Y) \xrightarrow{\cong} I^w_{*+4}(Y)$, so this is equivalent to $I^w_*(Y) = 0$ in all gradings.
\end{proof}

\begin{proof}[Proof of Proposition~\ref{prop:nonzero-isharp}]
We write the prime decomposition of $Y$ as
\[ Y \cong Y_0 \# Y_1 \# \dots \# Y_k, \]
where $Y_0$ is the unique summand with $b_1(Y_0) = 1$ and then the $Y_i$ with $i \geq 1$ are all rational homology spheres.  If we write the Poincar\'e dual $\lambda \in H_1(Y;\Z)$ of $w$ as $\lambda = \lambda_0+\dots+\lambda_k$ with $\lambda_i \in H_1(Y_i)$ for all $i$, then $I^\#$ satisfies a K\"unneth formula
\[ I^\#(Y,\lambda) \cong \bigotimes_{i=0}^k I^\#(Y_i,\lambda_i); \]
this is explained in \cite[\S7.7]{scaduto} when the $\lambda_i$ are all zero, but the same proof works in full generality.  By Lemma~\ref{lem:isharp-qhs3} we have $I^\#(Y_i,\lambda_i) \neq 0$ for all $i \geq 1$, so $I^\#(Y,\lambda) \neq 0$ if and only if $I^\#(Y_0,\lambda_0) \neq 0$.

Two applications of Lemma~\ref{lem:admissible-nonzero} now tell us that $I^w_*(Y) \neq 0$ if and only if $I^{w_0}_*(Y_0) \neq 0$, where $w_0 = w|_{Y_0}$ is the Poincar\'e dual to $\lambda_0$.  Since $Y_0$ is prime, either it is $S^1\times S^2$ and then $I^{w_0}_*(Y_0) = 0$, or it is irreducible and then $I^{w_0}_*(Y_0) \neq 0$ by Theorem~\ref{thm:irreducible-nonzero}.  Since $Y$ has a non-separating $S^2$ if and only if one of its prime summands is $S^1\times S^2$, it follows that $Y$ contains such a sphere if and only if $I^{w_0}_*(Y_0) = 0$, hence if and only if $I^w_*(Y) = 0$ as claimed.
\end{proof}

\subsection{Splicing knots in lens spaces}

This subsection is devoted to proving an analogue of Theorem~\ref{thm:splice-rep}, in which the knots can lie in lens spaces rather than in $3$-manifolds whose first homology is $2$-torsion.

\begin{proposition} \label{prop:splice-s3-lens}
Let each of $Y_1$ and $Y_2$ be either $S^3$ or a lens space, and let $K_1 \subset Y_1$ and $K_2 \subset Y_2$ be nullhomologous knots with irreducible, boundary-incompressible exteriors.  Form a closed, toroidal $3$-manifold
\[ Y = E_{K_1} \cup_{\partial} E_{K_2} \]
by gluing the meridian $\mu_1$ and longitude $\lambda_1$ of $K_1$ to the longitude $\lambda_2$ and meridian $\mu_2$ of $K_2$, respectively.  Then there is a representation
\[ \rho: \pi_1(Y) \to \SU(2) \]
with non-abelian image.
\end{proposition}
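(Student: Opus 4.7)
The plan is to mimic the proof of Theorem~\ref{thm:splice-rep} as closely as possible, substituting the lens-space version Proposition~\ref{prop:surgery-lens-space} for Theorem~\ref{thm:2-torsion-zero-surgery}, and replacing the input of Lemma~\ref{lem:corners-limit} with an ad hoc lens-space analogue. First I would observe that each $Y_i\in\{S^3,\mathrm{lens space}\}$ has cyclic fundamental group and is therefore $\SU(2)$-abelian; the pinching maps $Y\to Y_i$ from Proposition~\ref{prop:solid-torus-pinch} then let me reduce, as in the opening paragraph of the proof of Theorem~\ref{thm:splice-rep}, to trying to build the representation directly on $Y$.

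Since each $K_i$ has irreducible, boundary-incompressible exterior, its meridian does not bound a disk in $E_{K_i}$, so $K_i$ is not the unknot in $Y_i$. Proposition~\ref{prop:surgery-lens-space} therefore gives $I^w_*((Y_i)_0(K_i))\neq 0$, where $w$ is Poincar\'e dual to a meridian of $K_i$. Because each $Y_i$ is $\SU(2)$-abelian, Theorem~\ref{thm:pillowcase-loop} applies and yields, for $\ell=1,2$, a topologically embedded homologically essential loop $C^\ell\subset j(X(E_{K_\ell}))\subset \cP$. I then lift to continuous paths $\gamma^\ell_t=(\alpha^\ell_t,\beta^\ell_t):[0,1]\to[0,\pi]\times[0,2\pi]$ with $\beta^\ell_0=0$, $\beta^\ell_1=2\pi$, $0<\beta^\ell_t<2\pi$ for $0<t<1$, carrying representations $\rho^\ell_t:\pi_1(E_{K_\ell})\to\SU(2)$ that are irreducible on $(0,1)$, exactly as in the proof of Theorem~\ref{thm:splice-rep}.

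The new issue is the strict inequality $0<\alpha^1_0,\alpha^1_1<\pi$, which in the 2-torsion setting is enforced by Lemma~\ref{lem:corners-limit}. Here I need a lens-space analogue of that lemma: neither $(0,0)$ nor $(\pi,0)$ is a limit point of $i^*(X^{\irr}(E_{K_1}))$. After reducing the $(\pi,0)$ case to the $(0,0)$ case by multiplying by the central character $\chi$ with $\chi(\mu_1)=-1$, $\chi|_{H_1(Y_1)}=1$ (which works for any $Y_1$), a convergent subsequence would produce $\rho_\infty:\pi_1(E_{K_1})\to\SU(2)$ with $\rho_\infty(\mu_1)=\rho_\infty(\lambda_1)=1$, factoring through the cyclic $\pi_1(Y_1)$ and hence abelian with image in a maximal torus $T$. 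I would then adapt the Lubotzky--Magid tangent space argument from Lemma~\ref{lem:corners-limit}: the splitting $\mathfrak{sl}(2,\C)_{\ad\rho_\infty}=\C_{\mathrm{triv}}\oplus \C_{\psi^2}\oplus \C_{\psi^{-2}}$ (where $\psi:\pi_1(E_{K_1})\to T_\C$ is the character corresponding to $\rho_\infty$, trivial on $\mu_1$) should match the dimension of the abelian stratum of $\cR(E_{K_1})$ at $\rho_\infty$, so that a full neighborhood of $\rho_\infty$ in $\cR(E_{K_1})$ consists of abelian representations, contradicting $\rho_\infty$ being a limit of irreducibles. This is the hard part: for central $\rho_\infty$ the argument is identical to Lemma~\ref{lem:corners-limit}, but for non-central abelian $\rho_\infty$ the relevant tangent and abelian-locus dimensions must be reconciled using the twisted cohomology $H^1(E_{K_1};\C_{\psi^{\pm2}})$, which is where the real care is needed.

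Once the strict bounds $0<\alpha^1_0,\alpha^1_1<\pi$ are in hand, the endgame is verbatim from the proof of Theorem~\ref{thm:splice-rep}: transposing coordinates gives $\tilde\gamma^2_t=(\beta^2_t,\alpha^2_t)$ on $[0,\tau]$ where $\tau=\inf\{t:\beta^2_t=\pi\}$, and this path runs from the left edge $\{0\}\times[0,2\pi]$ to the right edge $\{\pi\}\times[0,2\pi]$ of the rectangle, separating $(0,\pi)\times\{0\}$ from $(0,\pi)\times\{2\pi\}$. Since $\gamma^1_0$ and $\gamma^1_1$ lie in these respective open edge segments, $\gamma^1$ must cross $\tilde\gamma^2$ at some parameter $0<t<1$, giving representations $\rho^1_t,\rho^2_{\tilde t}$ whose restrictions to the splicing torus agree under the identification $\mu_1\sim\lambda_2$, $\lambda_1\sim\mu_2$, and so glue to a representation $\rho:\pi_1(Y)\to\SU(2)$ whose restriction to $\pi_1(E_{K_1})$ is the irreducible $\rho^1_t$ and is therefore itself non-abelian.
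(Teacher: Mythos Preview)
Your overall strategy is sound, and the part you identify as ``the hard part'' really is the crux.  Unfortunately the sketch you give for the non-central abelian case does not go through as stated.  When $\rho_\infty$ factors through $\pi_1(Y_1)\cong\Z/n\Z$ with $n\geq 3$ and has non-central image, the tangent space $T_{\rho_\infty}\cR(E_{K_1})$ is
\[
H^1(E_{K_1};\C)\ \oplus\ H^1(E_{K_1};\C_{\psi^2})\ \oplus\ H^1(E_{K_1};\C_{\psi^{-2}}),
\]
and the twisted summands depend on the knot: their dimensions are governed by a twisted Alexander invariant and need not equal $1$.  When they are larger than $1$, the abelian locus (which is $3$-dimensional near $\rho_\infty$, counting the $\SL(2,\C)$-orbit) is too small to fill a Zariski neighborhood, and the Lubotzky--Magid argument fails.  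So you have not ruled out that $(0,0)$ is a limit of irreducible characters when $|H_1(Y_1)|\geq 3$.

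The paper sidesteps this entirely by a case split.  If both $|H_1(Y_i)|\geq 3$, one writes down the representation directly: let $\rho_i$ factor through $\pi_1(E_{K_i})\twoheadrightarrow\pi_1(Y_i)\cong\Z/n_i\Z$, sending a generator of $\Z/n_1\Z$ to $\exp(\tfrac{2\pi}{n_1}j)$ and a generator of $\Z/n_2\Z$ to $\exp(\tfrac{2\pi}{n_2}k)$.  Each $\rho_i$ is trivial on $\pi_1(T)$, so they glue, and the image is non-abelian since these quaternions do not commute.  In the remaining case one may assume $Y_1\in\{S^3,\RP^3\}$, so $H_1(Y_1)$ is $2$-torsion and Lemma~\ref{lem:corners-limit} applies to $K_1$ verbatim; the rest of your argument then runs exactly as you wrote it.  This avoids any need to generalize Lemma~\ref{lem:corners-limit}.
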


\begin{proof}
Suppose that each $Y_i$ is a lens space of order $n_i \geq 3$.  Then we can define representations
\[ \rho_i: \pi_1(E_{K_i}) \twoheadrightarrow \frac{\pi_1(E_{K_i})}{\llangle \mu_i\rrangle} \cong \pi_1(Y_i) \cong \Z/n_i\Z \hookrightarrow \SU(2),\]
satisfying $\rho_i(\mu_i) = 1$, and we have $\rho_i(\lambda_i) = 1$ since the image of $\rho_i$ is abelian.  Each $\rho_i$ restricts to the trivial representation on $\pi_1(T)$, so they glue together to give a representation $\rho: \pi_1(Y) \to \SU(2)$, and we can guarantee that $\rho$ has non-abelian image by choosing to send generators of $\pi_1(Y_1) \cong \Z/n_1\Z$ and $\pi_1(Y_2) \cong \Z/n_2\Z$ to the unit quaternions $\exp(\frac{2\pi}{n_1}j)$ and $\exp(\frac{2\pi}{n_2}k)$, respectively.

From now on we assume without loss of generality that $Y_1$ is either $S^3$ or $\RP^3$; the proof now follows essentially the same argument as Theorem~\ref{thm:splice-rep}.  Neither $K_1$ nor $K_2$ is unknotted, so if $w_1 \in H^2((Y_1)_0(K_1))$ and $w_2 \in H^2((Y_2)_0(K_2))$ are Poincar\'e dual to meridians of $K_1$ and $K_2$ then we know that
\begin{align*}
I^{w_1}_*((Y_1)_0(K_1)) &\neq 0, &
I^{w_2}_*((Y_2)_0(K_2)) &\neq 0
\end{align*}
by Proposition~\ref{prop:surgery-lens-space}.  Since both $Y_1$ and $Y_2$ are $\SU(2)$-abelian, the character varieties $X(E_{K_1})$ and $X(E_{K_2})$ have well-defined images in the cut-open pillowcase 
\[ \cP = [0,\pi] \times (\R/2\pi\Z) \]
of \S\ref{ssec:cut-open} (see Lemma~\ref{lem:alpha-pi}), and Theorem~\ref{thm:pillowcase-loop} provides us with essential closed curves
\begin{align*}
C_1 &\subset j(X(E_{K_1})), &
C_2 &\subset j(X(E_{K_2}))
\end{align*}
in the cut-open pillowcase images of each.

Just as in the proof of Theorem~\ref{thm:splice-rep}, the curves $C_1$ and $C_2$ now give rise to continuous paths
\[ \gamma^\ell_t = (\alpha^\ell_t, \beta^\ell_t): [0,1] \to [0,\pi] \times [0,2\pi], \qquad \ell=1,2 \]
such that for each $\ell$ we have
\begin{itemize}
\item $\beta^\ell_0 = 0$, $\beta^\ell_1 = 2\pi$, and $0 < \beta^\ell_t < 2\pi$ for $0 < t < 1$;
\item $0 < \alpha^\ell_t < \pi$ for $0<t<1$ by Lemma~\ref{lem:alpha-pi}, since $\beta^\ell_t \not\in 2\pi\Z$;
\item and each $\gamma^\ell_t$ is the pillowcase image of some $\rho^\ell_t: \pi_1(E_{K_\ell}) \to \SU(2)$ satisfying
\begin{align*}
\rho^\ell_t(\mu_\ell) &= \begin{pmatrix} e^{i\alpha^\ell_t} & 0 \\ 0 & e^{-i\alpha^\ell_t} \end{pmatrix}, &
\rho^\ell_t(\lambda_\ell) &= \begin{pmatrix} e^{i\beta^\ell_t} & 0 \\ 0 & e^{-i\beta^\ell_t} \end{pmatrix}.
\end{align*}
In particular $\rho^\ell_t$ is irreducible for $0<t<1$, since $\rho^\ell_t(\lambda_\ell) \neq 1$.
\end{itemize}
Lemma~\ref{lem:corners-limit} tells us slightly more about $\alpha^1_t$, namely that
\[ 0 < \alpha^1_t < \pi \quad \text{for all } t \in [0,1] \]
since $Y_1$ is $\SU(2)$-abelian and $H_1(Y_1)$ is either trivial or $\Z/2\Z$.  The paths
\[ \big\{ (\alpha^1_t, \beta^1_t) \big\}_{t\in[0,1]} \quad\text{and}\quad \big\{ (\beta^2_t,\alpha^2_t) \big\}_{t\in[0,1]} \]
must intersect exactly as before, say at some point $(\tilde\alpha,\tilde\beta)$, where $0<\tilde\alpha<\pi$ since $\tilde\alpha = \alpha^1_t$ for some $t$.  This point of intersection gives rise to a representation $\rho: \pi_1(Y) \to \SU(2)$, and the restriction $\rho|_{E_{K_2}}$ must have pillowcase coordinates $(\tilde\beta,\tilde\alpha)$.  Then $0 < \tilde\alpha < \pi$ implies that $\rho|_{E_{K_2}}(\lambda_2) \neq 1$, so $\rho|_{E_{K_2}}$ cannot have abelian image and thus neither can $\rho$.
\end{proof}

\subsection{Manifolds with $p$-torsion homology}

We are now ready to prove Theorem~\ref{thm:mod-p-to-p-torsion}, which will follow quickly from the next proposition.

\begin{proposition} \label{prop:p-torsion-dominates-something}
Let $p \geq 3$ be an odd prime such that every closed, $\SL(2,\C)$-reducible $3$-manifold with first homology $\Z/p\Z$ is a lens space.

If $Y$ is a closed, $\SL(2,\C)$-reducible $3$-manifold with $H_1(Y;\Z) \cong (\Z/p\Z)^r$ for some integer $r \geq 2$, then there is a closed $Y'$ with first homology $(\Z/p\Z)^{r'}$ for some $r' \geq 2$ and a degree-1 map
\[ Y \to Y' \]
that is not a homotopy equivalence.  Both $Y$ and $Y'$ are prime, toroidal, and $\SL(2,\C)$-reducible.
\end{proposition}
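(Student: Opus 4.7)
The plan is to exhibit $Y'$ as the target of a degree-1 map supplied by one of the decomposition results of Section~\ref{sec:knot-exteriors}. By Lemma~\ref{lem:p-torsion-prime} the manifold $Y$ is prime, and by Lemma~\ref{lem:geometric} the only atoroidal $\SL(2,\C)$-reducible possibilities with $p$-torsion homology are $\#^s \RP^3$ (excluded since $p$ is odd) or a lens space of order $p$ (excluded since $r \geq 2$), so $Y$ must be toroidal. I fix an incompressible separating torus $T \subset Y$ (which exists because $b_1(Y) = 0$) and write $Y = M_1 \cup_T M_2$; since $Y$ is irreducible, each $M_i$ is irreducible with incompressible boundary and is not a solid torus.

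If both rational longitudes $\lambda_i \subset \partial M_i$ are nullhomologous, then Proposition~\ref{prop:nice-basis} produces closed manifolds $Y_1, Y_2$ with $H_1(Y_i;\Z) \cong (\Z/p\Z)^k, (\Z/p\Z)^\ell$ and nullhomologous knots $K_i \subset Y_i$ whose exteriors are the $M_i$. In conclusion \ref{i:y2-hrp3} one has $k+\ell = r$ together with degree-1 maps $Y \to Y_i$ with non-trivial kernel; whenever $\max(k,\ell) \geq 2$ I take $Y'$ to be the corresponding $Y_i$, which is prime (Lemma~\ref{lem:p-torsion-prime}), $\SL(2,\C)$-reducible since $Y$ is, and toroidal (Lemma~\ref{lem:geometric}). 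The only subcase consistent with $r \geq 2$ that is not handled this way is $k = \ell = 1$ (forcing $r = 2$); there the hypothesis on $p$ makes each $Y_i$ a lens space of order $p$, so Proposition~\ref{prop:splice-s3-lens} provides an irreducible $\SU(2)$ representation of $\pi_1(Y)$, contradicting the $\SL(2,\C)$-reducibility of $Y$. In conclusion \ref{i:y2-hs3} one instead has $k+\ell = r-1$ and degree-1 maps $Y \to (Y_1)_{-p/a}(K_1)$ and $Y \to (Y_2)_{p/c}(K_2)$; a direct Mayer--Vietoris calculation using that $K_i$ is nullhomologous gives first homologies isomorphic to $(\Z/p\Z)^{k+1}$ and $(\Z/p\Z)^{\ell+1}$ respectively, whose exponents sum to $r+1 \geq 3$, so at least one of them is $\geq 2$, and I take $Y'$ to be the corresponding Dehn filling (prime and toroidal by the same argument as above).

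Finally, if some $\lambda_i$ is homologically essential, Proposition~\ref{prop:nice-essential-basis} provides, after relabeling, a manifold $Y_1$ with $H_1(Y_1;\Z) \cong (\Z/p\Z)^k$ for some $k \geq 2$ together with a degree-1 map $Y \to Y_1$ with non-trivial kernel, and I take $Y' = Y_1$. The main obstacle in the proof is the subcase $k = \ell = 1$ of conclusion \ref{i:y2-hrp3}, where no further degree-1 reduction is available; this is precisely the point where the hypothesis on $p$ combines with the splicing result Proposition~\ref{prop:splice-s3-lens} to eliminate the configuration entirely.
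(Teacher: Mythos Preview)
Your proof is correct and follows essentially the same approach as the paper's: the same primeness and toroidality reductions via Lemmas~\ref{lem:p-torsion-prime} and \ref{lem:geometric}, the same case split according to Propositions~\ref{prop:nice-basis} and \ref{prop:nice-essential-basis}, and the same appeal to Proposition~\ref{prop:splice-s3-lens} to eliminate the $k=\ell=1$ subcase. The only minor point left implicit is that in the $k=\ell=1$ subcase you are using the degree-1 maps $Y \to Y_i$ to conclude that each $Y_i$ is $\SL(2,\C)$-reducible before invoking the hypothesis on $p$, but this is clear from context.
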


\begin{proof}
We know that $Y$ is prime by Lemma~\ref{lem:p-torsion-prime}, and that it contains an incompressible torus by Lemma~\ref{lem:geometric}: indeed, if it were atoroidal then it would have to be a lens space, but $H_1(Y)$ is not cyclic.  By the same argument, once we have constructed $Y'$ with the desired homology and degree-1 map $f: Y\to Y'$, it will follow that $Y'$ is $\SL(2,\C)$-reducible, and then that $Y'$ is prime and toroidal.  We thus focus on constructing $Y'$ and the map $f$, which will be a pinch map of the sort provided by Proposition~\ref{prop:solid-torus-pinch}; if it collapses a submanifold bounded by an incompressible torus $T$ to a solid torus, then it will not be a homotopy equivalence, since the kernel of the induced map $f_*: \pi_1(Y) \to \pi_1(Y')$ contains non-trivial elements of the subgroup $\pi_1(T) \subset \pi_1(Y)$.

Since $Y$ is prime and has an incompressible torus $T$, we can write
\[ Y \cong M_1 \cup_T M_2 \]
where each $M_i$ is irreducible, with incompressible torus boundary.  We let $\lambda_i \subset \partial M_i$ denote the rational longitude of each $M_i$.

Suppose first that each of the $\lambda_i$ is nullhomologous in its respective $M_i$.  Then Proposition~\ref{prop:nice-basis} says that we can write each $M_i$ as the exterior of some nullhomologous knot $K_i \subset Y_i$, with $H_1(Y_i;\Z) \cong (\Z/p_i\Z)^{n_i}$, such that one of two cases occurs:

\vspace{1em}\noindent
{\bf Case 1}: $n_1+n_2=r$, and we form $Y$ by gluing $\mu_1$ to $\lambda_2$ and $\lambda_2$ to $\mu_1$.

In this case we use Proposition~\ref{prop:solid-torus-pinch} to pinch either $M_2$ or $M_1$ to a solid torus, giving us degree-1 maps
\begin{align*}
Y &\to M_1(\lambda_2) \cong M_1(\mu_1) = Y_1, \\
Y &\to M_2(\lambda_1) \cong M_2(\mu_2) = Y_2.
\end{align*}
If $n_1=n_2=1$ then the manifolds $Y_1$ and $Y_2$ are $\SL(2,\C)$-reducible with first homology $\Z/p\Z$, so they are both lens spaces by our assumption on $p$, but then Proposition~\ref{prop:splice-s3-lens} says that $Y$ is not even $\SU(2)$-abelian and we have a contradiction.  Now since $n_1+n_2 = r \geq 2$ but $(n_1,n_2) \neq (1,1)$, it follows that $n_i \geq 2$ for some $i$, so we let $Y'$ be the corresponding $Y_i$ and we are done.

\vspace{1em}\noindent
{\bf Case 2}: $n_1+n_2=r-1$.

In this case Proposition~\ref{prop:nice-basis} says that for some $a,b,c$ with $ac-bp=-1$ we have a pair of degree-1 maps
\[ Y \to (Y_1)_{-p/a}(K_1) \qquad\text{and}\qquad Y \to (Y_2)_{p/c}(K_2), \]
neither of which is a homotopy equivalence.  The targets of these maps have first homology $(\Z/p\Z)^{n_1+1}$ and $(\Z/p\Z)^{n_2+1}$, respectively, and
\[ (n_1+1) + (n_2+1) = r+1 \geq 3, \]
so we must have $n_i+1 \geq 2$ for some $i$.  We take $Y'$ to be the corresponding Dehn surgery on $K_i \subset Y_i$.

\vspace{1em}
We have now proved the proposition except in the case where one of the rational longitudes is homologically essential, so we suppose without loss of generality that $\lambda_1$ is nonzero in $H_1(M_1)$.  Now we apply Proposition~\ref{prop:nice-essential-basis} to see that we can write each $M_i$ as the exterior of a knot $K_i \subset Y_i$, with meridian $\mu_i$ and rational longitude $\lambda_i$, such that
\begin{itemize}
\item $H_1(Y_1) \cong (\Z/p\Z)^k$ for some $k \geq 2$;
\item $\lambda_2$ is nullhomologous in $M_2$;
\item and the gluing map $\partial M_1 \cong \partial M_2$ identifies $\mu_1$ with $\lambda_2$.
\end{itemize}
Proposition~\ref{prop:solid-torus-pinch} then gives us a degree-1 map
\[ Y \to M_1(\lambda_2) \cong M_1(\mu_1) \cong Y_1, \]
so we take $Y' = Y_1$ and the proof is complete.
\end{proof}

\begin{proof}[Proof of Theorem~\ref{thm:mod-p-to-p-torsion}]
Let $Y_1 = Y$ be $\SL(2,\C)$-reducible with first homology $(\Z/p\Z)^{r_1}$ for some $r_1 \geq 2$.  By the hypothesis on $p$, Proposition~\ref{prop:p-torsion-dominates-something} provides us with an $\SL(2,\C)$-reducible manifold $Y_2$, whose first homology is $(\Z/p\Z)^{r_2}$ for some $r_2 \geq 2$, and a degree-1 map
\[ f_1: Y_1 \to Y_2 \]
that is not a homotopy equivalence.  We repeat with $Y_2$ in place of $Y_1$ and so on, constructing an infinite sequence
\[ Y_1 \xrightarrow{f_1} Y_2 \xrightarrow{f_2} Y_3 \xrightarrow{f_3} \cdots \]
of degree-1 maps between prime, toroidal manifolds, in which none of the maps $f_i$ is a homotopy equivalence.  This contradicts Theorem~\ref{thm:rong}, so our initial manifold $Y$ cannot exist after all.
\end{proof}

\subsection{A strengthening of Theorem~\ref{thm:mod-p-to-p-torsion}}

We can deduce the conclusion of Theorem~\ref{thm:mod-p-to-p-torsion} from a seemingly much weaker hypothesis on the prime $p$, by a similar appeal to Theorem~\ref{thm:rong}.

\begin{theorem} \label{thm:p-torsion-strong}
Fix an odd prime $p \geq 3$. For any choice of
\begin{itemize}
\item integer homology 3-spheres $Y_1$ and $Y_2$,
\item knots $K_1 \subset Y_1$ and $K_2 \subset Y_2$ with irreducible, boundary-incompressible complements,
\item and integers $a,b,c$ satisfying $ac-bp=-1$ and $0 \leq b < c < \frac{p}{2}$
\end{itemize}
such that both
\[ (Y_1)_{-p/a}(K_1) \quad\text{and}\quad (Y_2)_{p/c}(K_2) \]
are lens spaces, we can form a closed $3$-manifold
\[ Y = E_{K_1} \cup_\partial E_{K_2} \]
by gluing $\partial E_{K_1}$ to $\partial E_{K_2}$ so that
\begin{align*}
\mu_1 &= a\mu_2 + b\lambda_2 \\
\lambda_1 &= p\mu_2 + c\lambda_2
\end{align*}
in the homology of the torus $\partial E_{K_1} \sim \partial E_{K_2}$.  Suppose we have chosen $p$ so that every such $Y$ admits an irreducible representation $\pi_1(M) \to \SL(2,\C)$.

With the above assumption on $p$, if $Y$ is a closed $3$-manifold such that $H_1(Y;\Z) \cong (\Z/p\Z)^r$ for some $r\geq 1$, then either $Y$ is a lens space of order $p$ or there is an irreducible homomorphism $\pi_1(Y) \to \SL(2,\C)$.
\end{theorem}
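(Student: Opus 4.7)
The plan is to mimic the descent argument used to prove Theorem~\ref{thm:sl2-abelian}, replacing the appeal to Theorem~\ref{thm:gluing-rep} with the present hypothesis on $p$. Assume for contradiction that $Y$ is a closed, $\SL(2,\C)$-reducible $3$-manifold with $H_1(Y;\Z) \cong (\Z/p\Z)^r$ for some $r \geq 1$ that is not a lens space of order $p$. By Lemma~\ref{lem:p-torsion-prime}, $Y$ is prime, and by Lemma~\ref{lem:geometric} every prime, atoroidal, $\SL(2,\C)$-reducible $3$-manifold with $p$-torsion first homology ($p \geq 3$) is a lens space of order $p$; hence $Y$ is toroidal. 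Write $Y = M_1 \cup_T M_2$ with $T$ an incompressible torus and each $M_i$ irreducible. I will show that in every case we can either derive a direct contradiction or produce a closed $3$-manifold $Y^{(2)}$ enjoying the same properties as $Y$ (prime, toroidal, $\SL(2,\C)$-reducible, not a lens space, with $H_1$ a non-trivial power of $\Z/p\Z$) together with a degree-1 map $f\colon Y \to Y^{(2)}$ that is not a homotopy equivalence. Iterating yields an infinite sequence
\[ Y = Y^{(1)} \xrightarrow{f_1} Y^{(2)} \xrightarrow{f_2} Y^{(3)} \xrightarrow{f_3} \cdots \]
of degree-1 maps between closed $3$-manifolds, none a homotopy equivalence, contradicting Theorem~\ref{thm:rong}.

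The construction of $Y^{(2)}$ splits into three cases. If some rational longitude of $M_i$ is not nullhomologous, Proposition~\ref{prop:nice-essential-basis} produces $Y_1$ with $H_1(Y_1) \cong (\Z/p\Z)^k$, $k \geq 2$, and a degree-1 map $Y \to Y_1$ surjecting on $\pi_1$ with nontrivial kernel; since $H_1(Y_1)$ is non-cyclic, $Y_1$ is not a lens space, and $\SL(2,\C)$-reducibility of $Y_1$ is inherited because any irreducible representation of $\pi_1(Y_1)$ would pull back through the surjection. If both rational longitudes are nullhomologous and the gluing has the form of case~(\ref{i:y2-hrp3}) of Proposition~\ref{prop:nice-basis}, then we obtain nullhomologous knots $K_j \subset Y_j$ with $M_j \cong E_{K_j}$, $H_1(Y_j) \cong (\Z/p\Z)^{n_j}$, $n_1 + n_2 = r$, and degree-1 $\pi_1$-surjections $Y \to Y_j$ with nontrivial kernel. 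If some $n_j \geq 2$, or if some $n_j = 1$ with $Y_j$ not a lens space, then take $Y^{(2)} := Y_j$. Otherwise every $Y_j$ is either a lens space of order $p$ or (using Theorem~\ref{thm:zentner-main} combined with the fact that each $Y_j$ must be $\SL(2,\C)$-reducible) homeomorphic to $S^3$; Proposition~\ref{prop:splice-s3-lens} then produces an irreducible $\SU(2)$-representation of $\pi_1(Y)$, contradicting $\SL(2,\C)$-reducibility.

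The final case is when both rational longitudes are nullhomologous and the gluing has the form of case~(\ref{i:y2-hs3}) of Proposition~\ref{prop:nice-basis}, giving nullhomologous knots $K_j \subset Y_j$ with $H_1(Y_j) \cong (\Z/p\Z)^{n_j}$, $n_1 + n_2 = r - 1$, and gluing coefficients $a,b,c$ satisfying $ac - bp = -1$. By Remark~\ref{rem:c-at-most-p/2}, possibly after reversing the orientation of $Y$ (which preserves the entire $\SL(2,\C)$ character variety), we may assume $0 \leq b < c \leq p/2$, and since $p$ is odd this forces $0 \leq b < c < p/2$. There are degree-1 maps $Y \to Y_j'$ with nontrivial kernel, where $Y_1' = (Y_1)_{-p/a}(K_1)$ and $Y_2' = (Y_2)_{p/c}(K_2)$ have $H_1 \cong (\Z/p\Z)^{n_j+1}$. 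If some $n_j + 1 \geq 2$, or if $n_1 = n_2 = 0$ with some $Y_j'$ not a lens space, take $Y^{(2)} := Y_j'$. The only remaining subcase is $n_1 = n_2 = 0$ with both $Y_j$ integer homology spheres and both $Y_j'$ lens spaces; here the decomposition $Y = E_{K_1} \cup_\partial E_{K_2}$ satisfies all hypotheses of the theorem (the exteriors $E_{K_j} = M_j$ are irreducible and boundary-incompressible by construction), so the standing assumption on $p$ supplies an irreducible representation $\pi_1(Y) \to \SL(2,\C)$ and yields the contradiction. The main obstacle in writing this out carefully is verifying that the coefficient normalization from Proposition~\ref{prop:nice-basis} together with Remark~\ref{rem:c-at-most-p/2} produces exactly the data assumed in the theorem's hypothesis; once that bookkeeping is done, the rest is a direct assembly of prior results.
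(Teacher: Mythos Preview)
Your proposal is correct and uses the same ingredients as the paper (Lemmas~\ref{lem:p-torsion-prime} and \ref{lem:geometric}, Propositions~\ref{prop:nice-basis}, \ref{prop:nice-essential-basis}, \ref{prop:splice-s3-lens}, Remark~\ref{rem:c-at-most-p/2}, and Theorem~\ref{thm:rong}), but the organization differs slightly. The paper first invokes Theorem~\ref{thm:mod-p-to-p-torsion} to reduce to the case $H_1(Y) \cong \Z/p\Z$: once the $r=1$ case is established under the standing hypothesis on $p$, the hypothesis of Theorem~\ref{thm:mod-p-to-p-torsion} is verified and the general $r \geq 1$ case follows. Having made this reduction, the paper's descent only needs to treat $r=1$, so the essential-longitude case of Proposition~\ref{prop:nice-essential-basis} never arises (it would force $r \geq 2$), and in case~(\ref{i:y2-hs3}) one automatically has $n_1=n_2=0$. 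You instead run a single unified descent for all $r \geq 1$, absorbing the content of Proposition~\ref{prop:p-torsion-dominates-something} directly into the argument. Your version is somewhat more self-contained, while the paper's version isolates the reduction from $(\Z/p\Z)^r$ to $\Z/p\Z$ as a separate statement (Theorem~\ref{thm:mod-p-to-p-torsion}) of independent interest; neither approach has any real advantage in difficulty.
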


\begin{remark}
There are exactly $\frac{p-1}{2}$ tuples $(a,b,c)$ to consider in the hypothesis of Theorem~\ref{thm:p-torsion-strong}, since once we have fixed $c$ between $1$ and $\frac{p-1}{2}$ inclusive, the condition $ac-bp=-1$ implies that $b \equiv p^{-1} \pmod{c}$.  For any $p$ this includes $(a,b,c) = (-1,0,1)$, and then for example when $p=5$ we must also consider $(a,b,c)=(2,1,2)$.
\end{remark}

\begin{proof}[Proof of Theorem~\ref{thm:p-torsion-strong}]
By Theorem~\ref{thm:mod-p-to-p-torsion}, it suffices to prove the theorem when $H_1(Y;\Z) \cong \Z/p\Z$, so we will assume from now on that $Y$ is $\SL(2,\C)$-reducible with $H_1(Y;\Z) \cong \Z/p\Z$, but that $Y$ is not a lens space.  Lemmas~\ref{lem:p-torsion-prime} and \ref{lem:geometric} respectively tell us that $Y$ is prime, and that it contains an incompressible torus $T$ since it is not a lens space, so we can write
\[ Y = M_1 \cup_T M_2 \]
where each $M_i$ is irreducible with incompressible boundary $T$.  Moreover, Proposition~\ref{prop:nice-essential-basis} guarantees that the rational longitude of each $M_i$ is nullhomologous in $M_i$, because otherwise we would have $H_1(Y;\Z) \cong (\Z/p\Z)^r$ for some $r \geq 2$.

Following Proposition~\ref{prop:nice-basis} and Remark~\ref{rem:c-at-most-p/2}, we can therefore write each $M_i$ as the exterior of a nullhomologous knot $K_i$ in some $3$-manifold $Y_i$ such that either
\begin{enumerate}
\item $H_1(Y_1) \oplus H_1(Y_2) \cong \Z/p\Z$, and $Y$ is formed by gluing $\mu_1$ to $\lambda_2$ and $\lambda_2$ to $\mu_1$;
\item $H_1(Y_1) \oplus H_1(Y_2) \cong 0$, and $Y$ is formed by some gluing such that
\begin{align*}
\mu_1 &= a\mu_2 + b\lambda_2 \\
\lambda_1 &= p\mu_2 + c\lambda_2
\end{align*}
in $H_1(T)$, where $ac-bp=-1$ and $0 \leq b < c < \frac{p}{2}$.
\end{enumerate}
For the last case, Remark~\ref{rem:c-at-most-p/2} may require us to reverse the orientation of $Y$ and of the $M_j$ in order to achieve $c \leq \frac{p}{2}$ rather than $c < p$, but this does not affect whether or not $Y$ is $\SL(2,\C)$-reducible.  We also note that the inequality $c \leq \frac{p}{2}$ is strict here because $p$ is odd.

In the first case, we have degree-1 pinching maps from $Y$ to each of $Y_1$ and $Y_2$, so $Y_1$ and $Y_2$ must be $\SL(2,\C)$-reducible as well.  One of them is a homology sphere, so it must be $S^3$ by Theorem~\ref{thm:zentner-main}, and the other has first homology $\Z/p\Z$.  If the latter is a lens space then Proposition~\ref{prop:splice-s3-lens} gives us a non-abelian representation $\pi_1(Y) \to \SU(2)$ and hence a contradiction, so it must be toroidal.

In the second case, both $Y_1$ and $Y_2$ are homology spheres, and we have degree-1 pinching maps
\[ Y \to (Y_1)_{-p/a}(K_1) \qquad\text{and}\qquad Y \to (Y_2)_{p/c}(K_2), \]
so both $(Y_1)_{-p/a}(K_2)$ and $(Y_2)_{p/c}(K_1)$ are $\SL(2,\C)$-reducible, with first homology $\Z/p\Z$.  If neither of these is toroidal then they must both be lens spaces, hence by hypothesis there is an irreducible representation $\pi_1(Y) \to \SL(2,\C)$.  But we assumed $Y$ is $\SL(2,\C)$-reducible, so at least one of $(Y_1)_{-p/a}(K_1)$ and $(Y_2)_{p/c}(K_2)$ must be toroidal after all.

In both cases, we have found (up to a possible change of orientation) a degree-1 map $Y \to Y'$, where $Y'$ is $\SL(2,\C)$-reducible and toroidal with $H_1(Y';\Z) \cong \Z/p\Z$, by pinching some submanifold with incompressible torus boundary onto a solid torus.  This is not a homotopy equivalence, so we can repeat this process indefinitely with $Y'$ in place of $Y$ and so on, to get an infinite sequence
\[ Y \to Y' \to Y'' \to \cdots \]
of degree-1 maps which are not homotopy equivalences.  This contradicts Theorem~\ref{thm:rong}, so the claimed $Y$ cannot exist after all.
\end{proof}

\section{Manifolds with $3$-torsion homology} \label{sec:3-torsion}

Our goal in this section is to prove Theorem~\ref{thm:3-torsion-main}, which we restate here.

\begin{theorem} \label{thm:3-torsion}
Let $Y$ be a closed $3$-manifold such that $H_1(Y;\Z)$ is $3$-torsion.  If $Y$ is not homeomorphic to $\pm L(3,1)$, then there is an irreducible representation $\pi_1(Y) \to \SL(2,\C)$.
\end{theorem}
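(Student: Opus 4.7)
The plan is to invoke Theorem~\ref{thm:p-torsion-strong} with $p=3$, which reduces Theorem~\ref{thm:3-torsion} to a single splicing hypothesis. For $p=3$ the constraint $0\le b<c<p/2$ forces $c=1$, $b=0$, and hence $a=-1$, so the only tuple to check is $(a,b,c)=(-1,0,1)$. Concretely, it suffices to establish the following theorem, referenced in the introduction as Theorem~\ref{thm:glue-slope-3}: given nullhomologous knots $K_i\subset Y_i$ in integer homology $3$-spheres with irreducible, boundary-incompressible exteriors, such that $(Y_i)_3(K_i)$ is a lens space (necessarily of order $3$) for $i=1,2$, the spliced manifold $Y=E_{K_1}\cup_\partial E_{K_2}$ formed by gluing $\mu_1\sim-\mu_2$ and $\lambda_1\sim 3\mu_2+\lambda_2$ admits an irreducible $\SU(2)$-representation of $\pi_1(Y)$, hence an irreducible $\SL(2,\C)$-representation.

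I would prove this by mirroring the pillowcase analysis of \S\ref{sec:glue-zhs}, exploiting a genuine simplification at $p=3$. First, I would show the analogue of Proposition~\ref{prop:lines-of-slope-2}: any $\rho\colon\pi_1(E_{K_i})\to\SU(2)$ with $\rho(\mu_i^3\lambda_i)\in\{\pm1\}$ has abelian image. The reason is that $\ad\rho$ descends through $\pi_1((Y_i)_3(K_i))\cong\Z/3\Z$ to a cyclic subgroup of $\SO(3)$, whose preimage in $\SU(2)$ is also cyclic. This is cleaner than the $p=2$ analogue because $\pi_1(\#^nL(3,1))$ is not $\SU(2)$-abelian for $n\ge 2$, so no exceptional arc of non-abelian characters appears on these slope lines. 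Consequently, the pillowcase image $I_{K_i}:=i^*(X(E_{K_i}))$ meets the slope-$3$ lines $\{3\alpha+\beta\in\pi\Z\}$ only at the four abelian characters $(\tfrac{k\pi}{3},0)$ for $k=0,1,2,3$.

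Next, Propositions~\ref{prop:surgery-lens-space} and \ref{prop:rp3-surgery-zero-surgery} (the lens-space-of-prime-order case) yield $I^{w_i}_*((Y_i)_0(K_i))\ne 0$ with $(Y_i)_0(K_i)$ irreducible, so Proposition~\ref{prop:curve-in-pillowcase} produces a homologically essential simple closed curve $C_i\subset I_{K_i}$ in $X(T^2)\setminus\{P,Q\}$; by the previous paragraph, $C_i$ meets the slope-$3$ lines only at a subset of $\{(\tfrac{k\pi}{3},0)\}$, and Lemma~\ref{lem:corners-limit} (applicable because $(Y_i)_3(K_i)$ is $\SU(2)$-abelian) rules out $(0,0)$ and $(\pi,0)$ as accumulation points of $i^*(X^{\irr}(E_{K_i}))$. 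The gluing induces a pillowcase involution $\sigma(\alpha,\beta)=(-\alpha,3\alpha+\beta)$ whose fixed points are $(0,0)$, $(\pi,0)$, $P$, and $Q$, and $I_{K_i}$ is $\tau$-invariant by Lemma~\ref{lem:pillowcase-involution}. A topological intersection argument modelled on Proposition~\ref{prop:skew-intersection} then shows that $I_{K_1}\cap\sigma(I_{K_2})$ must contain a non-abelian character, yielding the desired irreducible $\SU(2)$-representation of $\pi_1(Y)$.

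The main obstacle I anticipate is the final intersection step, where I must verify that the essential curves $C_1$ and $\sigma(C_2)$ actually cross at a non-abelian point of the pillowcase, rather than only at the finitely many abelian characters $(\tfrac{k\pi}{3},0)$ on $\{\beta=0\}$ or their $\sigma$-images. The plan is to use the $\tau$-symmetry of the pairs $C_1\cup\tau(C_1)$ and $\sigma(C_2)\cup\tau(\sigma(C_2))$ to separate them into regions around $P$ and $Q$ as in Lemma~\ref{lem:intersect-if-not-P}, and then deduce a non-abelian intersection point by the essentiality of these curves and the connectedness of the pillowcase complement. This step is markedly more tractable than in the $p=2$ case, since there is no exceptional "line $L_\pi$" subcase to peel off.
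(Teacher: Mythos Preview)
Your reduction via Theorem~\ref{thm:p-torsion-strong} to the single tuple $(a,b,c)=(-1,0,1)$ is exactly what the paper does, and your outline of the analogue of Proposition~\ref{prop:lines-of-slope-2} (that any $\rho$ with $\rho(\mu_i^3\lambda_i)=\pm1$ has abelian image, since $\ad\rho$ factors through $\Z/3\Z$) matches Lemma~\ref{lem:p-avoid-lines}. The setup through producing essential curves $C_i\subset i^*(X(E_{K_i}))$ in $X(T^2)\setminus\{P,Q\}$ via Proposition~\ref{prop:curve-in-pillowcase} is also correct.

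The gap is in your final intersection argument. You assert that $\sigma_3$ fixes $Q=(\pi,\pi)$, but in fact $\sigma_3(Q)=(\pi,0)$: for odd $p$ the involution $\sigma_p$ fixes $P$ but \emph{exchanges} $Q$ with $(\pi,0)$. Consequently $\sigma_3(C_2)$ is essential in $X(T^2)\setminus\{P,(\pi,0)\}$, not in $X(T^2)\setminus\{P,Q\}$, so the separation argument around $P$ and $Q$ from Lemma~\ref{lem:intersect-if-not-P} does not apply directly. More seriously, $\sigma_3$ does \emph{not} commute with $\tau$ (one checks $\sigma_3\tau(Q)=P$ while $\tau\sigma_3(Q)=(0,0)$), so $\sigma_3(I_{K_2})$ is not $\tau$-invariant and the pair $\sigma(C_2)\cup\tau(\sigma(C_2))$ need not lie in $\sigma_3(I_{K_2})$ at all. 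Your proposed mimicry of Proposition~\ref{prop:skew-intersection} therefore breaks down at exactly this point.

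The paper replaces this with a genuinely different mechanism. It shows (Lemma~\ref{lem:skew-transverse}) that near any point $A_k=(\tfrac{2k\pi}{3},0)$ the curves $\gamma_1$ and $\sigma_3(\gamma_2)$ are locally straight arcs of slopes $0$ and $-3$ respectively, hence meet \emph{transversely} there. Since the pillowcase is topologically $S^2$, any two closed curves have algebraic intersection number zero, so a single transverse intersection at $A_1=(\tfrac{2\pi}{3},0)$ forces a second intersection elsewhere; Lemma~\ref{lem:pass-through-isolated-reducible} and the $\tau$-invariance of $I_{K_j}$ (used on $\gamma_j$ itself, not on $\sigma_3(\gamma_j)$) then reduce the remaining cases to this one. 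The argument is special to $p=3$ precisely because there is only one point $A_k$ with $1\le k\le\tfrac{p-1}{2}$ to worry about.
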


We prove Theorem~\ref{thm:3-torsion} by appealing to Theorem~\ref{thm:p-torsion-strong}.  We note in the hypothesis of Theorem~\ref{thm:p-torsion-strong} that there is a unique triple of integers $(a,b,c)$ with $ac-3b=-1$ and $0 \leq b < c < \frac{3}{2}$, namely $(a,b,c)=(-1,0,1)$, so Theorem~\ref{thm:3-torsion} is now an immediate consequence of the following analogue of Theorem~\ref{thm:gluing-rep}.

\begin{theorem} \label{thm:glue-slope-3}
Let $K_1 \subset Y_1$ and $K_2 \subset Y_2$ be knots such that for each $j=1,2$:
\begin{itemize}
\item the manifold $Y_j$ is an integer homology sphere,
\item the exterior $E_{K_j}$ is irreducible with incompressible boundary, and
\item the Dehn surgery $(Y_j)_3(K_j)$ is a lens space of order $3$.
\end{itemize}
We glue the exteriors along their boundaries to form a toroidal manifold
\[ Y = E_{K_1} \cup_\partial E_{K_2} \]
by identifying $\mu_1 \sim \mu_2^{-1}$ and $\lambda_1 \sim \mu_2^3 \lambda_2$.  Then there is a representation
\[ \rho: \pi_1(Y) \to \SU(2) \]
with non-abelian image.
\end{theorem}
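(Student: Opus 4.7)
My plan is to adapt the strategy of Theorem~\ref{thm:gluing-rep}, exploiting the crucial simplification that $\pi_1(L(3,q)) \cong \Z/3\Z$ is abelian, in contrast to the non-abelian $\pi_1(\#^n\RP^3) \cong (\Z/2\Z)^{\ast n}$ in the $2$-torsion case.

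First, since each $K_j$ has irreducible, boundary-incompressible complement, we have $(Y_j,K_j) \not\cong (S^3,U)$. Proposition~\ref{prop:rp3-surgery-zero-surgery} (second bullet, with $p=3$) then shows $(Y_j)_0(K_j)$ is irreducible, and Theorem~\ref{thm:irreducible-nonzero} yields $I^{w_j}_*((Y_j)_0(K_j)) \neq 0$, where $w_j$ is Poincar\'e dual to a meridian of $K_j$.

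Next I would prove the $3$-torsion analogue of Proposition~\ref{prop:lines-of-slope-2}: if $\rho$ has pillowcase image $(\alpha,\beta)$ with $3\alpha+\beta \in \pi\Z$, then $\ad\rho$ factors through $\pi_1((Y_j)_3(K_j)) \cong \Z/3\Z$, so $\ad\rho$---and hence $\rho$---has abelian image. Since $K_j$ is nullhomologous in the integer homology sphere $Y_j$, we have $H_1(E_{K_j}) \cong \Z\langle\mu_j\rangle$, so any abelian $\rho$ satisfies $\rho(\lambda_j)=1$ and hence $\beta=0$. Enumerating solutions with $\alpha \in [0,\pi]$ gives
\[
  I_{K_j} \cap \{3\alpha+\beta \in \pi\Z\} \subseteq \bigl\{(0,0),\ (\tfrac{\pi}{3},0),\ (\tfrac{2\pi}{3},0),\ (\pi,0)\bigr\},
\]
where $I_{K_j} := i^*(X(E_{K_j}))$. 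In particular, neither $P=(0,\pi)$ nor $Q=(\pi,\pi)$ lies in $I_{K_j}$, so Proposition~\ref{prop:curve-in-pillowcase} supplies a topologically embedded, homologically essential curve $C_j \subset I_{K_j}$ in $X(T^2)\setminus\{P,Q\}$. Using Lemma~\ref{lem:corners-limit} as in the proof of Proposition~\ref{prop:avoid-slope-2}, I may assume $C_j$ avoids the corners $(0,0)$ and $(\pi,0)$: otherwise I would obtain a non-abelian $\SU(2)$-representation of $\pi_1(E_{K_j})$ with $\rho(\mu_j)=\rho(\lambda_j)=1$, which extends by the trivial representation on $\pi_1(E_{K_{3-j}})$ to the desired non-abelian representation of $\pi_1(Y)$.

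Introduce the involution $\sigma(\alpha,\beta) = (-\alpha,3\alpha+\beta)$ of $X(T^2)$ encoding the gluing $\mu_1\sim\mu_2^{-1}$, $\lambda_1\sim\mu_2^3\lambda_2$. A point in $I_{K_1} \cap \sigma(I_{K_2})$ with $\beta \not\in 2\pi\Z$ and $3\alpha+\beta \not\in 2\pi\Z$ corresponds to a gluable pair of non-abelian $\SU(2)$-representations of $\pi_1(E_{K_1})$ and $\pi_1(E_{K_2})$, hence to the desired $\rho\colon\pi_1(Y)\to\SU(2)$. The crux of the proof is to produce such a good intersection point, which I would do by adapting Proposition~\ref{prop:skew-intersection} and Lemmas~\ref{lem:intersect-if-P}--\ref{lem:intersect-if-not-P}, using the $\tau$-symmetry of $I_{K_j}$ (Lemma~\ref{lem:pillowcase-involution}), the separation of $P$ from $Q$ by $C_1$ in $X(T^2)\setminus\{P,Q\}$, and the separation of $P$ from $\sigma(Q)=(\pi,0)$ by $\sigma(C_2)$ in $X(T^2)\setminus\{P,(\pi,0)\}$.

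The hardest step will be this final intersection argument. The involutions $\sigma$ and $\tau$ no longer commute as in the $2$-torsion case (where the multiplier $2$ produced particularly clean algebra), so the symmetry setup is less rigid, and $\sigma(C_2)$ must be analyzed relative to $\{P,(\pi,0)\}$ rather than $\{P,Q\}$. The crucial simplification is that $I_{K_j}$ meets both bad lines $\{3\alpha+\beta \in 2\pi\Z\}$ and $\{3\alpha+\beta \in 2\pi\Z+\pi\}$ only at the four isolated abelian points listed above---as opposed to the potentially connected arc arising in the $2$-torsion case from representations of $(\Z/2\Z)^{\ast n}$ with non-central adjoint image. This tight restriction ensures that $C_1$ and $\sigma(C_2)$ cannot remain concealed within the bad loci, so their essential separating properties force them to intersect at a good point. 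Extracting the non-abelian $\SU(2)$-representation of $\pi_1(Y)$ then follows exactly as in the final step of the proof of Theorem~\ref{thm:gluing-rep}.
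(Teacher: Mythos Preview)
Your setup through the construction of the essential curves $C_j$ is correct and matches the paper's Lemmas~\ref{lem:look-for-intersection-3}--\ref{lem:slope-p-closed-curve}. However, your proposed endgame---adapting Lemmas~\ref{lem:intersect-if-P}--\ref{lem:intersect-if-not-P}---has a genuine gap. Those lemmas rely essentially on two features of the $p=2$ case that fail here: first, the commutation $\sigma\tau=\tau\sigma$, which made $\sigma(I_{K_2})$ itself $\tau$-invariant; second, the \emph{connectedness} of $I_{K_j}\cap L'_\pi$, which was used to force a crossing once $C_1$ separated $\tilde{C}_2$ from $\tau(\tilde{C}_2)$. You correctly note both obstacles but do not explain how to overcome them, and ``essential separating properties'' alone do not suffice: $C_1$ separates $P$ from $Q$, while $\sigma_3(C_2)$ separates $P$ from $\sigma_3(Q)=(\pi,0)$, so there is no a priori reason these curves must meet away from the bad points.

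The paper replaces the connectivity argument with a genuinely new mechanism that you have not identified. The key strengthening of your reducibility lemma is that the bad points $(\tfrac{k\pi}{3},0)$ are not merely the only places where $I_{K_j}$ meets $\{3\alpha+\beta\in\pi\Z\}$, but that they have \emph{open neighborhoods} containing no irreducible characters at all (Lemma~\ref{lem:p-avoid-lines}). This follows because a reducible limit of irreducibles would force $\Delta_{K_j}(e^{2k\pi i/3})=0$, contradicting cyclic finiteness of $\Z/3\Z$ via Boyer--Nicas. Consequently, any $3$-avoiding curve passing through $A_1=(\tfrac{2\pi}{3},0)$ is locally a straight arc of reducibles there, so $C_1$ and $\sigma_3(C_2)$ meet \emph{transversely} at $A_1$ whenever both pass through it (Lemma~\ref{lem:skew-transverse}). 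Since $X(T^2)\cong S^2$, their algebraic intersection number is zero, forcing a second intersection point. If instead they do not both pass through $A_1$, a path-construction argument (Lemma~\ref{lem:pass-through-isolated-reducible}) shows that disjointness would force each $C_j$ through some $(\tfrac{k_j\pi}{3},0)$; applying $\tau$ then moves both curves through $A_1$, reducing to the transverse case. This transversality-plus-intersection-number idea is the missing ingredient in your sketch.
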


The proof of Theorem~\ref{thm:glue-slope-3} will be similar in spirit to the content of \S\ref{sec:glue-zhs} but simpler, largely because in Lemma~\ref{lem:p-avoid-lines} below, we will be able to put stronger restrictions on the pillowcase images of the various character varieties $X(E_{K_j})$ than we could in the corresponding Proposition~\ref{prop:lines-of-slope-2}.

To set the stage, given an odd prime $p$, we define an involution of the pillowcase by
\[ \sigma_p(\alpha,\beta) = (-\alpha,p\alpha+\beta) = (\alpha,2\pi-(p\alpha+\beta)) \]
by analogy with the map $\sigma$ of Subsection~\ref{ssec:pillowcase-symmetries}.  If we let
\begin{align*}
P &= (0,\pi), \\
Q &= (\pi,\pi)
\end{align*}
as before, then $\sigma_p(P) = P$ but $\sigma_p(Q) = (\pi,0) \neq Q$.  (Similarly, the map $\sigma_p$ does \emph{not} commute with the involution $\tau$ of Lemma~\ref{lem:pillowcase-involution}, because $\sigma_p(\tau(Q)) = P$ but $\tau(\sigma_p(Q)) = (0,0)$.)

\begin{lemma} \label{lem:look-for-intersection-3}
Suppose under the hypotheses of Theorem~\ref{thm:glue-slope-3} that the pillowcase images
\[ i^*(X(E_{K_1})) \quad\text{and}\quad \sigma_3\left(i^*(X(E_{K_2}))\right) \]
intersect at some point other than $(0,0)$ or $(\frac{2\pi}{3},0)$.  Then there is a representation
\[ \rho: \pi_1(Y) \to \SU(2) \]
with non-abelian image.
\end{lemma}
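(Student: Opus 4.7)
The plan is to build $\rho$ by choosing representations $\rho_1:\pi_1(E_{K_1})\to\SU(2)$ and $\rho_2:\pi_1(E_{K_2})\to\SU(2)$ whose restrictions to the peripheral torus match after gluing, and then to verify non-abelianness using the precise location of the intersection point. First, let $(\alpha,\beta)\in i^*(X(E_{K_1}))\cap\sigma_3(i^*(X(E_{K_2})))$ be the given intersection point. By definition there exists $\rho_1:\pi_1(E_{K_1})\to\SU(2)$ that is conjugate to one sending
\[
\mu_1 \mapsto \begin{pmatrix}e^{i\alpha}&0\\0&e^{-i\alpha}\end{pmatrix}, \qquad
\lambda_1 \mapsto \begin{pmatrix}e^{i\beta}&0\\0&e^{-i\beta}\end{pmatrix}.
\]
Since $(\alpha,\beta)$ lies in $\sigma_3(i^*(X(E_{K_2})))$, and $\sigma_3$ is an involution, the point $(\gamma,\delta):=\sigma_3(\alpha,\beta)=(-\alpha,\,3\alpha+\beta)$ lies in $i^*(X(E_{K_2}))$, so there is some $\rho_2:\pi_1(E_{K_2})\to\SU(2)$ realizing these pillowcase coordinates.

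Next I would simultaneously diagonalize $\rho_2|_{\pi_1(T^2)}$ and, using the pillowcase ambiguity $(\gamma,\delta)\sim(-\gamma,-\delta)$, pass to a conjugate of $\rho_2$ so that
\[
\rho_2(\mu_2)=\begin{pmatrix}e^{-i\alpha}&0\\0&e^{i\alpha}\end{pmatrix},\qquad
\rho_2(\lambda_2)=\begin{pmatrix}e^{i(3\alpha+\beta)}&0\\0&e^{-i(3\alpha+\beta)}\end{pmatrix}.
\]
Then a direct check shows
\[
\rho_2(\mu_2^{-1})=\rho_1(\mu_1) \qquad\text{and}\qquad \rho_2(\mu_2^3\lambda_2)=\rho_1(\lambda_1),
\]
so $\rho_1$ and $\rho_2$ agree on $\pi_1(T)\subset\pi_1(E_{K_1})$, $\pi_1(E_{K_2})$ under the gluing $\mu_1\sim\mu_2^{-1}$, $\lambda_1\sim\mu_2^3\lambda_2$. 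By van Kampen's theorem they assemble into a homomorphism $\rho:\pi_1(Y)\to\SU(2)$.

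Finally, I would verify that $\rho$ has non-abelian image. Each $K_j$ is nullhomologous in its integer homology sphere $Y_j$, so $\lambda_j$ lies in the commutator subgroup of $\pi_1(E_{K_j})$; consequently $\rho_j$ has abelian image precisely when $\rho_j(\lambda_j)=1$, i.e.\ when $\beta\equiv 0\pmod{2\pi}$ (for $j=1$) or when $3\alpha+\beta\equiv 0\pmod{2\pi}$ (for $j=2$). Both conditions hold simultaneously exactly when $\beta\equiv 0$ and $3\alpha\equiv 0\pmod{2\pi}$, which in the pillowcase fundamental domain $[0,\pi]\times[0,2\pi]$ means $(\alpha,\beta)\in\{(0,0),(\tfrac{2\pi}{3},0)\}$. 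By hypothesis we have excluded exactly these two points, so at least one of $\rho_1,\rho_2$ has non-abelian image, and therefore so does $\rho$. The only substantive point requiring care is the simultaneous-conjugation step ensuring $\rho_1$ and $\rho_2$ produce matching matrices on the peripheral torus, but this is routine because both peripheral subgroups land in a common maximal torus of $\SU(2)$.
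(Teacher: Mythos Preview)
Your proof is correct and follows essentially the same argument as the paper's: pick representations realizing $(\alpha,\beta)$ and $\sigma_3(\alpha,\beta)$, conjugate so they agree on the peripheral torus under the gluing $\mu_1\sim\mu_2^{-1}$, $\lambda_1\sim\mu_2^3\lambda_2$, glue, and then observe that abelianness of both pieces forces $\beta\equiv 0$ and $3\alpha\equiv 0\pmod{2\pi}$. One small imprecision: you write that $\rho_j$ has abelian image \emph{precisely} when $\rho_j(\lambda_j)=1$, but only the forward implication (abelian $\Rightarrow\rho_j(\lambda_j)=1$) follows from $\lambda_j$ lying in the commutator subgroup, and that is the only direction your argument actually uses.
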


\begin{proof}
Let $(\alpha,\beta) \in X(T^2)$ be the given point of intersection, and write $\sigma_3(\alpha,\beta) = (\gamma,\delta)$; since $\sigma_3$ is an involution, this means that
\[ (\alpha,\beta) = \sigma_3(\gamma,\delta) = (\gamma,2\pi-(3\gamma+\delta)). \]
Now since $(\alpha,\beta) \in i^*(X(E_{K_1}))$ and $(\gamma,\delta) \in i^*(X(E_{K_2}))$, there are representations
\[ \rho_j: \pi_1(E_{K_j}) \to \SU(2), \quad j=1,2 \]
such that
\begin{align*}
\rho_1(\mu_1) &= \begin{pmatrix} e^{i\alpha} & 0 \\ 0 & e^{-i\alpha} \end{pmatrix}, &
\rho_1(\lambda_1) &= \begin{pmatrix} e^{i\beta} & 0 \\ 0 & e^{-i\beta} \end{pmatrix}
\end{align*}
and (using the fact that $(\gamma,\delta) \sim (-\gamma,-\delta)$ in $X(T^2)$)
\begin{align*}
\rho_2(\mu_2) &= \begin{pmatrix} e^{-i\gamma} & 0 \\ 0 & e^{i\gamma} \end{pmatrix}, &
\rho_2(\lambda_2) &= \begin{pmatrix} e^{-i\delta} & 0 \\ 0 & e^{i\delta} \end{pmatrix}.
\end{align*}
This means that 
\begin{align*}
\rho_2(\mu_2^{-1}) = \begin{pmatrix} e^{i\gamma} & 0 \\ 0 & e^{-i\gamma} \end{pmatrix} &= \begin{pmatrix} e^{i\alpha} & 0 \\ 0 & e^{-i\alpha} \end{pmatrix} = \rho_1(\mu_1), \\
\rho_2(\mu_2^3\lambda_2) = \begin{pmatrix} e^{-i(3\gamma+\delta)} & 0 \\ 0 & e^{i(3\gamma+\delta)} \end{pmatrix} &= \begin{pmatrix} e^{i\beta} & 0 \\ 0 & e^{-i\beta} \end{pmatrix} = \rho_1(\lambda_1)
\end{align*}
and so $\rho_1$ and $\rho_2$ glue together to give us the desired representation $\rho$.

At this point we need only show that $\rho$ has non-abelian image.  But if its image is abelian then each of $\rho_1$ and $\rho_2$ must have abelian image as well, hence $\beta \equiv \delta \equiv 0 \pmod{2\pi}$.  If $\beta \in 2\pi\Z$ then we know that $\delta = 2\pi - (3\alpha+\beta)$ is a multiple of $2\pi$ if and only if $3\alpha$ is, so we must have
\[ (\alpha,\beta) = (0,0) \text{ or } (\tfrac{2\pi}{3},0) \]
in the pillowcase.  Since we have assumed that our given intersection $(\alpha,\beta)$ is not either one of these points, we conclude that $\rho$ has non-abelian image after all.
\end{proof}

The remainder of this section will be devoted to finding a point of intersection to which we can apply Lemma~\ref{lem:look-for-intersection-3}.  Most of the argument applies equally well to other odd primes $p$, so we will not specialize to $p=3$ until the end.  

To summarize the upcoming argument, each character variety will provide us with a closed curve $\gamma_i$ in the pillowcase.  Each $\gamma_i$ is homologically essential in the complement of two points $P=(0,\pi)$ and $Q=(\pi,\pi)$, and is further constrained by the fact that the corresponding knots have lens space surgeries.  If we choose the $\gamma_i$ carefully, this will imply that $\gamma_1$ and $\sigma_3(\gamma_2)$ must intersect somewhere.  Now if they meet at the point $(\frac{2\pi}{3},0)$, then Lemma~\ref{lem:look-for-intersection-3} seems to say that we are stuck; but we will show that they must be transverse there, and since any pair of closed curves in the $2$-sphere $X(T^2) \cong S^2$ have intersection number zero, we can then deduce the existence of a second, more useful point of intersection.  This last part does not readily generalize to other primes $p$, unfortunately, because we have to show that the corresponding curves intersect away from one of the $\frac{p-1}{2}$ points of the form $(\frac{2k\pi}{p},0)$ where $1 \leq k \leq \frac{p-1}{2}$, and when $p > 3$ there are at least two such points.

We begin with the following analogue of Proposition~\ref{prop:lines-of-slope-2}, which is illustrated in Figure~\ref{fig:slope-p} when $p=3$.

\begin{lemma} \label{lem:p-avoid-lines}
Let $K$ be a knot in an integral homology sphere $Y$, and suppose for some prime $p \geq 3$ that $Y_p(K)$ is a lens space of order $p$.  If $\rho: \pi_1(E_K) \to \SU(2)$ has pillowcase coordinates $i^*([\rho]) = (\alpha,\beta)$ where $p\alpha+\beta \in \pi\Z$, then $\rho$ is reducible and $\beta\equiv 0\pmod{2\pi}$.  In this case there is an open neighborhood of $(\alpha,\beta) \in X(T^2)$ that does not contain the images of any irreducible representations.
\end{lemma}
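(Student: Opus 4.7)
The plan is to establish the reducibility claim and the isolation claim separately, with the latter being the more substantial part.

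For the reducibility claim, the condition $p\alpha + \beta \in \pi\Z$ forces $\rho(\mu^p\lambda) = \pm I$. If $\rho(\mu^p\lambda) = I$, then $\rho$ descends to $\pi_1(Y_p(K)) \cong \Z/p\Z$ and has cyclic image. If $\rho(\mu^p\lambda) = -I$, then $\ad\rho$ descends to $\pi_1(Y_p(K)) \cong \Z/p\Z$, so $\Img(\rho)/(\Img(\rho) \cap \{\pm I\})$ is cyclic; since $G/Z(G)$ cyclic implies $G$ abelian, $\rho$ is reducible. In both cases $\rho$ factors through $H_1(E_K) \cong \Z$ (using that $Y$ is a homology sphere), so $\rho(\lambda) = 1$ and $\beta \equiv 0 \pmod{2\pi}$. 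The pillowcase image is therefore $(\alpha, 0)$ with $\alpha = k\pi/p$ for some integer $0 \leq k \leq p$.

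For the isolation claim, the subcases $k \in \{0, p\}$ (giving $(0,0)$ and $(\pi,0)$) are already covered by Lemma~\ref{lem:corners-limit}, since $Y$ being an integer homology sphere makes $H_1(Y)$ $2$-torsion. For $1 \leq k \leq p-1$ I would mimic the Lubotzky--Magid argument of that lemma. A sequence of irreducibles $\rho_n$ with pillowcase images converging to $(\alpha_0, 0)$ subconverges in $R(E_K)$ to an abelian $\rho$ with $\rho(\mu) = D$ of angle $\alpha_0 \notin \pi\Z$ and $\rho(\lambda) = 1$. The injective morphism $f\colon \SL(2,\C) \to \cR(E_K)$ sending $A$ to the unique representation factoring through $H_1(E_K) = \Z$ with $\mu \mapsto A$ has $3$-dimensional image contained in the abelian locus. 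Using the $\ad D$-eigenspace decomposition $\mathfrak{sl}(2,\C) = \C_0 \oplus \C_{2\alpha_0} \oplus \C_{-2\alpha_0}$, one computes
\[ \dim T_\rho \cR(E_K) = \dim Z^1(\pi_1(E_K); \mathfrak{sl}(2,\C)_{\ad\rho}) = 2 + 1 + 2\,h^1_{2\alpha_0}, \]
where $h^1_\theta := \dim_\C H^1(E_K;\C_\theta)$, the $2$ is $\dim B^1$, the $1$ is $\dim H^1(E_K;\C)$, and the factor of $2$ uses Poincar\'e--Lefschetz duality between the $\pm 2\alpha_0$ pieces. If $h^1_{2\alpha_0} = 0$ then $\dim T_\rho \cR(E_K) = 3 = \dim \SL(2,\C)$, and \cite[Lemma~2.5]{lubotzky-magid} places an open neighborhood of $\rho$ in $\cR(E_K)$ inside the image of $f$ and hence in the abelian locus, contradicting the convergence of the $\rho_n$.

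The main obstacle is therefore to verify $h^1_{2\alpha_0} = 0$, equivalently (by the standard identification of twisted cohomology with the Alexander module of $K$) that $\Delta_K(\zeta_p^k) \neq 0$ for $1 \leq k \leq p-1$. I would exploit the lens-space hypothesis: the universal abelian cover $\tilde{Y}_p(K)$ is the $p$-fold cyclic cover of $L(p,q)$, which is $S^3$, so $H_1(\tilde{Y}_p(K); \Q) = 0$. Writing $\tilde{Y}_p(K)$ as the $p$-fold cyclic cover $\tilde{E}_K^p$ (whose boundary is a single torus, the $p$-fold cover of $\partial E_K$ in the $\mu$ direction) Dehn-filled along the lift of the surgery slope, a Mayer--Vietoris calculation forces $\dim_\Q H_1(\tilde{E}_K^p; \Q) = 1$. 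Decomposing $H_1(\tilde{E}_K^p; \C) = \bigoplus_{j=0}^{p-1} V_j$ by characters of the deck group, the transfer $\tau[\mu^p]$ lies in $V_0$ and has nonzero projection $p^2\,[\mu]$ to $H_1(E_K)$, so $V_0$ is nonzero and hence fills the $1$-dimensional total; thus $V_j = 0$ for $j \neq 0$, which translates to $\Delta_K(\zeta_p^j) \neq 0$ as required.
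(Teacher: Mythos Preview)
Your proof is correct and takes a genuinely different route from the paper for the isolation claim. For the reducibility claim your $\ad\rho$ descent argument (using that $G/Z(G)$ cyclic forces $G$ abelian) is essentially equivalent to the paper's central-character twist $\rho\mapsto\chi\cdot\rho$ with $\chi(\mu)=-1$, which uses the oddness of $p$ to get $(\chi\rho)(\mu^p\lambda)=1$. For the isolation claim, however, the paper avoids your explicit tangent-space computation and covering-space Mayer--Vietoris argument entirely: it simply cites \cite[Theorem~2.7]{heusener-porti-suarez} to say that a reducible limit of irreducibles forces $\Delta_K(e^{2i\alpha_0})=0$, and then cites \cite[Lemma~1.4]{boyer-nicas} to say that a primitive $p$th root of $\Delta_K$ would make $\pi_1(Y_p(K))\cong\Z/p\Z$ fail to be ``cyclically finite,'' which is absurd. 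Your argument is more self-contained---you are essentially reproving the relevant pieces of both of those cited results---while the paper's is shorter by outsourcing to the literature. One minor point: your transfer computation should read $\pi_*\tau[\mu]=p[\mu]$ rather than $p^2[\mu]$, but this does not affect the conclusion that the invariant summand $V_0$ is nonzero.
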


\begin{figure}
\begin{tikzpicture}[style=thick]
\begin{scope}
  \draw plot[mark=*,mark size = 0.5pt] coordinates {(0,0)(3,0)(3,6)(0,6)} -- cycle; 
  \begin{scope}[decoration={markings,mark=at position 0.55 with {\arrow[scale=1]{>>}}}]
    \draw[postaction={decorate}] (0,0) -- (0,3);
    \draw[postaction={decorate}] (0,6) -- (0,3);
  \end{scope}
  \begin{scope}[decoration={markings,mark=at position 0.575 with {\arrow[scale=1]{>>>}}}]
    \draw[postaction={decorate}] (3,0) -- (3,3);
    \draw[postaction={decorate}] (3,6) -- (3,3);
  \end{scope}
  \begin{scope}[decoration={markings,mark=at position 0.6 with {\arrow[scale=1]{>>>>}}}]
    \draw[postaction={decorate}] (3,6) -- (0,6);
    \draw[postaction={decorate}] (3,0) -- (0,0);
  \end{scope}
  \draw[dotted] (0,3) -- (3,3);
  \draw[thin,|-|] (0,-0.4) node[below] {\small$0$} -- node[midway,inner sep=1pt,fill=white] {$\alpha$} ++(3,0) node[below] {\small$\vphantom{0}\pi$};
  \draw[thin,|-|] (-0.3,0) node[left] {\small$0$} -- node[midway,inner sep=1pt,fill=white] {$\beta$} ++(0,6) node[left] {\small$2\pi$};
  \begin{scope}[color=red,style=ultra thick,shorten >=-4pt]
    \begin{scope}[shorten <=-4pt]
      \draw[o-o] (0,6) -- node[above,sloped] {avoid these lines} (2,0);
      \draw[o-o] (1,6) -- (3,0);
      \draw[o-o] (0,0) (3,6);
      \draw[o-o] (3,6) (0,0);
    \end{scope}
    \draw[-o] (0,3) -- (1,0);
    \draw[-o] (3,3) -- (2,6);
  \end{scope}
\end{scope}
\end{tikzpicture}
\caption{If $Y_3(K)$ is a lens space of order $3$, then the image $i^*(X(E_K))$ must avoid the lines $\{3\alpha+\beta\in \pi\Z\}$, except where $\beta\equiv0\pmod{2\pi}$.}
\label{fig:slope-p}
\end{figure}
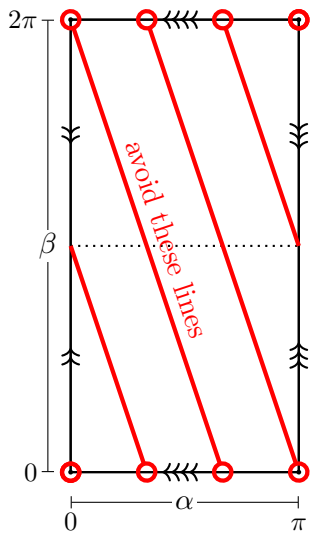

\begin{proof}
We suppose first that $i^*([\rho]) = (\alpha,\beta)$ where $p\alpha+\beta$ is an integral multiple of $2\pi$.  Then $\rho(\mu^p\lambda)=1$, so $\rho$ factors through
\[ \frac{\pi_1(E_K)}{\llangle \mu^p\lambda \rrangle} \cong \pi_1(Y_p(K)) \cong \Z/p\Z, \]
and therefore its image is cyclic.  This means that $\rho$ has abelian image, and hence $\rho(\lambda)=1$ (equivalently, $\beta=0$) as usual.

Now suppose instead that $p\alpha+\beta$ is an odd multiple of $\pi$, so $\rho(\mu^p\lambda) = -1$.  Then the central character
\[ \chi: \pi_1(E_K) \twoheadrightarrow H_1(E_K) \cong \Z \to \{\pm1\} \]
sending $\mu$ to $-1$ satisfies $\chi(\mu^p\lambda) = (-1)^p = -1$ since $p$ is odd, so $\chi\cdot\rho$ is a representation sending $\mu^p\lambda$ to $1$.  We conclude as above that $\chi\cdot \rho$ has cyclic image, hence so does $\rho$ itself, and then $\rho(\lambda)=1$ and $\beta=0$ once again.

Finally, suppose that we have a sequence of irreducible representations $\rho_n \in R^\irr(E_K)$ whose pillowcase images $i^*([\rho_n])$ converge to a point $(\alpha,\beta)$ with $p\alpha+\beta \in \pi\Z$.  Since $R(E_K)$ is compact, we can pass to a convergent subsequence, whose limit $\rho$ satisfies $i^*([\rho]) = (\alpha,\beta)$; since $p\alpha+\beta \in \pi\Z$, we deduce from above that $\rho$ is abelian, hence $\beta=0$ and $\alpha = \frac{k\pi}{p}$ for some integer $k$ with $0 \leq k \leq p$.  In addition, Lemma~\ref{lem:corners-limit} says that $\alpha$ cannot be $0$ or $\pi$ since $Y_p(K)$ is $\SU(2)$-abelian and $\rho$ is a limit of irreducible representations, and therefore $0 < k < p$.

Since $\rho$ is a reducible limit of irreducible representations, Heusener, Porti, and Su\'arez Peir\'o \cite[Theorem~2.7]{heusener-porti-suarez} also proved that the Alexander polynomial of $K$ satisfies
\[ \Delta_K(e^{2k\pi i/p}) = \Delta_K(e^{2\alpha i}) = 0. \]
(They attribute this to Klassen \cite[Theorem~19]{klassen}, who proved it for knots in $S^3$.)  Thus $\Delta_K(t)$ vanishes at a primitive $p$th root of unity, so a result of Boyer and Nicas \cite[Lemma~1.4]{boyer-nicas} says that the fundamental group
\[ \pi_1(Y_p(K)) \cong \Z/p\Z \]
is not \emph{cyclically finite}.  By definition this means that some normal subgroup of $\Z/p\Z$ has infinite abelianization, which is absurd, so $\rho$ cannot be a limit of irreducible representations after all.
\end{proof}

If $K$ is a knot in a homology sphere $Y$, and $Y_p(K)$ is a lens space of order $p$ for some prime $p \geq 3$, then Lemma~\ref{lem:p-avoid-lines} implies that the pillowcase image
\[ i^*(X(E_K)) \subset X(T^2) \]
avoids the points $P=(0,\pi)$ and $Q=(\pi,\pi)$.  In the following lemmas, we will say that a closed, embedded curve $\gamma \subset i^*(X(E_K))$ is \emph{$p$-avoiding} if it is homologically essential in $X(T^2) \setminus \{P,Q\}$.  Such curves can only intersect the lines $p\alpha+\beta \equiv 0 \pmod{\pi}$ at points of the form $(\frac{\pi k}{p},0)$, where $k$ is an integer and $0 \leq k \leq p$, and we are about to show that in fact $k$ cannot be $0$ or $p$.

\begin{lemma} \label{lem:slope-p-closed-curve}
Let $p\geq 3$ be prime, and let $K$ be a knot in a homology sphere $Y$ whose exterior is irreducible and has incompressible boundary.  If $Y_p(K)$ is a lens space of order $p$, then the pillowcase image $i^*(X(E_K)) \subset X(T^2)$ contains a $p$-avoiding curve.  Any such curve necessarily avoids $(0,0)$ and $(\pi,0)$ but intersects both of the lines $L_0 = \{\beta\equiv 0 \pmod{2\pi}\}$ and $L_\pi = \{\beta\equiv \pi \pmod{2\pi}\}$.
\end{lemma}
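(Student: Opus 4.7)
The plan is to verify the hypotheses of Proposition~\ref{prop:curve-in-pillowcase} and then analyze the resulting curve. For the existence of the $p$-avoiding curve, I would first use that $E_K$ is irreducible with incompressible boundary (so $(Y,K) \ne (S^3,U)$, since an unknotted $E_K$ would be a solid torus with compressible boundary) together with Proposition~\ref{prop:rp3-surgery-zero-surgery} in its lens-space case to conclude that $Y_0(K)$ is irreducible; Theorem~\ref{thm:irreducible-nonzero} then yields $I^w_*(Y_0(K)) \ne 0$ for $w$ Poincar\'e dual to a meridian of $K$. Lemma~\ref{lem:p-avoid-lines} excludes $P = (0,\pi)$ and $Q = (\pi,\pi)$ from $i^*(X(E_K))$, since at both points $p\alpha+\beta \in \pi\Z$ while $\beta = \pi \not\equiv 0 \pmod{2\pi}$. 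Proposition~\ref{prop:curve-in-pillowcase} therefore produces a topologically embedded simple closed curve $C \subset i^*(X(E_K))$ that is homologically essential in $X(T^2)\setminus\{P,Q\}$, which is our $p$-avoiding curve.

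For the avoidance of $(0,0)$ and $(\pi,0)$, suppose for contradiction that a $p$-avoiding curve $C$ passes through $(0,0)$. Applying Lemma~\ref{lem:p-avoid-lines} at this point (where $p\alpha+\beta = 0$) provides a neighborhood $U$ of $(0,0)$ containing no images of irreducible characters. Since $Y$ is a homology sphere, every abelian representation of $\pi_1(E_K)$ sends the nullhomologous $\lambda$ to $1$ and so has pillowcase image on $L_0$; hence $C \cap U \subset L_0 \cap U$, which is a half-open interval with $(0,0)$ as its only endpoint. The two local branches of the simple closed curve $C$ at $(0,0)$ would then have to inject disjointly into this half-open interval, which is impossible, and the same argument handles $(\pi,0)$.

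For the intersection with $L_\pi$, since $C$ is essential in the annulus $X(T^2)\setminus\{P,Q\}$, it separates $P$ from $Q$ on $X(T^2)\cong S^2$, so the arc $L_\pi$ must cross $C$ by the Jordan curve theorem. I expect the intersection with $L_0$ to be the main obstacle, and would attack it via the branched double cover $T^2 \to X(T^2)$ ramified at the four corners: since $C$ avoids all four, its preimage $\tilde C$ is an unbranched 2-fold cover of $C$, and a parity count of corners on each side of $C$ in $S^2$ shows that $\tilde C$ splits into two components exactly when $C$ separates $(0,0)$ from $(\pi,0)$, equivalently when $C$ meets $L_0$. The task is therefore to show the monodromy of this cover along $C$ is trivial, or equivalently that $C$ admits a continuous lift to the torus $R(T^2) = T^2$. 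My plan is to build such a lift locally from a continuous family of representations $\rho_t \in R(E_K)$ whose pillowcase characters sweep out $C$, and then restrict to $\pi_1(T^2)$; the key step — and the main difficulty — is to verify that such a local lift globalizes to an honest loop in $T^2$ rather than jumping to the other sheet of the cover, which I would approach by exploiting the fact that $\Gamma = i^*(X(E_K))$ contains the entire abelian arc $L_0$ and the way the irreducible branches of $\Gamma$ attach to it.
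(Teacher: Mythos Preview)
Your existence argument, the avoidance of the corners $(0,0)$ and $(\pi,0)$, and the intersection with $L_\pi$ are all correct and essentially match the paper's proof (the paper invokes Lemma~\ref{lem:corners-limit} rather than Lemma~\ref{lem:p-avoid-lines} for the corners, but your route works equally well).

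The gap is in your treatment of $L_0$. Your branched-cover strategy reduces the problem to showing that $C$ lifts to a closed loop in $T^2$, but the mechanism you propose for producing such a lift does not work as stated: the curve $C$ is only known to be a subset of the \emph{image} $i^*(X(E_K))$, and Proposition~\ref{prop:curve-in-pillowcase} gives no continuous loop of representations in $R(E_K)$ (or even in $X(E_K)$) lying over it. Even granting such a loop, the restriction map $R(E_K)\to R(T^2)$ lands in $\Hom(\Z^2,\SU(2))$, not in $T^2$, and passing from a commuting pair to a point of $T^2$ requires a continuous choice of common maximal torus together with an identification of it with $U(1)$---precisely the $\Z/2$ ambiguity you are trying to resolve. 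The paper bypasses all of this with a direct topological argument you overlooked: by Lemma~\ref{lem:p-avoid-lines}, any $p$-avoiding curve can meet the lines $\{p\alpha+\beta\in\pi\Z\}$ only along $L_0$, so if it were disjoint from $L_0$ it would be disjoint from all of these slope-$(-p)$ lines as well. The union of $L_0$ with these lines cuts $X(T^2)\setminus\{P,Q\}$ into $p$ open disks, and a connected closed curve confined to one such disk is nullhomotopic there, contradicting essentiality.
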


\begin{proof}
Proposition~\ref{prop:rp3-surgery-zero-surgery} says that $Y_0(K)$ is irreducible, so if $w \in H^2(Y_0(K))$ is Poincar\'e dual to a meridian of $K$, then Theorem~\ref{thm:irreducible-nonzero} says that
\[ I^w_*(Y_0(K)) \neq 0. \]
The image $i^*(X(E_K))$ does not contain $P$ or $Q$ by Lemma~\ref{lem:p-avoid-lines}, since these points both satisfy $p\alpha+\beta \in \pi\Z$ but $\beta=\pi \not\in 2\pi\Z$.  Now Proposition~\ref{prop:curve-in-pillowcase} gives us the desired $p$-avoiding curve $\gamma$.  Since $\gamma$ is homologically essential in the complement of $\{P,Q\}$, it must intersect any path from $P$ to $Q$, and in particular it meets the line $L_\pi$ somewhere.

To see that $\gamma$ avoids $(0,0)$ and $(\pi,0)$, we argue exactly as in the proof of Proposition~\ref{prop:avoid-slope-2}: by Lemma~\ref{lem:corners-limit} these are not limit points of the image $i^*(X^\irr(E(K)))$ of irreducible characters, so $\gamma$ can only approach $(n\pi,0)$ (where $n$ is $0$ or $1$) along the arc $\beta \equiv 0 \pmod{2\pi}$.  In particular, if we parametrize $\gamma$ as a map
\[ \gamma: \R/\Z \hookrightarrow X(T^2) \]
with $\gamma(0) = (n\pi,0)$, then $\gamma$ must embed some open interval $(-\epsilon,\epsilon)$ in the arc $[0,\pi] \times \{0\}$ as a neighborhood of the endpoint $(n\pi,0)$, and this is impossible.

Now suppose that $\gamma$ avoids the line $L_0$.  In this case, Lemma~\ref{lem:p-avoid-lines} says that $\gamma$ is disjoint from both $L_0$ and each of the lines $\{p\alpha+\beta \equiv 0\pmod{p}\}$, since it can only meet the latter along $L_0$.  But then $\gamma$ lies in the complement of all of these lines, which is a disjoint union of $p$ open disks in $X(T^2) \setminus \{P,Q\}$ as illustrated in Figure~\ref{fig:gamma-meets-L0}.
\begin{figure}
\begin{tikzpicture}[style=thick]
\begin{scope}
  \draw plot[mark=*,mark size = 0.5pt] coordinates {(0,0)(3,0)(3,6)(0,6)} -- cycle;   
  \begin{scope}[decoration={markings,mark=at position 0.55 with {\arrow[scale=1]{>>}}}]
    \draw[postaction={decorate}] (0,0) -- (0,3);
    \draw[postaction={decorate}] (0,6) -- (0,3);
  \end{scope}
  \begin{scope}[decoration={markings,mark=at position 0.575 with {\arrow[scale=1]{>>>}}}]
    \draw[postaction={decorate}] (3,0) -- (3,3);
    \draw[postaction={decorate}] (3,6) -- (3,3);
  \end{scope}
  \begin{scope}[decoration={markings,mark=at position 0.6 with {\arrow[scale=1]{>>>>}}}]
    \draw[postaction={decorate}] (3,6) -- (0,6);
    \draw[postaction={decorate}] (3,0) -- (0,0);
  \end{scope}
  \draw[dotted] (0,3) -- (3,3);
  \draw[thin,|-|] (0,-0.4) node[below] {\small$0$} -- node[midway,inner sep=1pt,fill=white] {$\alpha$} ++(3,0) node[below] {\small$\vphantom{0}\pi$};
  \draw[thin,|-|] (-0.75,0) node[left] {\small$0$} -- node[midway,inner sep=1pt,fill=white] {$\beta$} ++(0,6) node[left] {\small$2\pi$};
  \coordinate (P) at (0,3);
  \coordinate (Q) at (3,3);
  \path (P) circle (0.075) node[left] {$P\vphantom{Q}$};
  \path (Q) circle (0.075) node[right] {$Q$};
  \fill[color=gray!50, fill opacity=0.5] (0,0) -- (2,0) -- (0,6) -- cycle;
  \fill[color=gray!50, fill opacity=0.5] (3,0) -- (1,6) -- (3,6) -- cycle;
  \draw[ultra thick,red] \foreach \y in {0,6} { (0,\y) -- ++(3,0) };
  \draw[ultra thick,red] (0,3) -- (1,0) (0,6) -- (2,0) (1,6) -- (3,0) (2,6) -- (3,3);
  \node[red,above,inner sep=3pt] at (0.5,0) {$L_0$};
  \node[red,below,inner sep=3pt] at (0.5,6) {$L_0$};
\end{scope}
\begin{scope}[xshift=6cm]
  \draw plot[mark=*,mark size = 0.5pt] coordinates {(0,0)(3,0)(3,6)(0,6)} -- cycle;   
  \begin{scope}[decoration={markings,mark=at position 0.55 with {\arrow[scale=1]{>>}}}]
    \draw[postaction={decorate}] (0,0) -- (0,3);
    \draw[postaction={decorate}] (0,6) -- (0,3);
  \end{scope}
  \begin{scope}[decoration={markings,mark=at position 0.575 with {\arrow[scale=1]{>>>}}}]
    \draw[postaction={decorate}] (3,0) -- (3,3);
    \draw[postaction={decorate}] (3,6) -- (3,3);
  \end{scope}
  \begin{scope}[decoration={markings,mark=at position 0.6 with {\arrow[scale=1]{>>>>}}}]
    \draw[postaction={decorate}] (3,6) -- (0,6);
    \draw[postaction={decorate}] (3,0) -- (0,0);
  \end{scope}
  \draw[dotted] (0,3) -- (3,3);
  \draw[thin,|-|] (0,-0.4) node[below] {\small$0$} -- node[midway,inner sep=1pt,fill=white] {$\alpha$} ++(3,0) node[below] {\small$\vphantom{0}\pi$};
  \draw[thin,|-|] (-0.75,0) node[left] {\small$0$} -- node[midway,inner sep=1pt,fill=white] {$\beta$} ++(0,6) node[left] {\small$2\pi$};
  \coordinate (P) at (0,3);
  \coordinate (Q) at (3,3);
  \path (P) circle (0.075) node[left] {$P\vphantom{Q}$};
  \path (Q) circle (0.075) node[right] {$Q$};
  \fill[color=gray!50, fill opacity=0.5] (0,0) -- (1.2,0) -- (0,6) -- cycle;
  \fill[color=gray!50, fill opacity=0.5] (0.6,6) -- (1.2,6) -- (2.4,0) -- (1.8,0) -- cycle;
  \fill[color=gray!50, fill opacity=0.5] (3,0) -- (1.8,6) -- (3,6) -- cycle;
  \draw[ultra thick,red] \foreach \y in {0,6} { (0,\y) -- ++(3,0) };
  \draw[ultra thick,red] (0,3) -- (0.6,0) (0,6) -- (1.2,0) (0.6,6) -- (1.8,0) (1.2,6) -- (2.4,0) (1.8,6) -- (3,0) (2.4,6) -- (3,3);
  \node[red,above,inner sep=3pt] at (0.3,0) {$L_0$};
  \node[red,below,inner sep=3pt] at (1,6) {$L_0$};
\end{scope}
\end{tikzpicture}
\caption{If the closed curve $\gamma$ avoids $L_0$ then it must lie in a disjoint union of $p$ open disks, shown here for $p=3$ and for $p=5$ with several of the disks shaded.}
\label{fig:gamma-meets-L0}
\end{figure}
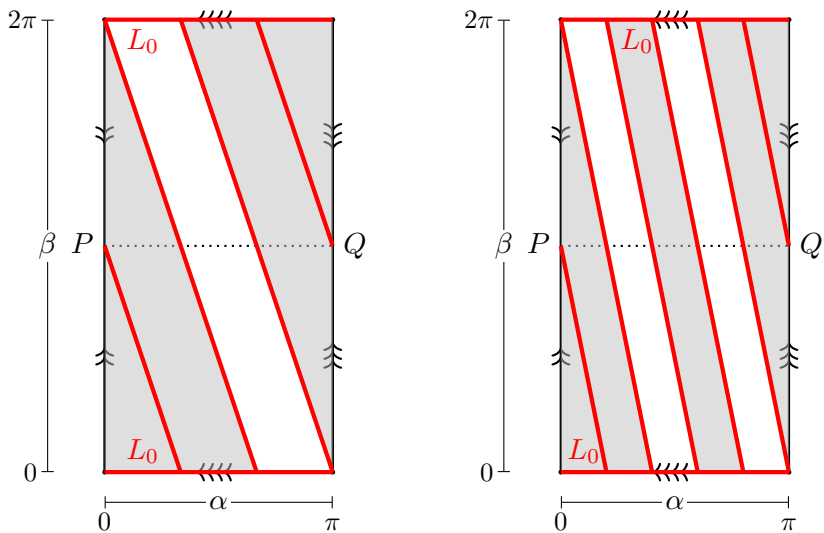
Since $\gamma$ is connected it must lie in one of these disks, which means that it is nullhomotopic in the complement of $P$ and $Q$.  This contradicts the fact that $\gamma$ is homologically essential, so $\gamma$ must meet the line $L_0$ after all.
\end{proof}

\begin{lemma} \label{lem:skew-transverse}
Let $\gamma \subset X(T^2)$ be a $p$-avoiding curve for some prime $p \geq 3$.  If $\gamma$ contains a point of the form $A_k = (\frac{2k\pi}{p}, 0)$, where $k$ is an integer satisfying $0 < k < \frac{p}{2}$, then there is an $\epsilon$-neighborhood $U$ of this point such that
\[ \gamma \cap U = \left(\tfrac{2k\pi}{p} - \epsilon, \tfrac{2k\pi}{p} + \epsilon\right) \times \{0\}. \]
In particular, if $\gamma' \subset X(T^2)$ is another $p$-avoiding curve passing through $A_k$, then $\gamma$ and $\sigma_p(\gamma')$ intersect transversely at $A_k$.
\end{lemma}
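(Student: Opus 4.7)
The plan is first to establish that $\gamma$ locally coincides with the horizontal arc $L_0 \cap U$ through $A_k$, and then to compute the derivative of $\sigma_p$ at $A_k$ to verify that the two tangent directions differ.

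For the local description of $\gamma$: since $A_k$ satisfies $p \cdot \tfrac{2k\pi}{p} + 0 = 2k\pi \in \pi\Z$, the last sentence of Lemma~\ref{lem:p-avoid-lines} supplies an open neighborhood $U$ of $A_k$ that contains no pillowcase image of any irreducible character. Every point of $\gamma \cap U$ is thus the image of a reducible $\rho \colon \pi_1(E_K) \to \SU(2)$; because $Y$ is an integer homology sphere, such a $\rho$ factors through $H_1(E_K) \cong \Z = \langle \mu\rangle$ and therefore sends $\lambda$ to $1$, forcing $\gamma \cap U \subseteq L_0$. To upgrade this inclusion to an equality on a (possibly smaller) neighborhood, I would parametrize $\gamma$ by a topological embedding $\phi \colon S^1 \hookrightarrow X(T^2)$ with $\phi(0) = A_k$ and apply invariance of domain: for sufficiently small $\delta$, the restriction $\phi|_{(-\delta,\delta)}$ is a continuous injection into the smooth arc $L_0 \cap U$ taking $0$ to the interior point $A_k$ (noting that $0 < 2k\pi/p < \pi$), so its image is a full open neighborhood of $A_k$ in $L_0$. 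Shrinking $U$ to a ball of the resulting radius $\epsilon$ yields the desired equality $\gamma \cap U = (\tfrac{2k\pi}{p} - \epsilon, \tfrac{2k\pi}{p} + \epsilon) \times \{0\}$.

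For the transversality statement: the pillowcase is a smooth surface in a neighborhood of $A_k$ since $0 < 2k\pi/p < \pi$, so we may work in local coordinates $(\alpha, \beta)$. A direct computation shows $\sigma_p(\tfrac{2k\pi}{p}, 0) = (-\tfrac{2k\pi}{p}, 2k\pi) \sim (\tfrac{2k\pi}{p}, 0)$, so $A_k$ is fixed by $\sigma_p$, but only after applying the pillowcase involution $(\alpha',\beta') \sim (-\alpha',-\beta')$. Composing this involution with the formula $\sigma_p(\alpha, \beta) = (-\alpha, p\alpha + \beta)$ and centering via $(u,v) = (\alpha - \tfrac{2k\pi}{p}, \beta)$, I expect $\sigma_p$ to take the form $(u, v) \mapsto (u, -pu - v)$, with derivative $\bigl(\begin{smallmatrix} 1 & 0 \\ -p & -1 \end{smallmatrix}\bigr)$ at the origin. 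This sends the horizontal tangent $(1,0)$ to $(1,-p)$, which has nonzero slope $-p$. The first part of the lemma, applied to $\gamma'$, says that $\gamma'$ is locally the horizontal segment $L_0$ at $A_k$, so $\sigma_p(\gamma')$ is locally a smooth arc through $A_k$ with tangent line of slope $-p$, meeting the horizontal curve $\gamma$ transversely at $A_k$.

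The only subtle point is the coordinate bookkeeping in the last paragraph: $A_k$ is a fixed point of $\sigma_p$ only after passing through the pillowcase quotient, so one must apply the involution $(\alpha',\beta') \sim (-\alpha',-\beta')$ to extract the correct local formula and derivative. Once this is handled, the remainder of the argument reduces to the routine observation that a smooth arc of slope $-p$ and a smooth arc of slope $0$ cross transversely.
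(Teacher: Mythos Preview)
Your proof is correct and follows essentially the same approach as the paper: both use Lemma~\ref{lem:p-avoid-lines} to force $\gamma$ into $L_0$ near $A_k$ and then observe that $\sigma_p$ carries the horizontal arc through $A_k$ to an arc of slope $-p$. You are more explicit about the invariance-of-domain step (the paper simply asserts that $\gamma \cap U$ equals the open arc) and you package the slope computation as a derivative matrix rather than directly computing $\sigma_p(\alpha,0) = (\alpha, 2\pi - p\alpha)$, but these are stylistic differences rather than substantive ones.
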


\begin{proof}
Suppose that $\gamma$ belongs to the pillowcase image of the character variety of $K \subset Y$.  Then Lemma~\ref{lem:p-avoid-lines} says that $A_k$ has some $\epsilon$-neighborhood $U$ in the pillowcase where every point of the corresponding image $i^*(X(E_K))$ is the image $(\alpha,0)$ of a reducible representation.  Since $\gamma$ is a subset of $i^*(X(E_K))$, it follows that the intersection $\gamma \cap U$ must be the open arc $\Gamma = (\frac{2k\pi}{p} - \epsilon, \frac{2k\pi}{p} + \epsilon) \times \{0\}$, as claimed.

Now if $\gamma'$ also passes through $A_k$, then it intersects some $\epsilon'$-neighborhood $U'$ of $A_k$ in the open arc 
\[ \Gamma' = (\tfrac{2k\pi}{p} - \epsilon', \tfrac{2k\pi}{p} + \epsilon') \times \{0\}, \]
where $\epsilon'$ may be different from $\epsilon$ because $\gamma'$ may come from a different knot $K' \subset Y'$.  In any case, we have $\sigma_p(A_k) = A_k$, so the image of $\Gamma'$ is an arc
\[  \sigma_p(\Gamma') = \{ (\alpha, 2\pi - p\alpha) \mid \tfrac{2k\pi}{p}-\epsilon' < \alpha < \tfrac{2k\pi}{p}+\epsilon' \} \]
of slope $-p$ through $A_k$.  See the left side of Figure~\ref{fig:arcs-near-B}.  The arcs $\Gamma$ and $\sigma_p(\Gamma')$ intersect transversely at $A_k$, hence so do the simple closed curves $\gamma$ and $\sigma_p(\gamma')$ to which they belong.
\end{proof}

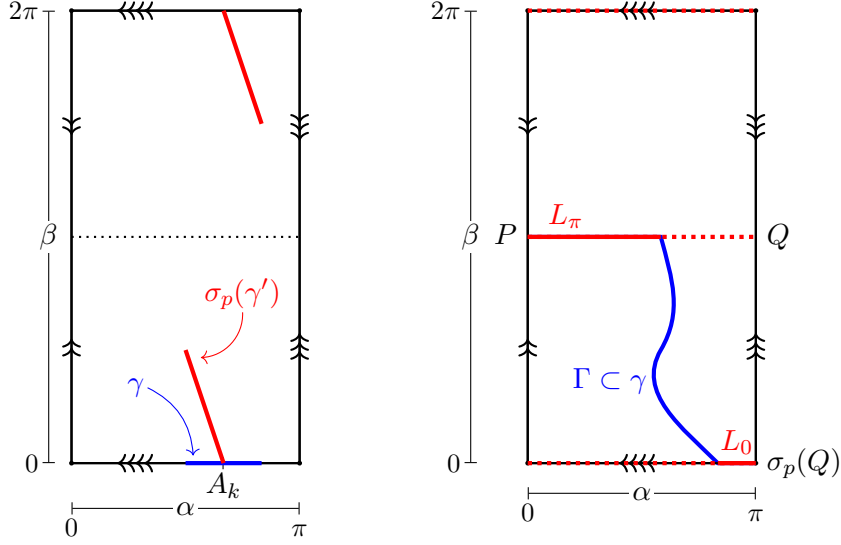
\begin{figure}
\begin{tikzpicture}[style=thick]
\begin{scope}
  \draw plot[mark=*,mark size = 0.5pt] coordinates {(0,0)(3,0)(3,6)(0,6)} -- cycle; 
  \begin{scope}[decoration={markings,mark=at position 0.55 with {\arrow[scale=1]{>>}}}]
    \draw[postaction={decorate}] (0,0) -- (0,3);
    \draw[postaction={decorate}] (0,6) -- (0,3);
  \end{scope}
  \begin{scope}[decoration={markings,mark=at position 0.575 with {\arrow[scale=1]{>>>}}}]
    \draw[postaction={decorate}] (3,0) -- (3,3);
    \draw[postaction={decorate}] (3,6) -- (3,3);
  \end{scope}
  \begin{scope}[decoration={markings,mark=at position 0.8 with {\arrow[scale=1]{>>>>}}}]
    \draw[postaction={decorate}] (3,6) -- (0,6);
    \draw[postaction={decorate}] (3,0) -- (0,0);
  \end{scope}
  \draw[dotted] (0,3) -- (3,3);
  \draw[thin,|-|] (0,-0.6) node[below] {\small$0$} -- node[midway,inner sep=1pt,fill=white] {$\alpha$} ++(3,0) node[below] {\small$\vphantom{0}\pi$};
  \draw[thin,-|] (1,0) -- (2,0) node[below] {$A_k$};
  \draw[thin,|-|] (-0.3,0) node[left] {\small$0$} -- node[midway,inner sep=1pt,fill=white] {$\beta$} ++(0,6) node[left] {\small$2\pi$};
  \draw[ultra thick,blue] (1.5,0) -- (2.5,0);
  \draw[thin,blue,->] (1,1) node[left,inner sep=1pt] {$\gamma$} to[bend left=30] (1.6,0.1);
  \draw[ultra thick,red] (1.5,1.5) -- (2,0) (2,6) -- (2.5,4.5);
  \draw[thin,red,->] (2.25,2) node[above,inner sep=1pt] {$\sigma_p(\gamma')$} to[bend left=45] (1.7,1.4);
\end{scope}
\begin{scope}[xshift=6cm]
  \begin{scope}[decoration={markings,mark=at position 0.55 with {\arrow[scale=1]{>>}}}]
    \draw[postaction={decorate}] (0,0) -- (0,3);
    \draw[postaction={decorate}] (0,6) -- (0,3);
  \end{scope}
  \begin{scope}[decoration={markings,mark=at position 0.575 with {\arrow[scale=1]{>>>}}}]
    \draw[postaction={decorate}] (3,0) -- (3,3);
    \draw[postaction={decorate}] (3,6) -- (3,3);
  \end{scope}
  \begin{scope}[decoration={markings,mark=at position 0.6 with {\arrow[scale=1]{>>>>}}}]
    \draw[postaction={decorate}] (3,6) -- (0,6);
    \draw[postaction={decorate}] (3,0) -- (0,0);
  \end{scope}
  \draw[thin,|-|] (0,-0.4) node[below] {\small$0$} -- node[midway,inner sep=1pt,fill=white] {$\alpha$} ++(3,0) node[below] {\small$\vphantom{0}\pi$};
  \draw[thin,|-|] (-0.75,0) node[left] {\small$0$} -- node[midway,inner sep=1pt,fill=white] {$\beta$} ++(0,6) node[left] {\small$2\pi$};
  \coordinate (P) at (0,3);
  \coordinate (Q) at (3,3);
  \path (P) circle (0.075) node[left] {$P\vphantom{Q}$};
  \path (Q) circle (0.075) node[right] {$Q$} -- ++(0,-3) node[right] {$\sigma_p(Q)$};
  \draw[line width=1.55pt,blue, preaction={yellow,double=yellow,double distance=2\pgflinewidth}] (0,3) -- (1.75,3) to[out=-75,in=60] (1.75,1.5) to[out=240,in=135] node[pos=0.25,left,inner sep=2pt] {$\Gamma \subset \gamma$} (2.5,0) -- (3,0);
  \draw plot[mark=*,mark size = 0.5pt] coordinates {(0,0)(3,0)(3,6)(0,6)} -- cycle;   
  \draw[ultra thick,red] (0,3) -- (1.75,3) (2.5,0) -- (3,0);
  \draw[ultra thick,dotted,red] \foreach \y in {0,3,6} { (0,\y) -- ++(3,0) };
  \node[red,above,inner sep=3pt] at (2.75,0) {$L_0$};
  \node[red,above,inner sep=3pt] at (0.5,3) {$L_\pi$};
\end{scope}
\end{tikzpicture}
\caption{Left: If the $p$-avoiding curves $\gamma$ and $\gamma'$ both pass through $A_k = (\frac{2k\pi}{p},0)$, then $\gamma$ and $\sigma_p(\gamma')$ must meet transversely at $A_k$.  Right: Using a path from $L_0$ to $L_\pi$ in $\gamma$ to construct a path $\tilde\Gamma$ from $\sigma_p(P)=P$ to $\sigma_p(Q)$, which must then intersect the closed, essential curve $\sigma_p(\gamma') \subset X(T^2) \setminus \{\sigma_p(P),\sigma_p(Q)\}$.}
\label{fig:arcs-near-B}
\end{figure}

\begin{lemma} \label{lem:pass-through-isolated-reducible}
Let $\gamma, \gamma' \subset X(T^2)$ be $p$-avoiding curves for some prime $p \geq 3$.  If the intersection
\[ \gamma \cap \sigma_p(\gamma') \]
is empty, then there are integers $k$ and $k'$ with $0 < k,k' < p$ such that $\gamma$ contains the point $(\frac{k\pi}{p},0)$ and $\gamma'$ contains the point $(\frac{k'\pi}{p},0)$.
\end{lemma}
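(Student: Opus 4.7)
The plan is to prove the contrapositive: if $\gamma'$ avoids every point $A_k = (\tfrac{k\pi}{p}, 0)$ with $0 < k < p$, then $\gamma$ and $\sigma_p(\gamma')$ must intersect. Since $\sigma_p$ is an involution, $\gamma \cap \sigma_p(\gamma') = \emptyset$ is equivalent to $\sigma_p(\gamma) \cap \gamma' = \emptyset$, so running the same argument with the roles of $\gamma$ and $\gamma'$ swapped handles the other curve.

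The first step is to realize each $p$-avoiding curve as a meridian of an open annulus. By Lemma~\ref{lem:alpha-pi} together with the last sentence of Lemma~\ref{lem:slope-p-closed-curve}, $\gamma'$ lies entirely in the open cylinder $C = (0, \pi) \times (\R/2\pi\Z)$, namely the pillowcase with its two seams $\{\alpha \in \pi\Z\}$ removed. Because $\gamma'$ is homologically essential in $X(T^2) \setminus \{P, Q\}$ and $P, Q$ lie on opposite seams, $\gamma'$ is a simple closed meridian of $C$. The involution $\sigma_p$ preserves $\alpha$ in its form $(\alpha,\beta) \mapsto (\alpha, -(p\alpha+\beta))$, so it restricts to a self-homeomorphism of $C$; since any homeomorphism of a cylinder preserves the meridian class up to sign, $\sigma_p(\gamma')$ is again a simple closed meridian of $C$.

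Next I will pin down where $\gamma'$ can meet $\sigma_p(L_0 \cup L_\pi)$. A direct calculation gives $\sigma_p(L_0) \subset \{p\alpha + \beta \equiv 0 \pmod{2\pi}\}$ and $\sigma_p(L_\pi) \subset \{p\alpha + \beta \equiv \pi \pmod{2\pi}\}$, both contained in the union of lines $\{p\alpha + \beta \in \pi\Z\}$. Lemma~\ref{lem:p-avoid-lines} says that any point of $\gamma'$ on this union must have $\beta \equiv 0 \pmod{2\pi}$ and $p\alpha \in \pi\Z$; the only such points in the closed cylinder are the $A_k$ for $0 \leq k \leq p$. The corners $A_0 = (0,0)$ and $A_p = (\pi,0)$ are excluded from $\gamma'$ by Lemma~\ref{lem:slope-p-closed-curve}, and the remaining $A_k$ are excluded by hypothesis. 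Thus $\gamma' \cap \sigma_p(L_0 \cup L_\pi) = \emptyset$, or equivalently $\sigma_p(\gamma') \cap (L_0 \cup L_\pi) = \emptyset$.

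The contradiction then comes from elementary intersection theory in $C$: the set $L_0 \cap C = (0, \pi) \times \{0\}$ is a properly embedded arc in $C$, and the algebraic intersection number of any meridian of $C$ with such an arc is $\pm 1$. In particular $\sigma_p(\gamma')$ must meet $L_0$, contradicting the previous paragraph. The main technical point to verify carefully is that $\sigma_p$ genuinely restricts to a self-homeomorphism of $C$ whose induced map on $H_1(C) \cong \Z$ is $\pm 1$, and that the meridian--arc intersection number argument is valid in this noncompact setting; both are elementary, but the overall structure relies on combining them with the restriction on $\gamma'$ coming from Lemma~\ref{lem:p-avoid-lines}.
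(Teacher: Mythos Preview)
Your argument has a genuine gap: the invocation of Lemma~\ref{lem:alpha-pi} is not justified.  That lemma requires the ambient manifold $Y$ to be $\SU(2)$-abelian, but a $p$-avoiding curve lives in $i^*(X(E_K))$ for a knot $K$ in an integral homology sphere $Y$ with $Y_p(K)$ a lens space, and there is no hypothesis that $Y$ is $\SU(2)$-abelian.  (The paper flags exactly this issue at the start of \S6.2.)  Concretely, if $\rho(\mu)=1$ then $\rho$ factors through $\pi_1(Y)$, and $\rho(\lambda)$ equals $\rho_Y$ evaluated on the class of $K$; since $K$ need not be nullhomotopic in $Y$ and $\pi_1(Y)$ may have irreducible $\SU(2)$ representations, there is no reason for $\rho(\lambda)$ to be $1$.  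So $\gamma'$ may meet the seams $\{\alpha\in\pi\Z\}$ at interior points, and the claim $\gamma'\subset C$ fails.  Without it, $\sigma_p(\gamma')$ need not lie in $C$, and your meridian--arc intersection argument in $C$ does not apply; in fact $\sigma_p(\gamma')$ is essential in $X(T^2)\setminus\{P,\sigma_p(Q)\}$, not in $X(T^2)\setminus\{P,Q\}$, so it can perfectly well avoid the arc $L_0$ (which runs from $(0,0)$ to $(\pi,0)=\sigma_p(Q)$, not from $P$ to $\sigma_p(Q)$).

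A structural warning sign is that your argument never uses the hypothesis $\gamma\cap\sigma_p(\gamma')=\emptyset$: you derive a contradiction from ``$\gamma'$ avoids every $A_k$'' alone, which would prove that \emph{every} $p$-avoiding curve passes through some $A_k$.  That stronger statement is false in general, precisely because such curves can traverse the seams.  The paper's proof uses the disjointness hypothesis in an essential way: from $\gamma$ it extracts a path $\Gamma$ between $L_\pi$ and $L_0$ (using Lemma~\ref{lem:slope-p-closed-curve}), concatenates with pieces of $L_\pi$ and $L_0$ to build a path $\tilde\Gamma$ from $P=\sigma_p(P)$ to $\sigma_p(Q)$, and then uses essentiality of $\sigma_p(\gamma')$ in $X(T^2)\setminus\{\sigma_p(P),\sigma_p(Q)\}$ to force an intersection with $\tilde\Gamma$.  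Disjointness from $\gamma$ then pushes that intersection onto $L_0\cup L_\pi$, after which Lemma~\ref{lem:p-avoid-lines} finishes.
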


\begin{proof}
It suffices to prove the desired conclusion for $\gamma'$, since we can use the fact that $\sigma_p$ is an involution to write
\[ \gamma' \cap \sigma_p(\gamma) = \sigma_p\big( \gamma \cap \sigma_p(\gamma') \big) = \emptyset \]
and thus freely exchange the roles of $\gamma$ and $\gamma'$.

According to Lemma~\ref{lem:slope-p-closed-curve}, the simple closed curve $\gamma$ meets both of the lines
\[ L_\pi = \{\beta \equiv 0 \!\!\!\pmod{\pi}\} \quad\text{and}\quad L_0 = \{\beta \equiv \pi \!\!\!\pmod{2\pi}\}, \]
so it contains an embedded path $\Gamma$ from $L_\pi$ to $L_0$.  Letting $P=(0,\pi)$ and $Q=(\pi,\pi)$ as usual, we form a path $\tilde\Gamma$ from $\sigma_p(P) = P$ to $\sigma_p(Q) = (\pi,0)$ by first following $L_\pi$ from $P$ until it meets $\Gamma$, then following $\Gamma$ until it meets $L_0$, and then following $L_0$ from there to $\sigma_p(Q)$. See the right side of Figure~\ref{fig:arcs-near-B}.

Since $\gamma'$ is homologically essential in $X(T^2)\setminus \{P,Q\}$, the image $\sigma_p(\gamma')$ is also homologically essential in $X(T^2) \setminus \{\sigma_p(P),\sigma_p(Q)\}$, and so it must intersect any path from $\sigma_p(P) = P$ to $\sigma_p(Q)$.  In particular, the intersection
\[ \tilde{\Gamma} \cap \sigma_p(\gamma') \]
is nonempty.  But we have assumed that $\sigma_p(\gamma')$ is disjoint from $\gamma$ and hence from the path $\Gamma \subset \tilde\Gamma$, so $\sigma_p(\gamma')$ must intersect $\tilde\Gamma$ along either $L_\pi$ or $L_0$.  This means that
\[ \sigma_p(\gamma') \cap \{\beta \in \pi\Z \} \neq \emptyset, \]
and we apply $\sigma_p$ to both sets to deduce that the intersection
\[ \gamma' \cap \sigma_p\left( \{\beta \in \pi\Z\} \right) = \gamma' \cap \{ p\alpha+\beta \in \pi\Z \} \]
is also nonempty.  Lemma~\ref{lem:p-avoid-lines} says that any point in this intersection must have the form $(\frac{k'\pi}{p},0)$, where $0 < k' < p$ by Lemma~\ref{lem:slope-p-closed-curve}, so $\gamma'$ contains such a point after all.
\end{proof}

We are now ready to specialize to $p=3$ and thus prove Theorem~\ref{thm:glue-slope-3}.

\begin{proof}[Proof of Theorem~\ref{thm:glue-slope-3}]
In order to find the desired representation $\pi_1(Y) \to \SU(2)$, Lemma~\ref{lem:look-for-intersection-3} says that it suffices to prove that
\[ i^*(X(E_{K_1})) \quad\text{and}\quad \sigma_3\left( i^*(X(E_{K_2})) \right) \]
intersect at some point of $X(T^2)$ other than $A_0 = (0,0)$ or $A_1 = (\frac{2\pi}{3},0)$.

We use Lemma~\ref{lem:slope-p-closed-curve} to find a pair of $3$-avoiding curves
\[ \gamma_j \subset i^*(X(E_{K_j})) \subset X(T^2) \setminus \{P,Q\}, \qquad j=1,2 \]
that both avoid the point $A_0$.  If they both pass through $A_1$, then Lemma~\ref{lem:skew-transverse} says that the simple closed curves $\gamma_1$ and $\sigma_3(\gamma_2)$ meet transversely at $A_1$.  We view these curves as lying in the pillowcase, which is topologically $S^2$, and then after orienting them arbitrarily their intersection number must be zero.  Their transverse intersection at $A_1$ contributes $\pm1$ to this intersection number $\gamma_1 \cdot \sigma_3(\gamma_2) = 0$, so there must be at least one other point of intersection.  This other point is neither $A_0$ nor $A_1$, so in this case we are done.

In the remaining case, the curves $\gamma_1$ and $\sigma_3(\gamma_2)$ do not intersect at $A_0$ or $A_1$.  If they have another point of intersection then we are done, so we can assume that $\gamma_1$ and $\sigma_3(\gamma_2)$ are disjoint.  But then Lemma~\ref{lem:pass-through-isolated-reducible} says that there must be integers $k_1,k_2 \in \{1,2\}$ such that
\[ \left(\frac{k_1\pi}{3},0\right) \in \gamma_1 \quad\text{and}\quad \left(\frac{k_2\pi}{3},0\right) \in \gamma_2. \]
At least one of the $k_j$ is equal to $1$, since otherwise $\gamma_1$ and $\sigma_3(\gamma_2)$ both contain $A_1 = \sigma_3(A_1)$.  For each such $j$, we use the involution
\[ \tau(\alpha,\beta) = (\pi-\alpha,2\pi-\beta) \]
of the pillowcase, which fixes each of the pillowcase images $i^*(X(E_{K_j})) \subset X(T^2)$ setwise by Lemma~\ref{lem:pillowcase-involution}, to replace $\gamma_j$ with the $3$-avoiding curve $\tau(\gamma_j)$ that passes through $\tau(\frac{\pi}{3},0) = (\frac{2\pi}{3},0)$.  But now we have found $3$-avoiding curves in $i^*(X(E_{K_1}))$ and $i^*(X(E_{K_2}))$ that both pass through $A_1$, so by the first case above they also must intersect at some point other than $A_0$ and $A_1$, completing the proof.
\end{proof}

This completes the proof of Theorem~\ref{thm:3-torsion}. \hfill\qed

\appendix

\section{The surgery exact triangle in irreducible instanton homology} \label{sec:exact-triangle-proof}

In this appendix, we verify some details needed for the proof of Theorem~\ref{thm:exact-triangle-2-torsion}, which generalizes the surgery exact triangle in instanton homology \cite{floer-surgery,braam-donaldson} to the irreducible instanton homology of surgery on a nullhomologous knot in some $Y$ such that $H_1(Y;\Z)$ is $2$-torsion.  We repeat Scaduto's proof of the surgery exact triangle \cite[\S5]{scaduto}, in which the maps in the exact triangle are induced by $2$-handle cobordisms
\begin{equation} \label{eq:triangle-cobordisms}
Y \xrightarrow{(W_0,c_0)} Y_0(K) \xrightarrow{(W_1,c_1)} Y_1(K) \xrightarrow{(W_2,c_2)} Y \to \cdots.
\end{equation}
In each case, we write $(W,c)$ to indicate that $W$ is a cobordism, and $c \subset W$ is a properly embedded surface such that $[c] \in H_2(W,\partial W)$ is Poincar\'e dual to the first Chern class of some line bundle, which specifies a $\SO(3)$-bundle on $W$ as usual.  The cobordism map associated to $(W,c)$ is then defined by counting solutions to the perturbed ASD equation on this bundle.

More generally, the proof of exactness also involves counting instantons on various compositions of these cobordisms, taken over various 1- and 2-dimensional families of metrics.  Since these define chain maps between various irreducible instanton homology groups, and chain homotopies between these, the instantons we count always have irreducible flat limits at either end of their cobordisms.  Scaduto's proof of exactness works without modification as long as the relevant moduli spaces can be compactified without reducible connections appearing in the middle of a broken flowline, since these would not be counted.

In what follows we will omit $K$ from our notation, writing $Y_n := Y_n(K)$.  We will also concatenate subscripts to denote the composition of two or more cobordisms, so that for example $W_{01} = W_0 \cup_{Y_0} W_1$ and $W_{120} = W_1 \cup_{Y_1} W_2 \cup_{Y} W_0$.

We first describe the basic cobordisms in \eqref{eq:triangle-cobordisms}.  We build $W_0$ by attaching a $0$-framed $2$-handle to $Y \times [0,1]$ along $K\times\{1\}$.  Then $W_1$ is the result of attaching a $-1$-framed $2$-handle along a meridian of $K$, and $W_2$ is likewise built out of a $-1$-framed $2$-handle attached along a meridian of the previous attaching curve.

\begin{lemma} \label{lem:triangle-cobordisms-topology}
We have $b_1(W_i) = b^+_2(W_i) = 0$ for each $i=0,1,2$, and the $W_i$ have signatures 
\[ \sigma(W_0) = \sigma(W_1) = 0, \quad \sigma(W_2) = -1. \]
\end{lemma}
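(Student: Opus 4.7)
The plan is to compute each $W_i$ directly from its one-handle decomposition, using the long exact sequence of the pair $(W_i, Y_i^{\mathrm{in}})$, where $Y_i^{\mathrm{in}}$ denotes the incoming boundary component. Since each $W_i$ is built by attaching a single $2$-handle to $Y_i^{\mathrm{in}} \times [0,1]$, excision yields $H_*(W_i, Y_i^{\mathrm{in}}) \cong H_*(D^2, \partial D^2)$, which is concentrated in degree $2$ where it is generated by the cocore/core class of the $2$-handle. The associated long exact sequence collapses to
\[
0 \to H_2(W_i;\Z) \to \Z \xrightarrow{\partial_i} H_1(Y_i^{\mathrm{in}};\Z) \to H_1(W_i;\Z) \to 0,
\]
in which $\partial_i$ sends the generator to the class of the attaching circle.

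First I would identify each attaching circle homologically. For $W_0$ the circle is $K \subset Y$, nullhomologous by hypothesis. For $W_1$ the circle is a meridian of $K$ viewed in $Y_0(K)$; after the $0$-surgery this curve is isotopic to the core $K_0$ of the new surgery solid torus, which generates the free $\Z$ summand of $H_1(Y_0(K)) \cong H_1(Y) \oplus \Z$. For $W_2$ the circle is a meridian of the previous attaching curve viewed in $Y_1(K)$; once both earlier surgeries have been performed, this curve is just a meridian of the core of a surgery solid torus, hence bounds an embedded meridian disk in that solid torus and is nullhomologous.

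From here everything falls out of the exact sequence and standard intersection-form bookkeeping. For $W_1$, injectivity of $\partial_1$ forces $H_2(W_1) = 0$, so $b_2^+(W_1) = 0$ and $\sigma(W_1) = 0$ trivially, while $H_1(W_1) \cong H_1(Y)$ is $2$-torsion, giving $b_1(W_1) = 0$. For $W_0$ and $W_2$ the map $\partial_i$ is zero, so $H_2(W_i) \cong \Z$ generated by a closed surface $\widehat{\Sigma}_i$ obtained by capping a bounding surface (a Seifert surface for $K$ in the case of $W_0$; a meridian disk of a surgery solid torus in the case of $W_2$) with the core of the $2$-handle. The standard formula gives $\widehat{\Sigma}_i \cdot \widehat{\Sigma}_i = n$, where $n$ is the Kirby framing of the $2$-handle measured against the $0$-framing induced by the bounding surface: this is $0$ for $W_0$ and $-1$ for $W_2$, so the intersection forms are $(0)$ and $(-1)$, giving $b_2^+ = 0$ in both cases and signatures $0$ and $-1$ respectively. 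Finally, $H_1(W_0)$ and $H_1(W_2)$ are quotients of $H_1(Y)$ and $H_1(Y_1(K)) \cong H_1(Y)$ respectively, both $2$-torsion, so their first Betti numbers vanish.

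The only step that needs genuine care is identifying the attaching circle of $W_2$---defined as a meridian of the meridian of $K$, inherited from the Kirby diagram drawn in $Y$---with the description ``a meridian of the core of a surgery solid torus in $Y_1(K)$'' used above. This matching follows from the standard Rolfsen twist: adding a $(-1)$-framed meridian to the $0$-framed $K$ realizes $1$-surgery on $K$, so that in the resulting $Y_1(K)$ the former ``meridian of $K$'' has become the core of the new surgery solid torus, and its meridian bounds a small meridian disk there as required.
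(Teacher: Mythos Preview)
Your direct approach---computing each $H_*(W_i)$ and its intersection form from the single handle attachment---differs from the paper's, which instead computes the signatures of the composites $W_0$, $W_{01}$, $W_{012}$ from their linking matrices and then extracts the individual $\sigma(W_i)$ by additivity.  Your route is fine in principle, but as written it has two gaps.

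For $W_1$, the incoming end is $Y_0(K)$, which has $H_2(Y_0(K);\Z)\cong\Z$ (generated by the capped Seifert surface $\widehat\Sigma_K$), so the long exact sequence of the pair does \emph{not} begin with $0\to H_2(W_1)$.  In fact $H_2(Y_0(K))\to H_2(W_1)$ is injective and $H_2(W_1)\cong\Z$, not $0$.  The conclusions $b_2^+(W_1)=\sigma(W_1)=0$ survive because the generator comes from the boundary $3$-manifold and therefore has square zero, but your stated reason is incorrect.  For $W_2$, the Rolfsen-twist identification fails: after the $(-1)$-surgery on $m$, the meridian of the resulting solid torus $V_m$ is $-\mu_m+\lambda_m$, not $\mu_m$, so the attaching circle $\mu_m$ (as drawn in the original Kirby diagram in $Y$) is \emph{not} a meridian of any surgery solid torus in $Y_1(K)$.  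Following the isotopies $\mu_m\simeq\mathrm{core}(V_m)\simeq\lambda_m\simeq\mu_K$ shows that $\mu_m$ is isotopic to the surgery core $K_1$, which bounds no disk when $K$ is nontrivial.  Your conclusion $\widehat\Sigma_2^2=-1$ is nevertheless correct, but it needs a different justification: one must check that the Seifert framing of $\mu_m$ in $Y$ agrees with its Seifert framing in $Y_1(K)$ (for instance by verifying that the $Y$-longitude of $\mu_m$ is nullhomologous in $Y_1(K)\setminus N(\mu_m)$), rather than by exhibiting a bounding disk that does not exist.  The paper's linking-matrix argument sidesteps this framing comparison entirely.
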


\begin{proof}
The claim that $b_1(W_i) = 0$ follows from noting that $Y$ is a rational homology sphere and the knot $K \subset Y$ is nullhomologous.  Now the signatures of $W_0$, $W_{01}$, and $W_{012}$ are the same as the signatures of the linking matrices
\[ \begin{pmatrix} 0 \end{pmatrix}, \quad
\begin{pmatrix} 0 & 1 \\ 1 & -1 \end{pmatrix}, \quad
\begin{pmatrix} 0 & 1 & 0 \\ 1 & -1 & 1 \\ 0 & 1 & -1 \end{pmatrix} \]
for the Kirby diagrams of the respective cobordisms, and these signatures are $0$, $0$, and $-1$ respectively, so that $\sigma(W_0) = \sigma(W_1) = 0$ and $\sigma(W_2) = -1$ by additivity of signatures.  Since each $b_2(W_i)$ is $1$, the claim that $b_2^+(W_i) = 0$ follows immediately.
\end{proof}

Each $W_i$ is labeled with a properly embedded surface $c_i$, following \cite[\S3]{scaduto}: if the incoming end of $W_i$ is decorated with $\lambda$, then $c_i$ is (up to orientation) the union of a cylinder $\lambda \times [0,1]$ with a meridional disk of the attaching curve for the $2$-handle, pushed slightly into the interior of $W_i$ so that it is properly embedded with boundary on the outgoing end.  We note that if for some cobordism $(W,c)$ the ends of $c$ are nullhomologous in $\partial W$, as they are when $\partial W$ consists of copies of $Y$ or $Y_1$, then $[c] \in H_2(W,\partial W)$ lifts to a class in $H_2(W)$, so that $c^2 \in \Z$; in this case the class of $c\pmod{2}$ determines uniquely the value of $c^2 \pmod{4}$.

\begin{lemma} \label{lem:c-squared-mod-4}
We have $c_2^2 \equiv 0 \pmod{4}$ and $c_{01}^2 \equiv c_{012}^2 \equiv c_{201}^2 \equiv -1 \pmod{4}$.
\end{lemma}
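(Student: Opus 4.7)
The plan is to compute each self-intersection directly by identifying the surfaces $c_i$ with core/cocore disks of the 2-handles $H_0, H_1, H_2$, and then to evaluate $S \cdot S$ for the resulting sphere via a Kirby blow-down (or equivalently a normal Euler-number calculation) that exploits the framings recorded in Lemma~\ref{lem:triangle-cobordisms-topology}.

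Concretely, I would interpret the description ``cylinder $\lambda\times[0,1]\cup\text{meridional disk of the attaching curve}$'' as follows. For $W_0$ (framing $0$ along $K$, outgoing decoration $\mu_K$), $c_0$ is the cocore $D_0'$ of $H_0$, with boundary the belt circle representing $\mu_K\subset Y_0$. For $W_1$ (framing $-1$ along $\mu_K$, incoming decoration $\mu_K$), $c_1=(\mu_K\times I)\cup D_1$, the cylinder in the product region of $W_1$ capped by the core of $H_1$; this is a properly embedded disk with $\partial c_1=\mu_K\subset Y_0$. For $W_2$, both boundary decorations are trivial, and I would take $c_2=\emptyset$, corresponding to the trivial bundle. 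Since $\partial c_0=\partial c_1=\mu_K\subset Y_0$ and $\partial c_1\cap Y_1=\partial c_2\cap Y_1=\emptyset$, the gluings collapse cleanly to
\[
S\;=\;D_0'\cup(\mu_K\times I)\cup D_1\;\subset\;W_{01},
\]
and because $c_2=\emptyset$ the same sphere represents $c_{012}$ and $c_{201}$ when viewed in $W_{012}$ and $W_{201}$. The computation then reduces to $S\cdot S$: in the Kirby diagram for $W_{01}$, the handle $H_1$ is a $(-1)$-framed meridian of $K$, and $S$ is precisely the exceptional sphere of the corresponding $\overline{\mathbb{CP}^2}$-blow-up, so $S^2=-1$. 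Equivalently, the normal Euler number of $S\cong S^2$ is the framing discrepancy along its equator $\mu_K$ between the belt side of $H_0$ (framing $0$) and the handle side of $H_1$ (framing $-1$), namely $-1$. This gives $c_{01}^2=c_{012}^2=c_{201}^2=-1\equiv-1\pmod4$, and $c_2^2=0\equiv0\pmod 4$.

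The main obstacle is pinning down the precise identification of the $c_i$, especially $c_2$, since the ``cylinder $+$ meridional disk'' prescription is ambiguous when both boundary decorations are trivial; one has to verify that Scaduto's exact-triangle conventions indeed choose the trivial (or more generally an even) class in $H_2(W_2)\cong\Z$, so that $c_2^2\equiv0\pmod4$ holds. A secondary check is to match my cocore/core interpretations against the ``pushed slightly into the interior'' clause of the definition, which is a straightforward isotopy in each handle. Once the identifications are locked in, the blow-down/Euler-number calculation of $S^2=-1$ is essentially immediate, and the mod-$4$ congruences follow at once.
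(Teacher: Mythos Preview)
Your strategy---identify $c_{01}$ concretely as a sphere and compute its self-intersection directly---is different from the paper's, which never pins down $c_{01}$ as a specific surface.  The paper instead computes the mod-$2$ intersection numbers $c_{01}\cdot F_0$ and $c_{01}\cdot E$ against the basis $\{F_1,E\}$ of $H_2(W_{01})$ (citing \cite[\S3.4]{scaduto} for the latter), concludes that $c_{01}\equiv E\pmod 2$, and then uses the general fact that $c^2\pmod 4$ depends only on $c\pmod 2$.  For $c_2$ it simply cites \cite{braam-donaldson} that $c_2\equiv 0\pmod 2$, and the triple compositions follow by additivity.  This approach is deliberately insensitive to which representative of the relative class $[c_i]$ one chooses.

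Your approach is more geometric and gives the exact value $S^2=-1$ when it works, but it hinges on identifications that do not quite match the paper's definitions.  The paper says each $c_i$ is the cylinder over the incoming decoration together with a \emph{meridional disk} of the attaching curve---i.e.\ the cocore of $H_i$, not the core.  You apply this correctly to $c_0$ (cocore of $H_0$) but then switch conventions for $c_1$, taking the core of $H_1$ so that $c_1$ is a single disk with no boundary on $Y_1$.  Under the paper's literal definition, $c_1$ is instead the disconnected union of the cylinder and the cocore of $H_1$, so $c_{01}$ has nonempty (but nullhomologous) boundary on $Y_1$ and must be lifted to $H_2(W_{01})$ before one can speak of $c_{01}^2$.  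Likewise, the definition gives $c_2=$ (cocore of $H_2$), not $\emptyset$.  Your sphere $S$ does represent the class $A_1$ in the linking-matrix basis and does satisfy $S^2=-1$, so your computation is internally correct; what is missing is the check that the paper's $c_{01}$ (and Scaduto's) agrees with $S$ at least modulo $2$.  That is exactly what the paper's intersection-number argument supplies, and it is the step you would need to add to close the gap.
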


\begin{proof}
We realize $c_0 \subset W_0$ by taking a disk in $Y\times\{1\}$ with boundary $\mu_K \times \{1\}$, where $\mu_K$ is a meridian $\mu_K$ of the attaching curve $K\times \{1\}$, and pushing its interior into the interior of $Y\times [0,1]$.  The homology $H_2(W_0)$ is generated by the union $F_0$ of a Seifert surface for $K$ and a core of the $2$-handle; we have $F_0^2 = 0$, and $c_0 \cdot F_0 = 1$.
  
Then $W_1$ is built by attaching a $-1$-framed $2$-handle to $Y_0 \times [0,1]$ along $\mu_K$, and $c_1$ is the union of $\mu_K \times [0,1]$ and a disk bounded by a meridian of $\mu_K$ with some orientation.  We observe that $W_{01}$ is diffeomorphic to a blow-up of the trace of $1$-surgery on $K$, and then $H_2(W_{01})$ is generated by a capped-off Seifert surface $F_1$ and the exceptional sphere $E$, with $F_1^2 = 1$ and $E^2 = -1$.  We have $c_{01} \cdot E \equiv 1\pmod{2}$ by \cite[\S3.4]{scaduto} (see also \cite[Lemma~2.1]{floer-surgery}), and
\[ c_{01} \cdot F_0 \equiv c_0 \cdot F_0 \equiv 1 \pmod{2} \]
since $F_0 \subset W_0$, whence
\[ c_{01} \cdot F_1 \equiv c_{01} \cdot (F_0 - E) \equiv 0 \pmod{2}. \]
Since $c_{01}$ has nullhomologous ends in $Y$ and $Y_1$, it lifts to a class in $H_2(W_{01})$.  Then the above intersection numbers tell us that $c_{01} \equiv E \pmod{2}$, and so $c_{01}^2 \equiv -1 \pmod{4}$.

Meanwhile, we have $c_2 \equiv 0 \pmod{2}$ as in \cite[\S2]{braam-donaldson}, so that $c_2^2 \equiv 0 \pmod{4}$.  And we note that the surfaces $c_{012}$ and $c_{201}$ are each homologous to a disjoint union of closed surfaces in the classes of $c_{01} \subset W_{01}$ and $c_2 \subset W_2$ in some order, so we conclude that
\[ c_{012}^2 \equiv c_{201}^2 \equiv c_{01}^2 + c_2^2 \equiv -1 \pmod{4}. \qedhere \]
\end{proof}

Next, we note that a cobordism either to or from $Y_0$, equipped with a bundle whose restriction to $Y_0$ is the admissible $w$, does not admit any reducible ASD connections at all.  This is because any such connection must limit at the $Y_0$ end to a reducible flat connection over $Y_0$, and such limiting connections do not exist by the admissibility of $w$.  Thus we can restrict our attention to the cobordisms
\begin{equation} \label{eq:exact-triangle-interesting-cobordisms}
(W_2,c_2),\ (W_{01},c_{01}),\ (W_{012},c_{012}),\ (W_{201},c_{201}),
\end{equation}
which are the only other cobordisms considered in \cite[\S5]{scaduto}.  We note from Lemma~\ref{lem:triangle-cobordisms-topology} that these all have $b_1=0$, and that by additivity of signature their signatures are $-1$, $0$, $-1$, $-1$ respectively while their second Betti numbers are $1$, $2$, $3$, $3$, so that
\begin{equation} \label{eq:cobordisms-bplus}
b^+_2(W_2)=0, \qquad b^+_2(W_{01}) = b^+_2(W_{012}) = b^+_2(W_{201}) = 1.
\end{equation}
Moreover, since $K$ is nullhomologous it follows that each of the classes $[c_s] \in H_2(W_s,\partial W_s)$ appearing in \eqref{eq:exact-triangle-interesting-cobordisms} has nullhomologous boundary in $\partial W_s$, hence lifts to a class in the corresponding $H_2(W_s)$.

From now on, we focus our attention on reducible connections on each of the cobordisms \eqref{eq:exact-triangle-interesting-cobordisms}.  We will call a reducible instanton \emph{central} if its holonomy is central, and \emph{abelian} if it is not.

An abelian instanton on any of the cobordisms $(W,c)$ of \eqref{eq:exact-triangle-interesting-cobordisms} limits to a central connection at either end, because $H_1(Y)$ and $H_1(Y_1)$ are both $2$-torsion and thus all reducible flat connections on $Y$ and $Y_1$ are central.  According to \cite[Proposition~1.7]{miller-equivariant}, the components of the space of abelian instantons on $(W,c)$ are parametrized by pairs
\[ \left\{ \{x,y\} \subset H^2(W;\Z) \mid x+y=\mathrm{PD}(c),\ x\neq y \right\}; \]
given a $U(2)$-bundle $E\to Y$ such that $\lambda = \det(E)$ has first Chern class $\mathrm{PD}(c)$, we send a reducible connection that induces a splitting into complex line bundles $E \cong \eta \oplus (\lambda\otimes \eta^{-1})$ to the set $\{x,y\} = \{c_1(\eta),c_1(\lambda\otimes \eta^{-1})\}$.  Given a perturbation $\pi_W$ on $W$ restricting to perturbations $\pi,\pi'$ on the incoming and outgoing ends, and given an abelian instanton $\Lambda$ in the component labeled by $\{x,y\}$, the component $D^\nu_{\Lambda,\pi_W}$ of the linearized ASD operator at $\Lambda$ that is normal to the reducible locus has index $N(\Lambda;\pi,\pi') \in 2\Z$ equal to
\begin{equation} \label{eq:normal-index}
N(\Lambda;\pi,\pi') = -2(x-y)^2 - 2b^+_2(W) - 2,
\end{equation}
by \cite[Proposition~3.23]{daemi-miller-eismeier}, which in turn comes from \cite[\S4.5]{miller-equivariant} (and is greatly simplified here because $b_1(W)=0$ and because the limiting flat connections at either end of $W$ must be central).  We omit perturbations from the notation from here on, but note that by taking them sufficiently small on the interior of $W$, we will always have $-2(x-y)^2 \geq 0$ by \cite[Remark~3.28]{daemi-miller-eismeier}.

We can say more about the $-2(x-y)^2$ term.  Working dually in homology, we note that $H_2(W)$ is free abelian, since $W$ is built by attaching 2-handles to either $Y$ or $Y_1$ (both of which have $H_2=0$) along nullhomologous knots.  In fact, we have a splitting
\[ H_2(W,\partial W) \cong H_2(W) \oplus \ker\big(i_*: H_1(\partial W) \to H_1(W)\big), \]
whose second term is $2$-torsion since $H_1(\partial W)$ is.  The class $c$ lies in the $H_2(W)$ summand, so if we have $x+y=c$ then we can write the summands with respect to this splitting as $x=(\alpha, \tau)$ and $y=(c-\alpha,\tau)$ for some $\alpha \in H_2(W)$ and some $2$-torsion element $\tau \in \ker(i_*)$.  But then $x-y = 2\alpha-c$, and both $\alpha$ and $c$ have integral square since they lift to $H_2(W)$.  We can thus compute that
\begin{equation} \label{eq:index-square-mod-8}
{-}2(x-y)^2 \equiv -2c^2 \pmod{8},
\end{equation}
and the right side of this congruence only depends on the mod $2$ class of $c$.  We will use this to bound $N(\Lambda)$ from below.

\begin{lemma} \label{lem:index-composite}
Fix $s \in \{01,012,201\}$.  Then there are no central instantons on $(W_s,c_s)$, and any abelian instanton $\Lambda$ on $(W_s,c_s)$ has normal index $N(\Lambda) \geq -2$.
\end{lemma}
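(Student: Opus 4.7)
The plan is to derive both conclusions directly from the constraints gathered in Lemmas~\ref{lem:triangle-cobordisms-topology} and \ref{lem:c-squared-mod-4}, together with the index formula \eqref{eq:normal-index} and the congruence \eqref{eq:index-square-mod-8}.  The key observation for both parts is that $c_s^2$ is odd (congruent to $-1 \pmod 4$) for each $s \in \{01,012,201\}$, which is a strong divisibility restriction on the class $c_s \in H^2(W_s;\Z)$.

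First I would dispose of the central instantons.  A central reducible connection on the $\SO(3)$-bundle determined by $(W_s, c_s)$ corresponds to a splitting of the form $E \cong \eta \oplus \eta$ of the associated $U(2)$-bundle, which forces $\mathrm{PD}(c_s) = 2 c_1(\eta)$; equivalently $c_s$ is divisible by $2$ in $H^2(W_s;\Z)$.  But if $c_s = 2\alpha$ for some integral class $\alpha$, then $c_s^2 = 4\alpha^2 \equiv 0 \pmod 4$, contradicting $c_s^2 \equiv -1 \pmod 4$.  Hence no central instanton exists.

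For the second statement, let $\Lambda$ be an abelian instanton on $(W_s,c_s)$ labelled by a pair $\{x,y\} \subset H^2(W_s;\Z)$ with $x+y = \mathrm{PD}(c_s)$ and $x \neq y$.  From \eqref{eq:normal-index} and the fact that $b_2^+(W_s) = 1$ (see \eqref{eq:cobordisms-bplus}), we have
\[
N(\Lambda) \;=\; -2(x-y)^2 - 2 b_2^+(W_s) - 2 \;=\; -2(x-y)^2 - 4.
\]
The congruence \eqref{eq:index-square-mod-8} together with $c_s^2 \equiv -1 \pmod 4$ gives $-2(x-y)^2 \equiv -2c_s^2 \equiv 2 \pmod 8$, and by choosing the perturbation $\pi_{W_s}$ sufficiently small we may assume that $-2(x-y)^2 \geq 0$.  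Combining the congruence with the non-negativity forces $-2(x-y)^2 \geq 2$, and therefore $N(\Lambda) \geq 2 - 4 = -2$, as required.

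The main subtlety to confirm is that the class $c_s$ genuinely lies in the image of $H_2(W_s;\Z) \to H_2(W_s,\partial W_s;\Z)$ so that its self-intersection is an integer modulo $4$, and correspondingly that \eqref{eq:index-square-mod-8} applies with $c = c_s$.  This is exactly what was verified just before Lemma~\ref{lem:c-squared-mod-4}: because $K$ is nullhomologous and $H_1(Y), H_1(Y_1)$ are $2$-torsion, both ends of each $c_s$ are nullhomologous, so $c_s$ lifts to $H_2(W_s;\Z)$, and the decomposition $H_2(W_s,\partial W_s;\Z) \cong H_2(W_s;\Z) \oplus \ker(i_*)$ used in the derivation of \eqref{eq:index-square-mod-8} is available.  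With these two ingredients in hand the argument is the short parity-and-positivity calculation sketched above, with no further analytic input required.
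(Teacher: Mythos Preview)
Your bound $N(\Lambda)\geq -2$ for abelian instantons is argued exactly as in the paper: same use of \eqref{eq:normal-index}, \eqref{eq:cobordisms-bplus}, the nonnegativity of $-2(x-y)^2$, and the congruence \eqref{eq:index-square-mod-8} combined with Lemma~\ref{lem:c-squared-mod-4}.

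For ruling out central instantons, you take a different route from the paper. The paper simply observes that each $W_s$ with $s\in\{01,012,201\}$ contains $Y_0$ as a separating hypersurface, and a central instanton would restrict to a reducible flat connection on the admissible bundle over $Y_0$; admissibility forbids this. Your argument instead uses the parity of $c_s^2$: a central connection forces the $\SO(3)$ bundle to be trivial, so $\mathrm{PD}(c_s)$ is divisible by $2$ in $H^2(W_s;\Z)$, and then via the splitting $H_2(W_s,\partial W_s)\cong H_2(W_s)\oplus(\text{2-torsion})$ the lift $\tilde c_s\in H_2(W_s)$ is itself divisible by $2$, giving $c_s^2\equiv 0\pmod 4$ and contradicting Lemma~\ref{lem:c-squared-mod-4}. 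Both arguments are valid. The paper's is a one-liner exploiting the specific geometry of the composite cobordisms; yours is a purely cohomological obstruction that does not need to locate $Y_0$ inside $W_s$, and effectively re-derives the admissibility obstruction from the self-intersection data already computed. One small point worth making explicit in your write-up is the passage from ``$\mathrm{PD}(c_s)$ divisible by $2$ in $H^2(W_s;\Z)$'' to ``$\tilde c_s$ divisible by $2$ in $H_2(W_s)$'', since $\alpha^2$ is only well-defined once $\alpha$ lives in the free part $H_2(W_s)$; you gesture at this in your final paragraph, but the step deserves a sentence.
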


\begin{proof}
To see that there are no central instantons, we note that they restrict to reducible flat connections over $Y_0 \subset W_s$, and the admissibility of $w \to Y_0$ rules such connections out.
 
If $\Lambda$ is reducible, then by equations~\eqref{eq:normal-index} and \eqref{eq:cobordisms-bplus}, we have
\[ N(\Lambda) = -2(x-y)^2 - 4 \]
where the $-2(x-y)^2$ term is nonnegative.  We also have $-2(x-y)^2 \equiv -2c_s^2 \equiv 2\pmod{8}$, by \eqref{eq:index-square-mod-8} and Lemma~\ref{lem:c-squared-mod-4}, so we conclude that it is at least $2$ and thus $N(\Lambda) \geq -2$.
\end{proof}

\begin{proof}[Proof of Theorem~\ref{thm:exact-triangle-2-torsion}]
As described above, we simply repeat the proof of the exact triangle in \cite[\S5]{scaduto}, and we only have to check that there are no compactness issues caused by broken flowlines with reducible (hence central) flat connections on a $3$-manifold in the middle.  If every ASD connection in a broken flowline is irreducible, but they are glued along flat connections at least one of which is central, then the index will be at least $3$.  We can thus restrict our attention to broken flowlines with at least one reducible instanton.  

We have seen that this can only happen on one of the cobordisms $(W,c)$ listed in \eqref{eq:exact-triangle-interesting-cobordisms}, and the proof does not make use of higher-dimensional families on $(W_2,c_2)$, so we really only need to consider
\[ (W,c) = (W_s,c_s), \quad s=01,012,201. \]
We remark that by Lemma~\ref{lem:index-composite}, a reducible instanton $\Lambda$ on one of these $(W,c)$ must be abelian with normal index at least $-2$, and by \cite[Proposition~4.26]{miller-equivariant} its index satisfies
\[ \ind(\Lambda) = N(\Lambda) - (1 - b_1(W) + b^+_2(W)) = N(\Lambda) - 2, \]
so then $\ind(\Lambda) \geq -4$.

We now fix a pair of irreducible flat connections $a$ and $a'$ on the ends $Y_a$ and $Y_{a'}$ of $W$.  Suppose that some sequence of instantons in the moduli space $\cM(W;a,a')$ converges to a broken flowline
\[ a \xrightarrow{B} b \xrightarrow{\Lambda} b' \xrightarrow{B'} a', \]
where $B$ and $B'$ are ASD connections on $\R \times Y_a$ and $\R \times Y_{a'}$, and $\Lambda$ is abelian; then generically $B$ and $B'$ have index at least $1$, while $\ind(\Lambda) \geq -4$.  Then by standard gluing results, the broken flowline has index
\[ \ind(B) + \ind(\Lambda) + \ind(B') + 5 \geq 3. \]
(To explain the constant on the left, we first glue $B$ to the abelian $\Lambda$ along the central $b$, with $\dim H^0_b(Y_a)=3$, and then similarly glue the irreducible result of this to $B'$ along the central $b'$.)  In particular, this broken flowline does not belong to the compactification of an at most $2$-dimensional moduli space.

The same holds for broken flowlines with longer chains of connections over $\R \times Y_a$ or $\R\times Y_{a'}$.  We conclude that the moduli spaces of dimension at most $2$ that appear in \cite[\S5]{scaduto} can be compactified without adding any broken flowlines with reducible components, so the proof of the exact triangle goes through as before.
\end{proof}

\bibliographystyle{myalpha}
\bibliography{References}

\begin{thebibliography}{DLME22}

\bibitem[BBL16]{baker-buck-lecuona}
K.~L. Baker, D.~Buck, and A.~G. Lecuona.
\newblock Some knots in {$S^1\times S^2$} with lens space surgeries.
\newblock {\em Comm. Anal. Geom.}, 24(3):431--470, 2016.

\bibitem[BD95]{braam-donaldson}
P.~J. Braam and S.~K. Donaldson.
\newblock Floer's work on instanton homology, knots and surgery.
\newblock In {\em The {F}loer memorial volume}, volume 133 of {\em Progr.
  Math.}, pages 195--256. Birkh\"auser, Basel, 1995.

\bibitem[BN90]{boyer-nicas}
S.~Boyer and A.~Nicas.
\newblock Varieties of group representations and {C}asson's invariant for
  rational homology {$3$}-spheres.
\newblock {\em Trans. Amer. Math. Soc.}, 322(2):507--522, 1990.

\bibitem[BS22]{bs-splicing}
J.~A. Baldwin and S.~Sivek.
\newblock Instanton {$L$}-spaces and splicing.
\newblock {\em Ann. H. Lebesgue}, 5:1213--1233, 2022.

\bibitem[Bul97]{bullock}
D.~Bullock.
\newblock Estimating a skein module with {${\rm SL}_2({\bf C})$} characters.
\newblock {\em Proc. Amer. Math. Soc.}, 125(6):1835--1839, 1997.

\bibitem[BW69]{bredon-wood}
G.~E. Bredon and J.~W. Wood.
\newblock Non-orientable surfaces in orientable {$3$}-manifolds.
\newblock {\em Invent. Math.}, 7:83--110, 1969.

\bibitem[BZ98]{boyer-zhang-seminorms}
S.~Boyer and X.~Zhang.
\newblock On {C}uller-{S}halen seminorms and {D}ehn filling.
\newblock {\em Ann. of Math. (2)}, 148(3):737--801, 1998.

\bibitem[CS83]{cs-splittings}
M.~Culler and P.~B. Shalen.
\newblock Varieties of group representations and splittings of {$3$}-manifolds.
\newblock {\em Ann. of Math. (2)}, 117(1):109--146, 1983.

\bibitem[DLME22]{dlme}
A.~Daemi, T.~Lidman, and M.~Miller~Eismeier.
\newblock 3-manifolds without any embedding in symplectic 4-manifolds.
\newblock arXiv:2207.03631, 2022.

\bibitem[DME22]{daemi-miller-eismeier}
A.~Daemi and M.~Miller~Eismeier.
\newblock Instantons and rational homology spheres.
\newblock arXiv:2210.14071, 2022.

\bibitem[Don02]{donaldson-book}
S.~K. Donaldson.
\newblock {\em Floer homology groups in {Y}ang-{M}ills theory}, volume 147 of
  {\em Cambridge Tracts in Mathematics}.
\newblock Cambridge University Press, Cambridge, 2002.
\newblock With the assistance of M. Furuta and D. Kotschick.

\bibitem[Flo88]{floer-instanton}
A.~Floer.
\newblock An instanton-invariant for {$3$}-manifolds.
\newblock {\em Comm. Math. Phys.}, 118(2):215--240, 1988.

\bibitem[Flo90]{floer-surgery}
A.~Floer.
\newblock Instanton homology, surgery, and knots.
\newblock In {\em Geometry of low-dimensional manifolds, 1 ({D}urham, 1989)},
  volume 150 of {\em London Math. Soc. Lecture Note Ser.}, pages 97--114.
  Cambridge Univ. Press, Cambridge, 1990.

\bibitem[Fr{\o}02]{froyshov}
K.~A. Fr{\o}yshov.
\newblock Equivariant aspects of {Y}ang-{M}ills {F}loer theory.
\newblock {\em Topology}, 41(3):525--552, 2002.

\bibitem[GJS23]{gunningham-jordan-safronov}
S.~Gunningham, D.~Jordan, and P.~Safronov.
\newblock The finiteness conjecture for skein modules.
\newblock {\em Invent. Math.}, 232(1):301--363, 2023.

\bibitem[GL96]{gordon-luecke-reducible}
C.~M. Gordon and J.~Luecke.
\newblock Reducible manifolds and {D}ehn surgery.
\newblock {\em Topology}, 35(2):385--409, 1996.

\bibitem[Gor83]{gordon}
C.~M. Gordon.
\newblock Dehn surgery and satellite knots.
\newblock {\em Trans. Amer. Math. Soc.}, 275(2):687--708, 1983.

\bibitem[Hat07]{hatcher-3}
A.~Hatcher.
\newblock Notes on basic 3-manifold topology.
\newblock \url{https://pi.math.cornell.edu/~hatcher/3M/3Mdownloads.html}, 2007.

\bibitem[Hei74]{heil}
W.~Heil.
\newblock Elementary surgery on {S}eifert fiber spaces.
\newblock {\em Yokohama Math. J.}, 22:135--139, 1974.

\bibitem[HL22]{hom-lidman-spheres}
J.~Hom and T.~Lidman.
\newblock Dehn surgery and nonseparating two-spheres.
\newblock In {\em Gauge theory and low-dimensional topology: progress and
  interaction}, volume~5 of {\em Open Book Ser.}, pages 145--153. Math. Sci.
  Publ., Berkeley, CA, 2022.

\bibitem[HP93]{hoste-przytycki-lens}
J.~Hoste and J.~H. Przytycki.
\newblock The {$(2,\infty)$}-skein module of lens spaces; a generalization of
  the {J}ones polynomial.
\newblock {\em J. Knot Theory Ramifications}, 2(3):321--333, 1993.

\bibitem[HPSP01]{heusener-porti-suarez}
M.~Heusener, J.~Porti, and E.~Su\'{a}rez~Peir\'{o}.
\newblock Deformations of reducible representations of 3-manifold groups into
  {${\rm SL}_2(\bold C)$}.
\newblock {\em J. Reine Angew. Math.}, 530:191--227, 2001.

\bibitem[Kir97]{kirby-list}
Problems in low-dimensional topology.
\newblock In R.~Kirby, editor, {\em Geometric topology ({A}thens, {GA}, 1993)},
  volume~2 of {\em AMS/IP Stud. Adv. Math.}, pages 35--473. Amer. Math. Soc.,
  Providence, RI, 1997.

\bibitem[Kla91]{klassen}
E.~P. Klassen.
\newblock Representations of knot groups in {${\rm SU}(2)$}.
\newblock {\em Trans. Amer. Math. Soc.}, 326(2):795--828, 1991.

\bibitem[KM04a]{km-su2}
P.~B. Kronheimer and T.~S. Mrowka.
\newblock Dehn surgery, the fundamental group and {SU{$(2)$}}.
\newblock {\em Math. Res. Lett.}, 11(5-6):741--754, 2004.

\bibitem[KM04b]{km-p}
P.~B. Kronheimer and T.~S. Mrowka.
\newblock Witten's conjecture and property {P}.
\newblock {\em Geom. Topol.}, 8:295--310, 2004.

\bibitem[KM10]{km-excision}
P.~Kronheimer and T.~Mrowka.
\newblock Knots, sutures, and excision.
\newblock {\em J. Differential Geom.}, 84(2):301--364, 2010.

\bibitem[LM85]{lubotzky-magid}
A.~Lubotzky and A.~R. Magid.
\newblock Varieties of representations of finitely generated groups.
\newblock {\em Mem. Amer. Math. Soc.}, 58(336):xi+117, 1985.

\bibitem[LPCZ23]{lpcz}
T.~Lidman, J.~Pinz{\'o}n-Caicedo, and R.~Zentner.
\newblock Toroidal integer homology three-spheres have irreducible
  {$SU(2)$}-representations.
\newblock {\em J. Topol.}, 16(1):344--367, 2023.

\bibitem[ME19]{miller-equivariant}
M.~Miller~Eismeier.
\newblock Equivariant instanton homology.
\newblock arXiv:1907.01091, 2019.

\bibitem[Mot88]{motegi}
K.~Motegi.
\newblock Haken manifolds and representations of their fundamental groups in
  {${\rm SL}(2,{\bf C})$}.
\newblock {\em Topology Appl.}, 29(3):207--212, 1988.

\bibitem[Mro11]{mroczkowski-rp3}
M.~Mroczkowski.
\newblock Kauffman bracket skein module of the connected sum of two projective
  spaces.
\newblock {\em J. Knot Theory Ramifications}, 20(5):651--675, 2011.

\bibitem[Prz91]{przytycki-kbsm}
J.~H. Przytycki.
\newblock Skein modules of {$3$}-manifolds.
\newblock {\em Bull. Polish Acad. Sci. Math.}, 39(1-2):91--100, 1991.

\bibitem[Prz97]{przytycki-algtop}
J.~H. Przytycki.
\newblock Algebraic topology based on knots: an introduction.
\newblock In {\em K{NOTS} '96 ({T}okyo)}, pages 279--297. World Sci. Publ.,
  River Edge, NJ, 1997.

\bibitem[Prz99]{przytycki-fundamentals}
J.~H. Przytycki.
\newblock Fundamentals of {K}auffman bracket skein modules.
\newblock {\em Kobe J. Math.}, 16(1):45--66, 1999.

\bibitem[Ron92]{rong}
Y.~W. Rong.
\newblock Degree one maps between geometric {$3$}-manifolds.
\newblock {\em Trans. Amer. Math. Soc.}, 332(1):411--436, 1992.

\bibitem[Sca15]{scaduto}
C.~W. Scaduto.
\newblock Instantons and odd {K}hovanov homology.
\newblock {\em J. Topol.}, 8(3):744--810, 2015.

\bibitem[Sco83]{scott}
P.~Scott.
\newblock The geometries of {$3$}-manifolds.
\newblock {\em Bull. London Math. Soc.}, 15(5):401--487, 1983.

\bibitem[SZ22]{sz-menagerie}
S.~Sivek and R.~Zentner.
\newblock A menagerie of {$SU(2)$}-cyclic 3-manifolds.
\newblock {\em Int. Math. Res. Not. IMRN}, (11):8038--8085, 2022.

\bibitem[Tur88]{turaev-kbsm}
V.~G. Turaev.
\newblock The {C}onway and {K}auffman modules of a solid torus.
\newblock {\em Zap. Nauchn. Sem. Leningrad. Otdel. Mat. Inst. Steklov. (LOMI)},
  167(Issled. Topol. 6):79--89, 190, 1988.

\bibitem[Zen18]{zentner}
R.~Zentner.
\newblock Integer homology 3-spheres admit irreducible representations in
  {${\rm SL}(2,\Bbb{C})$}.
\newblock {\em Duke Math. J.}, 167(9):1643--1712, 2018.

\end{thebibliography}

\end{document}